\documentclass{elsarticle}
\usepackage{german,latexsym,amsfonts,amssymb,amsbsy,amsmath}
\usepackage{epsfig,theorem}
\usepackage{color}
\setlength{\parindent}{0pt}
\setlength{\parskip}{3pt}
\usepackage[all]{xy}
\usepackage{url}
\usepackage{tikz}
\usetikzlibrary{arrows,matrix}


\parindent=0em
\parskip=1ex
\def\Top{\mathcal{T}\! op}
\def\Cat{\mathcal{C}at}
\def\SSets{\mathcal{SS}ets}
\def\Sets{\mathcal{S}ets}
\def\Ass{\mathcal{A}ss}
\def\Com{\mathcal{C}om}
\def\scA{\mathcal{A}}
\def\scB{\mathcal{B}}
\def\scD{\mathcal{D}}
\def\scI{\mathcal{I}}
\def\scM{\mathcal{M}}
\def\scN{\mathcal{N}}
\def\scC{\mathcal{C}}
\def\scP{\mathcal{P}}
\def\scS{\mathcal{S}}
\def\scT{\mathcal{T}}

\def\mlf{\rm mlf}

\def\In{\mathop{\rm In}}

\def\Opr{\mathop{\mathcal{O}\rm pr}\nolimits} %
\def\Id{\mathop{\rm Id}}

\selectlanguage{\USenglish}
\makeatletter
\@addtoreset{equation}{section}

\makeatother

\theoremstyle{change}
\theorembodyfont{\rm}
\newtheorem{prop}{Proposition:}[section]
\newtheorem{theo}[prop]{Theorem:}
\newtheorem{lem}[prop]{Lemma:}
\newtheorem{defi}[prop]{Definition:}
\newtheorem{coro}[prop]{Corollary:}
\newtheorem{conj}[prop]{Conjecture:}
\newtheorem{nconj}[prop]{Naive Conjecture:}
\newtheorem{conv}[prop]{Convention:}
\newtheorem{exam}[prop]{Example:}
\newtheorem{leer}[prop]{}
\newtheorem{rema}[prop]{Remark:}
\newtheorem{nota}[prop]{Notation:}
\newenvironment{proof}{\par\noindent\textbf{Proof}}{\hfill\ensuremath{\Box}
}

\begin{document}
\title{\textbf{An Additivity Theorem for the Interchange of $\mathbf{E_n}$ Structures}}
\author{Z. Fiedorowicz}
\address{Department of Mathematics, The Ohio State University\\ Columbus, OH 43210-1174, USA}
\ead{fiedorow@math.osu.edu}

\author{R.M.~Vogt}
\address{Universit\"at Osnabr\"uck, Fachbereich Mathematik/Informatik\\ Albrechtstr. 28a, 49069 Osnabr\"uck, Germany}
\ead{rvogt@uos.de}

\author{\textit{Dedicated to the memory of Roland Schw"anzl}}

\begin{abstract}
Let $\mathcal{A}$ and $\mathcal{B}$ be operads and let $X$ be an object with an
$\mathcal{A}$-algebra and a $\mathcal{B}$-algebra structure. These structures are said to interchange if
each operation $\alpha: X^n\to X$ of the $\mathcal{A}$-structure is a homomorphism with respect to the
$\mathcal{B}$-structure and vice versa. In this case the combined structure is codified by the tensor
product $\mathcal{A}\otimes \mathcal{B}$ of the two operads. There is not much known about $\mathcal{A}\otimes \mathcal{B}$ in general,
because the analysis of the tensor product requires
the solution of a tricky word problem. 

Intuitively one might expect that the tensor product of an $E_k$-operad with an $E_l$-operad (which encode
the multiplicative structures of $k$-fold, respectively $l$-fold loop spaces) ought to be an $E_{k+l}$-operad.
However, there are easy counterexamples to this naive conjecture.
In this paper we essentially
solve the word problem for the nullary, unary, and binary operations of the tensor product of arbitrary
topological operads and show that the
tensor product of a cofibrant $E_k$-operad with a cofibrant $E_l$-operad is an $E_{k+l}$-operad.
It follows that if $\mathcal{A}_i$ are $E_{k_i}$ operads for $i=1,2,\ldots,n$ then $\mathcal{A}_1\otimes\ldots\otimes
\mathcal{A}_n$ is at least an $E_{k_1+\ldots +k_n}$ operad, i.e. there is an $E_{k_1+\ldots +k_n}$-operad 
$\mathcal{C}$ and a map of operads
$\mathcal{C}\to \mathcal{A}_1\otimes\ldots\otimes \mathcal{A}_n$.
\end{abstract}
\begin{keyword}
\MSC[2010]{55P48, 55P35, 18D50}
\end{keyword}
\maketitle

\section{Introduction}\label{sec1}

Two algebraic structures are said to interchange if the structure maps of
one structure are homomorphisms with respect to the second structure and
vice versa. A precise definition will be given below.

Interchange features are abundant in algebra, category theory, algebraic
topology and related fields. A well-known exercise in introductory algebra
is to show that two interchanging group structures coincide and are abelian
group structures. In iterated loop space theory interchanging loop
structures provide rich algebraic structures. In the theory of
$n$-categories the interchange of the various category structures is of
central interest, and one of the main problems in the search for the
``best'' notion of a weak $n$-category is the determination of the
``right'' notion of interchange.

In the present paper we address the interchange of $E_n$
structures extending a program originally suggested by J.M. Boardman
\cite{Bo} in the context of a recognition principle for $n$-fold loop
spaces. This program has experienced a revival of interest for various
reasons: In connection with the research on weak $n$-categories,
$A_\infty$-categories, and Segal categories the question of the uniqueness
of $n$-fold delooping machines for $1\le n\le \infty$ has become of
interest again. The solution offered in \cite{Dunn2} has gaps. The analysis
of the delooping problem brings up the question of
interchanging $E_1$ structures, usually called $A_\infty$ structures.

Kontsevich's generalization of Deligne's Hochschild cohomology conjecture
to algebras over the little $n$-cubes operad, and the dual problem about
the multiplicative structure of (topological) Hochschild homology lead
directly into interchange problems of $E_n$ structures and $A_\infty$
structures. 

A space $X$ has two interchanging structures encoded by operads $\scB$ and
$\scC$ iff it has a structure encoded by the tensor product $\scB\otimes
\scC$. Our main result is

\textbf{Additivity Theorem:} If $\scB$ is an $E_k$ operad and $\scC$ an $E_l$ operad
and both are cofibrant, then $\scB\otimes\scC$ is an $E_{k+l}$ operad.

Our choice of weak equivalences are maps of operads $f:\scB\to \scC$ such that each map
$f(n): \scB(n)\to \scC(n)$ is a $\Sigma_n$-equivariant homotopy equivalence, i.e. we work with the
Str{\o}m model structure on the underlying collections. It is not known
that the category $\Opr$ of operads carries a model structure with these weak equivalences;
nevertheless one can define the term ``cofibrant'' by a lifting property and 
there is a good cofibrant replacement 
functor (see \cite{V} and the discussion in \cite[Section 8.1]{BM}) .

If we replace the Str{\o}m model structure on the category of collections
by the Quillen model structure, $\Opr$ has a model structure \cite{BM0} and the Additivity Theorem
also holds in this context.

If we drop the cofibrancy condition in the theorem, then $\scB\otimes\scC$ is at least
$E_{k+l}$, which means that there is a $E_{k+l}$ operad $\scD$ and a 
map of operads $\scD \to \scB\otimes\scC$. But there might be an $E_r$
operad $\scD'$ with $r>k+l$ and a map of operads $\scD' \to \scB\otimes\scC$.

Versions of the Additivity Theorem have been known in special cases and have been addressed
in other contexts. E.g. Dunn \cite{Dunn} proved that the $E_k$ operads $\scD_k$ of decomposable little $k$-cubes satisfy
 additivity  on the nose:
$$\scD_k\otimes \scD_l\cong \scD_{k+l}.$$
From this he deduced that the $n$-fold tensor product $\scC_1\otimes\ldots\otimes \scC_1$ of the little
$1$-cubes operad is an $E_n$ operad. In his thesis \cite{Brink} Brinkmeier extended this result to $n$-fold tensor products of
little $k$-cubes operads: $\scC_{k_1}\otimes\ldots\otimes\scC_{k_n}$ is an $E_{k_1+\ldots +k_n}$ operad.
These results do not imply the general Additivity Theorem because the tensor product is \textbf{not} homotopy invariant, i.e.
it does not preserve weak equivalences of operads. 


In his book project ``Higher Algebra'' Lurie introduces a tensor product of $\infty$-operads and proves a version of Brinkmeier's result
in his setting \cite[Section 5.1.2]{Lurie}. This tensor product seems to be related to ours but the precise relationship has not been worked out yet.

Moerdijk and Weiss extended the tensor product of operads to dendroidal sets  \cite{Moer}, \cite{MW1},
\cite{MW2}, \cite{W}, and 
suggested an investigation of an additivity theorem for $n$ copies of the dendroidal set $N_d(\scA ss)$ associated 
with the operad $\scA ss$ of monoid structures. But while $\scA ss\otimes \scA ss\cong \scC om$ (see \ref{3_7}
for more details), the structure of the dendroidal tensor product $N_d(\scA ss)\otimes N_d(\scA ss)$ is not clear.

Recent work of Cisinski and Moerdijk \cite{CM1}, \cite{CM2}, \cite{CM3} and Heuts, Hinich and Moerdijk \cite{HHM}  relates the dendroidal sets world with that of Lurie.
We hope that the methods presented in our paper will be of help in understanding the precise relation of their tensor products
with the interchange of structures.

Our tensor product of operads, which encodes interchanging structures on the nose,
 is quite elusive, and during our work on this paper we
often fell into traps. In Section 3 we will list some surprising examples which will
give an indication that we have to solve a non-trivial word problem.

The strategy of the proof of the Additivity Theorem is as follows:
For a particular choice of universal $E_k$ and $E_l$ operads $\scB=W|\mathcal{NM}_k|$ and $\scC=W|\mathcal{NM}_l|$
we cover the tensor product $\scB\otimes \scC$
by suitable contractible subsets and relate the
nerve of this covering to $\scM_{k+l}$. Here $\scM_k$ is the $\Cat$ operad which was
analyzed in \cite{BFSV} and shown to be an $E_{k}$ operad, $\mathcal{N}$ is the nerve
functor, and $W$ is a cofibrant replacement functor.

The paper is organized as follows: In Sections 2 and 3 we recall the definitions of
$A_\infty$ and $E_k$ operads, of interchange and of the tensor product of operads. As
mentioned above, Section 3 also contains some surprising examples of tensor products. Our
main results and a recollection of the operad $\mathcal{M}_k$ follow in
Section 4. In Section 5 we explain the strategy of the proof of the Additivity Theorem in
greater detail. Section 6 deals with the unary and binary operations in the tensor
product of arbitrary operads. 

The forgetful functor $U$ from reduced operads to topological monoids has a right
adjoint $R$, and we call a $\Sigma$-free operad $\scB$ \textit{axial} if the unit
$\scB(n)\to RU(\scB)(n)$ is a closed cofibration for each $n$.
This property is crucial in our proof. Therefore
we study the adjoint pair $U$ and $R$ in Section 7. In Section 8 we recall the
definition of the cofibrant replacement functor $W$.
We prove the axiality of $W|\mathcal{NM}_k|$ and related properties used
in the proof of the main result. In the remaining sections we define a cover of
$(W|\mathcal{NM}_k|  \otimes W|\mathcal{NM}_l|)(m)$ by contractible cells, describe
the diagram of intersections of these cells, and construct the homotopy equivalences mentioned above.

We dedicate this paper to the memory of Roland Schw\"anzl \cite{S}, our dear
friend and collaborator, who started off with
us on this project but tragically succumbed to a fatal illness at an early stage of its development.

\textit{Acknowledgment:} We are indebted to Clemens Berger for a number of clarifying conversations and 
for pointing out some errors in an earlier version of this paper, and to the referee for helpful comments
on its organization.

\section{$E_n$ structures and $A_\infty$ structures}\label{sec2} 
$E_n$ structures are closely related to the algebraic structure of an
$n$-fold loop space. They are best described using operads.

\begin{defi}\label{2_1}
Let $\scS$ be a symmetric monoidal category with multiplication $\times$.
An \textit{operad} $\mathcal{B}$ in $\scS$ is a collection $\{\mathcal{B}(k)\}_{k\ge 0}$ of
objects equipped with symmetric group actions
$\mathcal{B}(k)\times \Sigma_k\to\mathcal{B}(k)$, composition maps
$$
\mathcal{B}(k)\times(\mathcal{B}(j_1)\times\ldots\times \mathcal{B}(j_k))
\to \mathcal{B}(j_1+\ldots+j_k),
$$
and a unit $id\in\mathcal{B}(1)$ satisfying the appropriate equivariance,
associativity and unitality conditions - see \cite{May1} for details.
\end{defi}
\begin{rema}\label{2_1a}
Throughout this paper $\scS$ will be $\Cat$, the category of small categories,
$\Sets$, the category of sets,
$\SSets$, the category of simplicial sets, or $\Top$, the category of $k$-spaces and continuous maps.\\
In all three cases the symmetric monoidal structure is given by the product and the categories are complete and cocomplete.
Moreover, they are self-enriched and the product distributes over the coproduct. In particular, we can define the 
$\scS$-endomorphism operad $\mathcal{E}_X$ of an object $X$ in $\scS$ by $\mathcal{E}_X(n)=\scS(X^n,X)$ with the obvious $\Sigma_n$-action and the
obvious composition maps and unit.
\end{rema}

\begin{defi}\label{2_1b}
A topological operad will be called \textit{well-pointed} if  $\{id\}\subset\mathcal{B}(1)$ is a
closed cofibration. 
\end{defi}

\begin{leer}\label{2_2}
While (\ref{2_1}) is the most common definition of an operad, it is often
helpful to think of it in the following equivalent way, which is the original
version from \cite{BV1}. An operad $\mathcal{B}$ in a symmetric monoidal category
$\scS$ of Remark \ref{2_1a} is an $\scS$ enriched symmetric monoidal category $(\mathcal{B},\oplus,0)$ such  that 
\begin{enumerate}
\item[(i)] $ob\:\mathcal{B}=\mathbb{N}$ and $m\oplus n=m+n$
\item[(ii)] $\oplus$ is a strictly associative $\scS$-functor with strict unit 0
\item[(iii)] $\coprod\limits_{r_1+\ldots+r_n=k}\mathcal{B}(r_1,1)
\times\ldots\times \mathcal{B}(r_n,1)\times_{\Sigma_{r_1}
\times\ldots\times\Sigma_{r_n}} \Sigma_k\to\mathcal{B}(k,n)$
$$
\xymatrix{
((f_1,\ldots,f_n),\tau) \ar@{|->}[rr] 
&& (f_1\oplus\ldots\oplus f_n)\circ\tau
}$$
is an isomorphism in $\scS$. 
\end{enumerate}
In the topological case ``well-pointed'' translates to the assumption that $\{ id \}\subset \mathcal{B}(1,1)$
is a closed cofibration.

Using (iii),
each operad in the sense of  (\ref{2_1}) determines a category as in
(\ref{2_2}). Conversely, given a category as in (\ref{2_2}) we obtain an
operad by taking the collection $\{\mathcal{B}(k,1)\}_{k\ge 0}$.
\end{leer}

For some inductive arguments we will use the following
blown-up version of (\ref{2_2}):

\begin{leer}\label{2_2a}
Each operad $\mathcal{B}$ gives rise to an
$\scS$ enriched symmetric monoidal category $(\mathcal{B},\oplus,0)$ defined
by
\begin{enumerate}
\item[(i)] $ob\:\mathcal{B}=$ \{totally ordered finite sets\} and 
$S\oplus T=S\amalg T$, the ordered disjoint union.
\item[(ii)] $\oplus$ is a strictly associative $\mathcal{S}$-functor with strict unit 
$0=\emptyset$
\item[(iii)] $\mathcal{B}(S,\{t\})=\mathcal{B}(|S|)$, where $|S|$ is the
cardinality of $S$.
\item[(iv)] $\coprod\limits_{S_1\amalg\ldots\amalg S_n=U}
\mathcal{B}(S_1,\{t_1\})
\times\ldots\times \mathcal{B}(S_n,,\{t_n\})\times_{\Sigma(S_1)
\times\ldots\times\Sigma(S_n)} \Sigma(U)\to\mathcal{B}(U,T)$
$$
\xymatrix{
((f_1,\ldots,f_n),\tau) \ar@{|->}[rr] 
&& (f_1\oplus\ldots\oplus f_n)\circ\tau
}$$
is an isomorphism in $\mathcal{S}$, where $\Sigma(U)$ is the permutation group of (the underlying
set of) U and $T=\{t_1<\ldots<t_n\}$.
\end{enumerate}
\end{leer}
We will also find it convenient to use this blown-up version in operad notation; e.g.
$\mathcal{B}(S)$ stands for $\mathcal{B}(|S|)$.

\begin{defi}\label{2_3}
Let $\mathcal{B}$ and $\mathcal{C}$ be  $\scS$-operads.
\begin{enumerate}
\item[(1)] $\mathcal{B}$ is called $\Sigma$\textit{-free} if the $\Sigma_n$-action on $\mathcal{B}(n)$
is free for each $n$ in the cases $\scS=\Cat$, $\Sets$, or $\SSets$. If $\scS=\Top$ we require that
$\mathcal{B}(n)\to\mathcal{B}(n)/\Sigma_n$ is a numerable principal
$\Sigma_n$-bundle for each $n$.
\item[(2)] An \textit{operad map} $\mathcal{B}\to\mathcal{C}$ is a collection of
equivariant maps $\mathcal{B}(n)\to\mathcal{C}(n)$ in $\scS$, compatible with the
operad structure.
\item[(3)] A $\mathcal{B}$\textit{-structure} on an object $X$ in $\scS$ is an operad map
$\mathcal{B}\to\mathcal{E}_X$ into the endomorphism operad $\mathcal{E}_X$
of $X$. We say that $\mathcal{B}$ \textit{acts on} $X$, or that $X$ is a
$\mathcal{B}$-algebra; if $\scS=\Top$ we also call $X$ a $\mathcal{B}$-space. 
\item[(4)] An operad map is called a \textit{weak equivalence} if each map
$\mathcal{B}(n)\to\mathcal{C}(n)$ is an equivariant homotopy equivalence (in $\Cat$ or $\SSets$ this means that
each map is an equivariant homotopy equivalence after applying the classifying space functor, respectively the topological
realization functor). 
\item[(5)] Two operads are called \textit{equivalent} if there is a finite chain of weak
equivalences connecting them.
\item[(6)] A topological operad is called $E_n$ if it is equivalent to the little $n$-cubes
operad~$\mathcal{C}_n$. In particular, it is $\Sigma$-free, because $\mathcal{C}_n$ is $\Sigma$-free.
\item[(7)] An $A_\infty$  operad is another term for an $E_1$ operad.
\item[(8)] An operad $\scB$ is called \textit{reduced} if $\scB(0)=\{0\}$. We denote
the categories of operads and reduced operads by $\mathcal{O}pr$ and
$\mathcal{O}pr_0$ respectively. Here we use the same notation for each of our categories $\scS$; it will be clear
from the context which $\scS$ we mean.
\end{enumerate}
\end{defi}

Recall that $\mathcal{C}_n(k)$ is the space of $k$-fold configurations
of subcubes of the unit cube $I^n$, whose axes are parallel to those of
$I^n$ and whose interiors are disjoint. Any $n$-fold loop space $\Omega^nY$
has a natural action by this operad. Conversely, each path-connected space with a
$\mathcal{C}_n$-structure is of the weak homotopy type of an $n$-fold loop
space (c.f. \cite{BV1}, \cite{BV2}, and \cite{May1} for details).

\begin{rema}\label{2_3b}
As pointed out in the introduction our notion of $E_n$ operads is not shared by all authors dealing with operads.
Our basis is Str{\o}m's model structure on $\Top$ \cite{Strom}, while most authors prefer the cofibrantly generated Quillen
model structure. We will call our set-up the \textit{Str{\o}m environment} and - if not stated otherwise -
we will work in this environment. \\
In the \textit{Quillen environment}
the weak equivalences are operad maps $f:\scB\to \scC$ such that each map $f(n):\scB(n)\to \scC(n)$ is a weak
homotopy equivalence of spaces; an operad $\scB$ is $\Sigma$-free if for each $n$ the right action of $\Sigma_n$ on
 $\scB(n)$ is free; and $\scB$ is called $E_n$ if there is a finite chain of Quillen weak equivalences of $\Sigma$-free
operads joining $\scB$ with $\scC_n$. In particular, each $E_n$ operad in the Str{\o}m environment is $E_n$ in the
Quillen environment.
\end{rema}

\begin{defi}\label{2_3a} Let $S$ and $T=\{t_1,\ldots,t_n\}$ be totally ordered finite sets
and let $\scB$ be a reduced operad.
 We define \textit{restriction maps}
$$ -\cap S:\scB(T)\to \scB(S\cap T)$$
by composing with $(\varepsilon_1\oplus\ldots\oplus \varepsilon_n)$, where 
$\varepsilon_i=id$ if $t_i\in S$ and is $0$ otherwise. We will also use
this notation in related situations like products, sums etc. of
$\scB(T)$'s.
\end{defi}

We will also make use of the operads $\Ass$ and $\Com$ which encode the
structures of a monoid and a commutative monoid respectively. By
definition, $\Ass(n)=\Sigma_n$, where $\sigma\in\Sigma_n$ stands for the
operation
$$
(x_1,\ldots,x_n) \mapsto x_{\sigma^{-1}(1)}\cdot\ldots\cdot x_{\sigma^{-1}(n)}
$$
in a monoid. From this the operad data for $\Ass$ can be deduced.\\
$\Com(n)=\{\lambda_n\}$ is a single point. Here $\lambda_n$ stands for the
operation 
$$
\xymatrix{
(x_1,\ldots,x_n) \ar@{|->}[r]
& x_1+\ldots+x_n.
}
$$

An $n$-fold loop space $\Omega^nY$ has $n$ interchanging loop space
structures. Since $\mathcal{C}_n$ acts naturally on $\Omega^nY$, these $n$
interchanging structures should somehow be encoded in
$\mathcal{C}_n$. Before we can make this precise we need to formally define
the notion of interchange.

\section{Interchange}\label{sec3}
Since this paper is about interchange of topological operad structures the
operads in this section will be topological operads. The interested reader
can easily make the necessary adjustments for our other categories $\scS$.

Let $X$ be a space with actions of operads $\mathcal{B}$ and $\mathcal{C}$.
Then the coproduct $\mathcal{B}\amalg\mathcal{C}$ of $\mathcal{B}$
and $\mathcal{C}$ in the category of operads acts on $X$.

\begin{defi}\label{3_1}
We say that the $\mathcal{B}$- and $\mathcal{C}$-actions on $X$ \textit{interchange}
if each operation $\beta:X^k\to X$, $\beta\in\mathcal{B}(k)$, in the
$\mathcal{B}$-structure is a homomorphism of $\mathcal{C}$-spaces, and vice
versa. Explicitly, this means that for each $\beta\in\mathcal{B}(k)$ and
each $\gamma\in\mathcal{C}(l)$ the square 
$$
\xymatrix@=40pt{
(X^k)^l \ar[r]^{\tau_{k,l}}_{\cong} \ar[d]^{\beta^l} 
& (X^l)^k \ar[rr]^{\gamma^k} 
&& X^k \ar[d]^\beta
\\
X^l \ar[rrr]^\gamma &&& X
}
$$ 
commutes, where $\tau_{kl}\in\Sigma_{k\cdot l}$ is the permutation which
reorders the coordinates of $X^{k\cdot l}$ from lexicographical to reserve
lexicographical order.
\end{defi}

Note that the two composites $\beta\circ\gamma^k\circ\tau_{k,l}$ and
$\gamma\circ \beta^l$ are elements in
$(\mathcal{B}\amalg\mathcal{C})(k\cdot l)$. If $k>0$ and $l>0$, we may
interpret the interchange condition as follows.  Given a $k\times l$ array 
$\{x_{ij}\}_{i=1,j=1}^{i=k,j=l}$ of elements
of $X$, we can apply $\beta$ to the columns of the array and then $\gamma$
to the resulting products.  Alternatively we can apply $\gamma$ to each row of
the array, then $\beta$ to the resulting products. The interchange condition
states that we obtain the same final result either way.

\begin{defi}\label{3_2}
The \textit{tensor product} $\mathcal{B}\otimes\mathcal{C}$ of operads $\mathcal{B}$
and $\mathcal{C}$ is obtained from the coproduct
$\mathcal{B}\amalg\mathcal{C}$ by factoring out the interchange relation
(\ref{3_1}).
\end{defi}

A more detailed description of $\mathcal{B}\amalg\mathcal{C}$ and
$\mathcal{B}\otimes\mathcal{C}$  will be given in the beginning of Section 5.

\begin{rema}\label{3_2a} In the case when $\beta\in\mathcal{B}(1)$,
the interchange relation implies
$$\beta\circ\gamma=\gamma\circ\beta^l.$$
This relation and the dual relation when $\gamma\in\mathcal{C}(1)$ are
called \textit{unary interchanges} and will
play an important role in our analysis of the tensor product of operads.
More generally we shall refer to interchanges involving $\beta\in\mathcal{B}(k)$
and $\gamma\in\mathcal{C}(l)$ as $(k,l)$-\textit{interchanges}.
\end{rema}

By definition, a space $X$ admits a $\mathcal{B}$-structure interchanging
with a $\mathcal{C}$-struc\-ture iff it admits a
$(\mathcal{B}\otimes\mathcal{C})$-structure.

The following two results about tensor products of operads are extant in the
literature:

\begin{prop}\label{3_3} (Dunn \cite{Dunn})
There is a canonical weak equivalence of operads
$$
\xymatrix{
\mathcal{C}_1\otimes \ldots\otimes \mathcal{C}_1\ar[r] &\mathcal{C}_n
& (n \textrm{  tensor factors})
}
$$
So we ``recover'' the $n$ interchanging loop space structures in 
$\mathcal{C}_n$. 
\end{prop}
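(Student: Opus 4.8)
The plan is to write down the canonical map explicitly and then prove it is a weak equivalence by a recognition-principle argument.

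First I would construct, for each coordinate direction $i\in\{1,\dots,n\}$, an operad map $\iota_i\colon\mathcal{C}_1\to\mathcal{C}_n$ by ``thickening'' intervals: a little interval $c\subseteq I$ is sent to the little $n$-cube that equals $c$ in the $i$-th factor and all of $I$ in every other factor, and a configuration $\langle c_1,\dots,c_k\rangle\in\mathcal{C}_1(k)$ to the corresponding configuration of thickened cubes. Disjointness of the interiors of the $c_j$ gives disjointness of the interiors of the thickened cubes, and compatibility with the operad composition and with the symmetric group actions is immediate, so each $\iota_i$ is a map of operads.

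Next I would check that for $i\neq i'$ the images of $\iota_i$ and $\iota_{i'}$ interchange inside $\mathcal{C}_n$ in the sense of Definition \ref{3_1}: for $\beta\in\mathcal{C}_1(k)$ and $\gamma\in\mathcal{C}_1(l)$ the two composites $\iota_i(\beta)\circ\iota_{i'}(\gamma)^k\circ\tau_{k,l}$ and $\iota_{i'}(\gamma)\circ\iota_i(\beta)^l$ name the same configuration of $k\cdot l$ little $n$-cubes, namely the one whose $(a,b)$-th cube is the $a$-th subinterval of $\beta$ in direction $i$, the $b$-th subinterval of $\gamma$ in direction $i'$, and full in every other direction, since affine rescalings in orthogonal coordinates commute. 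By the universal property of the tensor product (Definition \ref{3_2}) the maps $\iota_1,\dots,\iota_n$ therefore assemble into a single operad map
$$
\Phi\colon \mathcal{C}_1\otimes\cdots\otimes\mathcal{C}_1\longrightarrow\mathcal{C}_n ,
$$
which is the asserted canonical map. Since $\mathcal{C}_n$ is $\Sigma$-free and $\Phi$ is equivariant, the source is $\Sigma$-free as well: if $g\in\Sigma_k$ fixes $x$ in the source it fixes $\Phi(x)$, forcing $g$ to be the identity.

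It remains to show that $\Phi$ is a weak equivalence, and this I would do on the level of algebras. For a path-connected space $X$, let $\mathbb{D}$ and $\mathbb{C}_n$ denote the monads associated to $\mathcal{C}_1\otimes\cdots\otimes\mathcal{C}_1$ and to $\mathcal{C}_n$; then $\Phi$ induces a natural transformation $\mathbb{D}X\to\mathbb{C}_nX$ lying over the canonical maps to $\Omega^n\Sigma^nX$. The right-hand map $\mathbb{C}_nX\to\Omega^n\Sigma^nX$ is a group completion, hence a weak equivalence for connected $X$. On the other hand a $\mathbb{D}$-algebra is a space carrying $n$ pairwise interchanging $E_1$-structures, and the iterated classifying-space construction together with the delooping machinery of Boardman--Vogt (as carried out in this context by Dunn) shows that a grouplike such space is weakly equivalent to an $n$-fold loop space; applied to $\mathbb{D}X$ this identifies $\mathbb{D}X\to\Omega^n\Sigma^nX$ as a weak equivalence for connected $X$ as well. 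Hence $\Phi$ induces a weak equivalence $\mathbb{D}X\xrightarrow{\ \simeq\ }\mathbb{C}_nX$ for every connected $X$. Finally, since both operads are $\Sigma$-free the free-algebra functors are filtered with subquotients of the form $\mathcal{O}(k)\times_{\Sigma_k}X^{\wedge k}$; taking $X$ to be a high-dimensional wedge of spheres and comparing these subquotients recovers that $(\mathcal{C}_1\otimes\cdots\otimes\mathcal{C}_1)(k)\to\mathcal{C}_n(k)$ is a $\Sigma_k$-equivariant weak equivalence for every $k$, which is the claim.

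I expect the last step --- passing from the algebra-level equivalence back to the operad-level statement --- to be the main obstacle: it is exactly here that $\Sigma$-freeness of the source is indispensable, for without it the homotopy type of $(\mathcal{C}_1\otimes\cdots\otimes\mathcal{C}_1)(k)$ cannot be read off from the monad, and a direct attack on that homotopy type would amount to solving precisely the word problem the introduction warns about. A secondary point demanding care is making the recognition statement for spaces with $n$ interchanging $E_1$-structures fully rigorous (the grouplikeness hypotheses and the naturality of the successive deloopings), which is a genuine theorem rather than a formality.
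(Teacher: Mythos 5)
The paper does not prove this proposition; it is quoted from Dunn's paper \cite{Dunn}, whose argument is a direct attack on the word problem: the image of the canonical map is identified with the suboperad of ``decomposable'' little $n$-cubes, the map onto that image is shown to be an isomorphism by a normal-form analysis of the interchange relations, and the inclusion of the decomposable cubes into all little $n$-cubes is shown to be a $\Sigma_k$-equivariant deformation retraction. Your construction of the canonical map is correct and is indeed the map in question: the thickening maps $\iota_i$ are operad maps, their images interchange strictly in $\mathcal{C}_n$ for $i\neq i'$, and $\Phi$ then exists by the universal property of $\otimes$.

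The argument that $\Phi$ is a weak equivalence, however, has a genuine gap at the step you yourself flag, and it is not merely a technical obstacle. Knowing that every grouplike space with $n$ interchanging $E_1$-structures is an $n$-fold loop space does \emph{not} identify the $n$-fold delooping of the free algebra $\mathbb{D}X$ with $\Sigma^nX$, and hence does not show that $\mathbb{D}X\to\Omega^n\Sigma^nX$ is an equivalence. The operad $\Ass\otimes\dots\otimes\Ass$ is a counterexample to the principle you invoke: each factor is $E_1$, the factors interchange by construction of the tensor product, and grouplike algebras are indeed $n$-fold (even infinite) loop spaces; yet by Corollary \ref{3_7} this tensor product is $\Com$, so its free algebra on $X$ is the infinite symmetric product $SP^\infty(X)\simeq\prod_m K(\widetilde{H}_m(X),m)$, whose map to $\Omega^n\Sigma^nX$ is far from an equivalence. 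What distinguishes $\mathcal{C}_1^{\otimes n}$ from $\Ass^{\otimes n}$ is precisely how much the interchange relations collapse the coproduct --- the word problem that Section \ref{sec3} warns about and that Dunn solves by exhibiting normal forms. A purely formal recognition-principle argument cannot see the difference between the two cases, so it cannot prove the proposition. (A secondary, in principle fixable, issue: even granting $\mathbb{D}X\simeq\mathbb{C}_nX$ for all connected $X$, the filtration-quotient comparison yields at best $\Sigma_k$-equivariant homology equivalences $\mathcal{C}_1^{\otimes n}(k)\to\mathcal{C}_n(k)$; since $\mathcal{C}_n(k)$ is not simply connected for $n\le 2$, promoting these to weak equivalences requires a separate $\pi_1$ argument.)
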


\begin{prop}\label{3_3a}(\cite{BFV}) The operad $\Ass\otimes\mathcal{C}_k$
is equivalent to $\mathcal{C}_{k+1}$\end{prop}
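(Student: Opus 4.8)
The statement to prove is Proposition~\ref{3_3a}: $\Ass\otimes\mathcal{C}_k$ is equivalent to $\mathcal{C}_{k+1}$. The plan is to exhibit $\Ass\otimes\mathcal{C}_k$ as an $E_{k+1}$ operad by bracketing it between two maps of operads whose targets and sources are known to be $E_{k+1}$, using the main comparison machinery of the paper. First I would observe that since $\Ass$ is an $A_\infty$ operad (indeed $\Ass = \mathcal{C}_0 \times \Sigma$ up to the usual identification, and it is an $E_1$ operad via a standard weak equivalence $W\Ass \to \mathcal{C}_1$), Theorem~4.3 applied to a cofibrant $E_1$ operad $\scB$ and a cofibrant $E_k$ operad $\scC$ gives that $\scB \otimes \scC$ is $E_{1+k}$. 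So the real work is to transfer this conclusion from the cofibrant pair $(\scB,\scC)$ to the specific uncofibrant pair $(\Ass, \mathcal{C}_k)$.

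The transfer step is where the argument has to be careful. I would take a cofibrant $E_1$ operad $\scB$ with a weak equivalence $\scB \to \Ass$ and a cofibrant $E_k$ operad $\scC$ with a weak equivalence $\scC \to \mathcal{C}_k$ (both exist, e.g.\ via the $W$-construction: $\scB = W\Ass$, $\scC = W\mathcal{C}_k$). One then wants the induced map $\scB \otimes \scC \to \Ass \otimes \mathcal{C}_k$ to be a weak equivalence. This is the heart of the matter and in general is \emph{false} without hypotheses --- the tensor product is not homotopy invariant in either variable for arbitrary operads, which is precisely the ``tricky word problem'' emphasized in the introduction. Here, though, the special structure of $\Ass$ helps: $\Ass(n) = \Sigma_n$ is discrete and $\Sigma$-free, and in fact $\Ass$ is the terminal reduced operad among those with free discrete components in the appropriate sense, which should make $-\otimes\mathcal{C}_k$ well-behaved when the $\Ass$-factor is replaced by its cofibrant model. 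Concretely I would analyze $(\scB\otimes\scC)(m)$ via the coproduct-then-quotient description promised at the start of Section~5: elements are represented by trees with nodes alternately labeled by $\scB$- and $\scC$-operations, modulo the interchange relations. Because $\Ass$-operations are just permutations, the interchange relations with $\Ass$ collapse strings of $\Ass$-nodes, and the ``word problem'' for $\Ass\otimes\mathcal{C}_k$ reduces to a manageable normal form; one shows the comparison map realizes a deformation retraction level by level, using cofibrancy of $\scB$ and $\scC$ to do the homotopies equivariantly and compatibly with the filtration by tree complexity.

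Finally, assembling: we have $\scB\otimes\scC \xrightarrow{\ \sim\ } \Ass\otimes\mathcal{C}_k$ and, by Theorem~4.3, $\scB\otimes\scC$ is $E_{k+1}$; hence $\Ass\otimes\mathcal{C}_k$ is equivalent to $\mathcal{C}_{k+1}$ as claimed. (Alternatively one could cite Dunn's Proposition~\ref{3_3}: write $\mathcal{C}_{k+1} \simeq \mathcal{C}_1\otimes\mathcal{C}_k \simeq \Ass\otimes\mathcal{C}_k$ using $\mathcal{C}_1\simeq\Ass$ together with the same homotopy-invariance input --- so the only genuinely new ingredient beyond the literature is again the invariance of $-\otimes\mathcal{C}_k$ under the weak equivalence $\scB\to\Ass$.) The main obstacle, then, is not the ``$E_{k+l}$'' conclusion itself --- that is Theorem~4.3 --- but verifying that passing from cofibrant models to $\Ass$ and $\mathcal{C}_k$ does not change the homotopy type of the tensor product; this requires the explicit normal-form analysis of words in $\Ass\otimes\mathcal{C}_k$, which is exactly the kind of binary-and-unary word-problem computation the paper sets up in Section~6.
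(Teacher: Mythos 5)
There is a genuine gap, and it sits exactly where you flag it yourself: the claim that the comparison map $W\Ass\otimes W\mathcal{C}_k\to\Ass\otimes\mathcal{C}_k$ is a weak equivalence. You correctly observe that $-\otimes-$ is not homotopy invariant, but you then assert the needed invariance "because $\Ass(n)=\Sigma_n$ is discrete" and promise a level-by-level deformation retraction without constructing it. This cannot be repaired by a soft or formal argument, because the analogous statement is \emph{false} for other $E_k$ operads in place of $\mathcal{C}_k$: by Proposition \ref{3_6}, $\Ass\otimes|\mathcal{NM}_k|\cong\Com$, even though $|\mathcal{NM}_k|$ is $E_k$, while $W\Ass\otimes W|\mathcal{NM}_k|$ is $E_{k+1}$ by Theorem \ref{4_4}. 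Indeed Proposition \ref{7_3} shows that $(\Ass\otimes\mathcal{B})(n)\cong(\Sigma_n\times\mathcal{B}(1)^n)/\!\sim$, where the identifications are governed entirely by the image of the axial map $\mathcal{B}(2)\to\mathcal{B}(1)^2$; so whether $\Ass\otimes\mathcal{C}_k$ has the right homotopy type is a concrete geometric question about how $\mathcal{C}_k(2)$ sits inside $\mathcal{C}_k(1)^2$, not a consequence of $\mathcal{C}_k$ being $E_k$. Any proof must therefore use specific features of the little cubes operad, which your normal-form sketch never engages with; the reduction to Theorem \ref{4_4} does not, by itself, produce the proposition.

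For comparison: the paper offers no proof at all — the proposition is quoted from \cite{BFV}, where the argument proceeds via the explicit description of $\Ass\otimes\mathcal{B}$ recalled here as Proposition \ref{7_3} (originally \cite{BV3}) and then a direct homotopy-theoretic comparison of $(\Ass\times RU(\mathcal{C}_k))/\!\sim$ with $\mathcal{C}_{k+1}$. So the "normal form for words in $\Ass\otimes\mathcal{C}_k$" you allude to does exist and is the right starting point, but the substance of the proof is the subsequent identification of that quotient with an $E_{k+1}$ operad, which your proposal leaves entirely open. (A small side remark: your parenthetical alternative via Dunn's $\mathcal{C}_1\otimes\mathcal{C}_k\simeq\mathcal{C}_{k+1}$ together with $\mathcal{C}_1\simeq\Ass$ runs into exactly the same invariance problem, so it is not an independent route.)
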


In view of these two results, we might conjecture that:

\begin{nconj}\label{3_4}
If $\mathcal{B}$ is $E_k$ and $\mathcal{C}$ is $E_l$, then
$\mathcal{B}\otimes\mathcal{C}$ is $E_{k+l}$.
\end{nconj}

Unfortunately, the situation is not that simple: The functor
$\mathcal{B}\otimes$- does not preserve weak equivalences, and the
structure of $\mathcal{B}\otimes\mathcal{C}$ is anything but clear.
In general, one has to solve a substantial word problem. For instance
unary operations in $\mathcal{B}(1)$ and $\mathcal{C}(1)$ may be factored
in many different ways and these factors may then be redistributed in a
complicated way using the unary interchange relations mentioned in Remark \ref{3_2a}.
The following examples illustrate some rather surprising consequences
that these and other interchange relation may imply.

\begin{prop}\label{3_5}(\cite[Lemma 2.23]{BV2})
Let $\mathcal{B}$ and $\mathcal{C}$ be operads such that
$\mathcal{B}(0)\neq \emptyset\neq\mathcal{C}(0)$ and let 
$\mathcal{B}'$ and $\mathcal{C}'$ be their universal quotients with
exactly one nullary operation.
Then \\
(1)$(\mathcal{B}\otimes\mathcal{C})(0)$ contains exactly one element.\\
(2) $\mathcal{B}\otimes\mathcal{C}\cong \mathcal{B}'\otimes\mathcal{C}'$
\end{prop}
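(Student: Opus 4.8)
The plan is to work directly with the combinatorial description of the
coproduct $\mathcal{B}\amalg\mathcal{C}$ as words (formal composites) in
operations of $\mathcal{B}$ and $\mathcal{C}$, modulo the operad axioms, and
then to impose the interchange relation. First I would fix notation for the
nullary operations: write $b_0\in\mathcal{B}(0)$ and $c_0\in\mathcal{C}(0)$
for arbitrary nullary operations, and let $\mathcal{B}',\mathcal{C}'$ be the
universal quotients in which $\mathcal{B}(0)$, respectively $\mathcal{C}(0)$,
has been collapsed to a point. The heart of the argument is the following
observation: for any $b_0,b_0'\in\mathcal{B}(0)$ one can consider the
$(1,0)$-type instance of interchange, comparing the two ways of processing
the ``empty array''. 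Concretely, apply a nullary $\gamma=c_0\in\mathcal{C}(0)$
and a nullary $\beta=b_0\in\mathcal{B}(0)$: the interchange square for
$\beta\in\mathcal{B}(0)$ and $\gamma\in\mathcal{C}(l)$ with $l=0$, together
with its dual, should force $b_0 = b_0\circ(\text{nothing}) = c_0$ as elements
of $(\mathcal{B}\otimes\mathcal{C})(0)$; more carefully, interchanging a
nullary $b_0$ with a nullary $c_0$ identifies both with a single canonical
nullary operation in the tensor product.

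In more detail, the key step for (1) is: given $b_0\in\mathcal{B}(0)$ and
$c_0\in\mathcal{C}(0)$, the interchange relation applied to $\beta=b_0$
(a $0$-ary operation of $\mathcal{B}$) and $\gamma=c_0$ (a $0$-ary operation
of $\mathcal{C}$) yields $b_0 = c_0$ in $(\mathcal{B}\otimes\mathcal{C})(0)$,
because both composites in the interchange square reduce, via the operad unit
and the fact that $X^0$ is terminal, to the same map $X^0\to X$. Hence every
nullary operation of $\mathcal{B}$ is identified with every nullary operation
of $\mathcal{C}$ in the tensor product; since any nullary element of
$\mathcal{B}\amalg\mathcal{C}$ is, up to the operad relations, built from such
$b_0$'s and $c_0$'s (a composite of nullary operations with anything nullary
is again expressible through a single nullary generator after using unit
axioms), $(\mathcal{B}\otimes\mathcal{C})(0)$ collapses to one point. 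I would
then verify that the same identification is precisely what defines the passage
from $\mathcal{B}$ to $\mathcal{B}'$ and from $\mathcal{C}$ to $\mathcal{C}'$:
collapsing $\mathcal{B}(0)$ to a point only adds the relations $b_0=b_0'$ for
$b_0,b_0'\in\mathcal{B}(0)$, and these already hold in
$\mathcal{B}\otimes\mathcal{C}$ by the previous step. This gives a canonical
map $\mathcal{B}'\otimes\mathcal{C}'\to\mathcal{B}\otimes\mathcal{C}$, and
conversely the quotient maps $\mathcal{B}\to\mathcal{B}'$,
$\mathcal{C}\to\mathcal{C}'$ induce
$\mathcal{B}\otimes\mathcal{C}\to\mathcal{B}'\otimes\mathcal{C}'$; checking
these are mutually inverse is then a diagram chase using the universal
properties of the coproduct, the quotient, and the tensor product, proving (2).

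The step I expect to be the main obstacle is making the ``empty array''
interchange argument fully rigorous. The interchange square in
Definition~\ref{3_1} is stated for $\beta\in\mathcal{B}(k)$ and
$\gamma\in\mathcal{C}(l)$ with the permutation $\tau_{k,l}$, and one must
carefully interpret it (and track the relations it generates in
$\mathcal{B}\amalg\mathcal{C}$, not just in a particular algebra $X$) in the
degenerate cases $k=0$ or $l=0$, where $X^0$ is a point and several of the
maps become canonical. One subtlety is that the interchange relation as
written involves the operations $\beta^l$ and $\gamma^k$, which for $k=0$ or
$l=0$ are identities or constants, so one has to unwind exactly which equality
of elements of $(\mathcal{B}\amalg\mathcal{C})(0)$ is being asserted; the
cleanest route is to phrase everything via the monoidal-category description
of Section~\ref{sec2} (\ref{2_2}), where nullary operations are morphisms
$0\to 1$ and the interchange relation becomes a statement about such morphisms
that is easy to manipulate. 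Once this degenerate bookkeeping is in place, the
remaining verifications are routine applications of universal properties.
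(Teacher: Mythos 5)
Your proposal is correct and follows essentially the same route as the paper: the paper's proof is exactly the degenerate $(0,0)$-interchange square (with $\tau_{0,0}=\beta^0=\gamma^0=id$ and $X^0=\ast$) forcing $\beta=\gamma$ for any $\beta\in\mathcal{B}(0)$, $\gamma\in\mathcal{C}(0)$, with (2) then an immediate consequence. Your extra care about reducing an arbitrary nullary element of $\mathcal{B}\amalg\mathcal{C}$ (a tree topped by stumps) to a single nullary generator is a detail the paper elides but is handled exactly as you suggest.
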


\begin{proof}:
Given $\beta \in\mathcal{B}(0)$ and $\gamma\in\mathcal{C}(0)$, the
interchange relation gives
$$
\xymatrix@=40pt{
\ast=(X^0)^0 \ar[r]^(.55){\tau_{0,0}=id} \ar[d]^{\beta^0=id}
& (X^0)^0 \ar[rr]^{\gamma^0=id} 
&& X^0=\ast \ar[d]^\beta
\\
\ast=X^0 \ar[rrr]^\gamma &&& X
}
$$
Hence $\beta=\gamma$ in $\mathcal{B}\otimes\mathcal{C}$.

The second part is an immediate consequence of the first part.
\end{proof}

\begin{prop}\label{3_6}
Let $\mathcal{B}$ and $\mathcal{C}$ be operads such that
$\mathcal{B}(1)=\{id\}$, $\mathcal{C}(1)=\{id\}$ and $\mathcal{B}(0)$,
$\mathcal{B}(2)$, $\mathcal{C}(0)$, $\mathcal{C}(2)$ are not empty. Then
$\mathcal{B}\otimes\mathcal{C}\cong \Com$.
\end{prop}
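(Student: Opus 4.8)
The plan is to run the classical Eckmann--Hilton argument directly at the level of the operad $\scD:=\scB\otimes\scC$ and then propagate the resulting collapse to all arities by an induction on arity. The outcome will be that every operation of $\scD$ equals an iterated product of a single commutative, associative, unital binary operation, forcing $\scD\cong\Com$.

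\emph{Producing a common unit.} First I would apply Proposition~\ref{3_5}: since $\scB(0)$ and $\scC(0)$ are nonempty, $\scD(0)$ consists of a single element $0$, to which every nullary operation of $\scB$ and of $\scC$ maps. The key point is that for any $\beta\in\scB(2)$ and any $\beta_0\in\scB(0)$ the composites $\beta\circ(\beta_0\oplus id)$ and $\beta\circ(id\oplus\beta_0)$ lie in $\scB(1)=\{id\}$, hence are $id$; in $\scD$ this says that $0$ is a two-sided unit for the image of $\beta$, and the same holds for every $\gamma\in\scC(2)$. So in $\scD$ we have binary operations coming from $\scB$ and from $\scC$, all with the same unit $0$.

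\emph{Eckmann--Hilton.} For $\beta\in\scB(2)$ and $\gamma\in\scC(2)$ the $(2,2)$-interchange relation of $\scD$ (Definition~\ref{3_1}) reads $\beta(\gamma(a,b),\gamma(c,d))=\gamma(\beta(a,c),\beta(b,d))$. Substituting $b=c=0$ and using the unit relations just established gives $\beta(a,d)=\gamma(a,d)$, so every binary operation of $\scB$ and of $\scC$ becomes one and the same operation $\mu\in\scD(2)$. Feeding $\mu=\beta=\gamma$ back into the same relation and substituting $a=d=0$ (resp. $b=0$) yields commutativity (resp. associativity) of $\mu$; unitality is already in hand. Writing $\mu_n\in\scD(n)$ for the iterated product ($\mu_0=0$, $\mu_1=id$, $\mu_{n+1}=\mu\circ(\mu_n\oplus id)$), associativity and commutativity make $\mu_n$ well defined and $\Sigma_n$-invariant, and the $\mu_n$ are closed under the operad structure: $\mu_k\circ(\mu_{j_1}\oplus\dots\oplus\mu_{j_k})=\mu_{j_1+\dots+j_k}$ and $\mu_n\cdot\sigma=\mu_n$. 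Next I would show by induction on $n$ that every $\alpha\in\scB(n)$ equals $\mu_n$ in $\scD$: the cases $n=0,1$ are $\scD(0)=\{0\}$ and $\scB(1)=\{id\}$, and for $n\ge2$ one applies the $(n,2)$-interchange relation between $\alpha$ and a chosen $\gamma\in\scC(2)$,
\[ \alpha\bigl(\gamma(a_1,b_1),\dots,\gamma(a_n,b_n)\bigr)=\gamma\bigl(\alpha(a_1,\dots,a_n),\alpha(b_1,\dots,b_n)\bigr), \]
substituting $b_1=\dots=b_{n-1}=0$ and $a_n=0$. The unit relations collapse the left side to $\alpha(a_1,\dots,a_{n-1},b_n)$; on the right $\alpha(0,\dots,0,b_n)$ is a unary $\scB$-operation, hence $b_n$, and $\alpha(a_1,\dots,a_{n-1},0)$ is an $(n-1)$-ary $\scB$-operation, hence $\mu_{n-1}(a_1,\dots,a_{n-1})$ by induction; since $\gamma=\mu$, the right side is $\mu_n(a_1,\dots,a_{n-1},b_n)$. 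By the symmetry of interchange the same argument gives $\gamma'=\mu_n$ for every $\gamma'\in\scC(n)$. Using the description of $\scB\amalg\scC$ recalled in Section~5, every element of $\scD(n)$ is obtained from operations of $\scB$ and $\scC$ by operad composition and $\Sigma$-action; all of these are among the $\mu_\bullet$, which are closed under these operations, so $\scD(n)=\{\mu_n\}$ and the unique operad map $\scD\to\Com$ is an isomorphism.

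\emph{Main obstacle.} The individual computations are routine; the step that needs real care is the bookkeeping around the tensor product itself --- justifying that substituting $0$ into an interchange square is a legitimate operadic manipulation and, above all, that $\scB\amalg\scC$ (hence $\scD$) is generated by the operations of the two factors, so that collapsing every factor operation to a $\mu_\bullet$ genuinely collapses all of $\scD$. This is exactly the word-problem analysis the paper develops in Section~5, and it is the part I would want to pin down before calling the argument complete.
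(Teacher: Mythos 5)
Your proof is correct. It does, however, take a different route from the paper's own argument for Proposition \ref{3_6}. The paper collapses everything in one shot: for $\beta\in\scB(k)$ and $\gamma\in\scC(k)$ of the \emph{same} arity it applies the single $(k,k)$-interchange to the diagonal array with $x_i$ on the diagonal and the basepoint elsewhere; since $\beta\circ(\lambda,\dots,\lambda,id,\lambda,\dots,\lambda)\in\scB(1)=\{id\}$, both orders of evaluation reduce to $\gamma(x_1,\dots,x_k)$ and $\beta(x_1,\dots,x_k)$ respectively, giving $\beta=\gamma$ for every $k$ simultaneously with no induction. You instead run the Eckmann--Hilton argument at arity $2$ to produce a single commutative associative $\mu$, and then propagate to higher arities by induction using $(n,2)$-interchanges with strategically placed basepoints; this is essentially the bootstrapping strategy the paper itself uses later in the proof of Corollary \ref{3_7}(i), so it is a legitimate and well-precedented alternative. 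What the paper's diagonal trick buys is brevity; what your version buys is that it only ever invokes interchanges against a \emph{binary} operation of the other factor, and it makes explicit the commutative-monoid structure $\{\mu_n\}$ and its closure under composition and the symmetric group action, steps the paper leaves implicit. The issue you flag at the end --- that $\scB\amalg\scC$, hence $\scB\otimes\scC$, is generated under operad composition and the $\Sigma$-action by the images of the two factors --- is indeed the remaining ingredient, but it is immediate from the tree description of the coproduct in Section 5 (the coproduct is by construction the free product on the two factors and the tensor product is a quotient of it), and the paper's own proof relies on exactly the same fact without comment; so this is not a gap in your argument.
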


\begin{proof}
Since $\mathcal{B}(2)$ and $\mathcal{C}(2)$ are not empty, so are 
$\mathcal{B}(k)$ and $\mathcal{C}(k)$ for $k\geq 2$.
Let $\lambda \in(\mathcal{B}\otimes\mathcal{C})(0)$ be the unique nullary
operation and let $X$ be a $(\mathcal{B}\otimes\mathcal{C})$-space. For
$\beta\in\mathcal{B}(k)$ and $\gamma\in\mathcal{C}(k)$ consider the
following $(k\times k)$-array of points in $X$, where $\ast=\lambda(\ast)$
$$
\begin{array}{cccc}
x_1  & \ast & \ldots & \ast\\
\ast & x_2  & \ldots & \ast\\
     & \ldots \\
\ast & \ast & \ldots & x_k
\end{array}
$$
Let $\beta$ act horizontally on this array and $\gamma$ vertically. By the
interchange relation, the results should be the same if we compute the
action first horizontally, then vertically or vice versa. Now
$$
\beta(*,\dots,*,x_i,*,\dots,*)=\beta\circ(\lambda,\dots,\lambda,id,\lambda,\dots,\lambda)(x_i)=x_i
$$
for $i=1,..,k$, since $\beta\circ(\lambda,\dots,\lambda,id,\lambda,\dots,\lambda)\in\scB(1)=\{id\}$.
Hence computing the action first horizontally and then vertically 
gives $\gamma(x_1,\ldots,x_k)$. Similarly computing it the other way we get
$\beta(x_1,\ldots,x_k)$, so that $\beta=\gamma$ in
$\mathcal{B}\otimes\mathcal{C}$ for all $\beta$ and $\gamma$.
\end{proof}

\begin{coro}\label{3_7}
\begin{itemize}
\item[(i)] Suppose $\mu_1,\mu_2:X^2\longrightarrow X$ are two (not necessarily
associative) $H$-space structures on a topological space $X$.  Suppose that
$\mu_1$ and $\mu_2$ both have the same strict 2-sided unit $*$ and that
$\mu_1$ and $\mu_2$ satisfy the $(2,2)$-interchange relation
$$(\mbox{E-H})\qquad\qquad \mu_1\left(\mu_2(x_1,x_2),\mu_2(x_3,x_4)\right)
= \mu_2\left(\mu_1(x_1,x_3),\mu_1(x_2,x_4)\right)$$
for all $x_1,x_2,x_3,x_4\in X$.  Then $\mu_1=\mu_2$ and this multiplication is
strictly associative and commutative.
\item[(ii)] More generally, if $\mathcal{B}(2)\times X^2\longrightarrow X$,
$\mathcal{C}(2)\times X^2\longrightarrow X$, are continuous nonempty families
of (not necessarily associative) multiplications on a topological space $X$, which
have a common 2-sided unit $*$, and which satisfy the $(2,2)$-interchange
relations $(\mbox{E-H})$ for all $\mu_1\in\mathcal{B}(2)$ and  $\mu_2\in\mathcal{C}(2)$,
then $\mathcal{B}(2)=\mathcal{C}(2)=\{\mu\}$ and $\mu$ defines a commutative
monoid structure on $X$.
\item[(iii)] $\Ass\otimes\Ass\cong 
\Ass\otimes\Com \cong \Com\otimes\Com \cong \Com $
\end{itemize}
\end{coro}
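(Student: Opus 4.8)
The plan is to deduce all three parts from Proposition 3.6, which already handles the case where both operads have only the identity as a unary operation. The point is that the hypotheses of 3.7 are exactly designed so that the relevant sub-operads generated by the given binary multiplications and the unit reduce to operads of that type. First I would treat part (i) as a special case of part (ii): a single non-associative $H$-space structure $\mu$ with strict two-sided unit $\ast$ generates an operad $\scB$ with $\scB(0)=\{\ast\}$, $\scB(1)=\{id\}$ (since the only unary operations one can build from $\mu$ and the unit $\ast$ by plugging the unit into one slot are the identity), and $\scB(2)$ nonempty; and similarly for $\mu_2$ giving $\scC$. The interchange relation (E-H) together with the shared unit says precisely that the induced $\scB$- and $\scC$-structures on $X$ interchange, so $X$ is a $(\scB\otimes\scC)$-space. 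By Proposition 3.6, $\scB\otimes\scC\cong\Com$, hence the two multiplications coincide with the single commutative, associative multiplication encoded by $\Com$.

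For part (ii) I would run the same argument, but now starting from the given nonempty continuous families $\scB(2)\times X^2\to X$ and $\scC(2)\times X^2\to X$. The subtlety is that a priori these are only the arity-$2$ parts of the endomorphism operad hit by $\scB$ and $\scC$; I would instead let $\scB'$ be the sub-operad of the free operad on a single binary operation together with a nullary operation $\lambda$ subject to $\lambda$ being a strict two-sided unit, so that $\scB'(1)=\{id\}$, $\scB'(0)=\{\lambda\}$, $\scB'(2)\neq\emptyset$; the hypothesis that $\ast$ is a common strict unit gives a map $\scB'\to\mathcal{E}_X$, and likewise $\scC'\to\mathcal{E}_X$. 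The (E-H) relations for all $\mu_1\in\scB(2)$, $\mu_2\in\scC(2)$ say these interchange (one has to check that the unary and higher interchanges follow formally from (E-H) and the unit, which is routine bookkeeping once $\scB'(1)=\scC'(1)=\{id\}$). Proposition 3.6 then forces $\scB'\otimes\scC'\cong\Com$, and pulling back along $\scB(2)\to(\scB'\otimes\scC')(2)=\{\mu\}$ and $\scC(2)\to(\scB'\otimes\scC')(2)=\{\mu\}$ shows both families are constant, equal to a single $\mu$ that defines a commutative monoid structure on $X$.

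Part (iii) is the cleanest: $\Ass$ and $\Com$ both satisfy $\scB(1)=\{id\}$ and have nonempty nullary and binary parts, so Proposition 3.6 applies directly to each of the three tensor products $\Ass\otimes\Ass$, $\Ass\otimes\Com$, $\Com\otimes\Com$, giving the common value $\Com$. The main obstacle I anticipate is not any deep step but the careful verification in part (ii) that the given \emph{families} of multiplications, which are not assumed closed under composition or equivariant a priori, genuinely assemble into (or map out of) honest sub-operads with trivial unary part — i.e. that no nontrivial unary operations sneak in — and that all higher interchange relations are consequences of (E-H) plus the strict unit; these are formal but need to be spelled out to legitimately invoke Proposition 3.6.
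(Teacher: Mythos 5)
Your overall strategy coincides with the paper's: generate suboperads $\scB$, $\scC$ from the given multiplications and the strict unit, observe that $\scB(1)=\scC(1)=\{id\}$ and $\scB(0)=\scC(0)=\{*\}$, verify that the two actions interchange, and then invoke Proposition \ref{3_6} to get $\scB\otimes\scC\cong\Com$. Parts (ii) and (iii) are handled the same way in spirit (the paper goes (i)$\Rightarrow$(ii) by letting $\mu_1,\mu_2$ range over the families, rather than (ii)$\Rightarrow$(i), but that is immaterial; your ``pulling back along $\scB(2)\to(\scB'\otimes\scC')(2)$'' should just be replaced by ``apply (i) to each pair $(\mu_1,\mu_2)$'').

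There is, however, one genuine gap: the step you defer as ``routine bookkeeping once $\scB'(1)=\scC'(1)=\{id\}$'' is precisely the mathematical content of the paper's proof, and triviality of the unary parts does \emph{not} by itself dispose of it. Interchange of the two operad actions is a condition on \emph{all} pairs $\beta\in\scB(k)$, $\gamma\in\scC(l)$, whereas (E-H) only gives the $(2,2)$-interchanges between the generators. The triviality of $\scB(1)$, $\scC(1)$ and of the nullary parts kills the unary and nullary interchanges, but the $(k,l)$-interchanges for $k>2$ or $l>2$ require a separate argument. The paper supplies it by a two-stage bootstrap: fixing $\beta_2\in\scB(2)$, the $(2,2)$-interchanges say each $\mu_2\in\scC(2)$ is a homomorphism of the $\beta_2$-structure; since composites of products of homomorphisms (and permutations) are homomorphisms, every iterate of $\mu_2$ — i.e.\ every element of $\scC(l)$ — is one too, giving all $(2,l)$-interchanges; then fixing $\gamma_l\in\scC(l)$ and repeating the argument with the roles reversed gives all $(k,l)$-interchanges. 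Without this (or an equivalent argument), you are not entitled to the hypothesis that $X$ is a $(\scB\otimes\scC)$-space, and Proposition \ref{3_6} cannot be applied. So your proposal identifies the right reduction but asserts, rather than proves, the one step that carries the weight.
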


Corollary \ref{3_7}(i) with the additional hypotheses that $\mu_1$ and $\mu_2$
are both associative is well known to topologists as Eckmann-Hilton interchange \cite{EH}.
Less known is the fact that the associativity hypotheses are superfluous.

\begin{proof}
We first prove (i).  Let $\mathcal{B}$ be the operad generated by $\mu_1$ and the 2-sided unit.
Specifically $\mathcal{B}(0)=\{*\}$, $\mathcal{B}(1)=\{id\}$, $\mathcal{B}(2)=\{\mu_1,\mu_1\circ(1\ 2)\}$,
and for $k>2$, $\mathcal{B}(k)$ consists of all $k$-fold iterates of $\mu_1$ and their permutations.
(For $k\ge 2$, the elements of $\mathcal{B}(k)$ are in 1-1 correspondence with planar  binary
trees with $k$ labeled inputs and no stumps.)
Similarly let $\mathcal{C}$ be the operad generated by $\mu_2$ and the 2-sided unit. Then $\mathcal{B}$
and $\mathcal{C}$ both act on $X$.  We must show that these operad actions interchange.

The unary interchanges between  $\mathcal{B}$ and $\mathcal{C}$ hold trivially since
 $\mathcal{B}(1)=\{id\}=\mathcal{C}(1)$. Similarly the nullary $(k,0)$- and $(0,l)$-interchanges
 follow from  $\mathcal{B}(0)=\{*\}=\mathcal{C}(0)$. Finally we obtain that
all $(2,2)$-interchanges hold by applying appropriate permutations to the interchange
relation $(\mbox{E-H})$.  Thus it remains to show that $(k,l)$-interchanges hold when $k>2$ or $l>2$.

Let $\beta_2\in\mathcal{B}(2)$.  Then $\beta_2$ defines an $H$-space structure on $X$ and
on all products $X^l$ (via coordinatewise multiplication). From the already established $(2,2)$-interchange
 relations, $\mu_2$ is a homomorphism $X^2\longrightarrow X$ with respect to $\beta_2$.
Thus iterates of $\mu_2$ are composites of products of $\beta_2$-homomorphisms and thus are also homomorphisms.
Clearly permutations of $X^l$ are also homomorphisms.  This establishes all $(2,l)$-interchanges
for all $l>2$.

Now fix an element $\gamma_l\in\mathcal{C}(l)$.  Then $\gamma_l$ defines an $l$-ary operation on
$X$ and coordinatewise on all products $X^k$. From the already established $(2,l)$-interchange
relations, we obtain that $\mu_1$, its iterates and permutations thereof determine homomorphisms of these
$\gamma_l$ structures.  This establishes all $(k,l)$-interchanges for $k>2$.

Thus we obtain that the $\mathcal{B}$ and $\mathcal{C}$ actions on $X$ interchange and thus
determine a $\mathcal{B}\otimes\mathcal{C}$ action.  But by \ref{3_6}
$\mathcal{B}\otimes\mathcal{C}\cong \Com$.  Thus $\mu_1=\mu_2$ is strictly
associative and commutative, establishing (i).

Part (ii) is an immediate consequence.  For we can pick $\mu_1\in\mathcal{B}(2)$
and ${\mu_2\in\mathcal{C}(2)}$ and apply (i).  We obtain that $\mu_1=\mu_2$
is associative and commutative.  Since $\mu_1$ and $\mu_2$ are arbitrary, (ii) follows.

Part (iii) is an immediate consequence of \ref{3_6}.
\end{proof}

The same argument as in the proof of \ref{3_7} part (i) establishes the following result.

\begin{prop}
Suppose that two operads $\mathcal{B}$ and $\mathcal{C}$ act on a space $X$.
Suppose that the interchange relations hold between all the generating
elements of $\mathcal{B}$ and $\mathcal{C}$.  Then the actions of 
$\mathcal{B}$ and $\mathcal{C}$ on $X$ interchange.
\end{prop}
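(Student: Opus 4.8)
The plan is to mimic almost verbatim the argument used in the proof of Corollary \ref{3_7}(i), since the statement here is precisely the abstract version of that bootstrapping step. The key observation is that an operad $\mathcal{B}$ acting on $X$ is determined by a set of generating operations together with the operad relations, and an operation obtained as an operadic composite $\alpha = \theta\circ(\theta_1,\ldots,\theta_r)$ of operations is automatically a $\mathcal{C}$-homomorphism as soon as each of $\theta,\theta_1,\ldots,\theta_r$ is. Indeed, if $\theta_1,\ldots,\theta_r:X^{k_i}\to X$ are $\mathcal{C}$-homomorphisms, then so is their external product $\theta_1\times\ldots\times\theta_r:\prod X^{k_i}\to X^r$ (using that $\mathcal{C}$ acts on $X^r$ coordinatewise, and that the interchange square for a product of maps is a pasting of the interchange squares for the factors together with an obvious permutation square), and composing with the $\mathcal{C}$-homomorphism $\theta$ gives a $\mathcal{C}$-homomorphism $\alpha$. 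The permutations lying in $\mathcal{B}(k)$ are $\mathcal{C}$-homomorphisms for the trivial reason that reordering coordinates of $X^k$ commutes with any coordinatewise operation.

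The first step I would carry out is to make precise what ``generating elements'' means in this context: one chooses, for each $\mathcal{B}$, a sub-collection $G_{\mathcal{B}}\subseteq\coprod_n\mathcal{B}(n)$ such that every element of every $\mathcal{B}(n)$ is obtained from elements of $G_{\mathcal{B}}$ and the identity by finitely many operadic compositions and symmetric group actions (for instance, one may always take $G_{\mathcal{B}} = \coprod_n\mathcal{B}(n)$ itself, which makes the statement trivially applicable, but in practice $G_{\mathcal{B}}$ is small). Next I would show the two closure properties just described: (a) the class of $\mathcal{C}$-homomorphisms $X^n\to X$ among the operations of $\mathcal{B}$ is closed under operadic composition, by the pasting-of-interchange-squares argument above; and (b) it contains all the operations $\sigma\cdot\theta$ for $\sigma\in\Sigma_n$ whenever it contains $\theta$, by the coordinate-permutation remark. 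One also needs the base case that $id\in\mathcal{B}(1)$ is a $\mathcal{C}$-homomorphism, which is immediate, and that the generators are $\mathcal{C}$-homomorphisms, which is exactly the hypothesis. By induction on the number of composition/permutation steps needed to build a given $\beta\in\mathcal{B}(k)$, every $\beta$ is then a $\mathcal{C}$-homomorphism.

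Symmetrically — interchanging the roles of $\mathcal{B}$ and $\mathcal{C}$, and using that the hypothesis ``interchange relations hold between all generating elements'' is symmetric — every $\gamma\in\mathcal{C}(l)$ is a $\mathcal{B}$-homomorphism. Hence every $(k,l)$-interchange square commutes, which by Definition \ref{3_1} is exactly the statement that the two actions interchange.

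The one point that deserves genuine care, rather than the routine inductions, is verifying closure property (a) at the level of diagrams: one must check that when $\theta_1,\ldots,\theta_r$ are each $\mathcal{C}(l)$-homomorphisms, the composite $\theta\circ(\theta_1,\ldots,\theta_r):\prod_i X^{k_i}\to X$ is again one. This is where the geometry of the interchange relation enters — the interchange square for $\theta\circ(\theta_1,\ldots,\theta_r)$ against $\gamma\in\mathcal{C}(l)$ is obtained by stacking the $r$ squares for the $\theta_i$ against $\gamma$ side by side, applying coordinatewise $\gamma$, and then using the square for $\theta$ against $\gamma$ — but one must be careful that the shuffle permutations $\tau_{k,l}$ appearing in Definition \ref{3_1} compose correctly, i.e. that $\tau_{k_1+\ldots+k_r,\,l}$ factors through $\tau_{k_i,l}$ on the blocks followed by the block-shuffle $\tau_{r,l}$. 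This is a straightforward but slightly tedious check on permutations of $\{1,\ldots,(k_1+\ldots+k_r)l\}$, and it is the only place where I expect to have to be explicit; everything else is formal bookkeeping with the fact that $\mathcal{C}$ acts coordinatewise on finite powers of $X$.
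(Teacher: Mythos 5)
Your approach is the paper's approach: the paper's entire proof of this proposition is the single sentence that the same argument as in the proof of Corollary \ref{3_7}(i) establishes the result, and the argument you describe --- the class of operations interchanging with a fixed coordinatewise structure is closed under operadic composition, products and permutations, so interchange propagates from generators to everything --- is precisely that argument, including the shuffle bookkeeping with $\tau_{k,l}$ that you correctly single out as the only step needing an explicit check.

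The one slip is in the base case of your first induction. You assert that ``the generators [of $\mathcal{B}$] are $\mathcal{C}$-homomorphisms, which is exactly the hypothesis.'' It is not: the hypothesis only says that a generator $\beta_0$ of $\mathcal{B}$ interchanges with the \emph{generators} of $\mathcal{C}$, whereas being a $\mathcal{C}$-homomorphism means interchanging with every element of $\mathcal{C}$. To obtain that base case you must first run the dual bootstrap: fix a generator $\beta_0$ of $\mathcal{B}$; the set of elements of $\mathcal{C}$ that interchange with $\beta_0$ contains the generators of $\mathcal{C}$ and the identity and is closed under operadic composition and the symmetric group actions (the same closure lemma, applied to the coordinatewise $\beta_0$-structures on the powers of $X$), hence is all of $\mathcal{C}$. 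Only then does your induction over $\mathcal{B}$, against an arbitrary fixed $\gamma\in\mathcal{C}(l)$, have a valid starting point --- and at that stage the ``symmetric'' second pass is no longer needed. In short, the two inductions must be nested rather than run in parallel from the hypothesis; this is exactly the order of steps in the proof of Corollary \ref{3_7}(i): first the $(2,2)$-interchanges, then $(2,l)$ for all $l$, then $(k,l)$ for all $k$ and $l$.
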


\begin{rema}\label{3_8}
The tensor product of $\Ass$ with an operad $\mathcal{B}$ satisfying
$\mathcal{B}(0)=\ast$ has been determined in \cite{BV3}. We will recall
this result in Section 7 and deduce from it the structure of
$\Com \otimes \mathcal{B}$. In the meantime we point out another surprising
consequence of that result.
\end{rema}

\begin{prop}
If the bar construction $B_\bullet M$ on a well-pointed topological monoid $M$
is an $H$-space in the category of simplicial topological spaces, then 
the geometric realization $|B_\bullet M|$ is homotopy equivalent to a
loop space, as an $H$-space.
\end{prop}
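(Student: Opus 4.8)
The plan is to exploit the structure theorem for $\Ass \otimes \mathcal{B}$ recalled in Section~7, following the philosophy of Remark~\ref{3_8}. First I would observe that a well-pointed topological monoid $M$ is an algebra over the operad $\Ass$ (after whiskering to ensure the unit is a cofibration), and that the bar construction $B_\bullet M$, regarded as a simplicial space, is naturally an algebra over $\Ass$ as well, with the simplicial structure maps being operad-algebra homomorphisms. The hypothesis that $|B_\bullet M|$ (equivalently $B_\bullet M$ in simplicial spaces) carries an $H$-space structure means it is an algebra over an operad $\mathcal{C}$ with $\mathcal{C}(0) = \mathcal{C}(1) = \ast$ and $\mathcal{C}(2)$ a single multiplication; one must check that this $H$-space multiplication interchanges with the monoid multiplication coming from $M$. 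Granting this, $B_\bullet M$ becomes an algebra over $\Ass \otimes \mathcal{C}$.

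The next step is to identify $\Ass \otimes \mathcal{C}$ using the result of \cite{BV3} on $\Ass \otimes \mathcal{B}$ for operads $\mathcal{B}$ with $\mathcal{B}(0) = \ast$. Here $\mathcal{C}$ is (equivalent to, or can be replaced by) the operad $\mathcal{M}$ generated by a single unital multiplication $\mu$, and one expects $\Ass \otimes \mathcal{C}$ to be equivalent to $\Ass \otimes \Ass \cong \Com$ by Corollary~\ref{3_7}(iii), or at least to an operad whose algebras in simplicial spaces, upon realization, are infinite loop spaces — but the point of the delicate \cite{BV3} computation is precisely that the tensor product is \emph{not} as large as naive intuition suggests, and in the reduced-monoid situation it forces the multiplication on $B_\bullet M$ to be the one coming from a second delooping of $M$. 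Concretely: $\Ass \otimes \mathcal{C}$-algebras are monoids with an interchanging multiplication, and by the same Eckmann--Hilton mechanism as in \ref{3_6}/\ref{3_7}, applied \emph{levelwise} in the simplicial direction where the monoid structure of each $B_k M = M^k$ provides strict units, the $H$-space structure on $|B_\bullet M|$ is forced to agree with the loop-sum on $|B_\bullet M| \simeq BM$ viewed via the canonical map $M \to \Omega BM$ — i.e. $|B_\bullet M|$ deloops once more as an $H$-space, so $|B_\bullet M| \simeq \Omega(B(BM))$ as $H$-spaces.

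More carefully, I would run the argument through the group-completion / recognition machinery: $B_\bullet M$ is a simplicial space with $(B_\bullet M)_0 = \ast$, so it is a \emph{reduced} simplicial space, hence $|B_\bullet M|$ is connected; an $H$-space structure on a connected space that interchanges with a structure making it (by the simplicial bar filtration) already a "grouplike-after-realization" object, together with the \cite{BV3} identification, upgrades $|B_\bullet M|$ to a two-fold loop space, and a posteriori to an $E_\infty$-space and thus an infinite loop space — but for the statement as given it suffices to extract a single extra delooping, which is exactly $\Omega^1$ of $B(|B_\bullet M|)$, and to note that the $H$-structure is carried along by the equivalence. The main obstacle, and the place where the work of \cite{BV3} is essential, is verifying the interchange relation between the monoid multiplication on $B_\bullet M$ and the given $H$-multiplication on $|B_\bullet M|$: the $H$-multiplication lives on the realization, not on the simplicial level, so one cannot directly apply the coherence; one must either lift it (up to homotopy, using well-pointedness of $M$ and hence of $B_\bullet M$) to a simplicial $H$-structure compatible with the bar multiplication, or argue that the two structures automatically interchange on $\pi_0$-components and then use the connectivity of $|B_\bullet M|$ together with the operadic replacement to promote this to a genuine $\Ass \otimes \mathcal{C}$-action. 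Handling this lift cleanly — and showing the resulting $\Ass \otimes \mathcal{C}$ is small enough to force the loop-space conclusion rather than merely an $E_2$-conclusion — is the crux.
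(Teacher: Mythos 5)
Your proposal misreads the hypothesis and, as set up, would prove a statement that is both different from and far stronger than the one asserted. First, the $H$-space structure is assumed to exist \emph{in the category of simplicial topological spaces}, i.e.\ at the simplicial level; it is not merely an $H$-structure on the realization $|B_\bullet M|$. So the ``crux'' you identify at the end --- lifting a multiplication from $|B_\bullet M|$ to a simplicial one compatible with the bar structure --- is a non-issue: the simplicial lift \emph{is} the hypothesis. What that hypothesis gives essentially for free is the interchange you worry about: a simplicial operad action on $B_\bullet M$ restricts on $B_1M=M$ to an action by maps that commute with the face maps, which are built from the multiplication of $M$; hence the action is by monoid homomorphisms, i.e.\ it interchanges with the $\Ass$-action.

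Second, and more seriously, modelling the $H$-structure by an operad $\scC$ with $\scC(0)=\scC(1)=\ast$ and a single binary operation is fatal: by Proposition \ref{3_6} (applied to $\Ass$ and $\scC$) one gets $\Ass\otimes\scC\cong\Com$, so your argument would force $M$ to be a strictly commutative monoid --- much stronger than the stated conclusion and false under the hypotheses. The correct parametrizing operad for $H$-space structures is $W^{(2)}\scA ss$, the suboperad of $W\scA ss$ generated by the operations of arity $\le 2$; its space of unary operations is contractible but \emph{not} a point, and this is precisely what blocks the Eckmann--Hilton collapse. The proof then rests on the one input you never actually invoke: by \cite{BV3} the induced map $\scA ss\otimes W^{(2)}\scA ss\to\scA ss\otimes W\scA ss$ is an \emph{isomorphism} of operads. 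Hence the simplicial $H$-structure, which is an $\Ass\otimes W^{(2)}\scA ss$-structure on $M$, upgrades to an $\Ass\otimes W\scA ss$-structure, so $|B_\bullet M|=BM$ inherits a $W\scA ss$-structure; since $BM$ is a path-connected Dold space this structure admits a homotopy inverse, and the recognition principle for $W\scA ss$-spaces then identifies $BM$ with a loop space as an $H$-space. No second delooping $\Omega B(BM)$, no group-completion argument, and no appeal to $\Com$ or to $E_\infty$-structures is needed or correct here.
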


\begin{proof}
As is shown in \cite{BV2}, loop space structures are parametrized by an
operad $W\scA ss$, a blown-up version of $\scA ss$ (which we will discuss in Section 8
below). It is also shown there that $H$-space structures are parametrized by
the suboperad $W^{(2)}\scA ss$ of $W\scA ss$ generated by the nullary, unary and binary
operations $W\scA ss(0)$, $W\scA ss(1)$, and $W\scA ss(2)$. Now a simplicial $H$-space structure
on $B_\bullet M$ amounts to the same thing as an $\scA ss\otimes W^{(2)}\scA ss$ structure on $M$.
But according to \cite{BV3}, the induced map
$$\scA ss\otimes W^{(2)}\scA ss\longrightarrow \scA ss\otimes W\scA ss$$
is an isomorphism of operads. Hence $M$ is an $\scA ss\otimes W\scA ss$ algebra and it
follows that $|B_\bullet M|$ has a $ W\scA ss$-structure. Since $|B_\bullet M|$ is a path-connected Dold space \cite[Cor. 5.2]{SV}
this structure admits a homotopy inverse (e.g. \cite[Prop. 3.16]{SV}). It is well-known that a
 $W\scA ss$-space admitting a homotopy inverse is homotopy equivalent through homotopy homomorphisms to a loop space.
\end{proof}

Since $\Ass$ is $E_1$ and $\Com$ certainly is not $E_2$, Corollary \ref{3_7} 
provides
a counterexample to the Naive Conjecture \ref{3_4}. 
In fact, each connected abelian
topological monoid is of the weak homotopy type of an infinite loop space,
indeed equivalent to a product of Eilenberg-MacLane spaces.
So we adjust the conjecture in the following way:

\begin{defi}\label{3_9}
An operad $\mathcal{B}$ is called \textit{at least} $E_n$ if there is an
$E_n$ operad $\mathcal{C}$ and a map of operads $\mathcal{C}\to \mathcal{B}$.
(So any $\mathcal{B}$ space has an $E_n$ structure.)
\end{defi}

\begin{conj}\label{3_10}
If $\mathcal{B}$ is $E_k$ and $\mathcal{C}$ is $E_l$, then
$\mathcal{B}\otimes\mathcal{C}$ is at least $E_{k+l}$.
\end{conj}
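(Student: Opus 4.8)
The plan is to deduce Conjecture~\ref{3_10} from Theorem~4.3 by a cofibrant replacement argument, using that the tensor product is a functor of each variable — even though, as Corollary~\ref{3_7} already shows, it does not preserve weak equivalences. First I would replace $\scB$ and $\scC$ by cofibrant models: applying the $W$-construction (the cofibrant replacement functor mentioned in the Introduction and recalled in Section~8) yields operads $W\scB$ and $W\scC$ together with weak equivalences of operads $W\scB\to\scB$ and $W\scC\to\scC$. Since $\scB$ is joined to $\mathcal{C}_k$ by a finite chain of weak equivalences (Definition~\ref{2_3}(5),(6)), adjoining the link $W\scB\to\scB$ shows that $W\scB$ is again $E_k$; likewise $W\scC$ is $E_l$. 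As $W\scB$ and $W\scC$ are cofibrant, Theorem~4.3 applies directly and tells us that $W\scB\otimes W\scC$ is an $E_{k+l}$ operad.

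Next I would invoke functoriality of $-\otimes-$. The maps $W\scB\to\scB$ and $W\scC\to\scC$ induce a map of coproducts $W\scB\amalg W\scC\to\scB\amalg\scC$, and this map sends the interchange relations (\ref{3_1}) among operations of $W\scB$ and $W\scC$ to the corresponding relations among their images, so it passes to the quotients and gives a map of operads
$$
W\scB\otimes W\scC\longrightarrow \scB\otimes\scC .
$$
Setting $\scD := W\scB\otimes W\scC$, we have produced an $E_{k+l}$ operad $\scD$ together with a map of operads $\scD\to\scB\otimes\scC$, which by Definition~\ref{3_9} is exactly the assertion that $\scB\otimes\scC$ is at least $E_{k+l}$. (The iterated statement of the Introduction for $\scA_1\otimes\cdots\otimes\scA_n$ then follows by induction, taking a cofibrant model of each partial tensor product before tensoring on the next factor and using associativity of $\otimes$.)

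Finally, where the work lies: essentially all of it is in Theorem~4.3, whose proof — covering $\scB\otimes\scC$ for the universal cofibrant models by contractible homotopy cells and relating the nerve of that cover to the $\Cat$ operad $\scM_{k+l}$ recalled in Section~4 — occupies the remaining sections, while the reduction above is purely formal. The single point in that reduction worth flagging is that $\scB\otimes-$ carries our chosen weak equivalences to an honest operad map but \emph{not} to a weak equivalence; this is exactly why the argument yields only ``at least $E_{k+l}$'' and does not recover Theorem~4.3 for arbitrary, possibly non-cofibrant, $\scB$ and $\scC$.
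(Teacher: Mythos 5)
Your argument is correct and is essentially the paper's own proof: the Corollary following Theorem~4.3 establishes exactly this statement (for $n$ tensor factors, with $n=2$ giving Conjecture~\ref{3_10}) by taking cofibrant resolutions $W\scB\to\scB$, $W\scC\to\scC$, applying Theorem~4.3 to conclude $W\scB\otimes W\scC$ is $E_{k+l}$, and using functoriality of $\otimes$ to obtain the map $W\scB\otimes W\scC\to\scB\otimes\scC$. Your closing remark about why one only gets ``at least $E_{k+l}$'' matches the paper's discussion as well.
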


\begin{rema}\label{3_11} 
Given maps of operads $\mathcal{A}\to \mathcal{B}\to \mathcal{A}$ with 
$\mathcal{A}$ being $E_n$, then each $\mathcal{B}$-space is an 
$\mathcal{A}$-space and each $\mathcal{A}$-space is a $\mathcal{B}$-space.
$\mathcal{B}$ is at least $E_n$, and one might expect $\mathcal{B}$ 
to be in fact $E_n$. 
This need not be true even if the composite 
$\mathcal{A}\to \mathcal{B}\to \mathcal{A}$ is the identity,
as the following example shows:
$$ \mathcal{C}_n \to \mathcal{C}_n\amalg \mathcal{C}_n\to \mathcal{C}_n
$$
where the first map is the inclusion of the first summand and the second is
the folding map. $\mathcal{C}_n\amalg \mathcal{C}_n$ 
codifies two non-interchanging $\mathcal{C}_n$-structures
and is certainly  not $E_n$: The operad $\pi_0(\mathcal{C}_n)$ 
of path components of $\mathcal{C}_n$ is isomorphic to $\Com$ for $n\geq 2$.
Hence there is a surjection (in fact, a bijection) $\pi_0(\mathcal{C}_n 
\amalg \mathcal{C}_n)\to \Com\amalg \Com$. Now $(\Com\amalg \Com)(2)$ has
two elements, while $\Com (2)$ has only one. So $\mathcal{C}_n
\amalg \mathcal{C}_n$ has too many path components to be $E_n$.
For $n=1$ the same argument works if one replaces $\Com$ by $\Ass$.
\end{rema}

\section{Main results}
\begin{conv}\label{4_1}
In view of Proposition \ref{3_5}, we only work with \textit{reduced} topological operads unless
explicitly stated otherwise. So operad will mean reduced topological operad.
\end{conv}

\begin{defi}\label{4_2}
An operad $\mathcal{B}$ in $\mathcal{O}pr_0$ is called \textit{cofibrant}
 if for
any diagram 
$$
\xymatrix{
&& \mathcal{P} \ar[d]^p
\\
\mathcal{B} \ar[rr]^f && \mathcal{C}
}
$$
of operad maps with $p$ a weak equivalence and each
$p:\mathcal{P}(n)\to\mathcal{C}(n)$ an equivariant fibration, there is a
lift $g:\mathcal{B}\to\mathcal{P}$ such that $p\circ g=f$.
\end{defi}

It is shown in \cite{BV2} that for any $\Sigma$-free well-pointed operad $\mathcal{B}$,
there is a cofibrant operad $W\mathcal{B}$ and an operad equivalence
$W\mathcal{B}\longrightarrow\mathcal{B}$. We call $W\mathcal{B}\longrightarrow\mathcal{B}$
a cofibrant resolution of $\mathcal{B}$. If $\scB$ fails to be well-pointed we add a whisker
to $\scB(1)$ to obtain a well-pointed operad $\scB'$ together with a weak equivalence
$\scB'\to\scB$ (e.g. see \cite[p.167]{May1}) and take $W\scB'\to\scB$ as cofibrant 
resolution of $\mathcal{B}$.

In the Quillen environment $\Opr$ has a model structure \cite{BM0}. As cofibrant replacement
functor in this model structure  for $\Sigma$-free operads in the Quillen sense we can take $WP\scB\to \scB$, 
where $W$ is the construction of 
\cite{BM} or \cite{BV2} (the two agree in $\Top$) and $P$ is the usual $CW$-approximation functor. 

The main result of our paper is the following.

\begin{leer}\label{4_4}\hspace{-1ex}\textbf{Additivity Theorem:}
If $\scB$ is an $E_k$ operad and $\scC$ an $E_l$ operad
and both are cofibrant, then $\scB\otimes\scC$ is an $E_{k+l}$ operad.
This holds in the Str{\o}m and the Quillen environment.
\end{leer}

This verifies the Naive Conjecture \ref{3_4} in the special case of cofibrant
operads. It also verifies the modified Conjecture \ref{3_10}. Indeed it is
equally easy to prove a strengthened version of this conjecture.

\begin{coro} Suppose that $\scA_i$ are at least $E_{k_i}$ for $i=1,2,\dots, n$.
Then the tensor product $\scA_1\otimes\scA_2\otimes\dots\otimes\scA_n$ is at
least $E_{k_1+k_2+\dots+k_n}$.
\end{coro}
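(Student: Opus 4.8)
The plan is to reduce the corollary to its two-factor case, which is Theorem~\ref{4_4}, by a short induction. The inductive step will be the following lemma, which I would prove first: if $\mathcal{P}$ is at least $E_r$ and $\mathcal{Q}$ is at least $E_s$, then $\mathcal{P}\otimes\mathcal{Q}$ is at least $E_{r+s}$. Granting the lemma, the corollary follows by induction on $n$: the case $n=1$ is the hypothesis, and for the inductive step I would use that the tensor product of operads is associative, writing $\scA_1\otimes\dots\otimes\scA_n\cong(\scA_1\otimes\dots\otimes\scA_{n-1})\otimes\scA_n$, observing that the first factor is at least $E_{k_1+\dots+k_{n-1}}$ by the inductive hypothesis, and applying the lemma with $\mathcal{Q}=\scA_n$.

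To prove the lemma I would begin from the data furnished by Definition~\ref{3_9}: an $E_r$ operad $\mathcal{E}$ with an operad map $\mathcal{E}\to\mathcal{P}$ and an $E_s$ operad $\mathcal{F}$ with an operad map $\mathcal{F}\to\mathcal{Q}$. The first substantive step is to replace $\mathcal{E}$ and $\mathcal{F}$ by \emph{cofibrant} operads. Here I would use that every $E_n$ operad is $\Sigma$-free: by Definition~\ref{2_3}(4)--(6) such an operad is joined to $\mathcal{C}_n$ by a chain of operad maps that are levelwise equivariant homotopy equivalences, and equivariant homotopy equivalences cannot destroy the freeness of the symmetric-group actions. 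Hence the $W$-construction of \cite{BV2} applies and yields cofibrant $E_r$ and $E_s$ operads $W\mathcal{E}$, $W\mathcal{F}$ together with operad equivalences $W\mathcal{E}\to\mathcal{E}$ and $W\mathcal{F}\to\mathcal{F}$; composing with the given maps produces operad maps $W\mathcal{E}\to\mathcal{P}$ and $W\mathcal{F}\to\mathcal{Q}$ out of cofibrant operads.

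With this in hand the lemma would be immediate. By Theorem~\ref{4_4}, the tensor product $W\mathcal{E}\otimes W\mathcal{F}$ is an $E_{r+s}$ operad. The tensor product of operads is functorial in each variable --- it is the quotient of the coproduct by the interchange relations, and both the coproduct and the passage to this quotient are functorial --- so the maps $W\mathcal{E}\to\mathcal{P}$ and $W\mathcal{F}\to\mathcal{Q}$ induce an operad map $W\mathcal{E}\otimes W\mathcal{F}\to\mathcal{P}\otimes\mathcal{Q}$. Since the source is $E_{r+s}$, this exhibits $\mathcal{P}\otimes\mathcal{Q}$ as at least $E_{r+s}$, completing the proof of the lemma and with it the corollary.

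Apart from the appeal to Theorem~\ref{4_4}, the argument is entirely formal, and the single point that I expect to require genuine care is the passage to cofibrant operads in the second paragraph. Theorem~\ref{4_4} has cofibrancy among its hypotheses, so one cannot apply it to an arbitrary $E_n$ operad; one must first know that ``at least $E_n$'' is always witnessed by a \emph{cofibrant} $E_n$ operad. This is precisely why the argument is routed through the $W$-construction, and the input it depends on is the $\Sigma$-freeness of $E_n$ operads.
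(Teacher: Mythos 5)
Your proposal is correct and follows essentially the same route as the paper: induction on $n$, cofibrant resolution of the witnessing $E_{k_1+\dots+k_{n-1}}$ and $E_{k_n}$ operads via the $W$-construction, Theorem \ref{4_4} applied to the cofibrant replacements, and functoriality of $\otimes$ to map the result into $\scA_1\otimes\dots\otimes\scA_n$. Your extra remark justifying that $E_n$ operads are $\Sigma$-free (so that the $W$-construction applies) makes explicit a point the paper leaves tacit, but it is not a different argument.
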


\begin{proof} We proceed by induction on $n$. The result is trivially true for
$n=1$. Assume it holds for $n-1$. Then there is an $E_{k_1+k_2+\dots+k_{n-1}}$
operad $\scB$ which maps into $\scA_1\otimes\scA_2\otimes\dots\otimes\scA_{n-1}$.
Let $\scC$ be a $E_{k_n}$ operad which maps into $\scA_n$. Let $W\scB\longrightarrow\scB$
and $W\scC\longrightarrow\scC$ be cofibrant resolutions. Then according to
Theorem \ref{4_4}, $W\scB\otimes W\scC$ is $E_{k_1+k_2+\dots+k_n}$ and we
have a chain of operad maps
$$W\scB\otimes W\scC\longrightarrow\scB\otimes\scC\longrightarrow 
\scA_1\otimes\scA_2\otimes\dots\otimes\scA_n.$$
This completes the induction and the proof.
\end{proof}

We will now make use of

\begin{lem}\label{4_6}(e.g. see \cite{V}):
Given a diagram of operads
$$
\xymatrix{
&& \mathcal{P} \ar[d]^p
\\
\mathcal{B} \ar[rr]^f && \mathcal{C}
}
$$
with $\mathcal{B}$ cofibrant and $p$ a weak equivalence. Then there is a
lift $g:\mathcal{B}\to\mathcal{P}$ up to homotopy, uniquely up to homotopy.
(Here homotopy means homotopy through operad maps.) 
\end{lem}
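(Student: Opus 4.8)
The plan is to prove Lemma \ref{4_6} by combining the lifting property of cofibrant operads (Definition \ref{4_2}) with a standard path-object argument, which is the usual way one trades a strict lift against a fibration for a lift-up-to-homotopy against a mere weak equivalence. First I would replace the weak equivalence $p:\mathcal{P}\to\mathcal{C}$ by a fibration: factor $p$ as $\mathcal{P}\xrightarrow{\ j\ }\mathcal{P}'\xrightarrow{\ q\ }\mathcal{C}$ where $j$ is a (closed) inclusion that is an equivariant homotopy equivalence on each level and $q$ is an equivariant fibration on each level (for instance, take $\mathcal{P}'(n)$ to be the operad built levelwise from the mapping path space of $p(n)$; one must check that these path-space constructions are compatible with the $\Sigma_n$-actions and the operad composition, which they are because they are functorial and products distribute over the relevant limits in $\Top$). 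Since $p$ is a weak equivalence and $j$ is one, $q$ is a weak equivalence too, so by Definition \ref{4_2} the composite $\mathcal{B}\xrightarrow{f}\mathcal{C}$ lifts strictly through $q$ to a map $\tilde g:\mathcal{B}\to\mathcal{P}'$. It then remains to push $\tilde g$ back into $\mathcal{P}$ up to homotopy, using that $j$ is a levelwise equivariant homotopy equivalence together again with the cofibrancy of $\mathcal{B}$ applied to a retraction/deformation datum for $j$.

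For the uniqueness-up-to-homotopy statement I would use the operad of paths in $\mathcal{P}$, i.e. the levelwise cotensor $\mathcal{P}^{I}$ with $\mathcal{P}^{I}(n)=\Maps(I,\mathcal{P}(n))$, which comes with two evaluation maps $\mathcal{P}^{I}\to\mathcal{P}\times_{\mathcal{C}}\mathcal{P}$ — more precisely a map to the ``homotopy fibre product over $\mathcal{C}$'' recording a path in $\mathcal{P}$ together with a path in $\mathcal{C}$ exhibiting its endpoints as lying over a common homotopy. Two lifts $g_0,g_1:\mathcal{B}\to\mathcal{P}$ with $p g_0\simeq p g_1$ then assemble into a map $\mathcal{B}\to(\text{that homotopy fibre product})$, and lifting this along the weak equivalence $\mathcal{P}^{I}\to(\text{homotopy fibre product})$ — again possible after converting it to a levelwise fibration and invoking Definition \ref{4_2} — produces the desired operad homotopy $g_0\simeq g_1$. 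The only subtlety here is bookkeeping the basepoint/reduced-operad conventions of \ref{4_1} and the closed-cofibration condition on $\{id\}\subset\mathcal{B}(1)$, but these are preserved by all the functorial constructions involved.

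The main obstacle, and the step deserving the most care, is verifying that the levelwise mapping-path-space and path-space constructions genuinely produce \emph{operads} and \emph{operad maps} — that the $\Sigma_n$-equivariance, the operadic composition maps, and the unit are all transported correctly, and in particular that the resulting level maps $q(n)$ really are equivariant fibrations (Hurewicz fibrations of $\Sigma_n$-spaces, or the appropriate numerable notion from Definition \ref{2_3}). This is where the self-enrichedness of $\Top$ and the distributivity of products over coproducts recorded in Remark \ref{2_1a} do the real work. Granting those model-category-style inputs (which is exactly what the citation to \cite{V} packages), the lemma follows formally from Definition \ref{4_2} by the two lifting arguments sketched above; I would therefore present the proof at the level of ``factor, lift strictly, correct up to homotopy, and run the same argument with a path object for uniqueness,'' citing \cite{V} for the verification that operads in $\Top$ carry the requisite functorial factorizations.
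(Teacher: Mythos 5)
Your argument is correct and is essentially the standard factor--lift--retract argument that the paper delegates to the citation \cite{V}; the paper itself gives no proof of Lemma \ref{4_6}. One simplification worth noting: in your third step no further appeal to cofibrancy is needed, because the projection $r:\mathcal{P}'\to\mathcal{P}$ from the levelwise mapping path operad is itself a map of operads with $r\circ j=\mathrm{id}$, and the path-shrinking homotopy $j\circ r\simeq \mathrm{id}_{\mathcal{P}'}$ is a homotopy through operad maps, so $g=r\circ\tilde g$ immediately satisfies $p\circ g\simeq f$. The same factorization lets you run the uniqueness step against the levelwise fibration $q$ rather than against $p$ itself, as you indicate.
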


\begin{coro}\label{4_7}
\begin{enumerate}
\item[(i)] Any two cofibrant $E_k$ operads $\mathcal{B}$ and $\mathcal{C}$
are homotopy equivalent in the strong sense; i.e. there are operad maps
$f:\mathcal{B}\to\mathcal{C}$ and $g:\mathcal{C}\to\mathcal{B}$ such that
$g\circ f\simeq id$ and $f\circ g\simeq id$ through operad maps.
\item[(ii)] If $\mathcal{A}$ is any operad and $\mathcal{B}$ and
$\mathcal{C}$ are cofibrant $E_k$ operads, than
$\mathcal{A}\otimes\mathcal{B}$ and $\mathcal{A}\otimes\mathcal{C}$ are
homotopy equivalent in the strong sense.
\item[(iii)] If $\mathcal{B}$ and $\mathcal{C}$ are $E_k$ and $\mathcal{B}$
is cofibrant, there is a weak equivalence $\mathcal{B}\to\mathcal{C}$.
\end{enumerate}
\end{coro}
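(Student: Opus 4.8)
The plan is to deduce all three parts formally, using only the homotopy lifting property of Lemma~\ref{4_6} and the fact that two $E_k$ operads are, by Definition~\ref{2_3}, joined by a finite zig-zag of weak equivalences; Theorem~\ref{4_4} is not needed. I would establish them in the order (iii), (i), (ii). Throughout I use that the weak equivalences of operads --- levelwise equivariant homotopy equivalences (in the $\Cat$ and $\SSets$ cases after applying the classifying space, respectively realization, functor) --- are closed under composition and satisfy the usual two-out-of-three property, and that homotopies of operad maps are stable under pre- and post-composition.

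\emph{Part (iii).} I would prove by induction on $n$ the slightly stronger statement: if $\scB$ is cofibrant and is joined to an operad $\scC$ by a zig-zag of weak equivalences with $n$ edges, then there is a weak equivalence $\scB\to\scC$. The case $n=0$ is the identity. For the step, write the zig-zag as $\scB=\scD_0 - \scD_1 - \cdots - \scD_n=\scC$ and apply the inductive hypothesis to the truncation $\scB - \cdots - \scD_{n-1}$ to obtain a weak equivalence $h\colon\scB\to\scD_{n-1}$. If the last edge is a map $\scD_{n-1}\to\scC$, compose it with $h$. If the last edge is a weak equivalence $p\colon\scC\to\scD_{n-1}$, use Lemma~\ref{4_6} (with $\scB$ cofibrant) to lift $h$ through $p$, producing $g\colon\scB\to\scC$ with $p\circ g\simeq h$; two-out-of-three then makes $g$ a weak equivalence. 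A small point to dispatch is that the intermediate operads may be taken reduced --- unproblematic, since the little $k$-cubes operad $\mathcal{C}_k$ is reduced --- so that Lemma~\ref{4_6} applies to them.

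\emph{Part (i).} Using (iii) and the cofibrancy of $\scB$, choose a weak equivalence $f\colon\scB\to\scC$. Applying Lemma~\ref{4_6} with $f$ in the role of the weak equivalence and $id_\scC$ in the role of the given map --- permissible since $\scC$ is cofibrant --- gives $g\colon\scC\to\scB$ with $f\circ g\simeq id_\scC$. To see $g\circ f\simeq id_\scB$, invoke the uniqueness clause of Lemma~\ref{4_6}: lifts of $f\colon\scB\to\scC$ along $f\colon\scB\to\scC$ --- permissible since $\scB$ is cofibrant --- are unique up to homotopy, and both $id_\scB$ and $g\circ f$ are such lifts because $f\circ(g\circ f)=(f\circ g)\circ f\simeq f$. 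Hence $f$ and $g$ are mutually homotopy inverse through operad maps, which is (i).

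\emph{Part (ii).} Apply the functor $\scA\otimes(-)$ to the pair $f,g$ of (i). By functoriality $(\scA\otimes g)\circ(\scA\otimes f)=\scA\otimes(g\circ f)$ and likewise in the other order, so it is enough that $\scA\otimes(-)$ carries a homotopy of operad maps $\scB\to\scC$ to a homotopy of operad maps $\scA\otimes\scB\to\scA\otimes\scC$. I would obtain this by modelling a homotopy between operad maps $\scB\to\scC$ as an operad map $H\colon\scB\to\scC^I$ into the path operad $\scC^I(n)=\Maps(I,\scC(n))$, and by constructing a natural comparison $\theta\colon\scA\otimes\scC^I\to(\scA\otimes\scC)^I$ corresponding under the path-object adjunction to the family $t\mapsto id_\scA\otimes\mathrm{ev}_t$ of operad maps $\scA\otimes\scC^I\to\scA\otimes\scC$; then $\theta\circ(\scA\otimes H)$ is the required homotopy, whence $\scA\otimes g$ and $\scA\otimes f$ are mutual homotopy inverses. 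I do not expect a serious obstacle anywhere --- all the real content sits in Lemma~\ref{4_6} --- the only points needing care being the cofibrancy bookkeeping at each application of Lemma~\ref{4_6} in (iii) and (i), and, in (ii), the continuity in $t$ of the family $id_\scA\otimes\mathrm{ev}_t$, which follows from the description of the tensor product as a colimit of products (Section~5) together with our working in the category of $k$-spaces.
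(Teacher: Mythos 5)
Your proof is correct and follows exactly the route the paper intends: the paper states Corollary \ref{4_7} without proof as an immediate consequence of the lifting Lemma \ref{4_6}, and your argument is the standard deduction from that lemma (zig-zag induction for (iii), lift-and-uniqueness for (i), functoriality of $\mathcal{A}\otimes(-)$ plus preservation of operad-map homotopies via the path operad for (ii)). The points you flag as needing care (reducedness of the intermediate operads, continuity of $t\mapsto id_{\mathcal{A}}\otimes\mathrm{ev}_t$ in the category of $k$-spaces) are indeed the only delicate ones, and you dispatch them correctly.
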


In view of the corollary it suffices to prove Theorems \ref{4_4} for our favorite cofibrant operads.
Moreover, the  Additivity Theorem in the Quillen environment follows from the one in the Str{\o}m
environment: If $\scB$ is a cofibrant $E_n$ operad in the Quillen environment then 
$WP\scB$ is cofibrant in both environments, and the canonical map $WP\scB\to \scB$ is a homotopy
equivalence of operads in the strong sense.

Our favorite cofibrant $E_k$ operad will be $W|\mathcal{NM}_k|$,  a cofibrant resolution
of $|\mathcal{NM}_k|$, the topological realization of the nerve $\mathcal{NM}_k$ 
of the $\Cat$-operad $\scM_k$ which parametrizes the algebraic
structure of a $k$-fold monoidal category \cite{BFSV}. We can briefly describe
$\mathcal{M}_k(m)$ as a poset whose objects are words of length $m$ in the alphabet
$\{1,2,\dots,m\}$ combined together using $k$ binary operations $\square_1,\square_2,\dots,\square_k$,
which are strictly associative and have common unit 0.
We moreover require that the objects of $\mathcal{M}_k(m)$ are precisely those words where
each generator $\{1,2,\dots,m\}$ occurs exactly once. The morphisms of $\mathcal{M}_k(m)$
are generated by interchanges
$$\eta^{ij}_{A,B,C,D}:(A\square_j B)\square_i(C\square_j D)
\longrightarrow (A\square_i C)\square_j(B\square_i D),$$
where $1\le i<j\le k$ and $A,B,C,D$ are words in $\{1,2,\dots,m\}$ such that
$(A\square_j B)\square_i(C\square_j D)$ (and hence also $(A\square_i C)\square_j(B\square_i D)$)
represent objects of $\mathcal{M}_k(m)$. These interchange morphisms are then combined using
the binary operations $\square_1,\square_2,\dots,\square_k$ as well as composition of morphisms.
The coherence theorem of \cite{BFSV} proves the nonobvious fact that $\mathcal{M}_k(m)$
is a poset, i.e. there is at most one morphism between any two objects, and gives a simple
algorithm for determining when there is a morphism between any two objects (c.f. the proof
of Lemma \ref{5_01}).

\begin{leer}\label{4_7a}
The topological operads $|\mathcal{NM}_k|$ and $|\mathcal{NM}_l|$ are far from cofibrant. Indeed
$|\mathcal{NM}_k|(1)$ and $|\mathcal{NM}_l|(1)$ are both $\{id\}$ and hence by Proposition \ref{3_6}
$|\mathcal{NM}_k|\otimes |\mathcal{NM}_l|=\Com$. On the other hand we will show that
$W|\mathcal{NM}_k|\otimes W|\mathcal{NM}_l|$ is $E_{k+l}$. In anticipation of this result, it
will be convenient to adopt the following convention. We will identify $\mathcal{M}_k$ with
the obvious suboperad of $\mathcal{M}_{k+l}$ and we will identify $\mathcal{M}_l$ with the
suboperad of $\mathcal{M}_{k+l}$ obtained by shifting the indices on the binary operations from
$\square_1,\square_2,\dots,\square_l$ to $\square_{k+1},\square_{k+2},\dots,\square_{k+l}$.
\end{leer}

In the course of proving Theorem \ref{4_4} we obtain more results about the spaces
of unary and binary operations in an arbitrary tensor product of operads. As these results
may be of separate interest, we state them below.

\begin{prop}\label{4_8} Let $\scA$ and $\scB$ be arbitrary topological operads with
$\scA(0)=\{0\}$ and $\scB(0)=\{0\}$. Then
\begin{itemize}
\item $(\scA\otimes\scB)(1)\cong \scA(1)\times\scB(1)$
\item $(\scA\otimes\scB)(2)$ is homeomorphic to the pushout of the following diagram
$$
\xymatrix{
\scA(2)\times\scB(2)\ar[r]^f\ar[d]^g &\scA(1)^2\times\scB(2)
\\
\scA(2)\times\scB(1)^2
}
$$
Here $f(\alpha,\beta)=(\alpha\circ(id\oplus0),\alpha\circ(0\oplus id),\beta)$ and
$g(\alpha,\beta)=(\alpha,\beta\circ(id\oplus0),\beta\circ(0\oplus id))$.
\end{itemize}
\end{prop}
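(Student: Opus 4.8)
The plan is to analyze the coproduct $\scA \amalg \scB$ in low arities and then impose the interchange relations. Recall that $\scA \amalg \scB$ is the free operad on the disjoint union of the two symmetric sequences, so an element of $(\scA \amalg \scB)(n)$ is represented by a tree whose vertices are alternately labeled by operations from $\scA$ and from $\scB$ (with no two adjacent vertices from the same operad, since operads are closed under internal composition). Because $\scA(0)=\scB(0)=\{0\}$ and both are reduced, the only way a nullary operation can appear is as the constant $0$, and a tree with a stump collapses to $0$; this is exactly \ref{3_5}, and it means that in arities $1$ and $2$ we need only consider trees all of whose vertices have arity $\ge 1$ except possibly for stumps that kill the whole tree. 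For arity $1$: a representative is a linear string of alternating unary operations $\alpha_1, \beta_1, \alpha_2, \beta_2, \ldots$; the unary interchange relations of Remark \ref{3_2a} (here $\beta \circ \gamma = \gamma \circ \beta^1 = \gamma \circ \beta$, i.e.\ unary operations of $\scA$ commute with those of $\scB$) plus internal composition in each operad collapse such a string to a single pair $(\alpha, \beta) \in \scA(1)\times\scB(1)$, and composition in $\scA\otimes\scB$ is coordinatewise. So $(\scA\otimes\scB)(1)\cong \scA(1)\times\scB(1)$. I would check that this identification is a homeomorphism, which follows because the quotient map from the (topologized) set of words to $\scA(1)\times\scB(1)$ has an obvious continuous section.

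For arity $2$, the key observation is that up to the unary interchanges every element of $(\scA\otimes\scB)(2)$ can be brought to one of two normal forms: either a binary operation $\alpha\in\scA(2)$ applied to two inputs, each first modified by a unary $\scB$-operation (and the $\scA(1)$-component absorbed into $\alpha$), or symmetrically a binary $\beta\in\scB(2)$ with the two inputs modified by unary $\scA$-operations. This gives two "charts'' $\scA(2)\times\scB(1)^2$ and $\scA(1)^2\times\scB(2)$ (here I use that $(\scA\otimes\scB)(1)\cong\scA(1)\times\scB(1)$ from the first part, and that the $\scA(1)$- resp.\ $\scB(1)$-part of each input can be pushed into the binary vertex by internal composition). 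A tree in $(\scA\amalg\scB)(2)$ with two binary vertices, one from $\scA$ over one from $\scB$ (or vice versa), is precisely where a $(2,2)$-type situation could arise — but in arity $2$ a tree has only two leaves, so a binary $\scA$-vertex sitting above a binary $\scB$-vertex would already have arity $\ge 3$; hence the only overlap between the two charts is the common refinement where both a binary $\scA$-operation and a binary $\scB$-operation are "present but degenerate,'' i.e.\ the image of $\scA(2)\times\scB(2)$ under the two maps $f$ and $g$ described in the statement, where one uses $\alpha\circ(id\oplus 0)$, $\alpha\circ(0\oplus id)\in\scA(1)$ to record the ways the binary $\alpha$ degenerates to a unary operation (and dually for $\beta$). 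Thus $(\scA\otimes\scB)(2)$ is the union of the two charts glued along this common part, which is exactly the pushout in the statement.

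The main obstacle I expect is the \emph{faithfulness} of the two charts and the \emph{completeness} of the normal form: one must show that no further interchange relations identify points within a chart, and that the word problem in arity $2$ really is solved by these two normal forms plus their overlap. Concretely, one has to verify that an arbitrary alternating tree with two leaves, after applying internal compositions and the unary and $(1,l)/(k,1)$ interchanges, reaches one of the two forms, and that two words reaching the same normal form were related by the interchange relations from the start. I would handle this by a careful induction on the number of vertices of a representing tree: the inductive step pushes the outermost unary vertices inward using Remark \ref{3_2a}, and uses the $\scA(0)=\scB(0)=\{0\}$ hypothesis to see that the only binary vertices that survive without increasing arity are the two "outermost'' ones (one from each operad at most). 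Once the set-level bijection is established, continuity in both directions is routine: the pushout topology is precisely the quotient topology inherited from $(\scA\amalg\scB)(2)$ restricted to the locus of the two normal forms, and the structure maps $f$, $g$ are evidently continuous, so the induced map from the pushout to $(\scA\otimes\scB)(2)$ is a continuous bijection with continuous inverse.
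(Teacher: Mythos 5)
Your overall strategy coincides with the paper's: reduce modulo unary interchanges to the two normal forms $\scA(2)\times\scB(1)^2$ and $\scA(1)^2\times\scB(2)$, then identify the gluing locus with the image of $\scA(2)\times\scB(2)$ under $f$ and $g$. The arity-one argument and the derivation of the two normal forms are correct and match the paper. However, there is a genuine gap at the step you yourself flag as "the main obstacle": you never actually carry out the verification that the \emph{only} identifications between (and within) the two charts are those generated by the single relation $f(\alpha,\beta)\sim g(\alpha,\beta)$. Your intermediate claim that a binary $\scA$-vertex sitting above a binary $\scB$-vertex "would already have arity $\ge 3$" is not a valid way to dispose of the $(2,2)$-interchanges: such configurations do occur in arity $2$, because the extra leaves are filled by the constant $0\in\scA(0)=\scB(0)$. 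The real content of the proof is the enumeration of where the two genuine inputs $1,2$ and the two constants $0,0$ can sit among the four leaves of a $(2,2)$-interchange applied to a reduced representative. There are four essentially distinct placements; two of them follow from the other two by the equivariance relation, one collapses to a trivial identity using $a\circ(0\oplus 0)=0$ (this is where the hypothesis $\scA(0)=\{0\}$ is used a second time, beyond merely killing stumps), and only the remaining one survives and, after absorbing the unary factors $b_1',b_2'$ via $b'=b\circ(b_1'\oplus b_2')$, yields exactly the relation $\left(a,b\circ(id\oplus 0),b\circ(0\oplus id)\right)\sim\left(a\circ(id\oplus 0),a\circ(0\oplus id),b\right)$. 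One must also check separately that interchanges initiated from the $G_2(1\boxtimes_2 2)$ side produce only the reverse of this relation. Without this case analysis the pushout description is asserted rather than proved; your proposed "induction on the number of vertices" is a plausible scaffold for the normal-form reduction but does not by itself address which degenerate $(2,2)$-interchanges survive.

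A smaller point: your description of $\scA\amalg\scB$ as trees with "alternately labeled" vertices is fine as a normal form, but the faithfulness of the charts also uses that adjacent same-operad vertices compose, that unary $\scA$- and $\scB$-labels commute past each other, and that a unary $\scB(1)$-label below the binary $\scA$-node migrates to \emph{both} branches above it via $b\circ a_2=a_2\circ(b\oplus b)$; you implicitly use all of these, and they are consistent with the paper, so no issue there beyond making them explicit.
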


To get a handle on the homotopy type of the space of binary operations, we need
to impose additional hypotheses (which are satisfied by $\scA=W|\mathcal{NM}_k|$ and
$\scB=W|\mathcal{NM}_l|$ in particular). We then obtain the following result.

\begin{coro}\label{4_9} Let $\scA$ and $\scB$ be arbitrary topological operads with
$\scA(0)=\{0\}$ and $\scB(0)=\{0\}$. Suppose also that the spaces $\scA(1)$ and
$\scB(1)$ are contractible and the map $\scA(2)\longrightarrow\scA(1)^2$ given by
$\alpha\mapsto(\alpha\circ(id\oplus0),\alpha\circ(0\oplus id))$ is a cofibration.
Then $(\scA\otimes\scB)(2)$ has the homotopy type of the join $\scA(2)*\scB(2)$.
\end{coro}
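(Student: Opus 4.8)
The plan is to combine the explicit pushout description of $(\scA\otimes\scB)(2)$ from Proposition \ref{4_8} with a standard homotopy-invariance argument for pushouts. Write $\phi\colon\scA(2)\to\scA(1)^2$ for the map $\alpha\mapsto(\alpha\circ(id\oplus0),\alpha\circ(0\oplus id))$ and $\psi\colon\scB(2)\to\scB(1)^2$ for its $\scB$-analogue. Inspecting the formulas in Proposition \ref{4_8}, the maps $f$ and $g$ are, up to the obvious reshuffling of factors, just $f=\phi\times id_{\scB(2)}$ and $g=id_{\scA(2)}\times\psi$. So $(\scA\otimes\scB)(2)$ is the genuine pushout of the span $\scA(1)^2\times\scB(2)\xleftarrow{f}\scA(2)\times\scB(2)\xrightarrow{g}\scA(2)\times\scB(1)^2$, and my task is to show this span's pushout is homotopy equivalent to the join $\scA(2)*\scB(2)$.

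First I would use the cofibration hypothesis. Since the product of a cofibration with an identity map is again a cofibration in the category of $k$-spaces, $f=\phi\times id_{\scB(2)}$ is a cofibration, and therefore the strict pushout above already computes the homotopy pushout of the span; equivalently, $(\scA\otimes\scB)(2)$ is homotopy equivalent to the double mapping cylinder of $f$ and $g$. Next I would set up a comparison with the span $\scB(2)\xleftarrow{\mathrm{pr}}\scA(2)\times\scB(2)\xrightarrow{\mathrm{pr}}\scA(2)$: the projections $\scA(1)^2\times\scB(2)\to\scB(2)$ and $\scA(2)\times\scB(1)^2\to\scA(2)$, together with the identity on the apex $\scA(2)\times\scB(2)$, constitute a map of spans, and commutativity of the two resulting squares is immediate from the formulas for $f$ and $g$ (one simply forgets the $\phi$- and $\psi$-coordinates). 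Because $\scA(1)$ and $\scB(1)$ are contractible, so are $\scA(1)^2$ and $\scB(1)^2$, hence these two projections are homotopy equivalences; thus the comparison is an objectwise homotopy equivalence of spans. By the gluing lemma for homotopy pushouts, the induced map of double mapping cylinders is a homotopy equivalence. Finally, the double mapping cylinder of $\scB(2)\xleftarrow{\mathrm{pr}}\scA(2)\times\scB(2)\xrightarrow{\mathrm{pr}}\scA(2)$ is, by direct inspection of the identifications, precisely the join $\scA(2)*\scB(2)$. Concatenating the three equivalences yields $(\scA\otimes\scB)(2)\simeq\scA(2)*\scB(2)$.

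The one genuinely delicate point is the passage from the strict pushout to the homotopy pushout: this is exactly where the hypothesis that $\phi$ is a cofibration is needed, and it is the only place it enters. Everything else is routine once one invokes the standard facts that products with identities preserve cofibrations in $k$-spaces, that a pushout along a cofibration is a homotopy pushout, and that an objectwise homotopy equivalence of span diagrams induces a homotopy equivalence on homotopy pushouts; I expect no further obstacle.
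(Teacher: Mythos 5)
Your proposal is correct and follows essentially the same route as the paper: use the cofibration hypothesis to replace the strict pushout of Proposition \ref{4_8} by a homotopy pushout, then use contractibility of $\scA(1)$ and $\scB(1)$ to replace the span by the projection span $\scB(2)\leftarrow\scA(2)\times\scB(2)\rightarrow\scA(2)$, whose homotopy pushout is the join by definition. You have merely spelled out the intermediate steps (identifying $f$ and $g$ as $\phi\times id$ and $id\times\psi$, and invoking the gluing lemma) in more detail than the paper does.
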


\begin{proof} Because of the cofibration hypothesis, the pushout diagram for
$(\scA\otimes\scB)(2)$ has the same homotopy type as the homotopy pushout.
Because $\scA(1)$ and $\scB(1)$ are contractible, this in turn has the
same homotopy type as the homotopy pushout of 
$$
\xymatrix{
\scA(2)\times\scB(2)\ar[rr]^{\mbox{pr}_2}\ar[d]^{\mbox{pr}_1} &&\scB(2)
\\
\scA(2)
},
$$
which is by definition the join $\scA(2)*\scB(2)$.
\end{proof}

If $\scA$ is an $E_k$ operad and satisfies the cofibration hypothesis of Corollary \ref{4_9},
while $\scB$ is an arbitrary $E_l$ operad, then $\scA(2)$ has the homotopy type of $S^{k-1}$,
and $\scB(2)$ has the homotopy type of $S^{l-1}$. Thus by Corollary \ref{4_9}, $(\scA\otimes\scB)(2)$
has the homotopy type of $S^{k-1}*S^{l-1}\cong S^{k+l-1}$. This is consistent with $\scA\otimes\scB$
being an $E_{k+l}$ operad, and suggests that Theorem \ref{4_4} might hold with a weaker hypothesis.
However, we will not pursue this further in this paper.

\section{Outline of the proofs}

Before we discuss the proof of Theorem \ref{4_4}, we need a more explicit description
of the tensor product of operads. We begin with a description of the coproduct
$\mathcal{B}\amalg\mathcal{C}$ of $\mathcal{B}$ and $\mathcal{C}$. The elements
of $(\scA\amalg\scB)(m)$ are equivalence classes of planar trees with $m$ labeled
inputs and one output, and whose nodes are labeled by elements of the the operads $\scA$
and $\scB$. It is required that the arity of each node label correspond to the number
of input branches coming into that node. So we have to allow nodes without inputs, which we 
call \textit{stumps} labeled by elements in $\scA(0)$ or $\scB(0)$. For a more detailed description
see Definition \ref{7_7}. The topology imposed  is the quotient topology
on the evident disjoint union of products of spaces $\scA(i)$ and $\scB(j)$.
Operad composition in $\scA\amalg\scB$ is given by splicing together trees.

The following equivalence relations are imposed
on the trees.  First of all if two trees have subtrees of the form shown
below but are otherwise identical, then they are identified:
\begin{leer}\label{5_1a}
$$\mbox{
\setlength{\unitlength}{0.75\unitlength}
\begin{picture}(220,115)(-100,0)
\put(0,0){\line(0,1){20}}
\put(0,20){\circle*{4}}
\put(0,20){\line(-2,1){100}}
\put(0,20){\line(-1,2){25}}
\put(0,20){\line(2,1){100}}
\put(35,67){\dots}
\put(4,12){$\lambda$}
\put(-100,70){\circle*{4}}
\put(-100,70){\line(-1,2){20}}
\put(-100,70){\line(1,2){20}}
\put(-95,70){$\lambda_1$}
\put(-105,110){\dots}
\put(-25,70){\circle*{4}}
\put(-25,70){\line(-1,2){20}}
\put(-25,70){\line(1,2){20}}
\put(-20,70){$\lambda_2$}
\put(-30,110){\dots}
\put(100,70){\circle*{4}}
\put(100,70){\line(-1,2){20}}
\put(100,70){\line(1,2){20}}
\put(105,70){$\lambda_k$}
\put(95,110){\dots}
\put(110,40){\Large=}
\end{picture}
}\mbox{
\setlength{\unitlength}{0.75\unitlength}
\begin{picture}(220,115)(-100,0)
\put(0,0){\line(0,1){20}}
\put(0,20){\circle*{4}}
\put(0,20){\line(-2,1){100}}
\put(0,20){\line(-1,2){25}}
\put(0,20){\line(2,1){100}}
\put(35,67){\dots}
\put(4,12){$\lambda\circ(\lambda_1\oplus\lambda_2\oplus\dots\oplus\lambda_k)$}
\end{picture}
}$$
Here we are assuming that the node labels $\lambda$, $\lambda_1$,
$\lambda_2$, \dots, $\lambda_k$ all belong to $\scA$ or all belong
to $\scB$, and $\lambda\circ(\lambda_1\oplus\lambda_2\oplus\dots\oplus\lambda_k)$
denotes operad composition.

Additionally we also allow
changing unary nodes which are labeled by the unit of $\scA$ to unary nodes
labeled by the unit of $\scB$ and vice versa. 
\end{leer}
\begin{leer}\label{5_1b}
 We also need to impose an equivariance relation
$$\mbox{
\setlength{\unitlength}{0.75\unitlength}
\begin{picture}(220,80)(-100,0)
\put(0,0){\line(0,1){20}}
\put(0,20){\circle*{4}}
\put(0,20){\line(-2,1){100}}
\put(0,20){\line(-1,2){25}}
\put(0,20){\line(2,1){100}}
\put(35,67){\dots}
\put(4,12){$\lambda\circ\sigma$}
\put(-102,73){$T_1$}
\put(-27,73){$T_2$}
\put(98,73){$T_k$}
\put(110,40){\Large=}
\end{picture}
}\mbox{
\setlength{\unitlength}{0.75\unitlength}
\begin{picture}(250,80)(-130,0)
\put(0,0){\line(0,1){20}}
\put(0,20){\circle*{4}}
\put(0,20){\line(-2,1){100}}
\put(0,20){\line(-1,2){25}}
\put(0,20){\line(2,1){100}}
\put(35,67){\dots}
\put(4,12){$\lambda$}
\put(-106,73){$T_{\sigma^{-1}(1)}$}
\put(-31,73){$T_{\sigma^{-1}(2)}$}
\put(94,73){$T_{\sigma^{-1}(k)}$}
\end{picture}
}$$
\end{leer}
Here we are assuming that the node label $\lambda$ is either in $\scA(k)$ or
in $\scB(k)$ and $\sigma\in\Sigma_k$.

The resulting spaces of equivalence
classes of trees form the coproduct operad $\scA\amalg\scB$.
The two equivalence relations insure that the
images of $\scA$ and $\scB$ are suboperads of $\scA\amalg\scB$.

\begin{leer}\label{5_1c}
To pass from the coproduct $\scA\amalg\scB$ to the tensor product we need
to impose another relation corresponding to the interchange. We identify two
trees if they have subtrees of the form shown below, but are otherwise identical:
$$\mbox{
\setlength{\unitlength}{0.75\unitlength}
\begin{picture}(220,120)(-100,0)
\put(0,0){\line(0,1){20}}
\put(0,20){\circle*{4}}
\put(0,20){\line(-2,1){100}}
\put(0,20){\line(-1,2){25}}
\put(0,20){\line(2,1){100}}
\put(35,67){\dots}
\put(4,12){$\alpha$}
\put(-100,70){\circle*{4}}
\put(-100,70){\line(-1,2){20}}
\put(-100,70){\line(1,2){20}}
\put(-95,70){$\beta$}
\put(-105,110){\dots}
\put(-125,113){$T_{11}$}
\put(-85,113){$T_{1\ell}$}
\put(-25,70){\circle*{4}}
\put(-25,70){\line(-1,2){20}}
\put(-25,70){\line(1,2){20}}
\put(-20,70){$\beta$}
\put(-30,110){\dots}
\put(-50,113){$T_{21}$}
\put(-10,113){$T_{2\ell}$}
\put(100,70){\circle*{4}}
\put(100,70){\line(-1,2){20}}
\put(100,70){\line(1,2){20}}
\put(105,70){$\beta$}
\put(95,110){\dots}
\put(75,113){$T_{k1}$}
\put(115,113){$T_{k\ell}$}
\put(110,40){\Large=}
\end{picture}
}\mbox{
\setlength{\unitlength}{0.75\unitlength}
\begin{picture}(250,120)(-130,0)
\put(0,0){\line(0,1){20}}
\put(0,20){\circle*{4}}
\put(0,20){\line(-2,1){100}}
\put(0,20){\line(-1,2){25}}
\put(0,20){\line(2,1){100}}
\put(35,67){\dots}
\put(-125,113){$T_{11}$}
\put(-85,113){$T_{k1}$}
\put(4,12){$\beta$}
\put(-100,70){\circle*{4}}
\put(-100,70){\line(-1,2){20}}
\put(-100,70){\line(1,2){20}}
\put(-95,70){$\alpha$}
\put(-105,110){\dots}
\put(-50,113){$T_{12}$}
\put(-10,113){$T_{k2}$}
\put(-25,70){\circle*{4}}
\put(-25,70){\line(-1,2){20}}
\put(-25,70){\line(1,2){20}}
\put(-20,70){$\alpha$}
\put(-30,110){\dots}
\put(100,70){\circle*{4}}
\put(100,70){\line(-1,2){20}}
\put(100,70){\line(1,2){20}}
\put(105,70){$\alpha$}
\put(95,110){\dots}
\put(75,113){$T_{1\ell}$}
\put(115,113){$T_{k\ell}$}
\end{picture}
}$$
\end{leer}
Here $\alpha\in\scA(k)$, $\beta\in\scB(\ell)$, and $T_{ij}$ denote the
branches of the trees lying above the nodes shown. Note that these branches
are permuted on the two sides of the relation. The resulting spaces of
equivalence classes of trees form the tensor product $\scA\otimes\scB$.
For future reference, we will define the equivalence relation shown in the picture
above as a \underline{$(k,l)$-interchange} on trees. Trees related by a sequence
of $(k,1)$- or $(1,l)$-interchanges will be said to be related by \underline{unary
interchanges} (cf. Remark 3.3).

\begin{rema}\label{5_001}
According to Proposition \ref{3_5}, the nullary operations in $\scA$ and $\scB$
are identified to a single nullary operation in $\scA\otimes\scB$.  This together
with relation \ref{5_1a} allows us to remove any stumps from trees representing
elements in $\scA\otimes\scB$. We shall refer to these relations as \underline{nullary
interchanges}.
\end{rema}

In order to determine the homotopy type of a tensor product of operads, we need
to describe its spaces of operations as colimits of ``nice'' diagrams. Since the
operad $\mathcal{M}_2$ encodes the algebraic structure of a 2-fold monoidal category,
i.e. a category with two multiplications which interchange with each other up to
coherent natural transformations, it should not be surprising that $\scM_2$
should serve as the basis for constructing such diagrams. However we need to make
some adjustments.

First of all, to avoid confusion between the multiplications in $\scM_2$
and the internal multiplications in $W|\mathcal{NM}_k|\otimes W|\mathcal{NM}_l|$ arising
from those in $\scM_k$ and $\scM_l$, we will denote the multiplications in $\scM_2$
by $\boxtimes_1$ and $\boxtimes_2$ instead of $\square_1$ and $\square_2$.

\begin{lem}\label{5_01} The poset operad $\scM_2$ has a quotient poset operad
$\mathcal{M}_2^{ab}$, obtained from $\scM_2$ by making the operations
$\boxtimes_1$ and $\boxtimes_2$ commutative.\\
\end{lem}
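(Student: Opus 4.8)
The plan is to realize $\mathcal{M}_2^{ab}$ explicitly with the help of the coherence theorem of \cite{BFSV}, and then to recognise the resulting poset operad as the quotient of $\scM_2$ obtained by making $\boxtimes_1$ and $\boxtimes_2$ commutative. First I would recall the relevant facts about $\scM_2(m)$. By \cite{BFSV} the objects of $\scM_2(m)$ are the words of length $m$ in the two strictly associative unital operations $\boxtimes_1,\boxtimes_2$ in which each generator $1,\dots,m$ occurs exactly once, taken up to associativity and unitality (equivalently, finite planar trees with internal nodes alternately labelled in $\{\boxtimes_1,\boxtimes_2\}$ and leaves bijectively labelled by $\{1,\dots,m\}$). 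For an object $u$ and a two-element subset $\{a,b\}$ of $\{1,\dots,m\}$, the smallest subword of $u$ containing both $a$ and $b$ has the form $P\boxtimes_i Q$ for a unique $i\in\{1,2\}$, the \emph{level} $\ell_u(\{a,b\})$, with $a$ lying on a definite side; the coherence theorem asserts that $\scM_2(m)$ is a poset and that there is a (necessarily unique) morphism $u\to v$ exactly when $\ell_u\le\ell_v$ pointwise and, for every pair with $\ell_u(\{a,b\})=\ell_v(\{a,b\})=1$, the objects $u$ and $v$ put $a$ on the same side. A direct inspection of the generating interchanges $\eta^{12}$ shows that each of them raises the level of exactly two pairs from $1$ to $2$ and leaves the level and the side of every other pair unchanged, so morphisms can only increase levels and can only preserve sides at pairs of unchanged level $1$.

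Next I would define $\mathcal{M}_2^{ab}(m)$ to be the set of all functions $\ell\colon\binom{\{1,\dots,m\}}{2}\to\{1,2\}$ of the form $\ell_u$, partially ordered by the pointwise order; the symmetric groups act by relabelling the generators, and operad composition is the one induced by substitution of words, which has a predictable effect on level functions (for leaves in the same substituted block one reads off the inner level, for leaves in different blocks the outer level). It is then routine that $\mathcal{M}_2^{ab}$ is a $\Cat$-operad, in fact a poset operad. The assignment $u\mapsto\ell_u$ is equivariant, compatible with operad composition, and functorial by the coherence criterion, hence a map of operads $q\colon\scM_2\to\mathcal{M}_2^{ab}$. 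Since replacing a subword $X\boxtimes_i Y$ by $Y\boxtimes_i X$ does not change $\ell_u$, the operations $\boxtimes_1,\boxtimes_2$ become commutative in $\mathcal{M}_2^{ab}$, while the images of the $\eta^{12}$ remain nontrivial, so $\boxtimes_1$ and $\boxtimes_2$ are not forced to interchange, in accordance with Corollary \ref{3_7}.

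It remains to check that $q$ exhibits $\mathcal{M}_2^{ab}$ as precisely the quotient of $\scM_2$ by making $\boxtimes_1,\boxtimes_2$ commutative, i.e.\ that any operad map out of $\scM_2$ under which these operations become commutative factors uniquely through $q$. This reduces to two combinatorial statements: (a) two objects of $\scM_2(m)$ with the same level function are connected by a chain of commutativity moves — because a level function determines the underlying tree up to associativity, and the only remaining freedom, the left--right order of the branches at each internal node, is exactly what the commutativity moves act on; and (b) whenever $\ell_u\le\ell_v$ there is an object $v'$ obtained from $v$ by commutativity moves together with a morphism $u\to v'$ in $\scM_2(m)$ — obtained by adjusting, node by node, the branch orders of $v$ so that they agree with those of $u$ on the pairs of common level $1$, which is possible because such pairs lie inside a common $\boxtimes_1$-block of $v$ on which $u$ induces a consistent linear order, and then invoking the coherence theorem. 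Statement (a) says $q$ is bijective on objects modulo commutativity, and (b) says $q$ is surjective on morphisms; together with $\mathcal{M}_2^{ab}(m)$ being a poset, they yield the universal property.

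I expect (b) to be the main obstacle. Carrying out the branch-order adjustments on $v$ coherently really uses the nesting structure of the trees underlying $\scM_2(m)$ — equivalently, it requires a careful identification of exactly which part of the coherence data of \cite{BFSV} survives when the products are made commutative — and it is the step where one must make sure the quotient acquires no unexpected parallel morphisms, which is the whole content of the assertion that $\mathcal{M}_2^{ab}$ is again a \emph{poset} operad.
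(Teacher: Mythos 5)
Your route is genuinely different from the paper's and considerably more ambitious: you try to realize $\scM_2^{ab}(m)$ explicitly as the set of level functions with the pointwise order and then verify a universal property, which forces you to characterize the induced order completely. The paper does much less. It only needs to check that the relation induced on commutativity classes is antisymmetric, and it does so by observing that the number of pairs $\{a,b\}$ with $\alpha\cap\{a,b\}=a\boxtimes_2 b$ is, by the coherence criterion, a strictly monotone integer invariant on $\scM_2(m)$ (strictly increasing along every non-identity morphism) which is unchanged by commutativity moves; antisymmetry of the quotient preorder is then immediate, and no description of the order in terms of level functions is required. Your step (a) is correct: the level function does determine the commutativity class, since the root partition can be recovered from the equivalence relation generated by the level-$2$ (resp.\ level-$1$) pairs and one recurses.

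The gap is in step (b), which is false as you state it. Take $m=4$, $u=1\boxtimes_1 4\boxtimes_1 2\boxtimes_1 3$ and $v=((1\boxtimes_1 2)\boxtimes_2 3)\boxtimes_1 4$. Then $\ell_u\le\ell_v$ pointwise ($\ell_u$ is constant $1$), but there is no $v'$ obtained from $v$ by commutativity moves admitting a morphism $u\to v'$: for the pairs $\{1,4\}$ and $\{2,4\}$, whose level is $1$ in both objects, the coherence criterion forces $v'$ to orient them as $u$ does, so $v'$ would have to place $1$ before $4$ and $4$ before $2$ at its root node, which is impossible because $1$ and $2$ lie in the same root branch of every $v'\sim v$. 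This is exactly the failure mode you flagged as the main obstacle: $u$ can impose mutually inconsistent side conditions on the branches of a single node of $v$, so adjusting the branch orders of $v$ node by node cannot succeed in general. The statement can be repaired --- here one must also replace $u$ by $u'=1\boxtimes_1 2\boxtimes_1 3\boxtimes_1 4\sim u$, after which $u'\to v$ does exist --- so the correct formulation has to allow commutativity moves on both $u$ and $v$ (or chains of such), and proving that version is a nontrivial combinatorial argument that your proposal does not supply. If you only want the lemma as stated, the paper's counting argument gets you there without ever confronting (b).
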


\begin{proof} In order to verify that making $\boxtimes_1$ and $\boxtimes_2$ commutative
is compatible with the poset structure of $\scM_2(m)$, we recall the criterion
for the existence of a morphism $\alpha\longrightarrow\beta$ in $\scM_2(m)$. If $\{a,b\}$
is any two element subset of $\{1,2,\dots,m\}$ and $\alpha\cap\{a,b\}=a\boxtimes_i b$, then
either $\beta\cap\{a,b\}=a\boxtimes_j b$ for $j\ge i$ or $\beta\cap\{a,b\}=b\boxtimes_j a$ for
$j>i$. [Here we use the notation for restriction maps introduced in Definition \ref{2_3a}.]
Using this criterion we see that the number of pairs $\{a,b\}$ for which
$\alpha\cap\{a,b\}=a\boxtimes_2 b$ is less than or equal to the number of pairs $\{c,d\}$
for which $\beta\cap\{c,d\}=c\boxtimes_2 d$, and that the counts are equal iff $\alpha=\beta$.
Making $\boxtimes_1$ and $\boxtimes_2$ commutative does not affect these counts, so the
poset structure on $\scM_2(m)$ induces a poset structure on $\scM_2^{ab}(m)$.
It is straight forward to verify that the operad structure on $\scM_2$ passes to
an operad structure on $\scM_2^{ab}$.
\end{proof}

\begin{defi}\label{5_05} We will not need to use the poset structure of $\scM_2^{ab}$
until Section \ref{sec11}. However we will need to use the underlying sets of objects of
$\scM_2^{ab}$ in various constructions before that. Hence we will use the separate
notation $\mathfrak{M}_2^{ab}(m)$ to denote the underlying set of objects of $\scM_2^{ab}(m)$.
Obviously the operad structure on $\scM_2^{ab}$ restricts to an operad structure on
$\mathfrak{M}_2^{ab}(m)$.
\end{defi}

\begin{defi}\label{5_4} We define a collection of simplicial complexes (in the classical sense of the
term, rather than simplicial sets) $\{\mathfrak{K}_\bullet(m)\}_{m\ge0}$
as follows. The vertex set of $\{\mathfrak{K}_\bullet(m)\}_{m\ge0}$ is $\mathfrak{M}_2^{ab}(m)$.
The complexes $\mathfrak{K}_\bullet(0)$ and $\mathfrak{K}_\bullet(1)$ are 0-dimensional consisting of a single
vertex while $\mathfrak{K}_\bullet(2)$ is the simplicial complex consisting of the single 1-simplex
$\{1\boxtimes_1 2,1\boxtimes_2 2\}$ and its subsimplices. The complex
$\mathfrak{K}_\bullet(3)$ has the vertex set
$$
\xymatrix{
&(1\boxtimes_22)\boxtimes_13 & (2\boxtimes_13)
\boxtimes_21 \\
(1\boxtimes_12\boxtimes_13) &
(1\boxtimes_23)\boxtimes_12 &
(1\boxtimes_13)\boxtimes_22& (1\boxtimes_22\boxtimes_23)\\
& (2\boxtimes_23)\boxtimes_11 &
(1\boxtimes_12)\boxtimes_23
}
$$
The simplices are those collections of vertices which contain at most one vertex from each
column. For $m\ge 3$ the simplicial complex $\mathfrak{K}_\bullet(m)$ has as $r$-simplices
those $(r+1)$-tuples $\{\alpha_0,\alpha_1,\dots \alpha_r\}$ such that for each subset $\{a<b<c\}\subset\{1,2,\dots,m\}$
the restrictions $\{\alpha_0\cap\{a,b,c\},\alpha_1\cap\{a,b,c\},\dots,\alpha_r\cap\{a,b,c\}$ forms a simplex in
 $\mathfrak{K}_\bullet(\{a,b,c\})$, which is identified with 
$\mathfrak{K}_\bullet(3)$ via the isomorphism induced by $1\mapsto a$, $2\mapsto b$ and $3\mapsto c$.
\end{defi}

\begin{lem}\label{5_1}For any topological operads $\scA$ and $\scB$, there is a natural map of operads
$$\varepsilon:\scA\amalg\scB\longrightarrow\mathfrak{M}_2^{ab}$$
\end{lem}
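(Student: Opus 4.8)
The map $\varepsilon$ should send a tree in $(\scA\amalg\scB)(m)$ to the object of $\mathfrak{M}_2^{ab}(m)$ obtained by collapsing all $\scA$-nodes to applications of $\boxtimes_1$ and all $\scB$-nodes to applications of $\boxtimes_2$, then reading off the resulting (commutative) word in the inputs $\{1,\dots,m\}$. Concretely, I would first define $\varepsilon$ on the level of the coproduct by giving, for each pair $(i,j)$, the obvious maps $\scA(i)\to\mathfrak{M}_2^{ab}$-data and $\scB(j)\to\mathfrak{M}_2^{ab}$-data: an element $\alpha\in\scA(k)$ maps to the iterated $\boxtimes_1$-operation $x_1\boxtimes_1 x_2\boxtimes_1\cdots\boxtimes_1 x_k$ (well-defined since $\boxtimes_1$ is strictly associative in $\scM_2$, hence in $\mathfrak{M}_2^{ab}$), and likewise $\beta\in\scB(l)$ maps to $x_1\boxtimes_2\cdots\boxtimes_2 x_l$; the nullary operations $\scA(0)=\scB(0)=\{0\}$ map to the unit $0$, and the units of $\scA(1),\scB(1)$ map to $id$. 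By the universal property of the coproduct this assembles into an operad map $\scA\amalg\scB\to\mathfrak{M}_2^{ab}$, provided these are genuine operad maps — which amounts to checking compatibility with operad composition, and this follows from strict associativity of $\boxtimes_1$ and $\boxtimes_2$ together with the unit axioms.

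The key verification is that $\varepsilon$ is continuous and well-defined, i.e.\ respects the equivalence relations $(\ref{5_1a})$ and $(\ref{5_1b})$ on trees. Continuity is automatic: each component $(\scA\amalg\scB)(m)$ carries the quotient topology from a disjoint union of products of the $\scA(i)$ and $\scB(j)$, and on each such product $\varepsilon$ is locally constant (it only depends on the combinatorial shape of the tree and the operad to which each node label belongs, not on the labels themselves within a fixed operad), hence continuous; so the induced map on the quotient is continuous. For the composition-of-adjacent-same-operad-nodes relation in $(\ref{5_1a})$: if a node labelled $\lambda\in\scA(k)$ sits above nodes labelled $\lambda_1,\dots,\lambda_k\in\scA$, collapsing first to $\boxtimes_1$-words and then concatenating gives the same $\boxtimes_1$-word as collapsing $\lambda\circ(\lambda_1\oplus\cdots\oplus\lambda_k)$ directly, again by strict associativity of $\boxtimes_1$; similarly for $\scB$. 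The unit-exchange part of $(\ref{5_1a})$ is handled because both units go to $id$. The equivariance relation $(\ref{5_1b})$ is respected because permuting the branches above a node permutes the corresponding input letters of the $\boxtimes_i$-word accordingly, which is exactly the $\Sigma_k$-action under which $\varepsilon$ is equivariant by construction.

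**Main obstacle.** The one genuinely nontrivial point is showing that the target is $\mathfrak{M}_2^{ab}$ and not something larger: one must check that the word in the inputs produced by any tree, after the commutativity identifications on $\boxtimes_1$ and $\boxtimes_2$ and the associativity normalizations, actually lies in the set $\mathfrak{M}_2^{ab}(m)$ as defined in $(\ref{5_05})$ and $(\ref{5_4})$ — that is, it is a legitimate object of the quotient $\scM_2^{ab}$ and every generator $\{1,\dots,m\}$ occurs exactly once. Occurrence-exactly-once is immediate from the tree having $m$ distinctly labelled inputs. The subtler issue is that repeated alternation of the two multiplications in a deep tree still collapses to a well-formed word of the form allowed in $\scM_2$; this is exactly where one uses that $\scM_2$ (and hence $\mathfrak{M}_2^{ab}$) already contains \emph{all} such alternating bracketings as objects, so no relation beyond associativity, commutativity, and the unit laws is needed. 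Naturality in $\scA$ and $\scB$ is then a routine check: a pair of operad maps $\scA\to\scA'$, $\scB\to\scB'$ induces a map on trees that does not alter the combinatorial data $\varepsilon$ records, so the evident square commutes. I would organize the write-up as: (1) define $\varepsilon$ on generators; (2) verify the operad-map axioms on the coproduct via the universal property; (3) check continuity and the two tree relations; (4) remark on naturality.
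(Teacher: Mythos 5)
Your proposal is correct and follows essentially the same route as the paper: collapse $\scA$-labelled nodes to iterated $\boxtimes_1$, $\scB$-labelled nodes to iterated $\boxtimes_2$, send stumps to $0$ and unary labels to $id$, and evaluate in $\mathfrak{M}_2^{ab}$, with commutativity of $\boxtimes_1,\boxtimes_2$ being exactly what makes the constant maps $\scA(k)\to\mathfrak{M}_2^{ab}(k)$ equivariant and hence the construction compatible with the equivariance relation on trees (the point the paper flags in its bracketed remark). Your additional framing via the universal property of the coproduct is a mild streamlining that renders the explicit tree-relation checks redundant, but the substance is identical.
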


\begin{proof}
To define $\varepsilon$, take a tree representative $T$ of an element in $(\scA\amalg\scB)(m)$. Replace each stump by $0$
and delete each node with exactly one input by combining its input and output.
Regard each node in $T$ with more than $1$ input and label coming from $\scA$ as an iterated multiplication $\boxtimes_1$ and label
coming from $\scB$ as an iterated multiplication $\boxtimes_2$. Interpret the edges of the resulting tree as compositions
in the operad $\mathfrak{M}_2^{ab}$. This evaluated tree gives an element of
$\mathfrak{M}_2^{ab}(m)$. It is obvious that this construction is compatible with the
equivalence relations on trees which define the elements of $(\scA\amalg\scB)(m)$. [Note that
this construction does not give a well defined map $\scA\amalg\scB$ to the set of objects of
$\scM_2$,
since that would not be compatible with the equivariance relation on trees.]
\end{proof}

\begin{leer}\label{5_2}
We observe that Lemma \ref{5_1} gives a trivial colimit decomposition of $(\scA\amalg\scB)(m)$,
namely as a disjoint union of $\varepsilon^{-1}(\alpha)$ indexed over all the elements of
$\mathfrak{M}_2^{ab}(m)$. For future reference, we will use the notation $\widetilde{G}_m(\alpha)$
to refer to $\varepsilon^{-1}(\alpha)$. The first step in obtaining a colimit decomposition for the tensor
product is to observe that trees in $(\scA\amalg\scB)(m)$ which are related by unary interchanges
have the same image under $\varepsilon$, since $\mathfrak{M}_2^{ab}(1)=\{1=id\}$. Thus for any object
$\alpha$ in $\mathfrak{M}_2^{ab}(m)$ we define
$$G_m(\alpha)=\widetilde{G}_m(\alpha)/\mbox{nullary and unary interchanges}$$
\end{leer}

 From now on we will take $\scA=W|\mathcal{NM}_k|$ and $\scB=W|\mathcal{NM}_l|$.

The proofs of the remaining statements are postponed to Sections 10 and 11. 

\begin{prop}\label{5_3} For each element $\alpha$ in $\mathfrak{M}_2^{ab}(m)$, the natural map
$$G_m(\alpha)\longrightarrow(W|\mathcal{NM}_k|\otimes W|\mathcal{NM}_l|)(m)$$
is a cofibration. 
\end{prop}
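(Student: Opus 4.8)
The plan is to show that each $G_m(\alpha)$ sits inside $(W|\mathcal{NM}_k|\otimes W|\mathcal{NM}_l|)(m)$ as a closed subspace with a neighborhood that deformation retracts onto it — more precisely, to exhibit $G_m(\alpha)$ as a (sub-NDR-pair) inside the big space. Since the tensor product is a quotient of the coproduct $(\scA\amalg\scB)(m)$ by the interchange relations, and $\widetilde{G}_m(\alpha)=\varepsilon^{-1}(\alpha)$ is a union of path components of the coproduct (Lemma \ref{5_1} and \ref{5_2}), the first task is to understand exactly which trees outside $\widetilde{G}_m(\alpha)$ can become identified with trees in $\widetilde{G}_m(\alpha)$ under the full interchange relation. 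The key point is that a $(k,l)$-interchange with $k>1$ and $l>1$ changes the $\varepsilon$-image (it alters the underlying object of $\mathfrak{M}_2^{ab}$), whereas unary interchanges do not. So the image of $\widetilde{G}_m(\alpha)$ in the tensor product is obtained by dividing out only the unary interchanges plus whatever identifications are forced by chains of interchanges that leave and re-enter $\widetilde{G}_m(\alpha)$; I would first argue (or cite the analysis to be carried out in the postponed sections) that no such nontrivial ``round trip'' identifications occur, so that $G_m(\alpha)\to(\scA\otimes\scB)(m)$ is injective with image a closed subspace.

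Next I would set up the cofibration itself. The strategy is the standard one for quotients built from trees: use the fact that $W\scB$-type operads are built cell-by-cell from closed cofibrations (the $W$-construction produces cofibrant operads, and in particular the generating inclusions $\scA(n)\hookrightarrow \ldots$ are closed cofibrations), together with Proposition \ref{4_8} and the axiality property advertised in the introduction (the unit $\scB(n)\to RU(\scB)(n)$ being a closed cofibration for $W|\mathcal{NM}_k|$, to be proved in Section 8). Concretely, I would filter $(W|\mathcal{NM}_k|\otimes W|\mathcal{NM}_l|)(m)$ by ``complexity'' of the reduced trees — e.g. by the total number of $\boxtimes$-type nodes or by the dimension of the $W$-cells involved — and show inductively that at each stage attaching the new cells is done along a closed cofibration, and that $G_m(\alpha)$ is a subcomplex for this filtration. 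The point is that $\widetilde G_m(\alpha)$ is already a union of components in the coproduct, each of which is a colimit of products of the contractible spaces $W|\mathcal{NM}_k|(i)$, $W|\mathcal{NM}_l|(j)$ glued along the (co)face-type maps $\alpha\mapsto(\alpha\circ(id\oplus 0),\alpha\circ(0\oplus id))$, all of which are cofibrations by hypothesis (this is exactly the cofibration condition that appears in Corollary \ref{4_9}), so $G_m(\alpha)$ is itself a CW-like object and its inclusion into the analogously-built big space is a relative CW inclusion, hence a cofibration.

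I would then assemble the NDR data: from the componentwise cofibration $\widetilde G_m(\alpha)\hookrightarrow(\scA\amalg\scB)(m)$ one gets a Strøm-structure $(u,h)$ on the coproduct; the content is to check that $(u,h)$ descends through the quotient map to the tensor product, i.e. that it is compatible with the full interchange relation restricted to a neighborhood of $\widetilde G_m(\alpha)$. Here one uses again that interchanges which move a tree off $\widetilde G_m(\alpha)$ strictly change the $\mathfrak{M}_2^{ab}$-coordinate, so near $G_m(\alpha)$ the only relations in play are unary interchanges, and $G_m(\alpha)$ was \emph{defined} by dividing $\widetilde G_m(\alpha)$ by precisely those; choosing the retracting homotopy to respect the tree structure (it only rescales $W$-coordinates, never performs an interchange) makes it automatically compatible. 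Then pushing $(u,h)$ down gives the desired cofibration structure on $G_m(\alpha)\hookrightarrow(W|\mathcal{NM}_k|\otimes W|\mathcal{NM}_l|)(m)$.

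The hard part, and the real obstacle, is the combinatorial bookkeeping showing that $G_m(\alpha)\to(\scA\otimes\scB)(m)$ is injective and has closed image — i.e. that the word problem for the interchange relation does not secretly identify two distinct classes in $G_m(\alpha)$ via a detour through trees with different $\varepsilon$-images. This is precisely the ``tricky word problem'' flagged in the abstract and introduction, and it is what the postponed Sections 10 and 11 (and the preparatory material on $\scM_2^{ab}$, the complexes $\mathfrak{K}_\bullet(m)$, and the nerve of the cover) are designed to handle; I expect the proof to reduce the statement to a normal-form result for reduced trees modulo interchange, proved by an induction on $m$ using the coherence theorem of \cite{BFSV} for $\scM_2$, after which the cofibration claim follows from the formal NDR-pair argument sketched above.
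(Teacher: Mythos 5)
Your proposal correctly isolates the crux --- injectivity of $G_m(\alpha)\to(W|\mathcal{NM}_k|\otimes W|\mathcal{NM}_l|)(m)$ --- but then defers it to ``the postponed sections'' without supplying the idea that actually resolves it. The paper's proof (Proposition \ref{9_1}) does not attack the word problem in the tensor product directly: it composes with the axial map $\xi$ into $RU(W|\mathcal{NM}_k|\otimes W|\mathcal{NM}_l|)(m)=W|\mathcal{NM}_k|(1)^m\times W|\mathcal{NM}_l|(1)^m$ and shows that this \emph{composite} is already an embedding of $G_m(\alpha)$, with image the binodal-tree space $(W|\mathcal{NM}_k|)(S_\alpha)\times(W|\mathcal{NM}_l|)(T_\alpha)$, by writing down explicit mutually inverse maps (a normal form for trees modulo unary interchange). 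Injectivity of the map into the tensor product is then automatic, since it is the first factor of an injective composite. This is exactly the ``round-trip'' argument you were hoping for, and it is why the axiality machinery of Sections 7--8 is set up; without it your sketch has no mechanism for ruling out hidden identifications.

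Your NDR/Str\o m argument for the cofibration itself also has a flaw. You assert that ``near $G_m(\alpha)$ the only relations in play are unary interchanges,'' but that is false: as the binary case in Section 6 already shows, nonunary interchanges in which some inputs are stumps identify points of $G_m(\alpha)$ with points of $G_m(\beta)$ for $\beta\neq\alpha$ (this is precisely the pushout in Proposition \ref{4_8}), so a neighborhood of $G_m(\alpha)$ in the tensor product meets the images of other $\widetilde G_m(\beta)$ and your Str\o m data on the coproduct would have to be compatible with all of these identifications --- which you do not check and which is not automatic. The paper avoids this entirely: since everything in sight is the realization of simplicial sets and the map $G_m(\alpha)\to(W|\mathcal{NM}_k|\otimes W|\mathcal{NM}_l|)(m)$ is the realization of an injective simplicial map, it is a cofibration for free. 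So the two genuine gaps are (i) the missing axial-map/normal-form argument for injectivity, and (ii) an unjustified (and in fact incorrect) local description of the quotient near $G_m(\alpha)$ on which your NDR construction rests.
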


Thus $(W|\mathcal{NM}_k|\otimes W|\mathcal{NM}_l|)(m)$ is a union of subspaces $G_m(\alpha)$
over all objects $\alpha$ in $\mathfrak{M}^{ab}_2(m)$. To determine the homotopy type
$(W|\mathcal{NM}_k|\otimes W|\mathcal{NM}_l|)(m)$, we need to analyze the intersections of these
closed subspaces. We would expect that these intersections should correspond to nonunary
interchanges, which are encoded by the simplices of $\mathfrak{K}_\bullet(m)$.

\begin{defi}\label{5_41} The \textit{interchange diagram} for $(W|\mathcal{NM}_k|\otimes W|\mathcal{NM}_l|)(m)$ is the
following diagram $G_m$ indexed by $\scI(m)=\mbox{Sd}\mathfrak{K}_\bullet(m)$,
the barycentric subdivision of $\mathfrak{K}_\bullet(m)$ with poset structure opposite to the inclusions 
of the faces of $\mathfrak{K}_\bullet(m)$:
\begin{itemize}
\item To each vertex $\alpha\in\mathfrak{K}_0(m)=\mathfrak{M}_2^{ab}(m)$ assign $G_m(\alpha)$ to $\alpha$.
\item To each barycenter of a simplex in $\mathfrak{K}_\bullet(m)$ assign the intersection
$\cap G_m(\alpha_i)$, where $\{\alpha_i\}$ are the vertices of that simplex. 
\item The maps in the diagram are inclusions.
\end{itemize}
\end{defi}

\begin{prop}\label{5_5} The interchange diagram for
$(W|\mathcal{NM}_k|\otimes W|\mathcal{NM}_l|)(m)$ is a diagram of cofibrations, and
$(W|\mathcal{NM}_k|\otimes W|\mathcal{NM}_l|)(m)$ is the colimit of that diagram. Moreover
each space in that diagram has the general form
$$\left(\prod W|\mathcal{NM}_k|(r_i)\times\prod W|\mathcal{NM}_l|(s_j)\right)/\mbox{unary interchanges},$$
and the maps in the diagram are given by operad compositions (including insertion
of the unique constant).
\end{prop}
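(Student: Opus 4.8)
The plan is to exploit the two already-established structural facts — the trivial colimit decomposition $(W|\mathcal{NM}_k|\otimes W|\mathcal{NM}_l|)(m)=\bigcup_\alpha G_m(\alpha)$ (Propositions \ref{5_3} and the discussion following it) and the combinatorics of the complex $\mathfrak{K}_\bullet(m)$ from Definition \ref{5_4} — and to show that the intersection lattice of the closed subspaces $G_m(\alpha)$ is governed exactly by the simplices of $\mathfrak{K}_\bullet(m)$. Concretely, I would first show that $G_m(\alpha)\cap G_m(\alpha')$ is nonempty precisely when $\{\alpha,\alpha'\}$ spans a $1$-simplex of $\mathfrak{K}_\bullet(m)$, and more generally that $\bigcap_i G_m(\alpha_i)$ is nonempty iff $\{\alpha_0,\ldots,\alpha_r\}$ spans a simplex; this is where the restriction criterion recalled in the proof of Lemma \ref{5_01} (applied triple-by-triple over subsets $\{a<b<c\}$, exactly as in Definition \ref{5_4}) does the bookkeeping. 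Here one uses that a tree $T$ in $(W|\mathcal{NM}_k|\amalg W|\mathcal{NM}_l|)(m)$ represents, after applying non-unary $(r,s)$-interchanges, an element lying in $G_m(\alpha)$ for \emph{every} $\alpha$ reachable from $\varepsilon(T)$ by such interchanges, and that the set of such $\alpha$ is precisely the vertex set of a simplex of $\mathfrak{K}_\bullet(m)$.

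Next I would identify each space in the diagram explicitly. A tree $T$ whose image under $\varepsilon$ lies in the simplex $\{\alpha_0,\ldots,\alpha_r\}$ has, by the coherence analysis of $\scM_2^{ab}$, a canonical shape: its $\boxtimes_1$- and $\boxtimes_2$-nodes alternate in a way forced by that simplex, so $T$ is obtained by operad composition from a product of $W|\mathcal{NM}_k|$-labelled pieces (the $\scA$-nodes, of arities $r_i$) and $W|\mathcal{NM}_l|$-labelled pieces (the $\scB$-nodes, of arities $s_j$), together with insertions of the unique constant $0$ to account for the vertices of $\mathfrak{M}_2^{ab}$ that are ``spread out'' across the simplex. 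Modding out by the remaining freedom — which is exactly re-association within a single operad and the unary interchanges — gives the stated normal form
$$\prod W|\mathcal{NM}_k|(r_i)\times\prod W|\mathcal{NM}_l|(s_j)/\mbox{unary interchanges}.$$
That the maps of the diagram are operad compositions (with constant insertions) is then immediate from how faces of a simplex in $\mathrm{Sd}\,\mathfrak{K}_\bullet(m)$ relate, and the colimit statement follows by combining the covering with the intersection identification: the canonical map from the colimit to $(W|\mathcal{NM}_k|\otimes W|\mathcal{NM}_l|)(m)$ is surjective because the $G_m(\alpha)$ cover, and injective because two representatives agreeing in the tensor product differ by a sequence of interchanges, each non-unary step of which is recorded by passing to a common face in the diagram and each unary step of which is already quotiented out inside the individual spaces.

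For the cofibration claim I would argue inductively, along the usual lines for $W$-constructions: each face inclusion in the diagram is, by the normal form above, built from the cofibration $W|\mathcal{NM}_k|(r)\to W|\mathcal{NM}_k|(r-1)^{?}$-type maps and insertions of $0$, which are closed cofibrations because $\{0\}\subset W|\mathcal{NM}_k|(0)$ and $\{id\}\subset W|\mathcal{NM}_k|(1)$ are closed cofibrations (Convention/Remark \ref{2_1a}) and $W|\mathcal{NM}_k|$ is cofibrant hence $\Sigma$-free; one then invokes the axiality property advertised in the introduction to push the cofibrancy through the quotient by unary interchanges, exactly as Proposition \ref{5_3} does for the single subspace $G_m(\alpha)$. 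The main obstacle, and the step I expect to cost the most work, is the injectivity half of the colimit identification: controlling arbitrary zig-zags of $(r,s)$-interchanges on trees and showing they can always be reorganized so that each non-unary move is witnessed inside the combinatorially prescribed diagram requires the full strength of the $\scM_2^{ab}$ coherence theorem and a careful confluence argument for the rewriting system on trees — essentially the ``tricky word problem'' flagged in the abstract.
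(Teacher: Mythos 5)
Your proposal has the right skeleton — cover the space by the $G_m(\alpha)$, show the intersection lattice is governed by the simplices of $\mathfrak{K}_\bullet(m)$, identify each piece with a product of operad spaces modulo unary interchanges — but it leaves the central difficulty unresolved, and you say so yourself at the end. The gap is precisely the claim that an intersection $\bigcap_i G_m(\alpha_i)$ \emph{computed inside the tensor product} coincides with what the combinatorics of $\mathfrak{K}_\bullet(m)$ predicts, equivalently the injectivity half of the colimit comparison. You propose to handle this by "a careful confluence argument for the rewriting system on trees," but no such argument is given, and a direct confluence analysis of arbitrary zig-zags of $(r,s)$-interchanges is exactly the "tricky word problem" the paper is at pains to avoid attacking head-on. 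Your assertion that the set of objects $\alpha$ reachable from $\varepsilon(T)$ by non-unary interchanges is precisely the vertex set of a simplex is not a lemma you can quote; it is the theorem to be proved, and it is where all the work lives.

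The paper's route around this is worth contrasting with yours: instead of rewriting on trees, it pushes everything through the axial map $\xi$ into $W|\mathcal{NM}_k|(1)^m\times W|\mathcal{NM}_l|(1)^m$. Proposition \ref{9_1} shows each $G_m(\alpha)$ embeds there with image $(W|\mathcal{NM}_k|)(S_\alpha)\times(W|\mathcal{NM}_l|)(T_\alpha)$ for a pair of binodal trees; intersections of such images are then computed concretely using the reduction to three inputs (Proposition \ref{8_4}, which rests on axiality, left factoriality, and Lemmas \ref{LemmaNull}--\ref{LemmaDrei}) together with the explicit intersection table of Proposition \ref{8_5}; nonemptiness along a simplex requires the maximal common decomposition of Lemma \ref{hope} and the recursive construction of $(S_{\overline{\alpha}},T_{\overline{\alpha}})$ in Proposition \ref{label_4}; and finally Proposition \ref{label_5}(3) shows the axial map is a homeomorphism on these intersections, which is what actually solves the word problem. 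You invoke axiality only for the cofibration claim, where it is the least essential; it is needed most where you have left the blank. To complete your proof you would need either to carry out the confluence argument in full (nontrivial and not obviously feasible) or to import the axial-embedding machinery, at which point you are reproducing the paper's Sections 7--9.
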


The operad spaces $|\mathcal{NM}_k|(r)$ and $|\mathcal{NM}_l|(s)$ can be described
as colimits of diagrams of contractible spaces indexed by the posets
$\mathcal{M}_k(r)$ and $\mathcal{M}_l(s)$, namely by assigning to each object
in $\mathcal{M}_k(r)$ or $\mathcal{M}_l(s)$ the realization of the nerve of the subposet for which
that object is terminal. Pulling back these diagrams along the augmentations
$W|\mathcal{NM}_k|(r)\longrightarrow |\mathcal{NM}_k|(r)$ and
$W|\mathcal{NM}_l|(s)\longrightarrow |\mathcal{NM}_l|(s)$, we obtain similar colimit
diagrams for $W|\mathcal{NM}_k|(r)$ and $W|\mathcal{NM}_l|(s)$. Moreover the maps in
these diagrams are cofibrations.

Combining these colimits with the colimit diagram of Proposition \ref{5_5}, we obtain
a refined iterated colimit decomposition of $(W|\mathcal{NM}_k|\otimes W|\mathcal{NM}_l|)(m)$.
By Proposition 5.2 of \cite{BFV}, such an iterated colimit diagram can be reexpressed
as a single colimit diagram over an appropriate Grothendieck construction, which we
denote $\scI(k,l)(m)$. We thus obtain the following result.

\begin{prop}\label{5_6} $(W|\mathcal{NM}_k|\otimes W|\mathcal{NM}_l|)(m)$ is the 
colimit of a diagram $G'_m$ of cofibrations
of contractible spaces indexed by the poset $\scI(k,l)(m)$,
obtained by the Grothendieck construction from the interchange diagram of Proposition \ref{5_5} by replacing the space
at each node by the product of posets $\prod\mathcal{M}_k(r_i)\times\prod\mathcal{M}_l(s_j)$
parametrizing the colimit decomposition at that node. The functors in that diagram are given
by operad compositions (including insertions of the unique constant).
\end{prop}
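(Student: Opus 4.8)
The strategy is to combine the colimit presentation of Proposition~\ref{5_5} with the colimit decompositions of the individual operad spaces $W|\mathcal{NM}_k|(r)$ and $W|\mathcal{NM}_l|(s)$ recalled above, and then to compress the resulting two-step colimit into a single one by means of Proposition~5.2 of \cite{BFV}.

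Fix a node of the interchange diagram $G_m$ of Proposition~\ref{5_5}: a simplex of $\mathfrak{K}_\bullet(m)$ with associated space
$$
N \;=\; \Big(\prod W|\mathcal{NM}_k|(r_i)\times\prod W|\mathcal{NM}_l|(s_j)\Big)\big/\text{unary interchanges}.
$$
Each factor $W|\mathcal{NM}_k|(r_i)$ is, as recalled, the colimit over the poset $\mathcal{M}_k(r_i)$ of a diagram of contractible spaces with closed cofibrations as structure maps (and likewise for $W|\mathcal{NM}_l|(s_j)$). Since in $\Top$ finite products commute with colimits and a finite product of closed cofibrations of contractible spaces is again one, $\prod W|\mathcal{NM}_k|(r_i)\times\prod W|\mathcal{NM}_l|(s_j)$ is the colimit over the product poset $P_N:=\prod\mathcal{M}_k(r_i)\times\prod\mathcal{M}_l(s_j)$ of a diagram $\widehat{G}_N$ of contractible spaces and closed cofibrations. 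It then remains to pass to the quotient by unary interchanges: because $\mathcal{M}_k(1)$ and $\mathcal{M}_l(1)$ are one-point posets, the unary part of a configuration lies over the single object of those posets and does not move within $P_N$, so the unary interchange relation is the same identification at every vertex of $P_N$; hence it descends to a levelwise identification of $\widehat{G}_N$, and one checks that this preserves contractibility of the values and the closed-cofibration property of the maps. This presents $N$ as a colimit over $P_N$ of a diagram of contractible spaces and cofibrations.

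Next, each structure map $N\to N'$ of $G_m$ is an operad composition, possibly involving insertion of the unique nullary operation $0$; on the level of the indexing posets these are induced by the operad composition maps and the restriction maps of Definition~\ref{2_3a} for $\mathcal{M}_k$ and $\mathcal{M}_l$, all of which are maps of posets. They yield functors $P_N\to P_{N'}$ together with natural transformations of the contractible-space diagrams covering $N\to N'$. Thus $G'_m$ is a diagram over $\scI(m)=\mathrm{Sd}\,\mathfrak{K}_\bullet(m)$ whose value at each node is a $P_N$-indexed diagram; applying Proposition~5.2 of \cite{BFV} reexpresses this iterated colimit as a single colimit over the Grothendieck construction $\scI(k,l)(m)$ of $N\mapsto P_N$. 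Since composites, finite products, and Grothendieck-construction assemblies of closed cofibrations of contractible spaces are again closed cofibrations of contractible spaces, the diagram $G'_m$ so obtained has the asserted form; its colimit is $(W|\mathcal{NM}_k|\otimes W|\mathcal{NM}_l|)(m)$ by Proposition~\ref{5_5}; and its functors are operad compositions (including insertions of $0$).

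The delicate point, which I expect to be the main obstacle, is the compatibility of the quotient by unary interchanges with the poset-indexed colimit decomposition of $\prod W|\mathcal{NM}_k|(r_i)\times\prod W|\mathcal{NM}_l|(s_j)$: one must verify that this quotient can be performed fibrewise over $P_N$ without destroying contractibility of the pieces or the cofibration property of the gluing maps. Everything else is either a formal property of colimits and closed cofibrations in $\Top$ or has already been recorded --- the decomposition of $|\mathcal{NM}_k|(r)$ over $\mathcal{M}_k(r)$, its pullback along the augmentation $W|\mathcal{NM}_k|(r)\to|\mathcal{NM}_k|(r)$, and Proposition~5.2 of \cite{BFV}.
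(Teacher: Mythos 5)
Your outline coincides with the one the paper itself sketches just before the statement: decompose each factor $W|\mathcal{NM}_k|(r_i)$, $W|\mathcal{NM}_l|(s_j)$ cellularly over $\scM_k(r_i)$, $\scM_l(s_j)$, take products, and reassemble the resulting iterated colimit via Proposition 5.2 of \cite{BFV}. But the step you yourself flag as ``the delicate point'' is in fact the entire substance of the proposition, and your justification for it does not hold up as stated. The unary interchange relation does not act ``at a single vertex'' of the product poset $P_N$ in any useful sense: it identifies configurations by redistributing elements of $W|\mathcal{NM}_l|(1)$ --- which is a large contractible space, not a point --- across the various factors $W|\mathcal{NM}_k|(r_i)$, and vice versa. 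Hence the quotient of $\prod W|\mathcal{NM}_k|(r_i)\times\prod W|\mathcal{NM}_l|(s_j)$ is a genuinely nontrivial identification space, and one cannot simply assert that contractibility of the cells and the cofibration property of the attaching maps survive the quotient; that is exactly what has to be proved.

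The paper handles this by a different mechanism, developed in Sections 7--10: it first establishes that $W|\mathcal{NM}_k|$ is axial and left factorial, then shows (Proposition \ref{9_1} together with Propositions \ref{label_4} and \ref{label_5}) that the axial map embeds each node space $G_m(\overline{\alpha})$ homeomorphically onto the binodal-tree space $(W|\mathcal{NM}_k|)(S_{\overline{\alpha}})\times(W|\mathcal{NM}_l|)(T_{\overline{\alpha}})$ inside $W|\mathcal{NM}_k|(1)^m\times W|\mathcal{NM}_l|(1)^m$. The cellular decomposition of the node space is then obtained not by quotienting a product of decompositions but by pulling back the evident decomposition of $|\mathcal{NM}_k(S_{\overline{\alpha}})|\times|\mathcal{NM}_l(T_{\overline{\alpha}})|$ along the augmentation, with Proposition \ref{8_7} guaranteeing that the resulting cells are contractible (Corollary \ref{8_10}). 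Moreover, the compatibility of the structure maps of the interchange diagram with these decompositions --- which you dispose of in one sentence about ``natural transformations covering $N\to N'$'' --- is the content of the ten-case analysis in the proof of Proposition \ref{label_5}(2), since those maps involve not just operad compositions and insertions of $0$ but passage across nonunary interchanges. So your proposal is an accurate restatement of the strategy, and the formal parts (products commute with colimits; the Grothendieck reassembly) are fine, but the two load-bearing claims --- contractible compatible cells on the unary-interchange quotients, and compatibility of the diagram maps with them --- are asserted rather than proved, and the sketch you offer for the first one would not go through as written.
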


Next we recall a relevant definition from \cite{BFSV}.

\begin{defi}\label{5_6a}
By a \textit{cellular decomposition} of a topological space $X$
over a finite poset $\scP$, we will mean a diagram $\{C_\alpha\}_{\alpha\in\scP}$ of closed subsets of $X$ indexed by $\scP$ satisfying the following conditions:
\begin{enumerate}
\item $X=\cup_{\alpha\in\scP} C_\alpha$.
\item If $\alpha\le\beta$ in $\scP$, then the inclusion $C_\alpha\to C_\beta$ is a cofibration.
\item Each $C_\alpha$ is contractible.
\item $C_\alpha\cap C_\beta=\cup_{\begin{array}{c}\gamma\le\alpha\\ \gamma\le\beta\end{array}}C_\gamma$
\end{enumerate}
Under these circumstances, as shown in  \cite{BFSV}, we have a sequence of equivalences and
homeomorphisms:
$$|\scN\scP|\stackrel{\simeq}{\longleftarrow}\mbox{hocolim}_{\alpha\in\scP}C_\alpha
\stackrel{\simeq}{\longrightarrow}\mbox{colim}_{\alpha\in\scP}C_\alpha
\stackrel{\cong}{\longrightarrow}X,$$
We will say that a topological operad $\scA=\{\scA(m)\}_{m\ge0}$ has a cellular decomposition
over a poset operad $\scP=\{\scP(m)\}_{m\ge0}$, if $\scA(m)$ has a cellular decomposition over
$\scP(m)$ for each $m$, and the operad structures on $\scA$ and $\scP$ satisfy the following
compatibility conditions.
\begin{enumerate}
\item If $a\in C_\alpha\subset\scA(m)$ and $b_i\in C_{\beta_i}\subset\scA(n_i)$, $i=1,2,\dots m$,
then $a\circ(\oplus_{i=1}^m b_i)\in C_{\alpha\circ(\oplus_{i=1}^m \beta_i)}\subset\scA(n_1+n_2+\dots+n_m)$.
\item If $a\in C_\alpha\subset\scA(m)$ and $\sigma\in\Sigma_m$, then $a\sigma\in C_{\alpha\sigma}$.
\end{enumerate}
Under these circumstances the chain of equivalences
$$|\scN\scP(m)|\stackrel{\simeq}{\longleftarrow}\mbox{hocolim}_{\alpha\in\scP(m)}C_\alpha
\stackrel{\simeq}{\longrightarrow}\mbox{colim}_{\alpha\in\scP(m)}C_\alpha
\stackrel{\cong}{\longrightarrow}\scA(m),$$
gives an equivalence of operads.
(This definition is inspired by, but differs slightly from the one of Berger \cite{Berger}).
\end{defi}

Thus Proposition \ref{5_6} provides a cellular decomposition of 
$(W|\mathcal{NM}_k|  \otimes W|\mathcal{NM}_l|)(m)$
over the poset $\scI(k,l)(m)$ for each $m$.

\begin{prop}\label{5_7} There is a functor
$$L:\scI(k,l)(m)\longrightarrow\mathcal{M}_{k+l}(m)$$
which satisfies Quillen's Theorem A, and thus induces an equivalence upon passage to nerves.
\end{prop}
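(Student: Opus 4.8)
The plan is to construct $L$ explicitly on objects and then verify Quillen's Theorem A by showing each comma category $L/\beta$ is contractible. Recall that an object of $\scI(k,l)(m)$ consists of a simplex $\sigma$ of $\mathfrak{K}_\bullet(m)$ (a vertex of $\mathrm{Sd}\,\mathfrak{K}_\bullet(m)$), together with, at the corresponding node of the interchange diagram, a choice of object in the product poset $\prod\mathcal{M}_k(r_i)\times\prod\mathcal{M}_l(s_j)$ indexing the cellular decomposition of that node's space. The first step is to assemble such data into a single word in $\mathcal{M}_{k+l}(m)$: the simplex $\sigma$ of $\mathfrak{K}_\bullet(m)$ records a ``skeleton'' built from the two commuting operations $\boxtimes_1,\boxtimes_2$ of $\mathfrak{M}_2^{ab}$ (more precisely, a coherent family of such skeletons on every triple $\{a<b<c\}$), and each factor $\mathcal{M}_k(r_i)$ (resp. $\mathcal{M}_l(s_j)$) supplies a word in the operations $\square_1,\dots,\square_k$ (resp. $\square_{k+1},\dots,\square_{k+l}$, using the shift convention of \ref{4_7a}) filling in the inputs of that skeleton node. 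Substituting these words into the skeleton, and reading $\boxtimes_1$ as ``the outermost $\mathcal{M}_k$-level'' and $\boxtimes_2$ as ``the outermost $\mathcal{M}_l$-level,'' produces a word using $\square_1,\dots,\square_{k+l}$ in which each of $1,\dots,m$ occurs exactly once — i.e.\ an object of $\mathcal{M}_{k+l}(m)$. One must check this is well-defined: a morphism in $\scI(k,l)(m)$ either refines the simplex $\sigma$ (an inclusion of faces, hence the opposite direction in $\mathrm{Sd}$) or applies an operad composition/constant-insertion in one of the $\mathcal{M}_k$ or $\mathcal{M}_l$ factors, and in each case the coherence criterion of \cite{BFSV} (recalled in the proof of \ref{5_01}) shows that the assembled word changes by an $\eta^{ij}$-interchange morphism or an identity, so $L$ is a well-defined functor.

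Next I would verify Quillen's Theorem A. Fix $\beta\in\mathcal{M}_{k+l}(m)$; I must show the comma category $L/\beta$ has contractible nerve. The key observation is that $\beta$, being an object of $\mathcal{M}_{k+l}(m)$, has a canonical ``$(k,l)$-stratification'': reading $\beta$ from the outside in, group its operations into maximal blocks using $\{\square_1,\dots,\square_k\}$ versus $\{\square_{k+1},\dots,\square_{k+l}\}$; this exhibits $\beta$ as built from a skeleton over $\mathfrak{M}_2^{ab}(m)$ (after abelianizing, which is harmless by \ref{5_01} since the objects assigned to vertices of $\mathfrak{K}_\bullet$ are $\mathfrak{M}_2^{ab}$-words) with $\mathcal{M}_k$- and $\mathcal{M}_l$-words plugged in. An object $(\sigma,\text{fillings})\to\beta$ of $L/\beta$ then amounts to: a simplex $\sigma$ of $\mathfrak{K}_\bullet(m)$ all of whose vertices admit a morphism (in $\mathfrak{M}_2^{ab}$) into the abelianized skeleton of $\beta$, together with fillings mapping into the corresponding $\mathcal{M}_k$/$\mathcal{M}_l$-blocks of $\beta$. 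I would show that $L/\beta$ has a terminal object, namely the \emph{maximal} such simplex $\sigma_\beta$ (the one spanned by all vertices of $\mathfrak{M}_2^{ab}(m)$ lying below the skeleton of $\beta$, which is a genuine simplex of $\mathfrak{K}_\bullet(m)$ precisely because the coherence criterion guarantees compatibility on every triple $\{a<b<c\}$), equipped with the terminal fillings (each $\mathcal{M}_k(r_i)$ or $\mathcal{M}_l(s_j)$ has the corresponding sub-block of $\beta$ as a terminal object of its relevant subposet, since those posets are exactly the ``below a fixed object'' subposets). Terminality follows because any $(\sigma,\text{fillings})\to\beta$ factors through $(\sigma_\beta,\text{terminal fillings})$: $\sigma\subseteq\sigma_\beta$ gives the refinement direction in $\mathrm{Sd}\,\mathfrak{K}_\bullet(m)$, and the fillings map into the terminal ones by the poset structure. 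A category with a terminal object has contractible nerve, so Theorem A applies and $\scN L$ is an equivalence.

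The main obstacle I anticipate is the well-definedness and functoriality of $L$ in the presence of the abelianization: the objects assigned to vertices of $\mathfrak{K}_\bullet(m)$ live in $\mathfrak{M}_2^{ab}(m)$, where $\boxtimes_1,\boxtimes_2$ are commutative, whereas $\mathcal{M}_{k+l}$ is not commutative, so the ``assembly'' map must make a consistent choice of ordering when expanding a commutative block into a word in $\square_1,\dots,\square_{k+l}$. The resolution is that the ordering is in fact dictated by the $\mathcal{M}_k$- and $\mathcal{M}_l$-fillings at the node together with the linear order on $\{1,\dots,m\}$ used to define $\mathfrak{K}_\bullet(m)$ via triples, so there is no genuine choice; verifying that the two a priori different reorderings coming from the two descriptions of a simplex of $\mathfrak{K}_\bullet(m)$ (as an object of $\mathrm{Sd}$ versus via its triple-restrictions) give the same word in $\mathcal{M}_{k+l}(m)$ is exactly where one invokes the coherence theorem of \cite{BFSV} once more. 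A secondary technical point is checking that each indexing subposet appearing in the Grothendieck construction $\scI(k,l)(m)$ of Proposition \ref{5_6} really is of the form ``all objects below a fixed object'' (this is how the colimit diagrams for $|\mathcal{NM}_k|(r)$ are set up, as noted just before \ref{5_6}), which is what makes the terminal-object argument for $L/\beta$ go through; once that is in hand, the rest is bookkeeping.
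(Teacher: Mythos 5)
Your construction of $L$ is in the right spirit and essentially matches the paper's: the paper first replaces a simplex $\overline{\alpha}$ by the minimal vertex $L'(\overline{\alpha})\in\mathfrak{M}_2^{ab}(m)$ with $G_m(\overline{\alpha})\subseteq G_m(L'(\overline{\alpha}))$ (read off from the binodal tree $S_{\overline{\alpha}}$ by recoloring $\bullet\mapsto\boxtimes_1$, $\circ\mapsto\boxtimes_2$), pushes the filling forward to $\scM_k(S_{L'(\overline{\alpha})})\times\scM_l(T_{L'(\overline{\alpha})})$, and then merges the two decorated trees (which have complementary colorings of the same shape) into a single $\scM_{k+l}$-word. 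Your ``substitute the fillings into the skeleton'' is this same assembly, though you leave implicit how a multi-vertex simplex yields a single skeleton; that is exactly what $L'$ accomplishes.

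The Quillen's Theorem A step, however, has a genuine error: you have the variance of $\scI(m)=\mbox{Sd}\,\mathfrak{K}_\bullet(m)$ backwards. By Definition \ref{5_41} the poset structure is \emph{opposite} to inclusion of faces (the interchange diagram assigns intersections to barycenters and the maps are inclusions of those intersections into the $G_m(\alpha_i)$), so a morphism $\sigma\to\tau$ in $\scI(m)$ exists iff $\tau\subseteq\sigma$; vertices are the \emph{maximal} elements and large simplices map \emph{to} their faces. Consequently your candidate terminal object of $L/\beta$ --- the maximal simplex $\sigma_\beta$ with terminal fillings --- receives no morphism from a proper face $\sigma\subsetneq\sigma_\beta$: the containment $\sigma\subseteq\sigma_\beta$ produces a morphism $\sigma_\beta\to\sigma$, not $\sigma\to\sigma_\beta$. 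Nor can $(\sigma_\beta,\ast)$ be rescued as an initial object, since then the filling component would have to be a \emph{minimum} of the relevant subposet, whereas your ``terminal fillings'' are maxima. The correct terminal object (as in the paper) sits at the other end: it is the pair $(\alpha',\beta')$ where $\alpha'$ is the single \emph{vertex} of $\mathfrak{K}_\bullet(m)$ obtained from $\beta$ by replacing each $\square_i$ with $\boxtimes_1$ for $i\le k$ and with $\boxtimes_2$ for $i>k$, and $\beta'$ is the unique object of $\scM_k(S_{\alpha'})\times\scM_l(T_{\alpha'})$ with $L(\alpha',\beta')=\beta$; one then checks that every object of $L/\beta$ admits a unique map to $(\alpha',\beta')$. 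As written, your contractibility argument does not go through. (A minor additional inaccuracy: the indexing posets in the Grothendieck construction are the full products $\scM_k(S_{\overline{\alpha}})\times\scM_l(T_{\overline{\alpha}})$, not ``below a fixed object'' subposets; only the individual cells are nerves of such subposets.)
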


Combining these results, we obtain  a chain of equivalences
$$
\begin{array}{rl}
(W|\mathcal{NM}_k|  \otimes W|\mathcal{NM}_l|)(m) &
\cong\mbox{colim}_{\scI(k,l)(m)}G'_m
 \stackrel{\simeq}{\longleftarrow}\mbox{hocolim}_{\scI(k,l)(m)}G'_m\\
& \stackrel{\simeq}{\longrightarrow} 
\mbox{hocolim}_{\scI(k,l)(m)}*
=|\mathcal{N}\scI(k,l)(m)|\\
& \stackrel{\simeq}{\longrightarrow}|\mathcal{NM}_{k+l}|(m)
\end{array}
$$
Finally we construct an operad structure on $\scI(k,l)=\{\scI(k,l)(m)\}_{m\ge0}$, 
verify the relevant compatibility conditions on the cellular decomposition of
$W|\mathcal{NM}_k|  \otimes W|\mathcal{NM}_l$, and thus
 show that this chain of equivalences defines a chain of operad equivalences.
This completes the proof of Theorem \ref{4_4}.

\section{Unary and binary operations}

In this section we analyze the spaces of unary and binary operations in a
tensor product $\scA\otimes\scB$ of two arbitrary reduced operads $\scA$
and $\scB$, and thus prove Proposition \ref{4_8}.

First of all observe that in the coproduct operad $\scA\amalg\scB$, the
unary operations are arbitrary compositions of unary operations in $\scA$
and $\scB$, with the units of $\scA(1)$ and $\scB(1)$ identified and with
the single relation that adjacent factors both in $\scA(1)$ or both in $\scB(1)$
can be combined into a single factor using the multiplications in $\scA(1)$ or
$\scB(1)$. In other words, $(\scA\amalg\scB)(1)$ is the coproduct (or free product)
$\scA(1)*\scB(1)$ in the category of topological monoids. The interchange
relations in $\scA\otimes\scB$ restrict on the space of unary operations to
the relation that factors from $\scA(1)$ commute with factors from $\scB(1)$.
Thus the factors from $\scA(1)$ can be commuted to the front of a word, leaving
the factors from $\scB(1)$ at the back of the word. Then the factors from $\scA(1)$ and
from $\scB(1)$ can be combined into single factors, using the multiplications
in $\scA(1)$ and $\scB(1)$, leaving a pair of factors, the first from $\scA(1)$
and the second from $\scB(1)$. This establishes that
$$(\scA\otimes\scB)(1)\cong\scA(1)\times\scB(1)$$
and thus proves the first part of Proposition \ref{4_8}.

To analyze the binary operations in $\scA\otimes\scB$, we begin by noting
that according to Proposition \ref{5_1} and the follow up discussion in paragraph \ref{5_2} we have
$$(\scA\amalg\scB)(2)=\widetilde{G}_2(1\boxtimes_1 2)\amalg \widetilde{G}_2(1\boxtimes_2 2).$$
Now by definition the elements of $\widetilde{G}_2(1\boxtimes_1 2)$ are represented by trees with
one binary node labeled by an element $a_2$ of $\scA(2)$ and arbitrary chains of unary nodes labeled
by arbitrary elements of $\scA(1)$ and $\scB(1)$ on both input branches coming into the binary node
and on the output branch from that node. Upon dividing out by the unary interchange relations, i.e.
passing to $G_2(1\boxtimes_1 2)$, we may commute the factors coming from $\scB(1)$ past factors
coming from $\scA(1)$ towards the top of each branch and combine them into single factors, using the
multiplication in $\scB(1)$. Let $b\in\scB(1)$ be the factor remaining on the top of the output branch
of the binary node labeled by $a_2$. We can then use the unary interchange relation
$$b\circ a_2 = a_2\circ(b\oplus b)$$
to move $b$ from below the binary node to both branches above that node. We can then commute these
copies of $b$ past factors of $\scA(1)$ on these branches and then combine them with the factors
from $\scB(1)$ at the top of these branches. This leaves a tree with one factor of $\scB(1)$ at the
top of each input branch and factors of $\scA(1)$ below them on the input branches, $a_2\in\scA(2)$
on the binary node and additional factors of $\scA(1)$ on the output branch below that node. These
$\scA$ nodes can all be composed together to produce a single binary node with a label in $\scA(2)$.
Finally we can use the equivariance relation to insure that input 1 is on the left branch of the
binary node and input 2 is on the right. Thus each element of $G_2(1\boxtimes_1 2)$ has a unique
tree representative of the form
$$\xymatrix@=5pt@M=-1pt@W=-1pt{
1\ && && &&\ 2\\
\ar@{-}[ddddrrr] && && &&\quad\ \ar@{-}[ddddlll]\\
\\
b_1&{\scriptscriptstyle\bullet}&& &&{\scriptscriptstyle\bullet}&b_2\\
\\
&&&{\scriptscriptstyle\bullet}\ar@{-}[ddd]&a\\
\\
\\
&&&&\ 
}$$

which we will refer to as the reduced form of that element. This shows that
$$G_2(1\boxtimes_1 2)\cong \scA(2)\times\scB(1)^2.$$
A similar argument, with the roles of $\scA$ and $\scB$ interchanged, shows that
each element of $G_2(1\boxtimes_2 2)$ has a unique reduced form representative
$$\xymatrix@=5pt@M=-1pt@W=-1pt{
1\ && && &&\ 2\\
\ar@{-}[ddddrrr] && && &&\quad\ \ar@{-}[ddddlll]\\
\\
a_1&{\scriptscriptstyle\bullet}&& &&{\scriptscriptstyle\bullet}&a_2\\
\\
&&&{\scriptscriptstyle\bullet}\ar@{-}[ddd]&b\\
\\
\\
&&&&\ 
}$$
and thus
$$G_2(1\boxtimes_2 2)\cong \scA(1)^2\times\scB(2).$$

It now remains to analyze the nonunary interchanges in $(\scA\otimes\scB)(2)$.
It is easy to see that any such interchange reduces to a $(2,2)$-interchange.
A reduced representative in $G_2(1\boxtimes_1 2)$ can only interchange this way
if both $b_1$ and $b_2$ factor through a common element in $b\in\scB(2)$ by
composing with the constant 0. The resulting interchanges are shown below\newline
\centerline{$\xymatrix@=5pt@M=-1pt@W=-1pt{
1\ && &&&&0 &&&&0 && &&&2\\
\ar@{-}[ddrrr] && &&&&\ar@{-}[ddlll] &&&&\ar@{-}[ddrr] && &&&\ar@{-}[ddlll]\\
b_1'&\,{\scriptscriptstyle\bullet}&&&& &&& &&&& &&{\scriptscriptstyle\bullet}\,&\,b_2'\\
&&b &{\scriptscriptstyle\bullet}\ar@{-}[dddrrrrr]&&&&&&&&&{\scriptscriptstyle\bullet}\ar@{-}[dddllll]&b\\
\\
\\
&&&& &&&&{\scriptscriptstyle\bullet}\ar@{-}[dddd]&\,a\\
\\
\\
\\
&&&& &&&&
}$
\raisebox{-30pt}{$\qquad\approx\qquad$}
$\xymatrix@=5pt@M=-1pt@W=-1pt{
1\ && &&&&0 &&&&0 && &&&2\\
\ar@{-}[ddrrr] && &&&&\ar@{-}[ddlll] &&&&\ar@{-}[ddrr] && &&&\ar@{-}[ddlll]\\
b_1'&\,\,{\scriptscriptstyle\bullet}&&&& &&& &&&& &&{\scriptscriptstyle\bullet}\,\,&\,b_2'\\
&&a &{\scriptscriptstyle\bullet}\ar@{-}[dddrrrrr]&&&&&&&&&{\scriptscriptstyle\bullet}\ar@{-}[dddllll]&a\\
\\
\\
&&&& &&&&{\scriptscriptstyle\bullet}\ar@{-}[dddd]&\,b\\
\\
\\
\\
&&&& &&&&
}$}\newline
\centerline{$\xymatrix@=5pt@M=-1pt@W=-1pt{
0\ && &&&&1 &&&&2 && &&&&0\\
\ar@{-}[ddrrr] && &&&&\ar@{-}[ddlll] &&&&\ar@{-}[ddrr] && &&&&\ar@{-}[ddllll]\\
&&&&&{\scriptscriptstyle\bullet} &b_1'&& &&b_2'\,&{\scriptscriptstyle\bullet}\ \,& &&&\\
&&b &{\scriptscriptstyle\bullet}\ar@{-}[dddrrrrr]&&&&&&&&&{\scriptscriptstyle\bullet}\ar@{-}[dddllll]&b\\
\\
\\
&&&& &&&&{\scriptscriptstyle\bullet}\ar@{-}[dddd]&\,a\\
\\
\\
\\
&&&& &&&&
}$
\raisebox{-30pt}{$\qquad\approx\qquad$}
$\xymatrix@=5pt@M=-1pt@W=-1pt{
0\ && &&&&2 &&&&1 && &&&&0\\
\ar@{-}[ddrrr] && &&&&\ar@{-}[ddlll] &&&&\ar@{-}[ddrr] && &&&&\ar@{-}[ddllll]\\
&&&&&{\scriptscriptstyle\bullet} &b_2'&& &&b_1'\,&{\scriptscriptstyle\bullet}\ \,& &&&\\
&&a &{\scriptscriptstyle\bullet}\ar@{-}[dddrrrrr]&&&&&&&&&{\scriptscriptstyle\bullet}\ar@{-}[dddllll]&a\\
\\
\\
&&&& &&&&{\scriptscriptstyle\bullet}\ar@{-}[dddd]&\,b\\
\\
\\
\\
&&&& &&&&
}$}\newline
\centerline{$\xymatrix@=5pt@M=-1pt@W=-1pt{
1\ && &&&&0 &&&&2 && &&&&0\\
\ar@{-}[ddrrr] && &&&&\ar@{-}[ddlll] &&&&\ar@{-}[ddrr] && &&&&\ar@{-}[ddllll]\\
b_1'&{\scriptscriptstyle\bullet}&&&& &&& &&b_2'\,&{\scriptscriptstyle\bullet}\ \,& &&&\\
&&b &{\scriptscriptstyle\bullet}\ar@{-}[dddrrrrr]&&&&&&&&&{\scriptscriptstyle\bullet}\ar@{-}[dddllll]&b\\
\\
\\
&&&& &&&&{\scriptscriptstyle\bullet}\ar@{-}[dddd]&\,a\\
\\
\\
\\
&&&& &&&&
}$
\raisebox{-30pt}{$\qquad\approx\qquad$}
$\xymatrix@=5pt@M=-1pt@W=-1pt{
1\ && &&&&2 &&&&0 && &&&&0\\
\ar@{-}[ddrrr] && &&&&\ar@{-}[ddlll] &&&&\ar@{-}[ddrr] && &&&&\ar@{-}[ddllll]\\
b_1'&{\scriptscriptstyle\bullet}&&&&{\scriptscriptstyle\bullet} &b_2'&& &&&& &&&\\
&&a &{\scriptscriptstyle\bullet}\ar@{-}[dddrrrrr]&&&&&&&&&{\scriptscriptstyle\bullet}\ar@{-}[dddllll]&a\\
\\
\\
&&&& &&&&{\scriptscriptstyle\bullet}\ar@{-}[dddd]&\,b\\
\\
\\
\\
&&&& &&&&
}$}\newline
\centerline{$\xymatrix@=5pt@M=-1pt@W=-1pt{
\\
0\ && &&&&1 &&&&0 && &&&2\\
\ar@{-}[ddrrr] && &&&&\ar@{-}[ddlll] &&&&\ar@{-}[ddrr] && &&&\ar@{-}[ddlll]\\
&&&&&{\scriptscriptstyle\bullet} &\,b_1'&& &&&& &&\,{\scriptscriptstyle\bullet}\ \,&b_2'\\
&&b &{\scriptscriptstyle\bullet}\ar@{-}[dddrrrrr]&&&&&&&&&{\scriptscriptstyle\bullet}\ar@{-}[dddllll]&b\\
\\
\\
&&&& &&&&{\scriptscriptstyle\bullet}\ar@{-}[dddd]&\,a\\
\\
\\
\\
&&&& &&&&
}$
\raisebox{-30pt}{$\qquad\approx\qquad$}
$\xymatrix@=5pt@M=-1pt@W=-1pt{
\\
0&& &&&&0 &&&&1\ && &&&2\\
\ar@{-}[ddrrr] && &&&&\ar@{-}[ddlll] &&&&\ar@{-}[ddrr] && &&&\ar@{-}[ddlll]\\
&&&&& &&& &&b_1'\,&{\scriptscriptstyle\bullet}\ & &&{\scriptscriptstyle\bullet}\ \,&b_2'\\
&&a &{\scriptscriptstyle\bullet}\ar@{-}[dddrrrrr]&&&&&&&&&{\scriptscriptstyle\bullet}\ar@{-}[dddllll]&a\\
\\
\\
&&&& &&&&{\scriptscriptstyle\bullet}\ar@{-}[dddd]&\,b\\
\\
\\
\\
&&&& &&&&
}$}\newline
Note that the second interchange follows from the first by replacing $b$ by
$b\tau$, where $\tau$ is the transposition $(1,2)$, using the equivariance
relation. Similarly the fourth interchange follows from the third. Also note
that in the third interchange we can apply the simplification $a\circ(0\oplus0)=0$
(since $\scA(0)=\{0\}$). If we then reduce both sides of that interchange 
 (using unary interchanges and composition), we see that both sides become
the same. Thus the third (and hence the fourth) interchanges are superfluous.
This leaves only the first interchange. Moreover even this interchange can be
simplified by noting that $b_1'\circ 0=0$ and $b_2'\circ0=0$ (since $\scB(0)=\{0\}$).
This converts the first interchange to the following form\newline
\centerline{$\xymatrix@=5pt@M=-1pt@W=-1pt{
1\ && &&&&0 &&&&0 && &&&2\\
\ar@{-}[ddrrr] && &&&&\ar@{-}[ddlll] &&&&\ar@{-}[ddrr] && &&&\ar@{-}[ddlll]\\
b_1'&\,{\scriptscriptstyle\bullet}&&&&{\scriptscriptstyle\bullet} &b_2'&& &&b_1'&{\scriptscriptstyle\bullet}\ \,&
&&{\scriptscriptstyle\bullet}\,&\,b_2'\\
&&b &{\scriptscriptstyle\bullet}\ar@{-}[dddrrrrr]&&&&&&&&&{\scriptscriptstyle\bullet}\ar@{-}[dddllll]&b\\
\\
\\
&&&& &&&&{\scriptscriptstyle\bullet}\ar@{-}[dddd]&\,a\\
\\
\\
\\
&&&& &&&&
}$
\raisebox{-30pt}{$\qquad\approx\qquad$}
$\xymatrix@=5pt@M=-1pt@W=-1pt{
1\ && &&&&0 &&&&0 && &&&2\\
\ar@{-}[ddrrr] && &&&&\ar@{-}[ddlll] &&&&\ar@{-}[ddrr] && &&&\ar@{-}[ddlll]\\
b_1'&\,{\scriptscriptstyle\bullet}&&&&{\scriptscriptstyle\bullet} &b_1'&& &&b_2'&{\scriptscriptstyle\bullet}\ \,&
&&{\scriptscriptstyle\bullet}\,&\,b_2'\\
&&a &{\scriptscriptstyle\bullet}\ar@{-}[dddrrrrr]&&&&&&&&&{\scriptscriptstyle\bullet}\ar@{-}[dddllll]&a\\
\\
\\
&&&& &&&&{\scriptscriptstyle\bullet}\ar@{-}[dddd]&\,b\\
\\
\\
\\
&&&& &&&&
}$}\newline
\centerline{\raisebox{-30pt}{$\qquad\qquad\qquad\qquad\qquad\qquad\quad\ \approx\qquad\quad$}
$\xymatrix@=5pt@M=-1pt@W=-1pt{
1\ &&0 &&&& &&&& && &0&&2\\
\ar@{-}[dr] &&\ar@{-}[dl] &&&& &&&& && &\ \ar@{-}[dr]&&\ar@{-}[dl]\\
a&{\scriptscriptstyle\bullet}\ar@{-}[ddddrrrrrr]&&&& &&& &&&&&&{\scriptscriptstyle\bullet}\ \ar@{-}[ddddlllllll]&a\\
&& &&&&&&&&& &\\
&&b_1'\ &{\scriptscriptstyle\bullet}\ \ &&&&&&&&&{\scriptscriptstyle\bullet}\ &b_2'\\
\\
&&&& &&&{\scriptscriptstyle\bullet}\ar@{-}[dddd]&\,b\\
\\
\\
\\
&&& &&&&
}$}\newline
Taking $b'=b\circ(b_1'\oplus b_2')$, this in turn simplifies to the interchange\newline
\centerline{$\xymatrix@=5pt@M=-1pt@W=-1pt{
1 && &&0 & && &0&& &&2\\
\ar@{-}[ddrr] &&&&\ar@{-}[ddll] && &&\ar@{-}[ddrr] && &&\ar@{-}[ddll]\\
\\
&b'&{\scriptscriptstyle\bullet}\ar@{-}[ddrrr]&&&&&&&&{\scriptscriptstyle\bullet}\ar@{-}[ddlllll]&b'\\
\\
&&&&&{\scriptscriptstyle\bullet}\ar@{-}[ddd]&a\\
\\
\\
&&&&&
}$
\raisebox{-30pt}{$\qquad\approx\qquad$}
$\xymatrix@=5pt@M=-1pt@W=-1pt{
1 && &&0 & && &0&& &&2\\
\ar@{-}[ddrr] &&&&\ar@{-}[ddll] && &&\ar@{-}[ddrr] && &&\ar@{-}[ddll]\\
\\
&a&{\scriptscriptstyle\bullet}\ar@{-}[ddrrr]&&&&&&&&{\scriptscriptstyle\bullet}\ar@{-}[ddlllll]&a\\
\\
&&&&&{\scriptscriptstyle\bullet}\ar@{-}[ddd]&b'\\
\\
\\
&&&&&
}$}\newline
This can be briefly summarized as follows. If an element 
$(a,b_1,b_2)\in G_2(1\boxtimes_1 2)\cong\scA(2)\times\scB(1)^2$
is the image of an element $(a,b)\in\scA(2)\times\scB(2)$ under the
map $(a,b)\mapsto\left(a,b\circ(id\oplus 0),b\circ(0\oplus id)\right)$,
then it is identified with the element
$\left(a\circ(id\oplus 0),a\circ(0\oplus id),b\right)$ in
$G_2(1\boxtimes_2 2)\cong \scA(1)^2\times\scB(2).$
A similar analysis of interchanges starting from an element
$G_2(1\boxtimes_2 2)$, shows that the only relation obtained
is the reverse of the above relation. It follows that
$(\scA\times\scB)(2)$ is a pushout as in the
second part of Proposition \ref{4_8}.

\section{Axial operads}\label{sec7a}

It is technically convenient to prove some of our results in the category $\SSets$ of 
simplicial sets. So throughout this section we work in the categories $\Top$ and $\SSets$ of $k$-spaces 
and simplicial sets respectively. The formulas given in this section make sense in $\Top$. If we
work in $\SSets$, they are meant to be applied degreewise.

\begin{leer}\label{7_1}
\textbf{Observation:} If $\scA$ and $\scB$ are simplicial operads, then
$$
|\scA\otimes \scB|\cong |\scA|\otimes |\scB|,
$$
because $\scA\otimes \scB$ is a colimit of a diagram involving finite products of
simplicial sets $\scA (k)$ and $\scB (l)$ and the realization functor preserves colimits
and finite limits.
\end{leer}

\begin{lem}\label{7_2}
(Igusa \cite{Igusa}) For both categories, the forgetful functor 
$$
U:\Opr_0\longrightarrow \mathcal{M}\rm{onoids},\qquad \mathcal{B}
\mapsto\mathcal{B}(1)
$$
has a right adjoint $R:\mathcal{M}\rm{onoids}\longrightarrow \Opr_0$, where
$\mathcal{M}\rm{onoids}$ is the category of monoids in $\Top$ respectively $\SSets$.
\end{lem}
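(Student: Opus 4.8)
The plan is to construct $R$ by hand. For a monoid $M$ in $\Top$ (resp.\ $\SSets$) set $R(M)(n)=M^n$, let $\Sigma_n$ act by permutation of coordinates, take $1\in M=R(M)(1)$ as unit, and define operad composition
$$R(M)(n)\times R(M)(j_1)\times\cdots\times R(M)(j_n)\longrightarrow R(M)(j_1+\cdots+j_n)$$
by sending $\bigl((a_1,\dots,a_n),(b^1_1,\dots,b^1_{j_1}),\dots,(b^n_1,\dots,b^n_{j_n})\bigr)$ to the tuple whose $k$-th block is $(a_kb^k_1,\dots,a_kb^k_{j_k})$; that is, the outer coordinate $a_k$ is multiplied onto each coordinate $b^k_l$ of the $k$-th inner factor. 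All these structure maps are assembled from the multiplication, the unit and the identity of $M$ together with the permutation maps of finite products, so they live in the ambient category and are visibly natural in $M$. The first step is then to verify the operad axioms for $R(M)$: equivariance of composition is immediate, while associativity and unitality reduce to those of $M$ plus routine bookkeeping of how blocks nest inside blocks. Note $R(M)(0)=M^0=\ast$, so $R(M)$ is automatically reduced. (In the topological case one may first whisker $M$ so that $\{1\}\subset M$ is a closed cofibration as in Remark \ref{2_1a}; this does not affect the adjunction.)

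The second step is to produce the unit of the adjunction and check its universal property directly. For $\mathcal{B}\in\Opr_0$, $\phi\in\mathcal{B}(n)$ and $1\le i\le n$ write $\phi_i:=\phi\circ(0,\dots,0,id,0,\dots,0)\in\mathcal{B}(1)$ for the ``$i$-th component'' of $\phi$, with $id$ in slot $i$ and the unique nullary operation $0\in\mathcal{B}(0)$ in the others. I claim $\phi\mapsto(\phi_1,\dots,\phi_n)$ is a map of operads $\eta_{\mathcal{B}}\colon\mathcal{B}\to R(U\mathcal{B})$. Equivariance and unitality ($\eta_{\mathcal{B}}$ is the identity on $\mathcal{B}(1)$) are clear; the content is compatibility with composition, which amounts to the identity
$$\bigl(\phi\circ(\psi_1,\dots,\psi_n)\bigr)_m=\phi_k\circ(\psi_k)_l,\qquad m\ \text{in block}\ k\ \text{at position}\ l,$$
and this follows from operad associativity once one observes that $\psi_{k'}\circ(0,\dots,0)=0$ for $k'\ne k$, which uses $\mathcal{B}(0)=\{0\}$. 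Dually the counit $\varepsilon_M\colon UR(M)=M\to M$ is the identity. Now given a monoid map $f\colon\mathcal{B}(1)\to M$, the composite $R(f)\circ\eta_{\mathcal{B}}\colon\mathcal{B}\to R(M)$ has unary part $f$; and if $g\colon\mathcal{B}\to R(M)$ is any operad map with $Ug=f$, then the $i$-th coordinate of $g_n(\phi)\in M^n$ is obtained by composing $g_n(\phi)$ in $R(M)$ with $(0,\dots,0,id,0,\dots,0)$ --- which, by the composition formula, returns precisely that coordinate --- and equals $g_1(\phi_i)=f(\phi_i)$; hence $g=R(f)\circ\eta_{\mathcal{B}}$. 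This yields the natural bijection $\Opr_0(\mathcal{B},R(M))\cong\mathcal{M}\mathrm{onoids}(U\mathcal{B},M)$, i.e.\ $R$ is right adjoint to $U$.

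The step that needs care --- and the only point where the hypothesis $\mathcal{B}(0)=\{0\}$ is genuinely used --- is checking that $\eta_{\mathcal{B}}$ respects operad composition (the displayed identity); the operad axioms for $R(M)$ and the uniqueness part are formal bookkeeping. One could alternatively invoke an adjoint functor theorem, since $\Opr_0$ is locally presentable and $U$ preserves colimits (it preserves coproducts because $(\mathcal{B}\amalg\mathcal{C})(1)=\mathcal{B}(1)\ast\mathcal{C}(1)$, as recalled in Section 6), but the explicit formula $R(M)(n)=M^n$ with the block-multiplication composition is exactly what is needed for the notion of axiality and for the computations of Section 8, so I would give the construction above.
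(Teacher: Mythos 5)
Your proposal is correct and is essentially the paper's own proof: the authors define $RM(k)=M^k$ with exactly the block-multiplication composition you give, take the unit of the adjunction to be the axial map $\xi_i:\mathcal{B}(n)\cong\mathcal{B}(n)\times(\mathcal{B}(0)^{i-1}\times\{id\}\times\mathcal{B}(0)^{n-i})\to\mathcal{B}(1)$, and take the counit to be the identity. You merely spell out the verification of the triangle identities and the role of $\mathcal{B}(0)=\{0\}$, which the paper leaves implicit.
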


\begin{proof}
The right adjoint $R:\mathcal{M}\rm{onoids}\longrightarrow \Opr_0$ is defined as follows:
$RM(k)=M^k$ for a monoid $M$. The symmetric group permutes the factors,
and composition is defined by

$
 (x_1,\ldots,x_n)\circ ((y_{11},\ldots,y_{1k_1})\oplus\ldots\oplus (y_{n1},
\ldots,y_{nk_n}))\\
{} \qquad\qquad = (x_1\cdot y_{11},\ldots,x_1\cdot y_{1k_1},\ldots,x_n\cdot y_{n1},\ldots,
x_n\cdot y_{nk_n}).
$\\
The unit of the adjunction
$$
\xi:\mathcal{B}\longrightarrow RU\mathcal{B}
$$
is given by the \textit{axial maps}
$$
\mathcal{B}(n)\longrightarrow\mathcal{B}(1)^n
$$
whose $i$-th coordinates are the compositions
$$
\xi_i :\mathcal{B}(n)\cong \mathcal{B}(n)\times
(\mathcal{B}(0)^{i-1}\times \{id\}\times \mathcal{B}(0)^{n-i})
\longrightarrow \mathcal{B}(1)
$$
[In other words, the $i$-th coordinate is the restriction map 
$$-\cap\{i\}:\mathcal{B}(n)\longrightarrow\mathcal{B}(\{i\})\cong\mathcal{B}(1)$$
of Definition \ref{2_3a}.]
The counit $URM\longrightarrow M$ is the identity map.
\end{proof}

$RU(\mathcal{B})$ is closely related to $\Ass \otimes \mathcal{B}$ and 
$\Com \otimes \mathcal{B}$. 
The following result is a reformulation of \cite[Thm. 5.5]{BV3}; the topological proof of \cite{BV3} also
works in $\SSets$.

\begin{prop}\label{7_3}
In $\Top$ or $\SSets$, if $\mathcal{B}\in \mathcal{O}pr_0$,
 then 
$$\Ass\otimes \mathcal{B}\cong (\Ass \times RU(\mathcal{B}))/\sim,$$
where the relation $\sim$ is defined on 
$$(\Ass \times RU(\mathcal{B}))(n)= \Sigma_n\times \mathcal{B}(1)^n$$
by $(\pi,(b_1,\ldots,b_n))\sim (\rho,(b_1,\ldots,b_n))$ iff $\pi^{-1}(i)<
\pi^{-1}(j)$ and $\rho^{-1}(i)>\rho^{-1}(j)$ imply that there is a $c\in 
\mathcal{B}(2)$ such that $(b_i, b_j)=\xi (c)$.
\hfill\ensuremath{\Box}
\end{prop}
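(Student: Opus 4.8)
The statement is essentially \cite[Thm.~5.5]{BV3} transcribed into the language of the adjunction $U\dashv R$ of Lemma~\ref{7_2}, so the plan is to recall that description of $\Ass\otimes\mathcal{B}$ and, as the remark preceding the statement promises, to observe that the argument only ever forms colimits of finite products of the spaces $\Ass(k)$ and $\mathcal{B}(j)$, hence carries over verbatim (applied degreewise) in $\SSets$. I would run the proof in two halves: first a \emph{normal form} for $(\Ass\otimes\mathcal{B})(n)$ producing a continuous surjection $q\colon\Sigma_n\times\mathcal{B}(1)^n\to(\Ass\otimes\mathcal{B})(n)$, $(\pi,(b_1,\dots,b_n))\mapsto\pi\circ(b_1\oplus\dots\oplus b_n)$; then an identification of the fibres of $q$ with the relation $\sim$, carried out by comparing $\Ass\otimes\mathcal{B}$ with the operad $E:=(\Ass\times RU(\mathcal{B}))/\!\sim$ via the universal property of the tensor product (Definition~\ref{3_2}).

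\textbf{Normal form.} The first step is the identity
$$\beta=\mu^{(l)}\circ(\xi_1\beta\oplus\dots\oplus\xi_l\beta)\qquad\text{in }(\Ass\otimes\mathcal{B})(l),$$
valid for every $\beta\in\mathcal{B}(l)$, where $\mu^{(l)}=\mathrm{id}\in\Sigma_l=\Ass(l)$ is the iterated multiplication and $\xi_i\beta=\beta\circ(0\oplus\dots\oplus\mathrm{id}\oplus\dots\oplus 0)\in\mathcal{B}(1)$ are the axial maps of Lemma~\ref{7_2}. This comes from a single interchange between $\mu^{(l)}\in\Ass(l)$ and $\beta\in\mathcal{B}(l)$: place $\mu^{(l)}$ at the bottom and, on its $i$-th input, a copy of $\beta$ all of whose input branches are stumps $0\in\mathcal{B}(0)$ except the $i$-th; the interchange turns this into $\beta$ at the bottom with copies of $\mu^{(l)}$ above, each of which collapses to $\mathrm{id}_1$ because $\Ass(0)=\{0\}$ is the monoid unit. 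Inserting this identity into an arbitrary tree representative of an element of $(\Ass\amalg\mathcal{B})(n)$ converts every $\mathcal{B}$-node of arity $\ge 2$ into an $\Ass$-node decorated with unary $\mathcal{B}$-nodes; then merging adjacent $\Ass$-nodes by operad composition in $\Ass$ and absorbing $0$-stumps, moving every unary $\mathcal{B}$-factor above every $\Ass$-factor by the unary interchanges $b\circ\pi=\pi\circ(b\oplus\dots\oplus b)$, and composing the remaining $\mathcal{B}(1)$-chains on the $n$ inputs, one reaches a representative $\pi\circ(b_1\oplus\dots\oplus b_n)$. This reduction terminates and respects the quotient topology, giving the surjection $q$.

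\textbf{The comparison operad.} The elementary fact driving everything is that for a fixed $\beta\in\mathcal{B}(l)$ \emph{any} ordered pair of its axial components equals $\xi(c)=(\xi_1 c,\xi_2 c)$ for a suitable $c\in\mathcal{B}(2)$ (set all but two inputs of $\beta$ to $0$, composing with a transposition if necessary); hence in $E$ the axial components of a single $\beta$ may be permuted freely. Using this I would check: (i) $\sim$ is compatible with the $\Sigma_n$-actions and the operad compositions of $\Ass\times RU(\mathcal{B})$, so $E$ is an operad; (ii) $E$ carries an $\Ass$-structure $\pi\mapsto[(\pi,(\mathrm{id},\dots,\mathrm{id}))]$ and a $\mathcal{B}$-structure $\beta\mapsto[(\mathrm{id}_l,(\xi_1\beta,\dots,\xi_l\beta))]$, the latter being an operad map precisely by the identity above read inside $E$; (iii) these two structures interchange in $E$, since the two legs of any interchange square evaluate, after using that unary $\mathcal{B}$-elements are monoid homomorphisms in $E$ and then reordering $\xi$-related pairs of factors, to the same class. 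By Definition~\ref{3_2} this yields an operad map $\phi\colon\Ass\otimes\mathcal{B}\to E$. Conversely $q$ factors through $\sim$: transposing a single $\xi$-related pair $(b_i,b_j)=\xi(c)$ does not change the value of $q$, because $\mu\circ(b_i\oplus b_j)=c$ and $\mu\circ(b_j\oplus b_i)=c\cdot\tau$ by the normal-form identity, so the two orderings of these factors define the same operation; a general $\sim$-step is a reordering through $\xi$-related pairs only. Thus $q$ descends to $\bar q\colon E\to\Ass\otimes\mathcal{B}$. A direct computation in $\Ass\times RU(\mathcal{B})$ gives $\phi\circ\bar q=\mathrm{id}_E$, while $\bar q\circ\phi$ restricts to the identity on the generating suboperads $\Ass$ and $\mathcal{B}$ (the latter by the normal-form identity once more), so $\bar q\circ\phi=\mathrm{id}$. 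Hence $\phi$ is an isomorphism of operads, and being a continuous bijection with continuous inverse $\bar q$ it is a homeomorphism in each arity, which is the assertion; the $\SSets$ statement follows by reading the whole argument degreewise.

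\textbf{Expected main obstacle.} The formal skeleton is routine; the real work — and where the ``tricky word problem'' advertised in the introduction actually bites — is step (iii): verifying that the general $(k,l)$-interchange relation holds in $E$, together with the dual bookkeeping that every identification forced by the tree moves is captured by $\sim$. Concretely one must track how an interchange square permutes the factors $\xi_j\beta(x_{m,j})$ and check, pair by pair, that each transposed pair has the form $\xi(c)$. This is elementary but is exactly the step that uses the hypothesis $\mathcal{B}(0)=\{0\}$ (through $\beta\circ(0\oplus\dots\oplus 0)=0$ and through the axial maps being available) and the fact that $\Ass$ is \emph{free} on a single associative binary operation with unit; everything else, including the adjunction $U\dashv R$ and the passage to $\SSets$, is formal.
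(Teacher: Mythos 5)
Your proposal is sound, but note that the paper does not actually prove this statement: it declares it ``a reformulation of [Thm.\ 5.5]{BV3}'' and only remarks that the topological proof of that reference goes through degreewise in $\SSets$. So you are reconstructing the outsourced argument rather than paralleling one in the text. Your reconstruction follows the same strategy as the cited source: the normal form $\pi\circ(b_1\oplus\dots\oplus b_n)$ obtained from the identity $\beta=\mu^{(l)}\circ(\xi_1\beta\oplus\dots\oplus\xi_l\beta)$ (which is exactly the diagonal-array argument the paper itself reuses in Propositions \ref{3_6} and \ref{7_4}), together with a comparison operad $E=(\Ass\times RU\mathcal{B})/\!\sim$ and the universal property of $\otimes$. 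The two mutually inverse maps and the observation that a $\sim$-step factors into adjacent transpositions of $\xi$-related pairs are all correct. The one place where you are still owing real work is the pair of verifications you defer to: that $\sim$ is compatible with the operad composition of $\Ass\times RU\mathcal{B}$ (so that $E$ is an operad at all), and that the general $(k,l)$-interchange holds in $E$; without these, $\phi$ does not exist and the argument collapses to only the easy containment ``$\sim$ is contained in the fibres of $q$.'' You correctly identify this as the crux and your sketch of how it goes (reduce to transpositions of axial components of a single $\beta$, each of which is $\xi$ of a binary restriction of $\beta$) is the right mechanism, so I regard the proposal as a correct plan whose remaining gaps are genuinely routine bookkeeping rather than missing ideas.
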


Note that $(\Ass\otimes \mathcal{B})(1) = \mathcal{B}(1)$, so that
$ RU(\Ass\otimes \mathcal{B})= RU(\mathcal{B}).$

As a simple consequence we get

\begin{prop}\label{7_4}
In $\Top$ or $\SSets$, if $\mathcal{B}\in \mathcal{O}pr_0$,
 then $$\Com\otimes \mathcal{B}\cong RU(\mathcal{B}))$$
and the adjunction
map $\xi : \mathcal{B}\to RU(\mathcal{B})$ corresponds to the canonical
map $\mathcal{B}\to \Com\otimes \mathcal{B}$.
\end{prop}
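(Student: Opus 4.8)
The plan is to derive $\Com\otimes\mathcal{B}\cong RU(\mathcal{B})$ as an immediate specialization of Proposition \ref{7_3}, using the fact that $\Com$ is the universal quotient of $\Ass$ obtained by making the multiplication commutative. First I would recall that there is a canonical surjection of operads $\Ass\to\Com$ which on the level of $n$-ary operations is the unique map $\Sigma_n\to\{\lambda_n\}$. Tensoring with $\mathcal{B}$ is right exact (the tensor product was defined as a quotient of the coproduct, and coproducts commute with coproducts), so we get a surjection $\Ass\otimes\mathcal{B}\to\Com\otimes\mathcal{B}$, and $\Com\otimes\mathcal{B}$ is obtained from $\Ass\otimes\mathcal{B}$ by further identifying all the permutations in the $\Sigma_n$-factor. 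Concretely, on $(\Ass\otimes\mathcal{B})(n) = (\Sigma_n\times\mathcal{B}(1)^n)/\!\sim$ we must additionally impose $(\pi,(b_1,\ldots,b_n))\sim(\rho,(b_1,\ldots,b_n))$ for \emph{all} $\pi,\rho\in\Sigma_n$, so that $(\Com\otimes\mathcal{B})(n)$ is the quotient of $\Sigma_n\times\mathcal{B}(1)^n$ by the equivalence relation generated by permuting the first coordinate arbitrarily, i.e. simply $\mathcal{B}(1)^n = RU(\mathcal{B})(n)$.

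The key step is then to check that this set-level identification is in fact an isomorphism of operads, and that it is compatible with the forgetful-functor description of $RU$. Here I would argue as follows: the extra relations in passing from $\Ass\otimes\mathcal{B}$ to $\Com\otimes\mathcal{B}$ kill exactly the $\Sigma_n$-coordinate but do not introduce any further collapsing among the $\mathcal{B}(1)^n$-coordinates, because the relation $\sim$ of Proposition \ref{7_3} only ever relates tuples with the \emph{same} $\mathcal{B}(1)^n$-entry. Hence the underlying space of $(\Com\otimes\mathcal{B})(n)$ is $\mathcal{B}(1)^n$, with the evident trivial $\Sigma_n$-action (the permutation of coordinates in $RM(n)=M^n$ has been annihilated — one must note that the $\Sigma_n$-action on $\Sigma_n\times\mathcal{B}(1)^n$ simultaneously permutes the $\mathcal{B}(1)$-factors and right-multiplies the permutation, so after quotienting out the permutation coordinate it becomes the standard $\Sigma_n$-action on $\mathcal{B}(1)^n$, which is precisely the action on $RU(\mathcal{B})(n)$). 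One then reads off from the composition formula in Proposition \ref{7_3} that it reduces exactly to the composition in $R M$ given in the proof of Lemma \ref{7_2}, since all the $\Sigma$-bookkeeping drops out. This gives the operad isomorphism $\Com\otimes\mathcal{B}\cong RU(\mathcal{B})$.

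Finally I would identify the maps. The canonical map $\mathcal{B}\to\Com\otimes\mathcal{B}$ is the composite of $\mathcal{B}\to\Ass\otimes\mathcal{B}$ (which by \cite{BV3}, or by inspection of Proposition \ref{7_3}, is $b\mapsto [(\mathrm{id},\xi(b))]$, i.e. the map whose coordinates are the axial maps paired with the identity permutation) followed by the quotient $\Ass\otimes\mathcal{B}\to\Com\otimes\mathcal{B}$. Under the identification $(\Com\otimes\mathcal{B})(n)=\mathcal{B}(1)^n$ this composite is exactly $\xi:\mathcal{B}(n)\to\mathcal{B}(1)^n$, the unit of the $(U,R)$-adjunction, as claimed. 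The main obstacle — really the only point requiring care — is verifying that no \emph{additional} identifications among the $\mathcal{B}(1)^n$ coordinates sneak in when one enlarges the relation $\sim$ to collapse all of $\Sigma_n$; this is handled by observing that the generating relations in Definition \ref{3_2} that are not already accounted for in $\Ass\otimes\mathcal{B}$ are precisely the interchanges between two operations of $\Ass$ (which, once $\Ass$ is made commutative, are the only thing left to impose), and these act solely on the permutation coordinate. Everything else is the routine verification that the quotient maps are maps of operads, which I would not spell out.
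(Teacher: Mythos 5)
Your argument is essentially correct, but it takes a genuinely different route from the paper's. The paper first invokes Corollary \ref{3_7}(iii) to write $\Com\otimes\mathcal{B}\cong\Ass\otimes\Ass\otimes\mathcal{B}$ and then applies Proposition \ref{7_3} a second time, with $\mathcal{B}$ replaced by $\Ass\otimes\mathcal{B}$; the permutation coordinate dies there because every pair $(a,b)\in\mathcal{B}(1)^2$ is the axial image of $(id,(a,b))\in(\Ass\otimes\mathcal{B})(2)$, so the relation $\sim$ of Proposition \ref{7_3} becomes total. You instead realize $\Com\otimes\mathcal{B}$ as the quotient of $\Ass\otimes\mathcal{B}$ by the congruence generated by commutativity and argue that this congruence collapses exactly the $\Sigma_n$-coordinate. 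Your route avoids Corollary \ref{3_7} but puts the burden on two points you treat rather lightly: (a) that $-\otimes\mathcal{B}$ carries the operadic quotient $\Ass\to\Com$ to the corresponding quotient of $\Ass\otimes\mathcal{B}$ (true, since both $\amalg$ and the interchange quotient commute with quotients by generated congruences, but worth saying precisely rather than via ``right exactness''); and (b) that the \emph{generated} congruence, not just the generating relation, never identifies elements with distinct $RU(\mathcal{B})$-coordinates --- this does hold because composition and equivariance in $\Ass\times RU(\mathcal{B})$ are componentwise and the generators have equal second coordinates, but your stated justification only addresses the relation $\sim$ of Proposition \ref{7_3}, which is a different relation. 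Also note your sentence about ``interchanges between two operations of $\Ass$'' conflates the commutativity relation defining $\Com$ with the Eckmann--Hilton interchange; in your setup the extra relation is commutativity by fiat, not an interchange. For the identification of the unit map, the paper argues intrinsically that $\beta=\lambda_n\circ(\xi_1(\beta)\oplus\dots\oplus\xi_n(\beta))$ holds in $\Com\otimes\mathcal{B}$ by the diagonal-array argument of Proposition \ref{3_6}, whereas you trace $\mathcal{B}\to\Ass\otimes\mathcal{B}\to\Com\otimes\mathcal{B}$; this works, but it uses the explicit form $b\mapsto[(id,\xi(b))]$ of the first map under the isomorphism of Proposition \ref{7_3}, which is not part of the statement as quoted and would need to be extracted from \cite{BV3} or verified from the tree description. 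None of these are fatal, but each should be filled in for a complete proof.
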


\begin{proof}
Using Corollary \ref{3_7} and Proposition \ref{7_3} we obtain 
$$
\Com \otimes \mathcal{B}\cong \Ass\otimes \Ass\otimes \mathcal{B}
\cong (\Ass\times RU(\Ass\otimes \mathcal{B}))/\sim \ \cong
(\Ass\times RU(\mathcal{B}))/\sim
$$ 
Now $(\Ass\otimes \mathcal{B})(2)=\Sigma_2\times \mathcal{B}(1)^2/\sim$,
and a pair $(a,b)\in \mathcal{B}(1)^2$ is the image of $(id,(a,b))\in
(\Ass\otimes \mathcal{B})(2)$ under
$\xi$. Hence 
$$(\Ass\times RU(\mathcal{B})/\sim)(n)=(\Sigma_n\times \mathcal{B}(1)^n)/
\Sigma_n = \mathcal{B}(1)^n
$$
Let $\beta\in \mathcal{B}(n)$ and let $\lambda_n$ denote the unique element in
$\Com (n)$. Then by the same argument as in the proof of Proposition \ref{3_6},
we have $\beta=\lambda_n\circ (\xi_1(\beta)\oplus\ldots\oplus
\xi_n(\beta))$. Hence
the adjunction map $\xi$ corresponds to the canonical map
$\mathcal{B}\to \Com\otimes \mathcal{B}$.
\end{proof}

\begin{rema}\label{7_5} Proposition \ref{7_4} is a special case of the
following general result, which is a simple consequence of 
Proposition \ref{7_4} and Proposition \ref{4_8}: 
Let $\mathcal{A}$ and $\mathcal{B}$ be reduced operads and let $M$ and $N$ be
monoids in $\Top$ or $\SSets$. Then there are natural isomorphisms  
\begin{enumerate}
\item $R(M)\otimes \mathcal{A}\cong R(M\times U(\mathcal{A}))$
\item $U(\mathcal{A}\otimes\mathcal{B})\cong  
U(\mathcal{A}) \times U(\mathcal{B})$
\item $R(M\times N)\cong R(M) \times R(N)\cong R(M) \otimes R(N)$
\item $RU(\mathcal{A}\otimes\mathcal{B})\cong  RU(\mathcal{A})
\otimes RU(\mathcal{B})\cong  RU(\mathcal{A}) \times RU(\mathcal{B})$
\end{enumerate}
\end{rema}

\begin{defi}\label{7_6}
A $\Sigma$-free topological operad $\mathcal{B}$ is called \textit{axial} if 
$\xi:\mathcal{B}\longrightarrow RU\mathcal{B}$ is 
a closed cofibration. A $\Sigma$-free simplicial operad $\mathcal{B}$ is 
called \textit{axial} if
$\xi:\mathcal{B}\longrightarrow RU(\mathcal{B})$ is injective. 
(In these cases we will consider $\mathcal{B}(n)$ as a subspace of
$\mathcal{B}(1)^n$ in the sequel). 
\end{defi}

\section{W-constructions}\label{sec7}

For the reader's convenience and for notational reasons we briefly recall the 
$W$-constructions we are going to use. They were originally defined for
well-pointed topological operads by Boardman and Vogt \cite{BV2} and extended to 
other categories including  $\SSets$ by Berger and Moerdijk \cite{BM}. 

\begin{leer}\label{7_7} \textbf{The non-reduced W-construction:}
In this subsection we allow non-reduced operads.

We define a functor
$$W^u:\Opr\to \Opr$$
where $\Opr$ is the category of all topological operads, together with a natural
transformation $\varepsilon :W^u\to \Id$.
\end{leer}

Let $\scB$ be a topological operad. An element in $W^u\scB(n)$ 
 is an equivalence class of quadruples $(\psi ,f,g,h)$ 
consisting of 
\begin{itemize}
\item  a finite directed rooted planar tree $\psi$.  
Each vertex $v$ has a 
finite set $\In(v)$ 
of incoming edges and exactly one outgoing edge. 
$\In(v)=\emptyset$ is allowed. Thus
 $\psi$ has a finite set $\In(\psi)$ of \textit{inputs} and exactly
 one \textit{output}, the \textit{root}. The inputs and the root are called
\textit{external} edges, all other edges are called \textit{internal}; they have a vertex
at both ends. We require that $|\In(\psi)| =n$, 
where $|M|$ denotes the cardinality of $M$.
\item  a function $f$ assigning to each vertex $v$ a label 
$f(v)\in\scB(|\In(v)|)$.
\item a bijection $g:\In(\psi )\to\underline{n}=\{1,\ldots,n\}$, where 
$n=|\In(\psi)|$. We call $g(i)$ an \textit{input-label} and think of it as label of input $i$.
\item a function $h$ assigning to each internal edge a \textit{length} $t\in [0,1]$. By convention,
the outer edges have length $1$.
\end{itemize}
We usually suppress $f,g,h$ from the notation and think of $(\psi ,f,g,h)$ as a 
tree $\psi$ with labeled inputs, edges, and vertices. 
We allow the \textit{trival tree}, i.e. a single edge with label $1$
(direction is from top to bottom), and call vertices without an input a
 \textit{stump}. 

\begin{leer}\label{7_8}
The equivalence relation between such trees is generated by 
\vspace{-1ex}
 \begin{enumerate}
  \item \textit{The equivariance relation:} The relation \ref{5_1b}
applied to labeled trees,
\item \textit{The identity relation:} 

\centerline{
$
\xymatrix@=5pt@M=-1pt@W=-1pt{
\ar@{-}[dddd]^{\displaystyle{\epsilon_1}}\\
\\
\\
\\
\bullet\save[]+<10pt,0pt>*{id}="a"\restore\ar@{-}[dddd]^{\displaystyle
{\epsilon_2}}\\
\\
\\
\\
\\
}$
\raisebox{-22pt}{$\qquad\sim\qquad$}
\raisebox{-2pt}{$\xymatrix@=5pt@M=-1pt@W=-1pt{
\ar@{-}[dddddddd]^{\mbox{max}(\displaystyle{\epsilon_1},\epsilon_2)}\\
\\
\\
\\
\\
\\
\\
\\
\\
}$}
}
where $\epsilon_1$ and $\epsilon_2$ are the lengths of the edges.
\item \textit{The shrinking relation:} Edges of length $0$ may be shrunk composing the labels
of the vertices at their ends using the composition in the operad $\scB$.
\end{enumerate}
\end{leer}
The operad structure is given by tree composition: 
The composite tree $\varphi\circ(\psi_1\oplus\ldots\oplus\psi_n)$ is 
obtained by grafting $\psi_i$ on the input of $\varphi$ with input-label $i$. 
The newly created inner edges obtain the labels 1.

The natural transformation 
$$\varepsilon : W^u\scB \to \scB$$
is defined by shrinking all internal edges to $0$. The map $\varepsilon(n): W^u\scB(n) \to \scB(n)$ has
a section $\eta(n)$ defined by sending $\alpha \in \scB(n)$ to the element represented by the tree with
one vertex labeled $\alpha$ and $n$ inputs labeled from left to right in increasing order. The homotopy
$h_t$ shrinking the lengths of internal edges by the factor $t$ deforms $W^u\scB(n)$ fiberwise into the section.
In particular, $\varepsilon$ is a weak equivalence.

If the inclusion $\{id\}\subset \scB(1)$ is a closed cofibration (i.e. the operad is \textit{well-pointed}) then
$\varepsilon :W^u\scB\to \scB$ is a cofibrant replacement in the sense of \ref{4_2} applied in the category $\Opr$.

The same construction can be carried out in the category $\SSets$ for simplicial operads with the unit
interval $[0,1]$ being replaced by the standard simplicial $1$-simplex. 
For a detailed account see \cite{BM}
where it is also shown that the topological realization of the simplicial $W^u$-construction
applied to a simplicial operad $\scB_\bullet$ is naturally homeomorphic to the topological $W^u$-construction
 applied to its realization $|\scB_\bullet|$:
$$|W^u\scB_\bullet|\cong W^u|\scB_\bullet|$$
Since every simplicial operad is well-pointed, $\varepsilon : W^u\scB_\bullet\to \scB_\bullet$ is a cofibrant
replacement in the category of simplicial operads. 

\begin{leer}\label{7_11} \textbf{The reduced W-construction:}
 The inclusion $\Opr_0\subset \Opr$ has a left adjoint $L$: For an operad $\scB$ let $Null(\scB)\subset \scB$ be
the suboperad generated by $id\in\scB(1)$ and by $\scB(0)$, and let $Null$ be the unique operad with $Null(0)$ and $Null(1)$
consisting of a point. There is a unique map of operads $Null(\scB)\to Null$, and $L(\scB)$ is the pushout of
$$Null\leftarrow Null(\scB)\xrightarrow{\subset} \scB$$
in the category of operads.  

The reduced $W$-construction is the composite functor 
$$W: \Opr_0\subset \Opr\xrightarrow{W^u} \Opr\xrightarrow{L} \Opr_0.$$
Hence $W\scB$ is obtained from $W^u\scB$ by freely adjoining an unlabeled stump and imposing the
\begin{leer}\label{7_12}
\textit{Stump relation:} If a labeled tree $\psi$ (unlabeled stumps on edges of length 1 are allowed) has an edge labeled $1$ such that
the subtree above that edge has no inputs (i.e. it is a subtree topped by stumps) then
this subtree may be replaced by the unlabeled stump.
\end{leer}
By adjointness $W$ is a cofibrant replacement functor for well-pointed operads in $\Opr_0$. 

There is also  a simplicial version of $W$ obtained from $W^u$ in the same way as in the topological case. 
Since realization preserves quotients, the 
natural homeomorphism of the $W^u$-construction induces a natural homeomorphism
$$|W\scB_\bullet|\cong W|\scB_\bullet|$$ for a reduced simplicial operad $\scB_\bullet$.
\end{leer}

Since our favorite
cofibrant $E_k$ operad is $W|\scN\scM_k|\cong |W\scN\scM_k|$ we will work in the topological and the simplicial category. We 
often find it easier to argue topologically and to transfer the results to the simplicial case using the natural homeomorphisms
between the two $W$-constructions. On the other hand, cofibration conditions in our later constructions reduce to checking injectivity
in the the simplicial category while they require proofs, which can be quite involved, in the topological category.

\begin{rema}\label{7_10}
 In \cite{BM} and \cite {BV2} the symbol $W$ is used 
instead of $W^u$. We reserve $W$ for the reduced version which corresponds to
the $W'$-construction of \cite[p. 159]{BV2} and is not treated in \cite{BM}. In \cite{BM} the cofibrancy of the
topological as well as the simplicial version of $W^u\scB$ requires that $\scB$ is $\Sigma$-free. The reason is that
their notion of a weak equivalence is weaker than our one, but for realizations of $\Sigma$-free simplicial operads the two agree.
\end{rema}

As pointed out in \ref{2_2} we often think of $W\scB$ as a topologically enriched
symmetric monoidal category. Then composition is defined by the operad composition.
So composites are represented by trees having inner edges of lengths $1$.

\begin{nota}\label{7_14a}
We call a representing tree of an element in $W\scB_\bullet$ or $W\scC$ \textit{reduced}
if it cannot be further reduced by applying the identity, stump, 
or, in case of a 
topological operad $\scC$, the shrinking relation.\\
An \textit{input path} of a tree is a directed edge path from an input to the root.
\end{nota}

\begin{prop}\label{7_15} (1) For a topological operad $\scC$ the axial map
$\xi :W\scC(n)\to (W\scC(1))^n$ is injective.\\
(2) $W\scB_\bullet$ and $W|\scB_\bullet|$ are axial operads for any simplicial operad $\scB_\bullet$..
\end{prop}
\begin{proof} 
(1) Since $\scC$ is reduced the space
$W^u\scC(0)$ has a canonical base point given by the stump. Hence there is an ``axial map'' 
$$\xi^u:W^u\scC(n)\to (W^u\scC(1))^n$$
defined by composing with stumps. This map is clearly injective. In the reduced case, we
consider the image of a reduced representing tree. Here
the stump relation may reduce the components of the
axial image of this tree further. Nevertheless one can recover the original tree from these
reduced components, because if $v$ is the vertex of the $i$-th input path of the original tree where it 
meets another input path the first time, then the subtree above $v$
containing the $i$-th input path is contained in the $i$-th coordinate
of the axial image.

(2) Since a simplicial map is injective iff its realization is injective (e.g. see \cite[II.1.7]{Lamotke})
the axial map $\xi :W\scB_\bullet(n)\to (W\scB_\bullet(1))^n$ is injective by Part (1) so that
$W\scB_\bullet$ is axial. Since the realization of an injective simplicial map is a closed cofibration
$W|\scB_\bullet|$ is axial, too.
\end{proof}

\begin{defi}\label{7_16}
A monoid $M$ is called \textit{left-factorial} 
if any $(x_1,\ldots,x_n)\in M^n$
has a maximal left factorization $x_i=y\cdot x'_i$, 
$i=1,\ldots,n$: if
$x_i=z\cdot x''_i$ is any other factorization, 
$i=1,\ldots,n$, it can be factored further as
$$
x''_i=w\cdot x'_i, \quad i=1,\ldots,n \textrm{ with } y=z\circ w
$$
We call $y=\mlf(x_1,\ldots,x_n)$ a \textit{maximal left factor}
of $(x_1,\ldots,x_n)$.\\
An operad $\scB$ is called \textit{left-factorial} if the monoid $\scB(1)$ is left-factorial.
\end{defi}

\begin{rema}
By definition, a maximal left factor is unique up to multiplication by invertible elements.
For most of our operads $\scA$, among them $W\scB$ for any operad $\scB$, the group of
invertible elements in $\scA(1)$ is trivial, so that maximal left factors are unique.
This justifies the notation $y=\mlf(x_1,\ldots,x_n)$.
\end{rema}

\begin{lem}\label{LemmaNull} Let $\scB$ be a topological operad.
 Then $W\scB$ is left factorial.
\end{lem}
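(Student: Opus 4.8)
The plan is to work with the monoid $M = W\scB(1)$ and show it is left-factorial in the sense of Definition~\ref{7_16}, since by the explicit description of $W\scB(1)$ as trees with one input, one output, internal edges of length $\le 1$, and vertex labels in $\scB$, a reduced representative of an element of $W\scB(1)$ is simply a linear chain of vertices joined by internal edges of lengths in $(0,1]$, with non-identity vertex labels (identity vertices are absorbed by the identity relation~\ref{7_9}, and no stumps can occur because the tree has an input). Such a chain is literally a word in $M$: an element of $M$ is a finite sequence $(\lambda_1, t_1, \lambda_2, t_2, \ldots)$ and composition is concatenation followed by the reduction relations. First I would make this identification precise and note that the reduced form of an element of $M$ is unique, so that ``common left factor of $x_1, \ldots, x_n$'' means a common initial segment of the reduced chains, read from the root upward.

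Next I would define $y = \mlf(x_1,\ldots,x_n)$ to be the longest common initial chain of the reduced representatives of $x_1, \ldots, x_n$, reading from the root; formally, one compares the chains vertex-by-vertex and edge-by-edge starting at the root and stops at the first place where they diverge, but with one subtlety at the boundary vertex: if the chains for all the $x_i$ agree on an initial segment ending in a vertex $v$ and then continue with edges of lengths $s_i^{(1)}, \ldots$ above $v$, the maximal common left factor should include the vertex $v$ together with the edge of length $\min_i s_i$ above it (this is the only point where ``maximality'' interacts with the metric structure rather than the combinatorics). Having defined $y$, each $x_i$ factors as $y \cdot x_i'$ where $x_i'$ is the residual chain. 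I would then verify the universal property: given any other common left factor $z$ with $x_i = z \cdot x_i''$, the chain of $z$ must, by uniqueness of reduced forms and the fact that a left factor of $x_i$ is a prefix of its reduced chain, be a prefix of every $x_i$, hence a prefix of their longest common prefix $y$; the complementary chain $w$ with $y = z \cdot w$ then satisfies $x_i'' = w \cdot x_i'$ by cancellation of the prefix $z$, which holds because concatenation of reduced chains followed by reduction is ``prefix-injective''.

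The step I expect to be the main obstacle is handling the interaction between the reduction relations (identity, stump, shrinking) and the notion of ``prefix'' — one must be careful that a reduced representative of a product $z \cdot x_i''$ really does contain the reduced representative of $z$ as an honest initial sub-chain, rather than having $z$ get partially absorbed into $x_i''$ via a reduction straddling the grafting point. In the reduced $W$-construction the grafting point is an internal edge of length $1$, and the only relation that can act across it is the shrinking relation, which requires length $0$, so no such absorption occurs; I would spell this out carefully, noting that the identity relation only merges an identity-labelled vertex with a neighbor (and reduced chains have no identity vertices), and the stump relation is irrelevant since there are no stumps in $W\scB(1)$. Once prefix-injectivity of composition is established, the verification of Definition~\ref{7_16} is routine, and continuity of $\mlf$ — which is not demanded by Definition~\ref{7_16} but is pleasant to observe — follows because $\min$ is continuous and the combinatorial type of the longest common prefix is locally constant on the (locally closed) strata where it does not jump; I would remark on this only in passing.
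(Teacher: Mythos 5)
Your overall strategy---pass to reduced representatives and define $\mlf$ as the largest common ``initial part'' read from the root---is the same as the paper's, but two of the concrete claims you build it on are false, and the second one produces a wrong candidate for $\mlf$. First, a reduced representative of an element of $W\scB(1)$ is \emph{not} a linear chain, and stumps do occur: a tree in $W\scB(1)$ has exactly one input, but vertices on its input path may have arity $\ge 2$ with all but one incoming branch topped by stumps. The stump relation only collapses such a branch to a single stump (it does not delete it), and the shrinking relation only removes it when the edge below it has length $0$, so these side branches survive in reduced form. This is not a marginal case---the axial map $\xi_i(\beta)=\beta\circ(0\oplus\dots\oplus id\oplus\dots\oplus 0)$ lands in exactly such trees, and the whole paper depends on that. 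Consequently a common left factor must match entire subtrees below the cut point, stump branches included, not merely the sequence of vertices and edge lengths along the input path.

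Second, and more seriously, your ``boundary vertex'' refinement---extending the common factor by an edge of length $\min_i s_i$ above the last common vertex $v$---describes factorizations that do not exist. A composite $y\cdot x'$ in $W\scB(1)$ is formed by grafting, and the newly created edge has length $1$; no relation can shorten it (the identity relation takes the $\max$ with $1$, shrinking requires length $0$, and the stump relation does not apply since the part above the graft contains the input). Hence $y$ is a left factor of $x$ exactly when the reduced tree of $y$ is obtained from that of $x$ by deleting everything above an edge of length \emph{exactly} $1$ on the input path; the left factors of $x$ form a finite, totally ordered set, and there is no continuum of cuts partway along an edge. Your candidate ending in an edge of length $\min_i s_i<1$ is either not a legitimate element of $W\scB(1)$ (its input edge must have length $1$) or not a left factor of the $x_i$. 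Once both points are corrected, your argument collapses to the paper's: among the finitely many common left factors, given by cuts at length-$1$ edges of the input path, take the one of maximal height; your prefix-cancellation step then goes through essentially as written.
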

\begin{proof}
 Since $\mlf(\mlf(\alpha_1,\alpha_2),\alpha_3)=\mlf(\alpha_1,\alpha_2,\alpha_3)$
it suffices to show the existence of a maximal left factorization for a pair $(\alpha_1,\alpha_2)
\in W\scB(1)^2$. 
Let $A$ be a reduced tree representing $\alpha_1$. The left factors of $\alpha_1$
are represented by subtrees $T$ obtained from $A$ by deleting the subtree on top
of an edge of length $1$ of the input path. The height of $T$ is defined to be the 
length of its input path. Then $\mlf(\alpha_1,\alpha_2)$ is represented by the subtree
$T$ of $A$ of maximal height which also represents a left factor of $\alpha_2$.
\end{proof}

\begin{lem}\label{LemmaZwei} Let $\scB$ be a topological operad.
If $\alpha_1,\ldots,\alpha_n\in W\scB(1),\ n\geq 2$, then there is
a pair $i<j$ such that
$$\mlf(\alpha_1,\ldots,\alpha_n)=\mlf(\alpha_i,\alpha_j)$$
\end{lem}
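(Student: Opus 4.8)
The plan is to argue by induction on $n$, using the associativity property of $\mlf$ recorded in the proof of Lemma \ref{LemmaNull}, namely that $\mlf$ of a tuple can be computed by iterating the binary $\mlf$. For $n=2$ there is nothing to prove. So suppose $n\geq 3$ and write $y=\mlf(\alpha_1,\ldots,\alpha_n)$ and $y'=\mlf(\alpha_1,\ldots,\alpha_{n-1})$. By associativity we have $y=\mlf(y',\alpha_n)$, and since left factorization in $W\scB$ is realized by passing to subtrees of a reduced representative along the input path (Lemma \ref{LemmaNull}), $y'$ is a left factor of each $\alpha_i$ for $i<n$ and $y$ is a left factor of $y'$; in particular $y$ is itself a left factor of $\alpha_n$ and of each of $\alpha_1,\ldots,\alpha_{n-1}$. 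By the inductive hypothesis there is a pair $i<j\le n-1$ with $y'=\mlf(\alpha_i,\alpha_j)$.

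The key step is then a dichotomy: either $y=y'$, in which case $y=\mlf(\alpha_i,\alpha_j)$ and we are done with that same pair; or $y$ is a \emph{proper} left factor of $y'$. In the latter case I claim $y=\mlf(\alpha_i,\alpha_n)$. Indeed, $y$ is a common left factor of $\alpha_i$ and $\alpha_n$, so it suffices to check maximality. Let $z$ be any common left factor of $\alpha_i$ and $\alpha_n$; I must produce $w$ with $z\circ w = y$ (equivalently, $z$ is a left factor of $y$). Realize everything inside a fixed reduced tree representative of $\alpha_i$: left factors of $\alpha_i$ correspond bijectively to the initial segments of its input path obtained by cutting at length-$1$ edges, and these are totally ordered by "being a further left factor of". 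So among the common left factors of $\alpha_i$ and $\alpha_n$ there is a maximal one, call it $y''=\mlf(\alpha_i,\alpha_n)$, and $z$ is a left factor of $y''$; it remains to show $y''=y$. Now $y''$ is a left factor of $\alpha_i$; the obstacle to concluding is that a priori $y''$ need not be a left factor of $\alpha_j$ or of the other $\alpha_r$, so it need not be a left factor of $y=\mlf(\alpha_1,\ldots,\alpha_n)$.

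This is the main obstacle, and here is how I would clear it. The point is that $y$ sits below the first branch point along the input path of $\alpha_i$ that separates two of the input paths, by the mechanism in the proof of Proposition \ref{7_15} (axiality): in the axial image, the $i$-th coordinate "remembers" the subtree above the vertex where the $i$-th input path first meets another input path. Since $y$ is a proper left factor of $y'=\mlf(\alpha_i,\alpha_j)$, and $y'$ is the largest common initial segment of the input paths of $\alpha_i$ and $\alpha_j$, the input path of $\alpha_i$ and that of $\alpha_n$ must already diverge at a length-$1$ edge lying weakly below where $\alpha_i$ and $\alpha_j$ diverge — otherwise $y'$ would be a common left factor of all of $\alpha_1,\dots,\alpha_n$, contradicting $y \subsetneq y'$ and the maximality of $y'$ among left factors of $\alpha_1,\ldots,\alpha_{n-1}$ combined with $y = \mlf(y',\alpha_n)$. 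Pushing this comparison through shows $y'' = y$, hence $y=\mlf(\alpha_i,\alpha_n)$, completing the induction. (The only genuinely combinatorial content is the total ordering of left factors of a fixed reduced tree by the "further-left-factor" relation, which is immediate from the tree description of $W\scB$ in Remark \ref{7_13}, together with the branch-point bookkeeping from Proposition \ref{7_15}; everything else is formal manipulation of $\mlf$.)
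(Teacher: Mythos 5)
Your proof is correct and is essentially the paper's own argument: the paper disposes of this lemma in one line by citing the identity $\mlf(\mlf(\alpha_1,\alpha_2),\alpha_3)=\mlf(\alpha_1,\alpha_2,\alpha_3)$, and your induction on $n$ — using that identity together with the total ordering of the left factors of a fixed element of $W\scB(1)$ (which is what makes both the dichotomy and the comparison $\mlf(\alpha_i,\alpha_n)\le y'$ work) — is exactly the unpacking of that one-liner. The only cosmetic point is that your final step needs no "branch-point bookkeeping" or appeal to the axiality mechanism of Proposition \ref{7_15} (which concerns trees with several inputs, whereas everything here lives in $W\scB(1)$): once you know $y''=\mlf(\alpha_i,\alpha_n)\le y'$, transitivity makes $y''$ a common left factor of all of $\alpha_1,\dots,\alpha_n$, so $y''\le y$, and combined with $y\le y''$ this gives $y''=y$ directly.
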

\begin{proof}
This follows from $\mlf(\mlf(\alpha_1,\alpha_2),\alpha_3)=\mlf(\alpha_1,\alpha_2,\alpha_3)$.
\end{proof}

\begin{lem}\label{LemmaEins}
Let $\scB$ be a topological operad such that $\scB(1)=\{id\}$. Given $\alpha_1,\ldots,\alpha_n\in
 W\scB(1)$ such that
$(\alpha_i,\alpha_j)\in  W\scB(2)$ for each pair $i<j$, then $(\alpha_1,\ldots,\alpha_n)\in W\scB(n)$.
\end{lem}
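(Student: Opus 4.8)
The plan is to argue by induction on $n$, the cases $n\le 2$ being trivial. Throughout I would use that $W|\scB_\bullet|$ is axial (Proposition \ref{7_15}), so that $W|\scB_\bullet|(m)$ sits inside $W|\scB_\bullet|(1)^m$ and its elements may be regarded as $m$-input trees; that $W|\scB_\bullet|$ is left factorial (Lemma \ref{LemmaNull}), so that $z:=\mlf(\alpha_1,\dots,\alpha_n)$ exists; and that $W|\scB_\bullet|$ is left cancellative, since grafting a reduced tree onto the single, length-$1$, input edge of a reduced tree triggers no further reduction. Write $\alpha_i=z\cdot\alpha_i'$ with $\alpha_i'$ uniquely determined.

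First I would split the inputs. The left factors of a fixed element of $W|\scB_\bullet|(1)$ are linearly ordered by the height of the corresponding subtree (cf. the proof of Lemma \ref{LemmaNull}), so, $z$ being a common left factor of the $\alpha_i$, every common left factor of $\alpha_i$ and $\alpha_j$ is comparable with $z$ and hence $\mlf(\alpha_i',\alpha_j')$ equals $\mlf(\alpha_i,\alpha_j)$ divided by $z$ on the left. With Lemma \ref{LemmaZwei} this yields $z=\min_{i<j}\mlf(\alpha_i,\alpha_j)$, attained for some pair. Declaring $i\sim j$ when $\mlf(\alpha_i,\alpha_j)>z$, equivalently $\mlf(\alpha_i',\alpha_j')\neq id$, therefore gives a partition $\{1,\dots,n\}=S_1\sqcup\dots\sqcup S_p$ (transitivity of $\sim$ comes from the linear order on left factors), with $p\ge 2$ since the pair realizing the minimum $z$ lies in distinct classes.

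Next I would bring in the hypothesis. For $i<j$ fix a reduced $2$-input tree $\delta_{ij}$ with axial image $(\alpha_i,\alpha_j)$; because $\scB_\bullet(1)=\{id\}$ it has no interior unary vertex, its two input paths meet in a single branch vertex $w_{ij}$, and it factors as $\pi_{ij}\circ\rho_{ij}$, where $\pi_{ij}$ — the unary portion below $w_{ij}$ — is a common left factor of $\alpha_i,\alpha_j$ and $\rho_{ij}$ is $w_{ij}$ with its two branches attached. When $i\sim j$ I would check that $z\le\pi_{ij}$: a common left factor of \emph{all} the $\alpha_k$ can reach above $w_{ij}$ only if $\alpha_i=\alpha_j$ with coinciding branches, which is incompatible with $z$ being the global maximal left factor (here $n\ge 3$ enters, together with the constraint on the other $\pi_{ik}$ coming from $z\mid\alpha_k$). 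Hence $\delta_{ij}=z\circ\delta_{ij}'$ with $\delta_{ij}'$ a $2$-input tree of axial image $(\alpha_i',\alpha_j')$; so the family $(\alpha_i')_{i\in S_t}$ again satisfies the hypothesis of the lemma, and by the inductive hypothesis ($|S_t|<n$ as $p\ge 2$) there is an $|S_t|$-input tree $T_t$ with $\xi_i(T_t)=\alpha_i'$ for $i\in S_t$.

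Finally I would reassemble. From the cross-class trees $\delta_{ij}'$ ($i\in S_t$, $j\in S_{t'}$, $t\neq t'$) one recovers the lowest branch vertex $w$ of the sought tree together with its stump branches — the pieces seen from different pairs being mutually compatible — and then sets $T=z\circ\bigl(w\circ(T_1\oplus\dots\oplus T_p)\bigr)$, grafting each $T_t$ onto the corresponding branch of $w$ after stripping the single bottom vertex it shares with $w$. One then verifies $\xi_i(T)=\alpha_i$ by pushing the nullary operation $0$ through the operad composition, using $W|\scB_\bullet|(0)=\{0\}$ so that killed classes collapse and using $\scB_\bullet(1)=\{id\}$ together with the shape of $w$ so that killing all classes but the one containing $i$ leaves exactly $\alpha_i$. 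I expect the hard part to be this reconstruction and the factorization $\delta_{ij}=z\circ\delta_{ij}'$ preceding it: controlling the reduced forms of the $\delta_{ij}$, recovering $w$ and its stump edges consistently across all the classes, and checking that the assembled tree really has the prescribed axial coordinates. Everything else reduces to routine applications of Lemmas \ref{LemmaNull} and \ref{LemmaZwei}, the axiality and reducedness of $W|\scB_\bullet|$, and the linear ordering of left factors.
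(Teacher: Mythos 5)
Your strategy is genuinely different from the paper's: you decompose at the root, stripping the maximal common left factor $z$ and partitioning the indices into classes according to whether $\mlf(\alpha_i,\alpha_j)>z$, whereas the paper inducts by grafting one new input at a time into an already-built tree for $(\alpha_1,\dots,\alpha_{n-1})$, using a pair with \emph{minimal} meeting height to control where the new branch attaches. Unfortunately your outline has two gaps, both located exactly where you yourself say you "expect the hard part" to be, and at least the first is not merely technical. The factorization $\delta_{ij}=z\circ\delta_{ij}'$ for a same-class pair rests on the claim $z\le\pi_{ij}$, but the lemma does not assume $\Sigma$-freeness, and when the label of the branch vertex $w_{ij}$ admits a symmetry interchanging the two live inputs, the element $\mlf(\alpha_i,\alpha_j)$ computed in $W|\scB_\bullet|(1)$ can be strictly longer than the literal unary bottom portion of the (unique, by axiality) tree $\delta_{ij}$: both $\xi_1(\delta_{ij})$ and $\xi_2(\delta_{ij})$ then begin with $\lambda(-,0)=\lambda(0,-)$ even though $\delta_{ij}$ does not factor through that unary tree. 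In that situation $\delta_{ij}$ does not factor through $z$, no witness for $(\alpha_i',\alpha_j')\in W|\scB_\bullet|(2)$ is produced, and indeed the final tree for $(\alpha_1,\dots,\alpha_n)$ need not factor as $z\circ(\cdots)$ at all, so the Ansatz $T=z\circ\bigl(w\circ(T_1\oplus\dots\oplus T_p)\bigr)$ is not valid for general $\scB_\bullet$ with $\scB_\bullet(1)=\{id\}$.

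The second gap is the reassembly itself. Even granting the class decomposition, you must produce a \emph{single} bottom vertex $w$, with its stump branches, whose restrictions to each pair of classes (by stumping the others) reproduce the bottom vertices of all the cross-class trees $\delta_{ij}'$. This mutual compatibility is precisely the content of the lemma already in the case $n=3$, $z=id$, all classes singletons: there one must find one label in $\scB(3)$ (or higher arity, with stumps) restricting to the three given binary witnesses, and nothing in your argument produces it. Asserting that "the pieces seen from different pairs are mutually compatible" assumes the conclusion. The paper's proof is organized the way it is precisely to discharge this point: by adding only one input at a time and choosing the new input so that its meeting height with some old input is minimal among all pairs, it either grafts the new branch below or at a stump position of the lowest branch vertex (where no compatibility is needed), or else pushes the problem into a proper subtree where the induction hypothesis on fewer inputs applies. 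To rescue your approach you would need to supply an argument of comparable strength for the existence of $w$, and either restrict to $\Sigma$-free operads or repair the class decomposition so that it reflects the actual meeting pattern of input paths rather than the $\mlf$'s of the axial coordinates.
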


 Let $J$ denote the category of subsets of $\{1,\ldots,n\}$ of cardinality $1$ or $2$ and inclusions
as morphisms. Let $P$ denote the limit of the $J^{op}$ diagram in $\Top$ sending $\{i\}$ to $W\scB(1)$, and
$\{i,j\}$ to $W\scB(2)$. The two maps $W\scB(2) \to W\scB(1)$ corresponding to the
inclusions of $\{i\}$and $\{j\}$ into $\{i,j\}$ are given by the components of the axial map. The lemma states that there is a
surjection $W\scB(n)\to P$. 
\begin{proof}
We proceed by induction.
For $n=2$ the statement is trivial. Now suppose that $n\geq 3$. 

Throughout  our proof
we will argue with reduced trees representing our elements. Let $T_i$ 
represent $\alpha_i$. By assumption there are reduced trees $S_{ij}$ with two
inputs, mapped by the axial map to $(T_i,T_j)$ (we may have to reduce the image). Let $s_{ij}$ denote the node
where the  input paths of $S_{ij}$ meet. The height
of $s_{ij}$ is the number of edges between $s_{ij}$ and the root along an input path. By renumbering the $\alpha_i$ we
may assume that $s_{1n}$ has the minimal height of all the $s_{ij}$. 

By induction there is a reduced tree $U$ with $n-1$ inputs whose axial image is $(T_1,\ldots,T_{n-1})$. Let $u$ be its node of lowest height where
two input paths meet. Then $U$ has the form

$$\xymatrix@=5pt@M=-1pt@W=-1pt{
U_1\ &U_2\ & &\dots &\ \,U_r\\
\ar@{-}[ddrr] &\ar@{-}[ddr] &&&\ar@{-}[ddll]\\
\\
&&\bullet\ar@{-}[ddd]&\save[]+<-2pt,0pt>*{u}\restore\\
&\bullet\ar@{-}[ddr]\\
\\
&&\ \\
&&\save[]+<2pt,-1pt>*{U_0}\restore\\
&&\ar@{-}[dd]\\
&&\\
&&
}$$
where the stump stands for a subtree topped by stumps if its outgoing edge has length $t,\ 0<t<1$. Of course, there may
be more such stumps. 
We may assume that the input path labeled $1$ passes through $U_1$. Then $T_1$ is obtained from $U$ by
replacing $U_1$ by $U_1'$, obtained from $U_1$ by putting stumps on all inputs of $U_1$ except of input $1$ and reducing,
and by putting stumps on $U_2,\ldots,U_r$ and reducing. In particular, $T_1$ contains the node $u$ and the subtree of $U_0$
below it.

By assumption, there is a tree $V\in \scT\scB(2)$ whose axial image is $(T_1,T_n)$. 
So $V$ contains the edge path of $U$ from $u$ to the root.
By minimality of $s_{1n}$ the two input
paths of $V$, which we denote by $1$ and $n$, meet in this edge path.
If they meet in a vertex
$v$ below $u$ consider the incoming edge of $v$ which lies on the $n$-th edge path of $V$ and the subtrees $U_0'$ of $U_0$
and $V'$ of $V$ on top of it. Then you obtain $U_0'$ from $V'$ by putting a stump on its single input and reduction. So
if we replace $U_0'$ in $U$ by $V'$ we obtain a tree whose axial image is $(T_1,\ldots,T_n)$.
We can proceed in the same way if the two input paths meet in $u$ and input path $n$
of $V$ meets $u$ at an input corresponding to a stump in $U$. This procedure only fails if input path $n$ meets
$u$ in an input which is topped by a tree $U_i$ with $1<i\leq r$. 
Suppose that the input paths $i_1,\ldots, i_q$ of $U$ pass through $U_i$. Let $(T_{i_1}',\ldots,T_{i_q}')$ be the axial image
of $U_i$ and let $V'$ be the subtree of $V$ sitting on top of the input to $u$ below $U_i$.
Then $T_j$ has the form
$$\xymatrix@=5pt@M=-1pt@W=-1pt{
&&\ar@{-}[dd]\\
\\
&&\\
&&T_j'\\
&&\ar@{-}[ddd]\\
\bullet\ar@{-}[ddrr] &\bullet\ar@{-}[ddr]&&&\bullet\ar@{-}[ddll]\\
\\
&&\bullet\ar@{-}[ddd]&\save[]+<-2pt,0pt>*{u}\restore\\
&\bullet\ar@{-}[ddr]\\
\\
&&\ \\
&&\save[]+<2pt,-1pt>*{U_0}\restore\\
&&\ar@{-}[dd]\\
&&\\
&&
}$$ 
for all $j\in \{i_1,\ldots ,i_q,n\}$ where the stumps again stand for subtrees topped by stumps. 
The condition $(\alpha_k,\alpha_l)\in W\scB(2)$ for all $k\neq l$ therefore implies that
$(T_k',T_l')$ represents an element in $W\scB(2)$ for all $k,\ l\in \{i_1,\ldots ,i_q,n\},\ k\neq l$. 
By induction there is a tree $X$ with $q+1$ inputs
whose axial image is $(T_{i_1}',\ldots,T_{i_q}',T_n')$. If we replace $U_i$ in $U$ by $X$, we obtain 
a tree representing an element in $W\scB(n)$ whose axial image is $(T_1,\ldots,T_n)$.
\end{proof}
\begin{lem}\label{LemmaDrei}
Let $\scB$ be a $\Sigma$-free topological operad. If $\alpha,\beta,\gamma,\delta \in W\scB(1)$ satisfy that
$(\alpha\circ \beta, \gamma)$ and $(\alpha,\gamma \circ \delta)$ are in $W\scB(2)$, then $(\alpha,\gamma)$ is
in $ W\scB(2)$ (for $(\alpha,\gamma)\in W\scB(2)$ see our convention in Definition \ref{7_6}).
\end{lem}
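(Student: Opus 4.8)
The plan is to translate the assertion into a bookkeeping fact about reduced trees, using the explicit description of $W\scB$ in \ref{7_13} and \ref{7_14a}. Since $W\scB$ is axial (cf. Proposition \ref{7_15}), the axial map $\xi\colon W\scB(2)\to W\scB(1)^{2}$ is injective; so $W\scB(2)$ may be regarded as a subset of $W\scB(1)^{2}$, and a pair $(\mu,\nu)$ lies in it precisely when $(\mu,\nu)=\xi(S)$ for (the class of) some reduced tree $S$ with two inputs, the two coordinates of $\xi(S)$ being obtained by replacing one of the two input leaves of $S$ by the constant $0\in\scB(0)$ and reducing. As a preliminary I would attach to every $\mu\in W\scB(1)$ its \emph{length} $\ell(\mu)$, the number of vertices on the input path of a reduced representative of $\mu$; this is unambiguous because the only relation among reduced trees is the equivariance relabelling, which does not change vertex counts. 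Operad composition grafts trees along edges of length $1$, which never triggers the identity-, stump- or shrinking relation, so a graft of reduced trees stays reduced; consequently $\ell(\mu\circ\mu')=\ell(\mu)+\ell(\mu')$, $\ell(\mu)=0$ iff $\mu=id$, and a reduced representative of $\alpha\circ\beta$ is literally the tree of $\beta$ grafted on top of that of $\alpha$ --- so along the input path $\alpha$ fills the $\ell(\alpha)$ vertices next to the root and $\beta$ the $\ell(\beta)$ vertices next to the leaf.

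Next I would choose a reduced two-input tree $S_{1}$ with $\xi(S_{1})=(\alpha\circ\beta,\gamma)$ and let $s$ be the vertex where its two input paths meet. Its label $c$ has arity $p\ge 2$; exactly two of its slots are non-trivial, say $a$ (carrying the path to leaf $1$) and $a'$ (carrying the path to leaf $2$), the others carrying stumps. Writing $P,P'\in W\scB(1)$ for the subtrees above the slots $a,a'$ and $Q\in W\scB(1)$ for the part of $S_{1}$ below $s$, one reads off $\alpha\circ\beta=Q\circ c^{(a)}\circ P$ and $\gamma=Q\circ c^{(a')}\circ P'$, where $c^{(a)},c^{(a')}\in W\scB(1)$ are $c$ with all but the indicated slot stumped; each of these is a single-vertex tree, so $\ell(c^{(a)})=\ell(c^{(a')})=1$, and they are not the identity because $p\ge 2$. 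The decisive dichotomy is whether the meeting vertex, viewed in the first coordinate, falls within the $\beta$-part of $\alpha\circ\beta$, i.e. whether $\ell(\beta)>\ell(P)$ or not. If $\ell(\beta)\le\ell(P)$, then $\beta$ is a top part of $P$, say $P=P_{0}\circ\beta$; replacing the subtree $P$ above slot $a$ of $s$ by $P_{0}$ gives a two-input tree whose input paths still meet at $s$ and whose axial image is $(Q\circ c^{(a)}\circ P_{0},\gamma)=(\alpha,\gamma)$ --- and we are finished. If instead $\ell(\beta)>\ell(P)$, then $\alpha$ is a bottom part of $Q$, say $Q=\alpha\circ Q_{0}$, and substitution gives $\gamma=\alpha\circ\rho_{1}$ with $\rho_{1}:=Q_{0}\circ c^{(a')}\circ P'$, so that $\ell(\rho_{1})\ge\ell(c^{(a')})=1$.

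I would then run the identical argument on a reduced two-input tree $S_{2}$ with $\xi(S_{2})=(\alpha,\gamma\circ\delta)$, where now the composite occupies the second coordinate with $\delta$ in the role of $\beta$. This produces: \emph{either} $(\alpha,\gamma)\in W\scB(2)$ directly, \emph{or} $\alpha=\gamma\circ\rho_{2}$ with $\ell(\rho_{2})\ge 1$. If both of the good alternatives failed, the two factorizations would give $\gamma=\alpha\circ\rho_{1}=\gamma\circ(\rho_{2}\circ\rho_{1})$, hence $\ell(\gamma)=\ell(\gamma)+\ell(\rho_{1})+\ell(\rho_{2})\ge\ell(\gamma)+1$, which is absurd. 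So a good alternative always occurs, i.e. $(\alpha,\gamma)\in W\scB(2)$. (The degenerate cases in which one of $\alpha,\beta,\gamma,\delta$ equals $id$ are absorbed into the dichotomy, so they require no separate discussion; and if $W\scB(2)=\varnothing$ there is nothing to prove.)

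The step I expect to be the main obstacle is the tree surgery in the case $\ell(\beta)\le\ell(P)$: one must verify carefully that excising the $\beta$-part from the leaf-$1$ branch of $S_{1}$ does produce a genuine two-input tree and that its first axial coordinate is exactly $\alpha$ --- this rests on the observation that the reduced tree of $\alpha\circ\beta$ contains the reduced tree of $\alpha$ as precisely the portion below the $\beta$-graft, so that ``stripping $\beta$'' commutes with passing to axial coordinates. The complementary technical point --- that in the case $\ell(\beta)>\ell(P)$ the factor $\rho_{1}$ genuinely has positive length --- is exactly where the arity bound $p\ge 2$ at the meeting vertex is essential (a single honest vertex of $\scB$, even with some inputs filled by constants, is never the identity of $W\scB(1)$). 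Everything else --- additivity of $\ell$, the fact that grafts along length-$1$ edges stay reduced, and the final length count --- is routine.
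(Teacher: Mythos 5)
Your proof is correct, and its core move is the same as the paper's: represent the hypothesis $(\alpha\circ\beta,\gamma)\in W\scB(2)$ by a two-input tree, locate the vertex where the two input paths meet relative to the $\alpha$/$\beta$ grafting edge, and excise the $\beta$-part when the meeting vertex lies at or below that edge. Where you diverge is in disposing of the remaining case. The paper observes that if the paths meet inside the $\beta$-part then $\gamma=\alpha\circ\gamma'$, and then kills this case by invoking the claim that $(\alpha\circ\rho,\alpha)\notin W\scB(2)$ for all $\rho\in W\scB(1)$, applied to the second hypothesis $(\alpha,\gamma\circ\delta)$ -- a claim it states without proof (and which is where $\Sigma$-freeness is implicitly needed). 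You instead run the same dichotomy symmetrically on both hypotheses, so that the only surviving bad case yields $\gamma=\alpha\circ\rho_1$ and $\alpha=\gamma\circ\rho_2$ with $\ell(\rho_i)\ge 1$, which the additivity of your length invariant rules out. This buys you a self-contained argument that does not rely on the unproved non-membership claim (nor, in fact, on $\Sigma$-freeness), at the price of having to set up and justify the invariant $\ell$ -- whose well-definedness rests on uniqueness of reduced representatives up to equivariance, a fact the paper also uses tacitly throughout Section 7. One presentational caution: your intermediate factorization $\alpha\circ\beta=Q\circ c^{(a)}\circ P$ is not a genuine composition in $W\scB(1)$ unless the edges adjacent to the meeting vertex have length $1$; the cuts that actually matter in your argument are made at the grafting edge of $\alpha\circ\beta$ (which does have length $1$ by uniqueness of reduced forms), and read that way every step you use goes through.
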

\begin{proof}
 Let $A,B,G,D$ be reduced trees representing $\alpha,\beta,\gamma,\delta$. By assumption there are trees $S$ and $T$
whose axial image is $(A\circ B,G)$ respectively $(A,G\circ D)$. In $S$ the input paths corresponding to $A\circ B$ and
$G$ can either meet in the $A$-part or the $B$-part. If they meet in the $A$-part we delete the $B$-part from $S$ and obtain
a tree whose axial image is $(A,G)$. If they meet in the $B$-part, then $G$ is of the form $G=A\circ G'$. But since 
$(\alpha\circ\rho,\alpha)\notin W\scB(2)$ for all $\rho\in W\scB(1)$, the condition $(\alpha,\gamma \circ \delta)\in
W\scB(2)$ then cannot hold.
\end{proof}

\section{Binodal trees}\label{sec8}

In this section we will discuss a tree notation which is very convenient in analyzing
the colimit decomposition of the tensor product $W\mathcal{NM}_k\otimes W\mathcal{NM}_l$.

\begin{defi} A \textit{binodal tree} is a \textbf{nonplanar} rooted tree with labeled inputs,
no stumps and no unary nodes. 
Also all
the nodes are colored either black or white and there are no edges connecting two
white nodes. [If some construction below gives rise to an edge connecting two white
nodes, it is to be understood that such an edge is to be shrunk to a point and the
two white nodes merged together.]
\end{defi}

When we draw pictures of binodal trees, we will represent black nodes by solid dots $\bullet$
and white nodes by hollow dots $\circ$. We will also use the following convention: if $T$
is a binodal tree, then $|T|$ will denote the set of inputs for $T$.

\begin{defi}\label{8_1} Let $\scA$ be an operad in one of our categories $\mathcal{S}$ 
and let $T$ be a binodal tree. We define the object $\scA(T)$
with a left $\scA(1)$ action recursively as follows.
\begin{itemize}
\item If $|T|$ has cardinality 1, then $\scA(T)=\scA(1)$ with left action
given by multiplication.
\item If the bottom node of $T$ is colored white and thus $T$ looks like
$$\xymatrix@=5pt@M=-1pt@W=-1pt{
T_1 &&T_2 &&&\dots &&T_r\\
\ar@{-}[dddrrr] &&\ar@{-}[dddr] &&& &&\ar@{-}[dddllll]\\
\\
\\
&&&\circ\ar@{-}[ddd]&&&\\
\\
\\
&&&
}$$
then 
$$\scA(T)=\scA(T_1)\times\scA(T_2)\times\dots\times\scA(T_r)$$
with the diagonal left action of $\scA(1)$ on the right hand side.
\item If the bottom node of $T$ is colored black and thus $T$ looks like
$$\xymatrix@=5pt@M=-1pt@W=-1pt{
T_1 &&T_2 &&&\dots &&T_r\\
\ar@{-}[dddrrr] &&\ar@{-}[dddr] &&& &&\ar@{-}[dddllll]\\
\\
\\
&&&\bullet\ar@{-}[ddd]&&&\\
\\
\\
&&&
}$$
then
$$\scA(T)=\scA(r)\times_{\scA(1)^r}\scA(T_1)\times\scA(T_2)\times\dots\times\scA(T_r)$$
with left $\scA(1)$ action coming from its left action on $\scA(r).$
\end{itemize}
\end{defi}
\begin{rema}\label{8_3a}
 If we work in $\Top$ or $\Sets$ it is helpful to think of an object in $\scA(T)$
in the following way. We extend this point of view to $\SSets$ by applying it
degreewise.

An element of $\scA(T)$ is a \textbf{planar} binodal tree $T$ in which each black node $v$ is decorated
with an element of $\scA(\In (v))$ and each edge with an element of $\scA(1)$ subject
to the following relations:
\begin{enumerate}
\item[(1)] The equivariance relation \ref{5_1b} (here we give each white node a dummy decoration
which is invariant under the action of the symmetric group).
\item[(2)] If an edge below a black node has decoration $\alpha\cdot\beta$ then $\beta$
can be moved into the black node by composing with the node's decoration.\newline
\centerline{
$
\xymatrix@=5pt@M=-1pt@W=-1pt{
\ar@{-}[dddrrrrrr] &&&\ar@{-}[dddrrr] &&&\dots& &&\ar@{-}[dddlll]\\
\\
\\
&& && &&\bullet\ar@{-}[dddd]^{\alpha\cdot\beta}&\save[]+<-2pt,0pt>*{\lambda}\restore\\
\\
\\
\\
&&&&&&\\
}
$
\raisebox{-20pt}{\qquad$\sim$\qquad}
$
\xymatrix@=5pt@M=-1pt@W=-1pt{
\ar@{-}[dddrrrrrr] &&&\ar@{-}[dddrrr] &&&\dots& &&\ar@{-}[dddlll]\\
\\
\\
&& && &&\bullet\ar@{-}[dddd]^{\alpha}&\save[]+<4pt,0pt>*{\beta\cdot\lambda}\restore\\
\\
\\
\\
&&&&&&\\
}
$
}
\item[(3)] If an edge above a black node has decoration $\alpha\cdot\beta$ then $\alpha$
can be moved into the black node by composing in the canonical way with the node's decoration.
\newline
\centerline{
$
\xymatrix@=5pt@M=-1pt@W=-1pt{
\ar@{-}[ddddddrrrrrr] &&&\dots &&&\ar@{-}[dddddd]^{\save[]+<-25pt,6pt>*{\scriptstyle\alpha\cdot\beta}\restore} &&&\dots& &&\ar@{-}[ddddddllllll]\\
\\
\\
\\
\\
\\
&&& && &\bullet\ar@{-}[ddd]&\save[]+<1pt,0pt>*{\lambda}\restore&&&&&\\
\\
\\
&&&&&&&&&&&&&&&&&\\
}
$
\raisebox{-20pt}{\qquad$\sim$\qquad}
$
\xymatrix@=5pt@M=-1pt@W=-1pt{
\ar@{-}[ddddddrrrrrr] &&&\dots &&&\ar@{-}[dddddd]^{\save[]+<-25pt,6pt>*{\scriptstyle\beta}\restore} &&&\dots& &&\ar@{-}[ddddddllllll]\\
\\
\\
\\
\\
\\
&&& && &\bullet\ar@{-}[ddd]&\save[]+<39pt,0pt>*{\lambda\cdot(id\oplus\alpha\oplus id)}\restore&&&&&\\
\\
\\
&&&&&&&&&&&&&&&&&\\
}
$
}
\item[(4)] If an edge below a white node has decoration $\alpha\cdot\beta$ then $\beta$
can be moved into all edges above that node simultaneously by composing with the decorations
of these edges.\newline
\centerline{
$
\xymatrix@=5pt@M=-1pt@W=-1pt{
\ar@{-}[ddddddrrrrrr]_{\gamma_1}  &\qquad&&\ar@{-}[ddddddrrr]^{\gamma_2} &&&&&&\dots& &&\ar@{-}[ddddddllllll]^{\gamma_r}\\
\\
\\
\\
\\
\\
&&& && &\circ\ar@{-}[dddd]^{\alpha\cdot\beta}&&&&&&\\
\\
\\
\\
&&&&&&&&&&&&&&&&&\\
}
$
\raisebox{-20pt}{\qquad$\sim$\qquad}
$
\xymatrix@=5pt@M=-1pt@W=-1pt{
\ar@{-}[ddddddrrrrrr]_{\beta\cdot\gamma_1}  &\qquad&&\ar@{-}[ddddddrrr]^{\beta\cdot\gamma_2} &&&&&&\dots& &&\ar@{-}[ddddddllllll]^{\beta\cdot\gamma_r}\\
\\
\\
\\
\\
\\
&&& && &\circ\ar@{-}[dddd]^{\alpha}&&&&&&\\
\\
\\
\\
&&&&&&&&&&&&&&&&&\\
}
$
}
\end{enumerate}
If $\scS=\Cat$, we can similarly think of the objects and morphisms of $\scA(T)$ as binodal trees decorated
with appropriate objects or morphisms of $\scA$, subject to the above relations.
\end{rema}

\begin{rema}
While we are primarily concerned with the above construction for topological and simplicial
operads, we will also need to consider this construction for operads in $\Cat$. In
particular, we will use the posets $\mathcal{M}_k(T)$ and $\mathcal{M}_l(T)$ as indexing
categories in our colimit decomposition of $W|\mathcal{NM}_k|\otimes W|\mathcal{NM}_l|$.
In this case the description in Remark \ref{8_3a} simplifies considerably, since $\mathcal{M}_k$
and $\mathcal{M}_l$ have no nontrivial unary operations. Thus there is no need for edge labels
or the relations (2)-(4). 
\end{rema}

\begin{prop}\label{8_2} Let $\scA$ be an axial operad and $T$ a binodal tree.
Then there is a natural imbedding $\scA(T)\subseteq RU\scA(|T|)=\scA(1)^{|T|}$.
\end{prop}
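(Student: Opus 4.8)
The plan is to proceed by induction on the cardinality of $S = |T|$, following the recursive structure of the definition of $\scA(T)$ in Definition \ref{8_1}. The base case $|S|=1$ is immediate: there $\scA(T)=\scA(1)=\scA(1)^S=RU\scA(S)$, and the imbedding is the identity. For the inductive step, write $T$ with bottom node having incoming subtrees $T_1,\dots,T_r$, so $S = S_1 \amalg \dots \amalg S_r$ with $S_i = |T_i|$, and assume by induction that each $\scA(T_i)$ imbeds naturally in $\scA(1)^{S_i}$. There are two cases according to the color of the bottom node.

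If the bottom node is white, then $\scA(T) = \scA(T_1)\times\dots\times\scA(T_r)$, which by the inductive hypothesis imbeds in $\scA(1)^{S_1}\times\dots\times\scA(1)^{S_r} = \scA(1)^S$. One must check this imbedding is the ``correct'' one, i.e. compatible with the description of $RU\scA(S)$ via the axial maps and the white-node case in Remark \ref{8_3a}(4)--(5); concretely, the $i$-th block of coordinates records the axial image of $\scA(T_i)$, which is exactly what evaluating at the tree with stumps on all inputs outside $S_i$ produces. If the bottom node is black, then $\scA(T) = \scA(r)\times_{\scA(1)^r}(\scA(T_1)\times\dots\times\scA(T_r))$. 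Here I would use axiality of $\scA$ itself: $\xi\colon\scA(r)\to\scA(1)^r$ is a closed cofibration (hence in particular injective on underlying sets / objects), so $\scA(r)$ sits inside $\scA(1)^r$, and the balanced product can be rewritten. The element recorded in the $s$-th coordinate, for $s$ lying in the block $S_i$, is the composite of the $i$-th coordinate $\xi_i$ of the $\scA(r)$-label with the $s$-th coordinate of the image of $\scA(T_i)$ in $\scA(1)^{S_i}$; this is precisely multiplication/composition in the monoid $\scA(1)$, and matches the black-node relations of Remark \ref{8_3a}(2)--(3). One then verifies this assignment $\scA(T)\to\scA(1)^S$ is injective (this is where one genuinely uses that $\scA$ is axial, not merely $\Sigma$-free) and that it is an imbedding in the topological sense — for this invoke that the $\scA(T_i)\hookrightarrow\scA(1)^{S_i}$ are imbeddings and that $\xi\colon\scA(r)\to\scA(1)^r$ is a closed cofibration, so the balanced product maps homeomorphically onto its image.

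The naturality in $T$ — i.e. compatibility with morphisms of binodal trees coming from merging white nodes, and compatibility with the left $\scA(1)$-actions — should be checked alongside the induction, since the recursive definition already builds in the $\scA(1)$-action and the construction is manifestly functorial in each constituent.

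The main obstacle I expect is the black-node injectivity step: showing that the map $\scA(r)\times_{\scA(1)^r}\prod\scA(T_i)\to\scA(1)^S$ is injective requires that one can recover both the $\scA(r)$-label (up to the balanced-product identifications) and each $\scA(T_i)$-component from the tuple of coordinates in $\scA(1)^S$. This is exactly the content of axiality for $\scA(r)$ combined with the inductive injectivity for the $T_i$, but one has to be careful that the balanced product over $\scA(1)^r$ does not collapse anything unexpectedly — the point is that $\xi(\scA(r))\subseteq\scA(1)^r$, so the quotient by the $\scA(1)^r$-action is pinning down a genuine subspace, and reading off, for each block $S_i$, the product of $\xi_i$ with the $\scA(T_i)$-data determines the $\scA(1)^r$-coset of the $\scA(r)$-label and hence, together with the (inductively imbedded) $\scA(T_i)$'s, the whole element. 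Verifying that this bookkeeping is consistent with the five relations of Remark \ref{8_3a} is the fiddly part, but it is routine once the indexing is set up carefully.
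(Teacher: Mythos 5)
Your proposal is correct and follows essentially the same route as the paper: induction on the cardinality of $S$, with the white-node case handled as a product of the inductively given imbeddings and the black-node case using axiality of $\scA$ to identify the balanced product with a subspace of $\scA(1)^r\times_{\scA(1)^r}\prod_i\scA(1)^{|T_i|}\cong\scA(1)^S$ via composition in $RU\scA$. The injectivity and naturality points you flag as the fiddly part are exactly the ones the paper leaves implicit, and your treatment of them is sound.
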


\begin{proof} We proceed inductively on the cardinality of $|T|$.
If $|T|$ has cardinality 1, then $\scA(T)=\scA(1)=\scA(1)^{|T|}$ by definition. Assuming we have
established the result for binodal trees whose sets of inputs have cardinality less than $|T|$,
we consider two cases.

If the bottom node of $T$ is colored white and thus $T$ looks like
$$\xymatrix@=5pt@M=-1pt@W=-1pt{
T_1 &&T_2 &&&\dots &&T_r\\
\ar@{-}[dddrrr] &&\ar@{-}[dddr] &&& &&\ar@{-}[dddllll]\\
\\
\\
&&&\circ\ar@{-}[ddd]&&&\\
\\
\\
&&&
}$$
then by induction we have
$$\scA(T)=\scA(T_1)\times\scA(T_2)\times\dots\times\scA(T_r)\subseteq\prod_{i=1}^r\scA(1)^{|T_i|}=\scA(1)^{|T|}$$

If the bottom node of $T$ is colored black and thus $T$ looks like
$$\xymatrix@=5pt@M=-1pt@W=-1pt{
T_1 &&T_2 &&&\dots &&T_r\\
\ar@{-}[dddrrr] &&\ar@{-}[dddr] &&& &&\ar@{-}[dddllll]\\
\\
\\
&&&\bullet\ar@{-}[ddd]&&&\\
\\
\\
&&&
}$$
then by induction hypothesis and the axiality of $\scA$
$$\scA(T)=\scA(r)\times_{\scA(1)^r}\scA(T_1)\times\scA(T_2)\times\dots\times\scA(T_r)\subseteq
\scA(1)^r\times_{\scA(1)^r}\prod_{i=1}^r\scA(1)^{|T_i|}\cong\scA(1)^{|T|}$$
where the last map is given by composition in the operad $RU\scA$.
\end{proof}

\begin{coro}\label{8_3}Suppose $\scA$ is an axial and left factorial topological or simplicial operad. 
Then for any binodal tree $T$, $\scA(T)$ can be identified with the subspace $\scA^*(T)$ of 
$RU\scA(|T|)=\scA(1)^{|T|}$ defined recursively as follows:
\begin{itemize}
\item If $|T|$ has cardinality 1, we define $\scA^*(T)=RU\scA(|T|)\cong\scA(1)$.
\item If the bottom node of $T$ is colored white and thus $T$ looks like
$$\xymatrix@=5pt@M=-1pt@W=-1pt{
T_1 &&T_2 &&&\dots &&T_r\\
\ar@{-}[dddrrr] &&\ar@{-}[dddr] &&& &&\ar@{-}[dddllll]\\
\\
\\
&&&\circ\ar@{-}[ddd]&&&\\
\\
\\
&&&
}$$
we define
$$\scA^*(T)=\scA(T_1)\times\scA(T_2)\times\dots\times\scA(T_r)
\subseteq\prod_{i=1}^r\scA(1)^{|T_i|}\cong\scA(1)^{|T|}$$
\item If the bottom node of $T$ is colored black and thus $T$ looks like
$$\xymatrix@=5pt@M=-1pt@W=-1pt{
T_1 &&T_2 &&&\dots &&T_r\\
\ar@{-}[dddrrr] &&\ar@{-}[dddr] &&& &&\ar@{-}[dddllll]\\
\\
\\
&&&\bullet\ar@{-}[ddd]&&&\\
\\
\\
&&&
}$$
we define $\scA^*(T)$ to be the subspace of
$\scA(T_1)\times\scA(T_2)\times\dots\times\scA(T_r)
\subseteq\prod_{i=1}^r\scA(1)^{|T_i|}\cong\scA(1)^{|T|}$
consisting of those $r$-tuples $(\overline{a}_1,\overline{a}_2,\dots,\overline{a}_r)$
such that the corresponding $r$-tuple of maximal left factors $(a_1,a_2,\dots,a_r)$
is in the image of the axial map $\scA(r)\longrightarrow\scA(1)^r$. Here $a_i$ is the
maximal left factor of $\overline{a}_i$.
\end{itemize}
\end{coro}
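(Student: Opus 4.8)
The plan is to prove the corollary by induction on the cardinality of $S=|T|$, using Proposition \ref{8_2} as the recursive backbone. By that proposition $\scA(T)$ is already an embedded subspace of $RU\scA(S)=\scA(1)^S$, so it suffices to identify its image with the subspace $\Phi(T)$ described recursively in the statement; once this is done, the identification as a \emph{subspace} is automatic. The cases $|S|=1$ and ``bottom node white'' carry no new content: for $|S|=1$ both sides are $\scA(1)$, and if the bottom node is white then $\scA(T)=\scA(T_1)\times\dots\times\scA(T_r)$ by Definition \ref{8_1}, whose image in $\prod_i\scA(1)^{|T_i|}=\scA(1)^S$ is $\prod_i\Phi(T_i)=\Phi(T)$ by the inductive hypothesis. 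Two preliminary observations will be used throughout. First, the embedding $\scA(T_i)\subseteq\scA(1)^{|T_i|}$ of Proposition \ref{8_2} is equivariant for the left $\scA(1)$-action, where $\scA(1)$ acts on $\scA(1)^{|T_i|}$ diagonally by left multiplication — immediate in the white case, and in the black case because $\xi$ is an operad map, so left-composing with $d\in\scA(1)$ corresponds on axial images to diagonal left multiplication by $d$; in particular each $\scA(T_i)$ is closed under diagonal left multiplication. Second, I use two elementary properties of the left factorial monoid $\scA(1)$ (both transparent for $\scA=W\scB$ from the tree picture): that $\mlf(x\cdot y_1,\dots,x\cdot y_n)=x\cdot\mlf(y_1,\dots,y_n)$, and the ``associativity'' of $\mlf$ already noted in the proof of Lemma \ref{LemmaNull}.

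\textbf{The black-node case.} Fix $T$ with black bottom node and input subtrees $T_1,\dots,T_r$. Unwinding the balanced product in Definition \ref{8_1} together with the composition formula for $RU\scA$ in Lemma \ref{7_2}, the image of $\scA(T)$ in $\scA(1)^S$ is
$$\{\,(\xi_1(c)\cdot\overline a_1,\dots,\xi_r(c)\cdot\overline a_r)\ :\ c\in\scA(r),\ \overline a_i\in\scA(T_i)\,\}.$$
Given such a point, put $\overline b_i=\xi_i(c)\cdot\overline a_i$; then $\overline b_i\in\scA(T_i)$ by the closure observation, and $\mlf(\overline b_i)=\xi_i(c)\cdot\mlf(\overline a_i)$, so
$$(\mlf(\overline b_1),\dots,\mlf(\overline b_r))=\xi(c)\circ(\mlf(\overline a_1),\dots,\mlf(\overline a_r))=\xi\bigl(c\circ(\mlf(\overline a_1),\dots,\mlf(\overline a_r))\bigr)$$
lies in the image of the axial map $\scA(r)\to\scA(1)^r$; thus the displayed set is contained in $\Phi(T)$. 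Conversely, given $(\overline b_1,\dots,\overline b_r)\in\prod_i\scA(T_i)$ with $(\mlf(\overline b_1),\dots,\mlf(\overline b_r))=\xi(c)$, set $\overline a_i=\mlf(\overline b_i)^{-1}\overline b_i$; if $\overline a_i\in\scA(T_i)$ then $\xi_i(c)\cdot\overline a_i=\overline b_i$, so $(\overline b_1,\dots,\overline b_r)$ is in the image. Everything therefore reduces to the following claim, which I prove by the same induction on $|T|$, in parallel with the main statement: \emph{if $\overline b\in\scA(T)\subseteq\scA(1)^{|T|}$, then $\mlf(\overline b)^{-1}\overline b\in\scA(T)$.}

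\textbf{Proof of the claim and the main obstacle.} For $|T|=1$ this is trivial. If the bottom node is white, write $\overline b=(\overline b_1,\dots,\overline b_r)$ and $m=\mlf(\overline b)=\mlf(\mlf(\overline b_1),\dots,\mlf(\overline b_r))$; then $m$ is a left factor of each $\mlf(\overline b_i)$, so writing $\overline b_i=m\cdot p_i\cdot(\mlf(\overline b_i)^{-1}\overline b_i)$ and applying the inductive claim and closure under the left action, $m^{-1}\overline b_i\in\scA(T_i)$, hence $m^{-1}\overline b\in\scA(T)$. If the bottom node is black, the same computation gives $m^{-1}\overline b_i\in\scA(T_i)$ with $\mlf(m^{-1}\overline b_i)=m^{-1}\mlf(\overline b_i)$, and it remains to see that $(m^{-1}\mlf(\overline b_1),\dots,m^{-1}\mlf(\overline b_r))$ is still in the image of the axial map — equivalently, since $(\mlf(\overline b_1),\dots,\mlf(\overline b_r))=\xi(c)$ and $m$ is its maximal left factor, that $c$ factors as $c=m\circ c'$ with $c'\in\scA(r)$. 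This is the main obstacle and is where the hypotheses on $\scA$ are used. For the operads $\scA=W\scB$ to which the corollary is actually applied it is transparent from the tree description underlying Lemma \ref{LemmaNull}: representing $c$ by a reduced tree, the coordinate $\xi_j(c)$ of its axial image is the subtree obtained by capping all inputs but the $j$-th with stumps, and the maximal common left factor of these $r$ subtrees is precisely the unary portion of $c$ below its lowest branching vertex, whose removal exhibits $c=m\circ c'$. (More generally the claim holds for any axial operad with $\scA(1)$ left factorial in which $\scA(r)$ detects common left factors of its axial image in this sense.) Combining the two inclusions finishes the black-node step, and since Proposition \ref{8_2} already presents $\scA(T)$ as an embedded subspace of $\scA(1)^S$, identifying the image with $\Phi(T)$ completes the induction.
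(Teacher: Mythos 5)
Your proof is correct in substance and, unlike the paper's one-line justification (``this follows immediately from the proof of Proposition \ref{8_2}''), it actually supplies the missing argument: the inductive skeleton is the same, but you make explicit that the black-node case is an equality of two subsets of $\scA(1)^S$ and that only the forward inclusion is formal. Two remarks. First, the forward inclusion is even easier than you make it: you do not need the exact identity $\mlf(x\cdot y_1,\dots,x\cdot y_n)=x\cdot\mlf(y_1,\dots,y_n)$ (which already uses left cancellation); it suffices that $\xi_i(c)$ is a common left factor of the components of $\overline b_i$, hence left-divides $\mlf(\overline b_i)$, so that $(\mlf(\overline b_1),\dots,\mlf(\overline b_r))=\xi\bigl(c\circ(u_1\oplus\dots\oplus u_r)\bigr)$ for the cofactors $u_i$. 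Second, and more importantly, the obstacle you isolate in the reverse inclusion is genuine: that an element of $\scA(T)$ stays in $\scA(T)$ after stripping its maximal left factor reduces, at a black node, to the statement that $c\in\scA(r)$ factors as $c=m\circ c'$ whenever $m$ is a common left factor of $\xi_1(c),\dots,\xi_r(c)$, and this does \emph{not} follow from the stated hypotheses (axiality of $\xi$ plus left factoriality of the monoid $\scA(1)$ say nothing about how factorizations in $\scA(1)$ lift to $\scA(r)$). You are right to verify it from the tree description for $\scA=W\scB$; that verification is at the same level of rigor as the paper's own proof of Lemma \ref{LemmaNull} (the maximal common left factor of the stumped trees $\xi_j(c)$ is the trunk of $c$ below its lowest branching vertex, cut at the last edge of length $1$, which by $\Sigma$-freeness cannot be extended past that vertex), and $W|\scB_\bullet|$ is the only case the paper ever uses, via Proposition \ref{8_4}. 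So your argument closes the gap where it matters, but as written it establishes the corollary only under the additional ``detection of common left factors'' hypothesis you name, not in the generality the statement claims; this is a defect of the statement rather than of your proof, and it would be worth recording the extra hypothesis explicitly.
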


This follows immediately from the proof of Proposition \ref{8_2}.\hfill\ensuremath{\Box}

In view of Proposition \ref{7_15} and Lemma \ref{LemmaNull}, Corollary \ref{8_3} applies
when  ${\scA=W|\scB_\bullet|}$, where $\scB_\bullet$ is a simplicial operad with $\scB_\bullet(1)=\{id\}$.
Thus in what follows below, $(W|\scB_\bullet|)(T)$ will always mean the subspace $(W|\scB_\bullet|)^*(T)$
of $(W|\scB_\bullet|)(1)^{|T|}$ as described in  Corollary \ref{8_3} for any binodal tree $T$.

\begin{prop}\label{8_4} 
Let $\scB_\bullet$ be a simplicial operad with $\scB_\bullet(1)=\{id\}$. Let $T$ be a binodal tree. Then
$(W|\scB_\bullet|)(T)$ can be identified with the subspace of $(W|\scB_\bullet|)(1)^{|T|}$
consisting of all tuples $\left(\alpha_s\right)_{s\in |T|}$ such that, for any subset $\{a,b,c\}\subseteq |T|$ (of not necessarily distinct elements),
$${\left(\alpha_s\right)_{s\in |T|}\cap \{a,b,c\}\in (W|\scB_\bullet|)(T\cap\{a,b,c\})}.$$
\end{prop}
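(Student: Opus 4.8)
The plan is to prove the two inclusions $(W|\scB_\bullet|)(T)\subseteq(W|\scB_\bullet|)'(T)$ and $(W|\scB_\bullet|)'(T)\subseteq(W|\scB_\bullet|)(T)$ separately, both by induction on $|S|$. Throughout write $\scA=W|\scB_\bullet|$, so that $\scA(T)$ abbreviates $(W|\scB_\bullet|)(T)$; by Proposition~\ref{7_15} and Lemma~\ref{LemmaNull} the operad $\scA$ is axial and left factorial, hence Corollary~\ref{8_3} applies and provides the recursive description of $\scA(T)$ as a subspace of $RU\scA(S)=\scA(1)^S$ which will be used at every step. The cases $|S|\le2$ are immediate: for $|S|\le1$ both subspaces equal $\scA(1)^S$, and for $|S|=2$ the only non-degenerate subset occurring in the definition of $(W|\scB_\bullet|)'(T)$ is $S$ itself, so that condition is tautological. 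For $|S|\ge3$ the inductive step is organised according to whether the bottom node of $T$ is white or black; in either case $T$ has branches $T_1,\dots,T_r$ with $r\ge2$ (binodal trees have no unary nodes) on input sets $S_1,\dots,S_r$ partitioning $S$, so each $|S_k|<|S|$.

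For the inclusion $\scA(T)\subseteq(W|\scB_\bullet|)'(T)$ it suffices to know that the restriction map $(\alpha_s)_{s\in S}\mapsto(\alpha_s)_{s\in S'}$ carries $\scA(T)$ into $\scA(T\cap S')$ for every $S'\subseteq S$, and I would prove this slightly more general statement by induction on $|S|$ using Corollary~\ref{8_3}. If the bottom node of $T$ is white then $\scA(T)=\prod_k\scA(T_k)$, and $T\cap S'$ is the white tree with branches $T_k\cap(S'\cap S_k)$ over the $k$ with $S'\cap S_k\ne\emptyset$ (the bottom node being contracted if only one branch survives), so the assertion is immediate from the inductive hypothesis applied to the $T_k$. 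If the bottom node is black I would additionally use two elementary facts: (a) for $S'_k\subseteq S_k$ one has $\mlf\big((\alpha_s)_{s\in S'_k}\big)=\mlf\big((\alpha_s)_{s\in S_k}\big)\circ\rho_k$ for some $\rho_k\in\scA(1)$, since the maximal left factor of the larger family is in particular a common left factor of the smaller one; and (b) the subset $\scA(n)\subseteq\scA(1)^n$ is stable under the restriction maps of Definition~\ref{2_3a} and under post-composing its $i$th coordinate with any element of $\scA(1)$, the latter because that operation on $\scA(1)^n$ is induced by $\phi\mapsto\phi\circ(\rho_1\oplus\dots\oplus\rho_n)$ on $\scA(n)$ (here one uses that $\scB_\bullet$ is reduced, so $\rho\circ0=0$). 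Feeding (a) and (b) into the black-node clause of Corollary~\ref{8_3} yields the inclusion.

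For the reverse inclusion, let $(\alpha_s)_{s\in S}\in(W|\scB_\bullet|)'(T)$. If the bottom node of $T$ is white, then for a triple $\{a,b,c\}$ contained in a single $S_k$ one has $T\cap\{a,b,c\}=T_k\cap\{a,b,c\}$, so the inductive hypothesis for $T_k$ gives $(\alpha_s)_{s\in S_k}\in\scA(T_k)$ for every $k$, whence $(\alpha_s)_{s\in S}\in\prod_k\scA(T_k)=\scA(T)$. If the bottom node of $T$ is black, put $\bar a_k=(\alpha_s)_{s\in S_k}$; the same triple argument gives $\bar a_k\in\scA(T_k)$ for every $k$, so by Corollary~\ref{8_3} it remains to show that $\big(\mlf(\bar a_1),\dots,\mlf(\bar a_r)\big)$ lies in the axial image of $\scA(r)$. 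Because $\scB_\bullet(1)=\{id\}$, Lemma~\ref{LemmaEins} reduces this to proving $\big(\mlf(\bar a_i),\mlf(\bar a_j)\big)\in W|\scB_\bullet|(2)$ for each pair $i<j$. To do so, fix $i<j$ and use Lemma~\ref{LemmaZwei} to choose $s_i,s_i'\in S_i$ with $\mlf(\bar a_i)=\mlf(\alpha_{s_i},\alpha_{s_i'})$ and $s_j,s_j'\in S_j$ with $\mlf(\bar a_j)=\mlf(\alpha_{s_j},\alpha_{s_j'})$, taking $s_i=s_i'$ if $|S_i|=1$ and similarly for $j$. Restricting $T$ to $\{s_i,s_j,s_j'\}$ collapses $T_i$ to a single edge and $T_j$ to $T_j\cap\{s_j,s_j'\}$, so $T\cap\{s_i,s_j,s_j'\}$ is a black binary node carrying a single-edge branch and the branch $T_j\cap\{s_j,s_j'\}$; unwinding Corollary~\ref{8_3} for this tree, the hypothesis $(\alpha_s)_{s\in S}\cap\{s_i,s_j,s_j'\}\in\scA(T\cap\{s_i,s_j,s_j'\})$ says exactly that $\big(\alpha_{s_i},\mlf(\bar a_j)\big)\in W|\scB_\bullet|(2)$, that is, $\big(\mlf(\bar a_i)\circ\alpha_{s_i}',\mlf(\bar a_j)\big)\in W|\scB_\bullet|(2)$ where $\alpha_{s_i}=\mlf(\bar a_i)\circ\alpha_{s_i}'$. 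Symmetrically, restricting to $\{s_i,s_i',s_j\}$ gives $\big(\mlf(\bar a_i),\mlf(\bar a_j)\circ\alpha_{s_j}'\big)\in W|\scB_\bullet|(2)$, where $\alpha_{s_j}=\mlf(\bar a_j)\circ\alpha_{s_j}'$. Lemma~\ref{LemmaDrei}, applied to $\scA$ (which is $\Sigma$-free) with $\alpha=\mlf(\bar a_i)$, $\beta=\alpha_{s_i}'$, $\gamma=\mlf(\bar a_j)$, $\delta=\alpha_{s_j}'$, then produces $\big(\mlf(\bar a_i),\mlf(\bar a_j)\big)\in W|\scB_\bullet|(2)$, which completes the induction.

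The white-node bookkeeping and facts (a), (b) in the second paragraph are routine. The step I expect to be the main obstacle is the last paragraph: one has to pin down exactly what the tree $T\cap\{a,b,c\}$ is when the triple straddles two branches of a black bottom node, observe that such a restriction only controls the \emph{product} $\alpha_{s_i}=\mlf(\bar a_i)\circ\alpha_{s_i}'$ rather than the trunk $\mlf(\bar a_i)$ itself, and then isolate precisely the pair of $(2,2)$-type relations in whose presence Lemma~\ref{LemmaDrei} can cancel the spurious factors $\alpha_{s_i}'$ and $\alpha_{s_j}'$. A subsidiary point to verify is that $W|\scB_\bullet|$ is $\Sigma$-free, since Lemma~\ref{LemmaDrei} is stated under that hypothesis.
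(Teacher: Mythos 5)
Your argument is correct and follows essentially the same route as the paper: the forward inclusion by naturality of restriction, and the reverse inclusion by induction on $|S|$ using Corollary \ref{8_3}, reducing the black-node case to pairs via Lemma \ref{LemmaEins}, choosing witnesses with Lemma \ref{LemmaZwei}, and extracting from two triple restrictions exactly the pair of relations $(\beta_i\circ\gamma,\beta_j)$ and $(\beta_i,\beta_j\circ\delta)$ that Lemma \ref{LemmaDrei} needs. The only cosmetic difference is that the paper treats the cardinality-one branch cases separately rather than via degenerate triples, and dispatches the forward inclusion in one line.
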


\begin{proof} Let $(W|\scB_\bullet|)'(T)$ denote the subspace of $(W|\scB_\bullet|)(1)^{|T|}$
consisting of all tuples $\left(\alpha_s\right)_{s\in |T|}$ such that, for any subset $\{a,b,c\}\subseteq |T|$ (of not necessarily distinct elements),
$${\left(\alpha_s\right)_{s\in |T|}\cap \{a,b,c\}\in (W|\scB_\bullet|)(T\cap\{a,b,c\})}.$$
The inclusion of $(W|\scB_\bullet|)(T)\subseteq (W|\scB_\bullet|)'(T)$ follows by naturality.
We prove the reverse inclusion by induction on the cardinality of $|T|$. If the cardinality of $|T|$ is $\le 3$,
this holds trivially. So assume the assertion is true for binodal trees $T'$ with $|T'|$ having cardinality less
than $|T|$.

If the bottom node of $T$ is colored white and thus $T$ looks like
$$\xymatrix@=5pt@M=-1pt@W=-1pt{
T_1 &&T_2 &&&\dots &&T_r\\
\ar@{-}[dddrrr] &&\ar@{-}[dddr] &&& &&\ar@{-}[dddllll]\\
\\
\\
&&&\circ\ar@{-}[ddd]&&&\\
\\
\\
&&&
}$$
then it follows by induction that
\begin{eqnarray*}
(W|\scB_\bullet|)'(T)&= &(W|\scB_\bullet|)'(T_1)\times (W|\scB_\bullet|)'(T_2)\times\dots\times (W|\scB_\bullet|)'(T_r)\\
&= &(W|\scB_\bullet|)(T_1)\times (W|\scB_\bullet|)(T_2)\times\dots\times (W|\scB_\bullet|)(T_r)\\
&= &(W|\scB_\bullet|)(T)
\end{eqnarray*}

If the bottom node of $T$ is colored black and thus $T$ looks like
$$\xymatrix@=5pt@M=-1pt@W=-1pt{
T_1 &&T_2 &&&\dots &&T_r\\
\ar@{-}[dddrrr] &&\ar@{-}[dddr] &&& &&\ar@{-}[dddllll]\\
\\
\\
&&&\bullet\ar@{-}[ddd]&&&\\
\\
\\
&&&
}$$
then it follows by induction that
\begin{eqnarray*}
(W|\scB_\bullet|)'(T) &\subseteq &(W|\scB_\bullet|)'(T_1)\times (W|\scB_\bullet|)'(T_2)\times\dots\times (W|\scB_\bullet|)'(T_r)\\
&= &(W|\scB_\bullet|)(T_1)\times (W|\scB_\bullet|)(T_2)\times\dots\times (W|\scB_\bullet|)(T_r)
\end{eqnarray*}
Thus it remains to show that if 
$$\left(\alpha_s\right)_{s\in S}=(\overline{\alpha}_1,\overline{\alpha}_2,\dots,\overline{\alpha}_r)\in (W|\scB_\bullet|)'(T)$$
then $(\beta_1,\beta_2,\dots,\beta_r)$ is in the image of $W|\scB_\bullet|(r)\longrightarrow W|\scB_\bullet|(1)^r$, where $\beta_i$
is the maximal left factor of $\overline{\alpha}_i$ for $i=1,2,\dots r$.

By \ref{LemmaEins} and the induction hypothesis, we can reduce to the case $r=2$. If both $|T_1|$ and $|T_2|$ have cardinality
1 then $|T|$ has cardinality 2 and we are done by the induction hypothesis. We can assume wolog that $|T_1|$ has cardinality
$\ge 2$. By \ref{LemmaZwei} we can choose $\{s_1,s_2\}\subseteq|T_1|$ so that the maximal left factor of 
$(\alpha_{s_1},\alpha_{s_2})$ is $\beta_1$. If $|T_2|=\{s_3\}$ has cardinality 1, then $\beta_2=\alpha_{s_3}$ and we
have $(\alpha_{s_1},\alpha_{s_2},\alpha_{s_3})\in (W|\scB_\bullet|)(T\cap\{s_1,s_2,s_3\})$ which implies that $(\beta_1,\beta_2)$
is in the image of $W|\scB_\bullet|(2)\longrightarrow W|\scB_\bullet|(1)^2$. If $|T_2|$ also has cardinality $\ge 2$, pick
$\{s_3,s_4\}\subseteq|T_2|$ so that the maximal left factor of $(\alpha_{s_3},\alpha_{s_4})$ is $\beta_2$. Then we have
factorizations in $W|\scB_\bullet|(1)$ of the following form:
$$\alpha_{s_1}=\beta_1\cdot\gamma_1,\ \alpha_{s_2}=\beta_1\cdot\gamma_2,\ \alpha_{s_3}=\beta_2\cdot\gamma_3,\ 
\alpha_{s_4}=\beta_2\cdot\gamma_4$$
We then have 
$$(\alpha_{s_1},\alpha_{s_2},\alpha_{s_3})=(\beta_1\cdot\gamma_1,\beta_1\cdot\gamma_2,\beta_2\cdot\gamma_3)
\in (W|\scB_\bullet|)(T\cap\{s_1,s_2,s_3\}),$$
which implies that $(\beta_1,\beta_2\cdot\gamma_3)$ is in the image of $W|\scB_\bullet|(2)\longrightarrow W|\scB_\bullet|(1)^2$.
Similarly we have
$$(\alpha_{s_1},\alpha_{s_3},\alpha_{s_4})=(\beta_1\cdot\gamma_1,\beta_2\cdot\gamma_3,\beta_2\cdot\gamma_4)
\in (W|\scB_\bullet|)(T\cap\{s_1,s_3,s_4\}),$$
which implies that $(\beta_1\cdot\gamma_1,\beta_2)$ is in the image of $W|\scB_\bullet|(2)\longrightarrow W|\scB_\bullet|(1)^2$.
By \ref{LemmaDrei} this implies that $(\beta_1,\beta_2)$ is in the image of $W|\scB_\bullet|(2)\longrightarrow W|\scB_\bullet|(1)^2$.
This concludes the induction and proof.
\end{proof}

Hence we can reduce questions about the spaces $(W|\scB_\bullet|)(T)$ to the case of binodal trees with three inputs. For
future reference we will need to work out the intersections $(W|\scB_\bullet|)(T_1)\cap (W|\scB_\bullet|)(T_2)$ in this case.
First we note that the following is the complete list of all binodal trees with three inputs.

\centerline{
$\xymatrix@=5pt@M=-1pt@W=-1pt{
1 &&2 &&3\\
\ar@{-}[ddddrr] &&\ar@{-}[dddd] &&\ar@{-}[ddddll]\\
\\
\\
\\
&&\bullet\ar@{-}[dd]\\
\\
&&
}$
\quad
$\xymatrix@=5pt@M=-1pt@W=-1pt{
1 &&2 &&3\\
\ar@{-}[ddddrr] &&\ar@{-}[dddd] &&\ar@{-}[ddddll]\\
\\
\\
\\
&&\circ\ar@{-}[dd]\\
\\
&&
}$
\quad
$\xymatrix@=5pt@M=-1pt@W=-1pt{
1 &&2 &&&3\\
\ar@{-}[ddr] &&\ar@{-}[ddl]&&&\ar@{-}[ddddll]\\
\\
&\bullet\ar@{-}[ddrr]\\
\\
&&&\bullet\ar@{-}[dd]\\
\\
&&&
}$
\quad
$\xymatrix@=5pt@M=-1pt@W=-1pt{
1 &&3 &&&2\\
\ar@{-}[ddr] &&\ar@{-}[ddl]&&&\ar@{-}[ddddll]\\
\\
&\bullet\ar@{-}[ddrr]\\
\\
&&&\bullet\ar@{-}[dd]\\
\\
&&&
}$
\quad
$\xymatrix@=5pt@M=-1pt@W=-1pt{
2 &&3 &&&1\\
\ar@{-}[ddr] &&\ar@{-}[ddl]&&&\ar@{-}[ddddll]\\
\\
&\bullet\ar@{-}[ddrr]\\
\\
&&&\bullet\ar@{-}[dd]\\
\\
&&&
}$
}
\centerline{}
\centerline{
\quad
$\xymatrix@=5pt@M=-1pt@W=-1pt{
1 &&2 &&&3\\
\ar@{-}[ddr] &&\ar@{-}[ddl]&&&\ar@{-}[ddddll]\\
\\
&\circ\ar@{-}[ddrr]\\
\\
&&&\bullet\ar@{-}[dd]\\
\\
&&&
}$
\quad
$\xymatrix@=5pt@M=-1pt@W=-1pt{
1 &&3 &&&2\\
\ar@{-}[ddr] &&\ar@{-}[ddl]&&&\ar@{-}[ddddll]\\
\\
&\circ\ar@{-}[ddrr]\\
\\
&&&\bullet\ar@{-}[dd]\\
\\
&&&
}$
\quad
$\xymatrix@=5pt@M=-1pt@W=-1pt{
2 &&3 &&&1\\
\ar@{-}[ddr] &&\ar@{-}[ddl]&&&\ar@{-}[ddddll]\\
\\
&\circ\ar@{-}[ddrr]\\
\\
&&&\bullet\ar@{-}[dd]\\
\\
&&&
}$
\quad
$\xymatrix@=5pt@M=-1pt@W=-1pt{
1 &&2 &&&3\\
\ar@{-}[ddr] &&\ar@{-}[ddl]&&&\ar@{-}[ddddll]\\
\\
&\bullet\ar@{-}[ddrr]\\
\\
&&&\circ\ar@{-}[dd]\\
\\
&&&
}$
\quad
$\xymatrix@=5pt@M=-1pt@W=-1pt{
1 &&3 &&&2\\
\ar@{-}[ddr] &&\ar@{-}[ddl]&&&\ar@{-}[ddddll]\\
\\
&\bullet\ar@{-}[ddrr]\\
\\
&&&\circ\ar@{-}[dd]\\
\\
&&&
}$
\quad
$\xymatrix@=5pt@M=-1pt@W=-1pt{
2 &&3 &&&1\\
\ar@{-}[ddr] &&\ar@{-}[ddl]&&&\ar@{-}[ddddll]\\
\\
&\bullet\ar@{-}[ddrr]\\
\\
&&&\circ\ar@{-}[dd]\\
\\
&&&
}$
}

\begin{prop}\label{8_5}Let $\scB_\bullet$ be a simplicial operad with $\scB_\bullet(1)=\{id\}$. Let $T_1$ and $T_2$ be binodal
trees with three inputs. Then we have $(W|\scB_\bullet|)(T_1)\cap(W|\scB_\bullet|)(T_2)=(W|\scB_\bullet|)(T_3)$ or $\emptyset$ as shown
in the table below and in the table in the Appendix.

\begin{center}
\begin{tabular}{|c|c|c|c|}
\hline
&$T_1$ &$T_2$ &$(W|\scB_\bullet|)(T_1)\cap(W|\scB_\bullet|)(T_2)$\\
\hline
\raisebox{-10pt}{1}
&$\xymatrix@=5pt@M=-1pt@W=-1pt{
i &&j &&&k\\
\ar@{-}[ddr] &&\ar@{-}[ddl]&&&\ar@{-}[ddddll]\\
\\
&\bullet\ar@{-}[ddrr]\\
\\
&&&\bullet\ar@{-}[dd]\\
\\
&&&
}$
&$\xymatrix@=5pt@M=-1pt@W=-1pt{
i &&k &&&j\\
\ar@{-}[ddr] &&\ar@{-}[ddl]&&&\ar@{-}[ddddll]\\
\\
&\bullet\ar@{-}[ddrr]\\
\\
&&&\bullet\ar@{-}[dd]\\
\\
&&&
}$
&\raisebox{-10pt}{$\emptyset$}
\\
\hline
\raisebox{-10pt}{2}
&$\xymatrix@=5pt@M=-1pt@W=-1pt{
i &&j &&&k\\
\ar@{-}[ddr] &&\ar@{-}[ddl]&&&\ar@{-}[ddddll]\\
\\
&\bullet\ar@{-}[ddrr]\\
\\
&&&\bullet\ar@{-}[dd]\\
\\
&&&
}$
&$\xymatrix@=5pt@M=-1pt@W=-1pt{
i &&k &&&j\\
\ar@{-}[ddr] &&\ar@{-}[ddl]&&&\ar@{-}[ddddll]\\
\\
&\circ\ar@{-}[ddrr]\\
\\
&&&\bullet\ar@{-}[dd]\\
\\
&&&
}$
&\raisebox{-10pt}{$\emptyset$}
\\
\hline
\raisebox{-10pt}{3}
&$\xymatrix@=5pt@M=-1pt@W=-1pt{
i &&j &&&k\\
\ar@{-}[ddr] &&\ar@{-}[ddl]&&&\ar@{-}[ddddll]\\
\\
&\circ\ar@{-}[ddrr]\\
\\
&&&\bullet\ar@{-}[dd]\\
\\
&&&
}$
&$\xymatrix@=5pt@M=-1pt@W=-1pt{
i &&k &&&j\\
\ar@{-}[ddr] &&\ar@{-}[ddl]&&&\ar@{-}[ddddll]\\
\\
&\circ\ar@{-}[ddrr]\\
\\
&&&\bullet\ar@{-}[dd]\\
\\
&&&
}$
&\raisebox{-10pt}{$\emptyset$}
\\
\hline
\end{tabular}
\end{center}

\end{prop}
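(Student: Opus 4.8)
The proof will be a finite case analysis carried out through the recursive presentation of $(W|\scB_\bullet|)(T)$ supplied by Corollary~\ref{8_3} --- legitimate because $W|\scB_\bullet|$ is axial by~\ref{7_15} and left factorial by~\ref{LemmaNull}. First I would spell out the dictionary between the eleven binodal trees on a three-element input set $S$ and the subsets they carve out of $W|\scB_\bullet|(1)^S$: each $(W|\scB_\bullet|)(T)$ is exactly the locus defined by a conjunction of conditions of two kinds --- of the form ``$(\alpha_a,\alpha_b)$ lies in the axial image of $W|\scB_\bullet|(2)$'' (produced by a black node whose incoming subtrees are single edges, or by the pair of restrictions of a black bottom node) and of the form ``$(\mlf(\alpha_a,\alpha_b),\alpha_c)$ lies in the axial image of $W|\scB_\bullet|(2)$'' (produced by a black bottom node standing over a genuine subtree) --- with a white corolla imposing no condition whatsoever and a white bottom node merely taking the product of the loci of its subtrees. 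The intersection $(W|\scB_\bullet|)(T_1)\cap(W|\scB_\bullet|)(T_2)$ is then simply the conjunction of the two associated lists, and the task becomes to recognise each such conjunction as one of the eleven lists (thereby reading off $T_3$) or to see that it is inconsistent.

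Three elementary facts about $W\scB$, each obtained by inspecting reduced tree representatives, will handle the bookkeeping. (i)~The left factors of a fixed $\alpha\in W\scB(1)$ are linearly ordered by left divisibility, since they are precisely the truncations of the input path of a reduced representative. (ii)~If $(\alpha,\beta)$ lies in the axial image of $W\scB(2)$ then neither of $\alpha,\beta$ is a left factor of the other; equivalently $\mlf(\alpha,\beta)$ is a \emph{proper} left factor of each --- this is the relation $(\alpha\circ\rho,\alpha)\notin W\scB(2)$ noted in the proof of Lemma~\ref{LemmaDrei}, together with its mirror. (iii)~An ``ascent'' property: if $(\beta,\gamma)$ lies in the axial image of $W\scB(2)$ and $\beta$ is a left factor of $\beta'$, then so does $(\beta',\gamma)$ --- one grafts a representative of the quotient of $\beta'$ by $\beta$ onto the appropriate input of a representative of the pair $(\beta,\gamma)$. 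Together with the identity $\mlf(\mlf(x,y),z)=\mlf(x,y,z)$ of~\ref{LemmaZwei}, with Lemma~\ref{LemmaEins}, and with Lemma~\ref{LemmaDrei}, these let one rewrite each of the eleven condition lists in a normal form in which, via~(iii), a condition of the second kind automatically subsumes two conditions of the first kind.

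The empty cases --- rows~1--3 and their relabellings, together with the analogous entries in the Appendix --- then all collapse to a single computation: \emph{if $\{a,b\}$ and $\{a',b'\}$ are distinct two-element subsets of $S$, the conditions $(\mlf(\alpha_a,\alpha_b),\alpha_c)\in W|\scB_\bullet|(2)$ and $(\mlf(\alpha_{a'},\alpha_{b'}),\alpha_{c'})\in W|\scB_\bullet|(2)$ cannot both hold}, where $c,c'$ denote the respective complements. Indeed $\{a,b\}$ and $\{a',b'\}$ share a unique element $a$, and then $c=b'$, $c'=b$; the elements $m=\mlf(\alpha_a,\alpha_b)$ and $m'=\mlf(\alpha_a,\alpha_c)$ are both left factors of $\alpha_a$, hence comparable by~(i). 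If $m$ divides $m'$ then $m$ divides $\alpha_c$ as well, so $m$ is a common left factor of $\alpha_a,\alpha_b,\alpha_c$ and therefore equals $\mlf(\alpha_a,\alpha_b,\alpha_c)$, whence the first condition reads $(\mlf(\alpha_a,\alpha_b,\alpha_c),\alpha_c)\in W|\scB_\bullet|(2)$ with the first entry a left factor of the second --- impossible by~(ii); the case $m'\mid m$ is symmetric and kills the second condition. Since every tree with a black bottom node over a two-input subtree imposes a condition of this type, and adjoining the remaining first-kind conditions only shrinks the set, all the relevant pairs of such trees yield $\emptyset$. The pairs in which one factor is the white corolla are immediate, the corolla contributing all of $W|\scB_\bullet|(1)^S$; and for the remaining, non-empty pairs I would exhibit $T_3$ directly as the binodal tree whose condition list is the union of the two given lists --- using~(iii) to see that, for example, $(\mlf(\alpha_a,\alpha_b),\alpha_c)\in W|\scB_\bullet|(2)$ already forces $(\alpha_a,\alpha_c)$ and $(\alpha_b,\alpha_c)$ into the axial image, and Lemma~\ref{LemmaDrei} to collapse nested maximal-left-factor conditions.

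The main obstacle I anticipate is the non-empty cases: for each relevant pair one must verify that the conjoined condition list coincides \emph{exactly} with that of a single binodal tree, neither strictly larger nor strictly smaller, and the maximal-left-factor bookkeeping --- in particular determining in each configuration whether $\mlf(\alpha_a,\alpha_b)$ is forced to equal $\mlf(\alpha_a,\alpha_b,\alpha_c)$ --- is where facts~(i)--(iii) and Lemma~\ref{LemmaDrei} must be applied with care. The empty cases, by contrast, are disposed of uniformly by the displayed contradiction, so the substance of the proof lies in erecting the tree-to-condition dictionary via Corollary~\ref{8_3} and then matching each surviving conjunction to a binodal-tree evaluation.
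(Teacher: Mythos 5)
Your overall architecture is sound and genuinely different from the paper's: you translate each of the eleven trees into a list of membership conditions in the axial image of $W|\scB_\bullet|(2)$ via Corollary \ref{8_3}, and then dispose of all three empty rows at once through a single incompatibility between two conditions of the form $(\mlf(\alpha_a,\alpha_b),\alpha_c)$ and $(\mlf(\alpha_a,\alpha_c),\alpha_b)$. The paper instead argues row by row: for the all-black pair it uses axiality of $W|\scB_\bullet|(3)$ to see that a common element would have a single reduced tree representative in which the $i$- and $j$-input paths meet both strictly above and strictly below the point where they meet the $k$-path, and it then reduces rows 2 and 3 to row 1 by restriction (``stumping''). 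Your version is more uniform, and your ascent property (iii) would indeed streamline the Appendix verifications, since each second-kind condition subsumes the corresponding first-kind ones.

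There is, however, one step that fails in the stated generality. Your fact (ii) --- that $(\alpha,\beta)$ in the axial image of $W|\scB_\bullet|(2)$ forces neither of $\alpha,\beta$ to be a left factor of the other --- is lifted from the proof of Lemma \ref{LemmaDrei}, which is stated only for $\Sigma$-free operads, while Proposition \ref{8_5} assumes only $\scB_\bullet(1)=\{id\}$. It genuinely fails otherwise: for $\scB_\bullet=\mathcal{N}\Com$ let $\sigma\in W|\scB_\bullet|(1)$ be the one binary vertex with a stump on one input; by the equivariance relation the two axial components of the one-vertex binary tree coincide, and one checks directly that $(\sigma,\sigma\circ\sigma)$ lies in the axial image of $W|\scB_\bullet|(2)$ even though $\sigma$ is a left factor of $\sigma\circ\sigma$. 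The same example produces accidental coincidences of left factors above the meeting vertex, so that $\mlf$ exceeds the ``geometric'' common part --- which is also why the description of Corollary \ref{8_3} only agrees with Definition \ref{8_1} under $\Sigma$-freeness, so your whole dictionary (not just (ii)) tacitly needs that hypothesis. Your uniform contradiction therefore proves the empty rows only for $\Sigma$-free $\scB_\bullet$ --- enough for the application to $W|\mathcal{NM}_k|$, but not for the proposition as stated; the paper's meeting-height argument sidesteps the issue entirely. Finally, the nonempty cases in the Appendix, where the paper expends most of its effort, remain unverified in your sketch: your dictionary does appear to reproduce each entry (I spot-checked several rows), but those nine rows still have to be worked through one by one, including the point you flag about when $\mlf(\alpha_a,\alpha_b)$ coincides with $\mlf(\alpha_a,\alpha_b,\alpha_c)$.
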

\begin{proof}
Let $T_{ij}$ be the tree in row $i$ and column $j$ of our lists. We start with the list in the Appendix.

Row 1 is trivially true because $(W|\scB_\bullet|)(T_{11})=W|\scB_\bullet|(1)^3$.

Rows 2 to 4: Since $T_{21}$ is symmetric in $1,2,3$ we may assume that $(i,j,k)=(1,2,3)$.
Then\\
$(W|\scB_\bullet|)(T_{21})=W|\scB_\bullet|(3)$\\
$(W|\scB_\bullet)(T_{22})=W|\scB_\bullet|(2)\times_{W|\scB_\bullet|(1)^2}(W|\scB_\bullet|(2)\times W|\scB_\bullet|(1))
\subset W|\scB_\bullet|(3)$\\
$(W|\scB_\bullet|)(T_{32})=W|\scB_\bullet|(2)\times W|\scB_\bullet|(1)\supset W|\scB_\bullet|(3)$\\
$(W|\scB_\bullet|)(T_{42})=W|\scB_\bullet|(2)\times_{W|\scB_\bullet|(1)^2}(W|\scB_\bullet|(1)^2\times W|\scB_\bullet|(1))$,\\
where $W|\scB_\bullet|(1)^2$ acts on $W|\scB_\bullet|(1)^2\times W|\scB_\bullet|(1)$ by
$$(x,y)\cdot(a_1,a_2,b)=(x\circ a_1,x\circ a_2,y\circ b)$$
Clearly $(W|\scB_\bullet|)(T_{43})\subset (W|\scB_\bullet|)(T_{41})\cap (W|\scB_\bullet|)(T_{42})$. Conversely,
if you stump $k$ (i.e. you graft a stump on input $k$) of an element in the intersection,
you obtain an element in $W|\scB_\bullet|(2)$, i.e. the intersection lies in $(W|\scB_\bullet|)(T_{43})$.

Row 5: Since $T_{51}$ is symmetric in $i$ and $j$ we have to consider the cases 
$(p,q,r)=(i,j,k)$ and $(p,q,r)=(i,k,j)$. 
In the first case $(W|\scB_\bullet|)(T_{51})\subset (W|\scB_\bullet|)(T_{52})$. In the second case
we use the equivariance relation to give the inputs of both trees the order  $k,i,j$. We find
$$
\begin{array}{rcl}
 (W|\scB_\bullet|)(T_{51})& = & W|\scB_\bullet|(2)\times_{W|\scB_\bullet|(1)^2}(W|\scB_\bullet|(1)\times W|\scB_\bullet|(2))\\
& \subset & W|\scB_\bullet|(2)\times W|\scB_\bullet|(2)= (W|\scB_\bullet|)(T_{52})
\end{array}
$$
The first component of the inclusion is given by stumping $j$, the second by stumping $k$ and $i$.

Row 6: $(W|\scB_\bullet|)(T_{61})\subset (W|\scB_\bullet|)(T_{62})$.

Row 7: Clearly $(W|\scB_\bullet|)(T_{73})\subset (W|\scB_\bullet|)(T_{71})\cap (W|\scB_\bullet|)(T_{72})$. Conversely,
by stumping $k$ we see that the intersection has to lie in $(W|\scB_\bullet|)(T_{73})$.

Row 8: We use the equivariance relation to give the inputs of both trees the order $k,i,j$. Then
$$
(W|\scB_\bullet|)(T_{81})\subset  W|\scB_\bullet|(2)\times W|\scB_\bullet|(1)=(W|\scB_\bullet|)(T_{82})
$$
The first component of the inclusion is given by stumping $j$, the second by stumping $k$ and $i$.

Row 9: Again we give the inputs of both trees the order $k,i,j$ and obtain the intersection
$$
W|\scB_\bullet|(1)\times W|\scB_\bullet|(2)\ \cap \ W|\scB_\bullet|(2)\times W|\scB_\bullet|(1)$$
which contains $W|\scB_\bullet|(3)$, but is not of the form $(W|\scB_\bullet|)(T)$.

Now consider the second list: In all three cases we arrange the trees such that the order of
inputs is $k,i,j$.

Row 1: If $x$ is in $(W|\scB_\bullet|)(T_{11})$ then the $i$-th and $j$-th input paths meet above the $i$-th and
$k$-th ones, while if $x$ is in $(W|\scB_\bullet|)(T_{12})$ it is the other way around. So
$(W|\scB_\bullet|)(T_{11})\cap (W|\scB_\bullet|)(T_{12})=\emptyset$.

Rows 2 and 3:
$$
(W|\scB_\bullet|)(T_{21})\cap (W|\scB_\bullet|)(T_{22})\subset (W|\scB_\bullet|)(T_{31})\cap (W|\scB_\bullet|)(T_{32})
$$
Stumping $j$ or $k$ of an element $y\in (W|\scB_\bullet|)(T_{31})\cap (W|\scB_\bullet|)(T_{32})$ makes it an
element in $W|\scB_\bullet|(2)$. Hence
$$
(W|\scB_\bullet|)(T_{31})\cap (W|\scB_\bullet|)(T_{32})\subset 
(W|\scB_\bullet|)(T_{11})\cap (W|\scB_\bullet|)(T_{12})=\emptyset
$$
\end{proof}

\begin{prop}\label{8_6} Let $\scB_\bullet$ be  a simplicial operad. Then 
\begin{itemize}
\item The imbedding $(W|\scB_\bullet|)(T)\subseteq W|\scB_\bullet|(1)^{|T|}$ of Proposition \ref{8_2} is a cofibration.
\item If $S$ and $T$ are binodal trees
such that $(W|\scB_\bullet|)(S)\subset (W|\scB_\bullet|)(T)$, then the inclusion map is a cofibration.
\end{itemize}
\end{prop}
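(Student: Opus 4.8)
The whole proposition rests on a single principle: both $W|\scB_\bullet|$ and the subspaces $(W|\scB_\bullet|)(T)$ are geometric realizations of simplicial sets, the imbeddings in question are realizations of monomorphisms of simplicial sets, and the realization of a monomorphism of simplicial sets is the inclusion of a CW-subcomplex, hence a closed cofibration. So the plan is to lift everything in sight from $\Top$ to $\SSets$ and then realize.

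In detail, for the first assertion I would first observe that Definition \ref{8_1} constructs $\scA(T)$ from the objects $\scA(r)$ using only finite products, pullbacks over powers of $\scA(1)$, and operad structure maps -- in other words finite limits -- and that the imbedding of Proposition \ref{8_2} is assembled inductively from the axial map $\xi$ by forming products, composites and pullbacks of monomorphisms. Carrying this out for the simplicial operad $W\scB_\bullet$ (the diagonal of the nerve of $\scT\scB_\bullet$, which is axial by \ref{7_15}, i.e.\ $\xi$ is a monomorphism of simplicial sets) produces, for each binodal tree $T$ with $|T|=S$, a sub-simplicial set $(W\scB_\bullet)(T)\hookrightarrow (W\scB_\bullet)(1)^S$. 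Since the topological realization functor preserves finite limits and finite products (Observation \ref{7_1}), realizing this recursion identifies $(W|\scB_\bullet|)(T)$ with $|(W\scB_\bullet)(T)|$ and the imbedding of \ref{8_2} with $|(W\scB_\bullet)(T)|\hookrightarrow |(W\scB_\bullet)(1)^S| = W|\scB_\bullet|(1)^S$. (In particular this reproves that $W|\scB_\bullet|$ is axial, the map $|\xi|$ being a subcomplex inclusion.) As $|(W\scB_\bullet)(T)|$ is a subcomplex of $|(W\scB_\bullet)(1)^S|$, the inclusion is a relative CW-complex, hence a closed cofibration.

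For the second assertion the argument is the same. If $(W|\scB_\bullet|)(S)\subseteq (W|\scB_\bullet|)(T)$ -- necessarily with $|S|=|T|=:U$, by comparing the cardinalities of the ambient products $W|\scB_\bullet|(1)^{(-)}$ -- then by the first part both are subcomplexes of $|(W\scB_\bullet)(1)^U| = W|\scB_\bullet|(1)^U$. A point of that CW-complex lies in $|(W\scB_\bullet)(S)|$, respectively $|(W\scB_\bullet)(T)|$, precisely when its carrying nondegenerate simplex of $(W\scB_\bullet)(1)^U$ lies in $(W\scB_\bullet)(S)$, respectively $(W\scB_\bullet)(T)$; hence the assumed inclusion of realizations forces $(W\scB_\bullet)(S)\subseteq (W\scB_\bullet)(T)$ as sub-simplicial sets. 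Therefore $(W|\scB_\bullet|)(S)=|(W\scB_\bullet)(S)|$ is a CW-subcomplex of $(W|\scB_\bullet|)(T)=|(W\scB_\bullet)(T)|$, and a subcomplex inclusion of CW-complexes is a closed cofibration.

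The only subtle points are bookkeeping ones: checking that Definition \ref{8_1} and the imbedding of \ref{8_2} genuinely involve nothing but finite limits and operad structure maps -- so that they commute with realization -- and, in the second part, observing that an inclusion of realizations of sub-simplicial sets reflects back to an inclusion of the sub-simplicial sets themselves. I expect the first of these to be the main thing to get straight; once it is in place, the conclusions are the standard facts that realizations of monomorphisms of simplicial sets, and in particular subcomplex inclusions of CW-complexes, are closed cofibrations.
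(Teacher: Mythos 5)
Your proposal is correct and follows essentially the same route as the paper: the paper's proof simply observes that the imbedding is the realization of an injective simplicial map (and hence so is any inclusion $(W|\scB_\bullet|)(S)\subset (W|\scB_\bullet|)(T)$), and that realizations of injective simplicial maps are cofibrations. You have merely spelled out the details behind the paper's "it is obvious" — namely that Definition \ref{8_1} is built from finite limits, which commute with realization.
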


\begin{proof} It is obvious that the imbedding $(W|\scB_\bullet|)(T)\subseteq W|\scB_\bullet|(1)^{|T|}$ is the realization
of an injective simplicial map. It follows that whenever we have $(W|\scB_\bullet|)(S)\subset (W|\scB_\bullet|)(T)$, the
inclusion is also the realization of an injective simplicial map. Since realizations of injective simplicial maps
are cofibrations, the result follows.
\end{proof}

\begin{prop}\label{8_7} Let $\scB_\bullet$ be  a simplicial operad such that $\scB_\bullet(1)=\{id\}$.
and let $\varepsilon: W|\scB_\bullet|\longrightarrow|\scB_\bullet|$ be the augmentation map. Then
\begin{itemize}
\item  For any binodal tree $T$, there is an induced map $\varepsilon: (W|\scB_\bullet|)(T)\longrightarrow|\scB_\bullet(T)|$
\item  For any subcomplex $K_\bullet\subset\scB_\bullet(T)$, the restriction 
$\varepsilon: \varepsilon^{-1}(|K_\bullet|)\longrightarrow |K_\bullet|$ is an equivalence.
\end{itemize}
\end{prop}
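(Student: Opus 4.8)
The first assertion is formal. The assignment $\scA\mapsto\scA(T)$ of Definition \ref{8_1} is functorial for maps of operads: a map $\phi\colon\scA\to\scA'$ commutes with all the composition maps, units and symmetric group actions that enter the recursion, so by induction on the cardinality of $|T|=S$ it induces a map $\scA(T)\to\scA'(T)$ (a product of the maps $\scA(T_i)\to\scA'(T_i)$ at a white bottom node, and the map induced on $\scA(r)$ and on the $\scA(T_i)$ at a black bottom node, which descends to the balanced product because $\phi$ is $\scA(1)$-equivariant). Applying this to the augmentation $\varepsilon\colon W\scB_\bullet\to\scB_\bullet$ of simplicial operads yields a map $p\colon W\scB_\bullet(T)\to\scB_\bullet(T)$ of simplicial sets, where $\scB_\bullet(T)$ is the construction of \ref{8_1} applied degreewise. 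Since geometric realization preserves finite limits and all colimits of simplicial sets (in $k$-spaces), the same induction shows that it carries the $T$-construction of a simplicial operad to the $T$-construction of its realization; in particular $(W|\scB_\bullet|)(T)=|W\scB_\bullet(T)|$ and $|\scB_\bullet(T)|=(|\scB_\bullet|)(T)$, and realizing $p$ gives the asserted $\varepsilon\colon(W|\scB_\bullet|)(T)\to|\scB_\bullet(T)|$.

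For the second assertion, note that realization also preserves the pullback defining the preimage, so $\varepsilon^{-1}(|K_\bullet|)=|p^{-1}(K_\bullet)|$, and it suffices to prove that $p^{-1}(K_\bullet)\to K_\bullet$ is a weak homotopy equivalence of simplicial sets for every subcomplex $K_\bullet\subseteq\scB_\bullet(T)$. I will produce a section $s_T$ of $p$ together with a \emph{fibrewise} simplicial deformation retraction onto $s_T$: a simplicial homotopy $H\colon W\scB_\bullet(T)\times\Delta^1\to W\scB_\bullet(T)$ with $H|_{\times\{0\}}=\mathrm{id}$, $H|_{\times\{1\}}=s_T\circ p$, and $p\circ H=p\circ\mathrm{pr}_1$. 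Granting this, the fibrewise condition makes $H$ restrict to $p^{-1}(K_\bullet)\times\Delta^1\to p^{-1}(K_\bullet)$, exhibiting $\mathrm{id}$ as simplicially homotopic to $s_T|_{K_\bullet}\circ p$; together with $p\circ s_T=\mathrm{id}$ this shows $p$ is a simplicial homotopy equivalence $p^{-1}(K_\bullet)\to K_\bullet$, hence a weak equivalence, and realizing gives that $\varepsilon\colon\varepsilon^{-1}(|K_\bullet|)\to|K_\bullet|$ is a homotopy equivalence.

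To build $H$, recall that $W\scB_\bullet(T)=\mathrm{diag}\,\mathcal{N}\bigl(\scT\scB_\bullet(T)\bigr)$, where $\scT\scB_\bullet$ is the tree operad of \ref{7_7},\ref{7_11} and $\scT\scB_\bullet(T)$ is its $T$-construction (a simplicial category), and $p$ is induced by the augmentation $\varepsilon\colon\scT\scB_\bullet\to\scB_\bullet$. Using the description in \ref{8_3a} of objects of $\scT\scB_\bullet(T)$ as planar binodal trees of shape $T$ whose black nodes $v$ are decorated by trees in $\scT\scB_\bullet(|\In(v)|)$ and whose edges are decorated by trees in $\scT\scB_\bullet(1)$, modulo the relations listed there, the operation ``simultaneously contract every inner edge lying inside a decoration'' is a well defined endofunctor, and sending each object to this contraction along the evident morphism is a natural transformation $\rho_T\colon\mathrm{id}\Rightarrow s_T\circ\varepsilon_T$. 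Here $\varepsilon_T\colon\scT\scB_\bullet(T)\to\scB_\bullet(T)$ is the ``compose everything'' functor from the first part, and $s_T$ is the inclusion of the full subcategory of fully contracted objects; since $\scB_\bullet(1)=\{id\}$ all edge decorations contract to $id$, so this subcategory is discrete and isomorphic to $\scB_\bullet(T)$, whence $s_T$ is a section of $\varepsilon_T$. The transformation $\rho_T$ lies over $\scB_\bullet(T)$ because contracting edges does not change the composed vertex labels, so $\varepsilon_T$ (landing in a discrete category) sends each structure morphism of $\rho_T$ to an identity, i.e. $\varepsilon_T\circ\rho_T=\varepsilon_T$. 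Taking nerves, diagonals and realizing turns $\rho_T$ into the required fibrewise simplicial deformation retraction $H$, with section $s_T$; the degreewise definition is compatible across simplicial degrees since the tree construction and $\rho$ are natural in $\scB_\bullet$.

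The technical heart of the argument -- and the thing one must check carefully -- is precisely the well definedness of $\rho_T$, equivalently its compatibility with the relations of \ref{8_3a} (and hence with the balanced products in the recursive description of $\scT\scB_\bullet(T)$). The naive homotopy that uniformly scales \emph{all} edge lengths to $0$, which works transparently for a single node (there $W|\scB_\bullet|(n)\to|\scB_\bullet(n)|$ deformation retracts fibrewise onto $\eta(n)(|\scB_\bullet(n)|)$), does \emph{not} preserve the subspace $(W|\scB_\bullet|)(T)\subseteq W|\scB_\bullet|(1)^S$: collapsing the structural edges of $T$ destroys the maximal left factor data recorded at the black nodes (cf. Corollary \ref{8_3}, Proposition \ref{8_4}). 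One must therefore organise the contraction along the tree, retracting only the edges internal to the $W$-expansion at each black node of $T$ while leaving the structural edges of $T$ in place; this is exactly what the passage to $\scT\scB_\bullet(T)$ and the natural transformation $\rho_T$ do, and verifying that $\rho_T$ descends through the balanced products is the one nonformal step. (For $|S|\le 2$, and whenever $T$ has a white bottom node all of whose branches are single edges or single black nodes, $(W|\scB_\bullet|)(T)$ is simply $W|\scB_\bullet|(1)^S$ or an axial image $W|\scB_\bullet|(r)\subseteq W|\scB_\bullet|(1)^r$, and the statement reduces at once to the single node case.)
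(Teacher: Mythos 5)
Your argument is correct and is essentially the paper's own: both reduce to the functor of simplicial categories $\varepsilon_\bullet:\scT\scB_\bullet(T)\to\scB_\bullet(T)$ (discrete target) and contract each fibre onto the fully collapsed tree $\eta(x)$. The paper packages this contraction as the statement that $\eta(x)$ is a terminal object of the overcategory $\varepsilon/x$ and invokes Quillen's Theorem A degreewise, which is the same datum you express as the natural transformation $\rho_T:\mathrm{id}\Rightarrow s_T\circ\varepsilon_T$ over the base.
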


\begin{proof}
Since $\scB_\bullet(1)=\{id\}$, $\scB_\bullet(T)$ is simply a product of the form $\prod_i\scB_\bullet(k_i)$.
Thus $\varepsilon$ induces a map of products 
$\prod_i W|\scB_\bullet|(k_i)\longrightarrow\prod_i|\scB_\bullet(k_i)|=|\scB_\bullet(T)|$
which factors through a map
$$\varepsilon: (W|\scB_\bullet|)(T)\longrightarrow|\scB_\bullet(T)|.$$
Let $W'|\scB_\bullet|$, denote the suboperad of $W|\scB_\bullet|$ consisting of trees without any stumps.
Then $\scB_\bullet(1)=\{id\}$ and the identity relation on trees implies that
$W'|\scB_\bullet|(1)=\{id\}$. It follows that $(W'|\scB_\bullet|)(T)\subset(W|\scB_\bullet|)(T)$,
is also a product $\prod_i W'|\scB_\bullet|(k_i)$.
There is a strong deformation retraction $(W|\scB_\bullet|)(T)\longrightarrow(W'|\scB_\bullet|)(T)$
given by shrinking stumps. The product
of the canonical sections
$\prod_i\eta(k_i):\prod_i|\scB_\bullet|(k_i)\longrightarrow\prod_i W|\scB_\bullet|(k_i)$
followed by the quotient map into $(W\scB_\bullet)(T)$ is contained in 
$(W'\scB_\bullet)(T)=\prod_i W'|\scB_\bullet|(k_i)$.  Shrinking the internal edges of
$(W'\scB_\bullet)(T)$ provides a strong deformation retraction onto this section. Composing
these two deformation retractions, we obtain that
$$\varepsilon: (W|\scB_\bullet|)(T)\longrightarrow|\scB_\bullet(T)|$$
is an equivalence, which restricts to an equivalence
$$\varepsilon: \varepsilon^{-1}(|K_\bullet|)\longrightarrow |K_\bullet|$$
for any subcomplex $K_\bullet\subset\scB_\bullet(T)$.
\end{proof}

\begin{coro}\label{8_10} For any binodal tree $T$, $|\scN\scM_k(T)|$ has a cellular decomposition 
(recall Definition \ref{5_6a})
over the poset
$\scM_k(T)$ which lifts to a cellular decomposition of $(W|\mathcal{NM}_k|)(T)$ over $\scM_k(T)$.
\end{coro}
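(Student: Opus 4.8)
The plan is to reduce to the colimit decomposition of the individual operad spaces $W|\Mk|(r)$ recalled in Section~5, combine the factors by a product argument, and then lift along the augmentation using Proposition~\ref{8_7}.

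First I would record that, since $\scM_k(1)=\{id\}$, Definition~\ref{8_1} applied to the $\Cat$-operad $\scM_k$ gives $\scM_k(T)=\prod_i\scM_k(r_i)$, a finite product of posets indexed by the black nodes of $T$ (with $r_i$ the valence of the $i$-th black node); correspondingly $\scN\scM_k(T)=\prod_i\scN\scM_k(r_i)$ and $|\scN\scM_k(T)|=\prod_i|\scN\scM_k(r_i)|$. For a single poset $P$ there is the standard cellular decomposition of $|\scN P|$ indexed by $P$ itself: to $\alpha\in P$ assign $|\scN(P_{\le\alpha})|$, the realization of the nerve of the subposet of elements $\le\alpha$; it is contractible because $P_{\le\alpha}$ has terminal object $\alpha$, the structure maps are realizations of subcomplex inclusions hence closed cofibrations, and $\bigcup_\alpha\scN(P_{\le\alpha})=\scN P$, so the colimit over $P$ is $|\scN P|$. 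Applying this to each $\scM_k(r_i)$ and taking the external product of the resulting diagrams over $\prod_i\scM_k(r_i)=\scM_k(T)$ yields a diagram $F_T$ with value $\prod_i|\scN(\scM_k(r_i)_{\le\alpha_i})|$ at $(\alpha_i)_i$. This is again a cellular decomposition: a finite product of contractible $k$-spaces is contractible; its structure maps are finite products of realizations of injective simplicial maps, hence themselves realizations of injective simplicial maps (of products of simplicial sets) and so closed cofibrations; and because $\Top$ is cartesian closed, so that $X\times(-)$ preserves colimits, the colimit of $F_T$ is the product of the colimits, namely $|\scN\scM_k(T)|$.

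For the lift, let $\varepsilon\colon W|\Mk|\to|\Mk|$ be the augmentation; by Proposition~\ref{8_7} it induces a map $\varepsilon\colon(W|\Mk|)(T)\to|\scM_k(T)|=|\scN\scM_k(T)|$, surjective via the canonical section. Writing the cell $F_T(\alpha)=|\scN(\scM_k(T)_{\le\alpha})|$ as $|K^\alpha_\bullet|$ for the simplicial subcomplex $K^\alpha_\bullet\subseteq\scN\scM_k(T)$ spanned by the chains contained in $\scM_k(T)_{\le\alpha}$, I would define the lifted cell to be $\widetilde F_T(\alpha):=\varepsilon^{-1}(|K^\alpha_\bullet|)\subseteq(W|\Mk|)(T)$, with the inclusions as structure maps. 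By Proposition~\ref{8_7}, $\varepsilon\colon\widetilde F_T(\alpha)\to|K^\alpha_\bullet|$ is an equivalence, and since $|K^\alpha_\bullet|$ is contractible so is $\widetilde F_T(\alpha)$. Each $\widetilde F_T(\alpha)$ is the realization of the nerve of the simplicial subcategory $\varepsilon^{-1}(K^\alpha_\bullet)$ appearing in the proof of Proposition~\ref{8_7}, so for $\alpha\le\beta$ the inclusion $\widetilde F_T(\alpha)\subseteq\widetilde F_T(\beta)$ is the realization of an injective simplicial map and hence a closed cofibration by the argument of Proposition~\ref{8_6}. Finally $\mbox{colim}_\alpha\widetilde F_T(\alpha)=\varepsilon^{-1}\bigl(\bigcup_\alpha|K^\alpha_\bullet|\bigr)=\varepsilon^{-1}(|\scN\scM_k(T)|)=(W|\Mk|)(T)$, since every chain of $\scN\scM_k(T)$ lies in the downward closure of its top vertex. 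Hence $\widetilde F_T$ is a cellular decomposition of $(W|\Mk|)(T)$ over $\scM_k(T)$ carried by $\varepsilon$ to $F_T$.

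The step I expect to be the main obstacle is verifying that passing to preimages under $\varepsilon$ preserves all three ingredients of a cellular decomposition at once and compatibly with the product decomposition: contractibility of the cells (which uses that the equivalence of Proposition~\ref{8_7} is an honest homotopy equivalence, legitimate since both spaces are realizations of nerves), closed-cofibrancy of the structure maps (for which each $\widetilde F_T(\alpha)$ must be realized from a simplicial subset so that the argument of Proposition~\ref{8_6} applies), and the colimit identification (for which the $\varepsilon^{-1}(|K^\alpha_\bullet|)$ must cover $(W|\Mk|)(T)$ with the intersection pattern dictated by the poset $\scM_k(T)$). None of this is deep, but it all has to be arranged coherently.
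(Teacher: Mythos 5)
Your proposal is correct and follows essentially the same route as the paper: exploit $\scM_k(1)=\{id\}$ to write $|\scN\scM_k(T)|$ as a product of the $|\scN\scM_k(r_i)|$, use the standard cellular decomposition of the nerve of a poset by downward-closed subposets, and lift it by taking preimages of the cells under the augmentation via Proposition \ref{8_7}. The paper's proof is just a two-sentence compression of exactly this argument.
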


\begin{proof} Since $N\scM(1)=\{id\}$, $|(\scN\scM_k)(T)|$ is simply
a product of the form $\prod_i N\scM(k_i)$, which has a cellular decomposition over $\prod_i \scM(k_i)=\scM_k(T)$.
Since this cellular decomposition is the realization of a simplicial cellular decomposition, by the second part
of Proposition \ref{8_7}, the inverse images of the cells under the augmentation
$\varepsilon: (W|\scN\scM_k|)(T)\longrightarrow|N\scM(T)|$ provide the required cellular decomposition.
\end{proof}

In order to analyze the behavior of the colimit decomposition of the tensor product
$W\mathcal{NM}_k\otimes W\mathcal{NM}_l$ with respect to its operad structure, we
will need the following constructions of operads associated to binodal trees.

\begin{defi}\label{scbt}The operad $\scB\scT$ is the operad in $\Sets$ with $\scB\scT(m)$
being the set of all binodal trees $T$ with $|T|=\{1,2,\dots m\}$ . Composition in $\scB\scT$
is defined in the usual way by grafting trees, followed by shrinking any edges connecting two
nodes labeled by $\circ$.  The symmetric group action is the one induced by its action on $|T|$.
\end{defi}

\begin{rema}The operad $\scB\scT$ is a slight variant of the operad of grown trees $TX$ of \cite{V},
where $X(0)=X(1)=\emptyset$ and $X(i)=\{\bullet,\circ\}$ for all $i\ge2$, with the symmetric group
acting trivially on $X(i)$. Indeed $\scB\scT$ is the quotient of $TX$ obtained by shrinking all edges
joining nodes labeled by $\circ$.
\end{rema}

\begin{defi}\label{scabt}Let $\scA$ be an operad in one of our categories $\scS$.  Then
we define $\scA(\scB\scT)$ to be the $\mathcal{S}$-operad with
$$\scA(\scB\scT)(m)=\amalg_{|T|=\{1,2,\dots,m\}}\scA(T),$$
where we think of elements of $\scA(T)$ as binodal trees $T$ decorated with appropriate
elements of $\scA$ as in Remark \ref{8_3a}.  Composition in $\scA(\scB\scT)$
is defined by grafting trees, followed by shrinking any edges connecting two nodes labeled by $\circ$.
The map $\varepsilon:\scA(\scB\scT)\longrightarrow\scB\scT$ given by forgetting the decorations from
$\scA$ is evidently an operad map. [Note that $\Sets\subset\scS$.]  If $\scA$ is an axial operad, then
the imbeddings $\scA(T)\subset RU\scA(|T|)$ of Proposition \ref{8_2} define an  imbedding of $\scA(\scB\scT)$
as a suboperad of $RU\scA$.
\end{defi}

\begin{rema}\label{scPscA}If $\scA$ is a topological operad which has a cellular decomposition over a poset
operad $\scP$, then it is easy to see that $\scA(\scB\scT)$ has a cellular decomposition over the poset
operad $\scP(\scB\scT)$.
\end{rema}

\section{The interchange diagram for a tensor product}\label{sec9}

We begin by showing how to associate a pair of binodal trees $(S_\alpha,T_\alpha)$
to any element $\alpha$ in $\mathfrak{M}^{ab}_2(m)$. First of all we can represent
$\alpha$ by a nonplanar rooted tree with inputs labeled by $\{1,2,\dots,m\}$
with nodes marked by $\boxtimes_1$ and $\boxtimes_2$.
Given such a tree, the corresponding object $\alpha$ is obtained by interpreting the
edges as compositions in the operad $\mathfrak{M}^{ab}_2$. There is more than
one tree representation of $\alpha$, but there is a unique reduced form obtained
by removing all univalent edges (which correspond to composing with the identity
of $\mathfrak{M}^{ab}_2$) and by shrinking all edges connecting nodes which are both
marked by $\boxtimes_1$ and or both marked by $\boxtimes_2$ (which corresponds to
the strict associativity of $\boxtimes_1$ and $\boxtimes_2$). Now given the reduced
form representation of $\alpha$ we obtain the binodal tree $S_\alpha$ by replacing
each node decorated with $\boxtimes_1$ by a black node and each node decorated with
$\boxtimes_2$ by a white node. The binodal tree $T_\alpha$ is obtained similarly
by reversing the roles of $\boxtimes_1$ and $\boxtimes_2$. Thus we may choose planar
representatives of the binodal trees $S_\alpha$ and $T_\alpha$ having exactly the same
shape and input labels, but each white node
in $S_\alpha$ corresponds to a black node in $T_\alpha$ and vice versa. 
We call such a pair of representatives compatible. Clearly
$\alpha$ can be uniquely reconstructed from a compatible pair $(S_\alpha,T_\alpha)$.

As a first step in our analysis of $(W|\mathcal{NM}_k|\otimes W|\mathcal{NM}_l|)(m)$ we prove Proposition
\ref{5_3} by proving a stronger result.

\begin{prop}\label{9_1} Let $\alpha$ be an element of $\mathfrak{M}^{ab}_2(m)$.
Then the composite map
$$
\begin{array}{rl}
G_m(\alpha)\longrightarrow(W|\mathcal{NM}_k|\otimes W|\mathcal{NM}_l|)(m) &
\stackrel{\xi}{\longrightarrow}
RU(W|\mathcal{NM}_k|\otimes W|\mathcal{NM}_l|)(m)\\
 &=(W|\mathcal{NM}_k|(1))^m\times (W|\mathcal{NM}_l|(1))^m
\end{array}
$$
is an imbedding and its image is $(W|\mathcal{NM}_k|)(S_\alpha)\times (W|\mathcal{NM}_l|)(T_\alpha)$.
\end{prop}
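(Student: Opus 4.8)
The plan is to unravel the composite on tree representatives and then read off both assertions from the recursive description of the binodal-tree spaces $\scA(T)$ of Section~\ref{sec8}. First I would identify the target and the map explicitly. By Proposition~\ref{4_8}, $(W|\mathcal{NM}_k|\otimes W|\mathcal{NM}_l|)(1)\cong W|\mathcal{NM}_k|(1)\times W|\mathcal{NM}_l|(1)$, and by Remark~\ref{7_5}(4) the target of the composite splits as $(W|\mathcal{NM}_k|(1))^m\times(W|\mathcal{NM}_l|(1))^m$, the two factors being the images of the axial maps of $W|\mathcal{NM}_k|$ and $W|\mathcal{NM}_l|$. Thus, picking a tree representative $T$ in $(W|\mathcal{NM}_k|\amalg W|\mathcal{NM}_l|)(m)$ of an element $\phi\in G_m(\alpha)$, the composite sends $\phi$ to the pair $((a_i)_{i=1}^m,(b_i)_{i=1}^m)$ in which $(a_i,b_i)$ is obtained by grafting the unique constant onto every input of $T$ other than $i$, reducing, and splitting the resulting unary operation via the isomorphism above; concretely $a_i$ records the $\scA$-nodes encountered along the input path of $i$ (each $r$-ary such node contributing the appropriate unary restriction to that branch) and $b_i$ the $\scB$-nodes.

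Next I would prove that the image lies in $(W|\mathcal{NM}_k|)(S_\alpha)\times(W|\mathcal{NM}_l|)(T_\alpha)$, by induction on $m$ along the bottom block of the reduced form of $\alpha$. Assume (the other case being symmetric, with the roles of $\scA,S_\alpha$ and $\scB,T_\alpha$ interchanged) that the bottom block of $\alpha$ consists of $\boxtimes_1$'s, so $\alpha=\alpha_1\boxtimes_1\cdots\boxtimes_1\alpha_r$ on disjoint input sets, $S_\alpha$ is a black bottom node with branches $S_{\alpha_1},\dots,S_{\alpha_r}$ and $T_\alpha$ a white bottom node with branches $T_{\alpha_1},\dots,T_{\alpha_r}$. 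Using the relations defining $G_m(\alpha)$ -- in particular the unary $(r,1)$-interchange of Remark~\ref{3_2a} to move $W|\mathcal{NM}_l|(1)$-factors from below the bottom $\scA$-node to all $r$ branches above it, and absorption of $W|\mathcal{NM}_k|(1)$-factors into that node -- one represents $\phi$ by an element of $W|\mathcal{NM}_k|(r)$ composed with elements $\phi_j\in G_{m_j}(\alpha_j)$. The $W|\mathcal{NM}_l|$-coordinates then split over the branches, giving $(b_i)\in\prod_j(W|\mathcal{NM}_l|)(T_{\alpha_j})=(W|\mathcal{NM}_l|)(T_\alpha)$ by the inductive hypothesis and the whiteness of the bottom node of $T_\alpha$; for the $W|\mathcal{NM}_k|$-coordinates, induction places the branch restrictions in $(W|\mathcal{NM}_k|)(S_{\alpha_j})$, and the bottom $W|\mathcal{NM}_k|(r)$-node forces the $r$-tuple of maximal left factors into the image of the axial map $W|\mathcal{NM}_k|(r)\to W|\mathcal{NM}_k|(1)^r$, which by Corollary~\ref{8_3} is exactly the condition $(a_i)\in(W|\mathcal{NM}_k|)(S_\alpha)$. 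One can also sidestep the unary-interchange bookkeeping here by appealing to Proposition~\ref{8_4}: the defining condition of $(W|\mathcal{NM}_k|)(S_\alpha)$ need only be checked on three-element subsets of inputs, and there the relevant restriction of $\phi$ lies in $G_3(\alpha\cap\{p,q,r\})$, reducing us to the finite list of binodal trees with three inputs and their intersections (Proposition~\ref{8_5}).

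Then I would show the composite is an imbedding onto that subspace. For surjectivity, given $((a_i),(b_i))$ in $(W|\mathcal{NM}_k|)(S_\alpha)\times(W|\mathcal{NM}_l|)(T_\alpha)$ the same inductive decomposition assembles a tree representative: the axiality constraint at a black node of $S_\alpha$ is precisely what lets one fit in the corresponding $W|\mathcal{NM}_k|(r)$-node, while the branch data comes from the inductive hypothesis and Corollary~\ref{8_3}. For injectivity, one recovers the reduced tree representative of $\phi$ from the tuples: its shape is that of $\alpha$, and the $\scA$- and $\scB$-decorations are determined by $(a_i)$ and $(b_i)$ respectively, the recovery of a node label from the coordinate restrictions of the inputs lying above it being the same kind of argument as in the proof of Proposition~\ref{7_15}. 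Finally, that this continuous bijection onto its image is a closed imbedding follows as in Proposition~\ref{8_6}: $G_m(\alpha)$, the tensor product, the axial map, and the spaces $(W|\mathcal{NM}_k|)(S_\alpha)$, $(W|\mathcal{NM}_l|)(T_\alpha)$ are all realizations of injective maps of simplicial sets.

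I expect the main obstacle to be pinning down a normal form for elements of $G_m(\alpha)$: the unary $W|\mathcal{NM}_k|(1)$- and $W|\mathcal{NM}_l|(1)$-factors can be redistributed in intricate ways by the unary interchanges (compare the analysis of $(\scA\otimes\scB)(2)$ in Section~6), and one must show that, modulo these, an element is determined by, and freely described by, its block shape $\alpha$ together with the tuples of maximal left factors at the nodes -- which is then exactly what matches the axiality constraints of Corollary~\ref{8_3}. The reduction to three inputs from Propositions~\ref{8_4} and~\ref{8_5} keeps the containment argument finite; the surjectivity and injectivity claims then rest on this normal form, and establishing it cleanly is the part I would spend the most care on.
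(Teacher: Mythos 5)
Your proposal is correct in outline but takes a genuinely different route from the paper's proof. The paper does not pass through the axial embedding or the maximal-left-factor description of Corollary \ref{8_3} at this point; it instead uses the presentation of $(W|\mathcal{NM}_k|)(S_\alpha)$ and $(W|\mathcal{NM}_l|)(T_\alpha)$ as decorated planar binodal trees from Remark \ref{8_3a} and writes down two explicit, mutually inverse maps on representatives: one splits a tree representing an element of $G_m(\alpha)$ into its $W|\mathcal{NM}_k|$-part (turning $W|\mathcal{NM}_l|$-labelled multivalent nodes into unlabelled white nodes and consolidating unary labels onto edges) and its $W|\mathcal{NM}_l|$-part; the other merges a compatible pair of decorated trees of shapes $S_\alpha$ and $T_\alpha$ back into a single mixed tree. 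With that choice of description the defining relations on the two sides match verbatim (equivariance with equivariance, relations (2)--(3) of Remark \ref{8_3a} with operad composition in $G_m(\alpha)$, relations (4)--(5) with the unary interchanges), so the normal-form problem you single out as the crux essentially dissolves: both sides are presented by the same generators and relations, merely interleaved versus separated. Your route instead front-loads the machinery of axiality, left factoriality and Propositions \ref{8_4} and \ref{8_5}, which buys you a finite (three-input) check for the containment statement and builds in compatibility with the axial map $\xi$ from the start, something the paper leaves implicit; the price is that surjectivity and especially injectivity become the delicate steps, and carrying out the reconstruction of a class in $G_m(\alpha)$ from its pair of axial images would amount to reproving, in coordinates, the representative-level bijection the paper writes down directly. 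Your inductive skeleton and your identification of where the difficulty lies are sound, so I would regard this as a workable alternative rather than a gap, but the clean way to discharge the step you flag is precisely the relation-matching afforded by Remark \ref{8_3a}.
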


\begin{proof}
We define a map
$$\zeta_{\alpha}:(W|\mathcal{NM}_k|(1))^m\times (W|\mathcal{NM}_l|(1))^m\longrightarrow
G_m(\alpha)$$
using the descriptions of $(W|\mathcal{NM}_k|)(S_\alpha)$ and $(W|\mathcal{NM}_l|)(T_\alpha)$
given in Remark \ref{8_3a}. Choose compatible planar representatives of the binodal trees
$S_\alpha$ and $T_\alpha$, and then pick corresponding labeled planar tree representatives 
$S\in(W|\mathcal{NM}_k|)(S_\alpha)$
and $T\in(W|\mathcal{NM}_l|)(T_\alpha)$. Now for any unlabeled white node in $S$ there is a unique matching black node in $T$,
labeled by an element of $W|\mathcal{NM}_l|$. Replace the white node in $S$ by the
corresponding labeled black node in $T$. Also combine the edge labels from $S$ and $T$ by putting a univalent node marked with
the edge label (in $W|\mathcal{NM}_l|(1)$) from $T$ above another univalent node marked with the edge label 
(in $W|\mathcal{NM}_k|(1)$) from $S$. The resulting tree $\zeta_{\alpha}(S,T)$ represents an element of $G_m(\alpha)$.
The map $\zeta_{\alpha}$ is well defined, since the relations (1)-(4) in Remark \ref{8_3a}
on the domain of $\zeta_{\alpha}$ correspond to the equivariance and unary interchange relations
on $G_m(\alpha)$. By Proposition \ref{8_2} and Corollary \ref{8_3} it follows that the
composite
$$\begin{array}{rl}
(W|\mathcal{NM}_k|(1))^m\times (W|\mathcal{NM}_l|(1))^m\stackrel{\zeta_{\alpha}}{\longrightarrow}\\
G_m(\alpha)\longrightarrow(W|\mathcal{NM}_k|\otimes W|\mathcal{NM}_l|)(m) &
\stackrel{\xi}{\longrightarrow}
RU(W|\mathcal{NM}_k|\otimes W|\mathcal{NM}_l|)(m)\\
 &=(W|\mathcal{NM}_k|(1))^m\times (W|\mathcal{NM}_l|(1))^m
\end{array}$$
is the natural imbedding. It follows that $\zeta_{\alpha}$ is injective.

It remains to show that $\zeta_{\alpha}$ is surjective.
Let $B$ be a tree representative of an element in $G_m(\alpha)$. We may reduce this tree using
quotient relations in $G_m(\alpha)$ as follows. First of all using the nullary interchange
relations (cf. Remark \ref{5_001}), we remove any stumps in $B$. We then eliminate any
unary nodes which are below a multivalent node.  For given a subtree of $B$ having the form
$$
\xymatrix@=5pt@M=-1pt@W=-1pt{
\ar@{-}[dddrrrrrr] &&&\ar@{-}[dddrrr] &&&\dots& &&\ar@{-}[dddlll]\\
\\
\\
&& && &&\bullet\ar@{-}[dddd]&\save[]+<-2pt,0pt>*{\lambda}\restore\\
\\
\\
\\
&& && &&\bullet\ar@{-}[dddd]&\save[]+<-2pt,0pt>*{\kappa}\restore\\
\\
\\
\\
&& && && &&
}
$$
we may replace it by the equivalent tree
$$
\xymatrix@=5pt@M=-1pt@W=-1pt{
\ar@{-}[dddrrrrrr] &&&\ar@{-}[dddrrr] &&&\dots& &&\ar@{-}[dddlll]\\
\\
\\
&& && &&\bullet\ar@{-}[dddd]&\save[]+<+4pt,0pt>*{\kappa\circ\lambda}\restore\\
\\
\\
\\
&&&&&&\\
}
$$
if $\kappa$ and $\lambda$ are both in $W|\mathcal{NM}_k|$ or both in $W|\mathcal{NM}_l|$, and
by
$$
\xymatrix@=5pt@M=-1pt@W=-1pt{
\ar@{-}[drr]&&&&&\ar@{-}[dr]&&&&&&&&&&&\ar@{-}[dll]
\\
&&\bullet\ar@{-}[dddrrrrrrr]&\save[]+<-12pt,0pt>*{\kappa}\restore
&&&\bullet\ar@{-}[dddrrr]&\save[]+<+3pt,0pt>*{\kappa}\restore &&&\dots& 
&&&\bullet\ar@{-}[dddlllll]&\save[]+<0pt,0pt>*{\kappa}\restore\\
\\
\\
&&& && && &&\bullet\ar@{-}[dddd]&\save[]+<-2pt,0pt>*{\lambda}\restore\\
\\
\\
\\
& && &&&&&&\\
}
$$
if one of $\kappa$, $\lambda$ is in $W|\mathcal{NM}_k|$ and the other is in
$W|\mathcal{NM}_l|$. Thus $B$ is equivalent to a tree where all the unary nodes are at the
top of the tree, just below the input edges. We further reduce the tree by
interpreting any edge connecting multivalent nodes both labeled by elements of $W|\mathcal{NM}_k|$ or both labeled by elements of
$W|\mathcal{NM}_l|$ as compositions within that operad. After these reductions, we obtain a tree
$S\in(W|\mathcal{NM}_k|)(S_\alpha)$ by replacing all multivalent nodes labeled with elements of $W|\mathcal{NM}_l|$ by unlabeled white nodes, dropping all univalent nodes, and transferring the univalent node labels coming from $W|\mathcal{NM}_k|$ to the corresponding edges. Any
edge which remains unlabeled is considered to be labeled by the unit.  Similarly we obtain
a tree $T\in(W|\mathcal{NM}_l|)(T_\alpha)$ by reversing the roles of $W|\mathcal{NM}_k|$ and
$W|\mathcal{NM}_l|$. It is clear that $\zeta_{\alpha}(S,T)=B$. Thus $\zeta_{\alpha}$ is
bijective and hence $G_m(\alpha)$ imbeds as claimed.
\end{proof}

We illustrate the proof of Proposition \ref{9_1} with a simple example.

\begin{exam}
Let $\alpha=(1\boxtimes_2 2\boxtimes_2 4)\boxtimes_1 3$. Then after completing the reduction
process described in the proof, any element $B$ of $G_4(\alpha)$ has a representative of the form
$$
\xymatrix@=5pt@M=-1pt@W=-1pt{
1&&&&2 &&&&&&&&4 &&&&&&&&&&&3\\
\ar@{-}[drr]&&&&&\ar@{-}[dr]&&&&&&&\ar@{-}[ddddlll]&&&&&&&&&&&\ar@{-}[ddddddddllllllllllll]
\\
&&\bullet\ar@{-}[dddrrrrrrr]&\save[]+<-14pt,0pt>*{\kappa_1}\restore
&&&\bullet\ar@{-}[dddrrr]&\save[]+<+3pt,0pt>*{\kappa_2}\restore 
&&&&\save[]+<+2pt,0pt>*{\bullet}\restore
&\save[]+<+2pt,0pt>*{\kappa_3}\restore\\
\\
\\
&&& && && &&\bullet\ar@{-}[ddddrr]&\save[]+<-13pt,-2pt>*{\lambda_1}\restore&&&
&&&&\save[]+<-3pt,0pt>*{\bullet}\restore&\save[]+<+2pt,-2pt>*{\lambda_2}\restore\\
\\
\\
\\
& && &&&&&&&&\bullet\ar@{-}[dddd]&\save[]+<0pt,-2pt>*{\kappa_4}\restore\\
\\
\\
\\
& && &&&&&&&&
}
$$
Here $\kappa_i\in W|\mathcal{NM}_k|(1)$ for $i=1,2,3$, $\kappa_4\in W|\mathcal{NM}_k|(2)$,
$\lambda_1\in W|\mathcal{NM}_l|(3)$, and $\lambda_2\in W|\mathcal{NM}_l|(1)$.
Then $\zeta_{\alpha}(S,T)=B$, where\\
\centerline{
\raisebox{-40pt}{$(S,T) =$}\qquad
\raisebox{-35pt}{$\left(\begin{array}{l}\\ \\ \\ \\ \\ \\ \end{array}\right.$}
$\xymatrix@=5pt@M=-1pt@W=-1pt{
1&&&&2 &&&&&&&&4 &&&&&&&&&&&3\\
\ar@{-}[ddddrrrrrrrrr]&&&&&\ar@{-}[ddddrrrr]&&&&&&&\ar@{-}[ddddlll]
&&&&&&&&&&&\ar@{-}[ddddddddllllllllllll]
\\
&&&\save[]+<-14pt,-2pt>*{\kappa_1}\restore
&&&&\save[]+<+3pt,0pt>*{\kappa_2}\restore 
&&&&&\save[]+<+2pt,0pt>*{\kappa_3}\restore\\
\\
\\
&&& && && &&\circ\ar@{-}[ddddrr]&&&&
&&&&&\\
\\
\\
\\
& && &&&&&&&&\bullet\ar@{-}[dddd]&\save[]+<0pt,-2pt>*{\kappa_4}\restore\\
\\
\\
\\
& && &&&&&&&&
}$
\raisebox{-65pt}{\quad{\Huge,}\quad}
$
\xymatrix@=5pt@M=-1pt@W=-1pt{
1&&&&2 &&&&&&&&&4 &&&&&&&&&&&3\\
\ar@{-}[ddddrrrrrrrrr]&&&&&\ar@{-}[ddddrrrr]&&&&&&&&\ar@{-}[ddddllll]
&&&&&&&&&&&\ar@{-}[ddddddddlllllllllllll]
\\
&&&
&&&& 
&&&&&&\\
\\
\\
&&& && && &&\bullet\ar@{-}[ddddrr]&\save[]+<-13pt,-2pt>*{\lambda_1}\restore&&&
&&&&&\save[]+<+2pt,-4pt>*{\lambda_2}\restore\\
\\
\\
\\
& && &&&&&&&&\circ\ar@{-}[dddd]&\\
\\
\\
\\
& && &&&&&&&&
}$
\raisebox{-35pt}{$\left.\begin{array}{l}\\ \\ \\ \\ \\ \\ \end{array}\right)$}
}\\
\raisebox{-8pt}{Here the unlabeled edges are considered to be labeled by the unit of the operad.}
\end{exam}

Thus we can think of $G_m(\alpha)$ as a subspace, either of $(W|\mathcal{NM}_k|\otimes W|\mathcal{NM}_l|)(m)$
or of $W|\mathcal{NM}_k|(1)^m\times W|\mathcal{NM}_l|(1)^m$, 
and it makes sense to talk about intersections
of the form $\cap_i G_m(\alpha_i)$. However at this point we cannot determine if these intersections
are the same in $(W|\mathcal{NM}_k|\otimes W|\mathcal{NM}_l|)(m)$ as they are in $W|\mathcal{NM}_k|(1)^m\times W|\mathcal{NM}_l|(1)^m$
because we so far have not proved that $(W|\mathcal{NM}_k|\otimes W|\mathcal{NM}_l|)$ is axial.
To distinguish between the two types of intersections, we will write $G_m^{ax}(\alpha)$ (i.e. axial image) when we think
of $G_m(\alpha)$ as a subset of the latter instead of the former.

Moreover since the map $G_m(\alpha)\to(W|\mathcal{NM}_k|\otimes W|\mathcal{NM}_l|)(m)$ is the realization of a simplicial map,
it follows that this map is cofibration, thus establishing Proposition \ref{5_3}.
 
Our next goal is to show that there are no nonempty
intersections in either $(W|\mathcal{NM}_k|\otimes W|\mathcal{NM}_l|)(m)$
or $W|\mathcal{NM}_k|(1)^m\times W|\mathcal{NM}_l|(1)^m$
except those encoded by the simplices of $\mathfrak{K}_\bullet(m)$.

\begin{prop}\label{empty}
 If $\alpha_0,\ldots,\alpha_r$ are not vertices of a simplex of 
$\mathfrak{K}_\bullet(m)$, then $\cap_{i=0}^r G_m(\alpha_i)=\cap_{i=0}^r G_m^{ax}(\alpha_i)=\emptyset$.
\end{prop}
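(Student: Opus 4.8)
The plan is to deduce the statement about the honest intersection $\bigcap_i G_m(\alpha_i)$ from the corresponding statement about the axial intersection $\bigcap_i G_m^{ax}(\alpha_i)$, to reduce the latter to the case $m=3$, and then to read off emptiness from the intersection table of Proposition~\ref{8_5}. First I would observe that it suffices to show $\bigcap_{i=0}^r G_m^{ax}(\alpha_i)=\emptyset$: by Proposition~\ref{9_1} the axial map $\xi$ carries each $G_m(\alpha_i)$ homeomorphically onto $G_m^{ax}(\alpha_i)$, so $\xi$ maps $\bigcap_i G_m(\alpha_i)$ into $\bigcap_i G_m^{ax}(\alpha_i)$, and emptiness of the target forces emptiness of the source. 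Using Proposition~\ref{9_1} once more, $G_m^{ax}(\alpha_i)=(W|\mathcal{NM}_k|)(S_{\alpha_i})\times(W|\mathcal{NM}_l|)(T_{\alpha_i})$ inside $W|\mathcal{NM}_k|(1)^m\times W|\mathcal{NM}_l|(1)^m$, so
$$\bigcap_i G_m^{ax}(\alpha_i)=\Bigl(\bigcap_i (W|\mathcal{NM}_k|)(S_{\alpha_i})\Bigr)\times\Bigl(\bigcap_i (W|\mathcal{NM}_l|)(T_{\alpha_i})\Bigr),$$
and it is enough to make one of the two factors empty.

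Next I would reduce to $m=3$; we may assume $m\ge 3$, since for $m\le 2$ every set of vertices of $\mathfrak{K}_\bullet(m)$ is a simplex and the hypothesis is vacuous. By Definition~\ref{5_4} there is a subset $\{a<b<c\}$ of $\{1,\dots,m\}$ for which $\{\alpha_i\cap\{a,b,c\}\}_i$ is not a simplex of $\mathfrak{K}_\bullet(\{a,b,c\})\cong\mathfrak{K}_\bullet(3)$. Unwinding Definition~\ref{2_3a}, restricting the reduced tree of an element $\alpha\in\mathfrak{M}^{ab}_2(m)$ to the inputs lying in $\{a,b,c\}$ commutes with deleting unary nodes and merging adjacent like-coloured nodes, so $S_\alpha\cap\{a,b,c\}=S_{\alpha\cap\{a,b,c\}}$ and likewise for $T$. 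By the characterization of $(W|\mathcal{NM}_k|)(S)$ in Proposition~\ref{8_4}, restriction of the $\{a,b,c\}$-coordinates carries $(W|\mathcal{NM}_k|)(S_{\alpha_i})$ into $(W|\mathcal{NM}_k|)(S_{\alpha_i\cap\{a,b,c\}})$, and similarly for $W|\mathcal{NM}_l|$; hence it carries $\bigcap_i G_m^{ax}(\alpha_i)$ into $\bigcap_i G_3^{ax}(\alpha_i\cap\{a,b,c\})$, and it is enough to treat $m=3$.

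For $m=3$ I would use that $\mathfrak{K}_\bullet(3)$ is a flag complex: by Definition~\ref{5_4} a set of vertices is a simplex iff it meets each column of the displayed picture in at most one vertex, equivalently iff every pair in it is an edge. Thus a non-simplex $\{\beta_0,\dots,\beta_r\}$ contains two vertices $\beta_i\ne\beta_j$ in a common column, and since only the two ``mixed'' columns $\{(x\boxtimes_2 y)\boxtimes_1 z\}$ and $\{(x\boxtimes_1 y)\boxtimes_2 z\}$ contain more than one vertex, there are two cases. If $\beta_i,\beta_j$ both lie in $\{(x\boxtimes_2 y)\boxtimes_1 z\}$, then $S_{\beta_i}$ and $S_{\beta_j}$ are binodal trees of the common shape ``two inputs joined at a white node whose output is joined to the third input at the root black node'', with distinct two-element blocks (so sharing exactly one input); this is precisely the pair of trees in the third row of the table in the statement of Proposition~\ref{8_5}, hence $(W|\mathcal{NM}_k|)(S_{\beta_i})\cap(W|\mathcal{NM}_k|)(S_{\beta_j})=\emptyset$. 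If instead both lie in $\{(x\boxtimes_1 y)\boxtimes_2 z\}$, the same applies to $T_{\beta_i},T_{\beta_j}$ and $W|\mathcal{NM}_l|$. Either way one factor of $\bigcap_l G_3^{ax}(\beta_l)\subseteq G_3^{ax}(\beta_i)\cap G_3^{ax}(\beta_j)$ is empty, so $\bigcap_l G_3^{ax}(\beta_l)=\emptyset$, which completes the argument.

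The genuinely hard content (that such pairs of binodal trees have disjoint $W$-realizations) is already isolated in Proposition~\ref{8_5}, so the remaining obstacle is purely organizational: verifying on the nose that restriction to a three-element subset of inputs is compatible with the assignment $\alpha\mapsto(S_\alpha,T_\alpha)$ and with the embeddings of Proposition~\ref{8_2} — that is, the identity $S_\alpha\cap\{a,b,c\}=S_{\alpha\cap\{a,b,c\}}$ together with the commuting of the restriction maps with the inclusions $(W|\mathcal{NM}_k|)(S)\subseteq W|\mathcal{NM}_k|(1)^S$. I expect this compatibility, unwound from Definition~\ref{2_3a} and the constructions of Section~\ref{sec9}, to be the one fiddly point.
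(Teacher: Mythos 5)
Your argument is correct and follows essentially the same route as the paper: pass to the axial images, restrict to a bad triple $\{a,b,c\}$ to reduce to $m=3$, observe that a non-simplex there must contain two distinct vertices from one of the two multi-element columns of $\mathfrak{K}_\bullet(3)$, and invoke row 3 of the table in Proposition \ref{8_5} to kill one factor, with emptiness of $\cap_i G_m(\alpha_i)$ following because the image of a nonempty set is nonempty. The compatibility $S_{\alpha}\cap\{a,b,c\}=S_{\alpha\cap\{a,b,c\}}$ that you flag as the fiddly point is indeed the only thing the paper leaves implicit, and it holds for exactly the reason you give.
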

\begin{proof}
 By assumption there is a triple $\{a,b,c\}\subset \{1,\ldots,m\}$ such that
the set of vertices of
$\{\alpha_0\cap \{a,b,c\},\ldots,\alpha_n \cap \{a,b,c\}\}$ is not a simplex
in $\mathfrak{K}_\bullet(\{a,b,c\})$. Wolog $\{a,b,c\}=\{1,2,3\}$. Hence this collection of vertices
must contain two different objects $\beta_1$ and $\beta_2$ in the second or third column of
the table of \ref{5_4}. By \ref{8_5}, if $\beta_1$ and $\beta_2$ are in the second
column, then $(W|\scB_\bullet|)(S_{\beta_1})\cap(W|\scB_\bullet|)(S_{\beta_2})=\emptyset$, and
if $\beta_1$ and $\beta_2$ are in the third column, then 
$(W|\scB_\bullet|)(T_{\beta_1})\cap(W|\scB_\bullet|)(T_{\beta_2})=\emptyset$.
Since the restriction into $\mathfrak{K}_\bullet(3)$ corresponds to looking at the 
first three components of the axial image, we conclude that 
$\cap_{i=0}^n G_m^{ax}(\alpha_i)=\emptyset$. Since the image of a nonempty set
must be nonempty, it follows that $\cap_{i=0}^n G_m(\alpha_i)=\emptyset$
\end{proof}

Our next aim is to show that the intersections in $W|\mathcal{NM}_k|(1)^m\times W|\mathcal{NM}_l|(1)^m$
along a simplex are not empty.

\begin{lem}\label{hope}
If $\sigma=\{\alpha_0,\ldots ,\alpha_r\}$ is an $r$-simplex, $r\geq 1$, in $\mathfrak{K}_\bullet(m)$
and all vertices $\alpha_i$ have outermost operation $\boxtimes_1$, then for all $i$
$$ \alpha_i=\alpha_{i1}\boxtimes_1 \alpha_{i2} 
$$
with $|\alpha_{i1}|=|\alpha_{j1}|$ and $|\alpha_{i2}|=|\alpha_{j2}|$ for all $i, j$, where $|\alpha|$ is the underlying set of
generators of $\alpha$. \\
The same holds if we interchange $\boxtimes_1$ and $\boxtimes_2$.
\end{lem}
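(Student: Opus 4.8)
The plan is to recast the conclusion in purely combinatorial terms. Writing $\alpha_i=\alpha_{i1}\boxtimes_1\alpha_{i2}$ amounts to cutting $\alpha_i$ at its outermost $\boxtimes_1$: if $\Pi_i$ denotes the partition of $\{1,\dots,m\}$ into the \emph{top blocks} of $\alpha_i$ (the maximal subtrees hanging off the root $\boxtimes_1$, each either a single input or a $\boxtimes_2$-rooted subtree), then, since $\boxtimes_1$ is strictly associative and commutative in $\mathfrak{M}_2^{ab}$, a splitting with $|\alpha_{i1}|=P$ and $|\alpha_{i2}|=Q$ exists precisely when $\{P,Q\}$ is a partition of $\{1,\dots,m\}$ that is coarser than $\Pi_i$. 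Thus the lemma asserts that $\Pi_0,\dots,\Pi_r$ admit a \emph{common} coarsening into two nonempty blocks, equivalently that their join has at least two blocks. First I would collapse this to a single, easily controlled relation, and then bound the join using the simplex condition of \ref{5_4}.

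For the reduction, define $a\approx b$ to mean $\alpha_i\cap\{a,b\}=a\boxtimes_2 b$ for some $i$; equivalently $a,b$ lie in a common top block of some $\alpha_i$ with their meet colored $\boxtimes_2$. I claim every top block lies in a single $\approx$-class. Indeed a top block $B$ with $|B|\ge 2$ is $\boxtimes_2$-rooted; two inputs in different children of its root are directly $\approx$-related, and two inputs $x,y$ in the \emph{same} child are bridged by any input $d$ in another child of the root of $B$, since then $x\approx d$ and $y\approx d$. Consequently the $\approx$-partition is coarser than every $\Pi_i$, so it suffices to produce two $\approx$-classes: any grouping of the classes into two nonempty parts $\{P,Q\}$ is then coarser than each $\Pi_i$ and yields the required splittings. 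The outermost-$\boxtimes_2$ statement follows by the symmetry interchanging the colors $\boxtimes_1\leftrightarrow\boxtimes_2$.

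It remains to show $\approx$ is disconnected, and here I would use that $\sigma$ is a simplex. The key observation is that $a\approx b$ witnessed by $\alpha_i$, together with any $c$ outside the top block of $a,b$, makes $\alpha_i\cap\{a,b,c\}$ the column-2 vertex $(a\boxtimes_2 b)\boxtimes_1 c$ of $\mathfrak{K}_\bullet(\{a,b,c\})$, while the simplex condition of \ref{5_4} forbids two distinct column-2 vertices on a single triple. Assuming $\approx$ connected, its graph (on $m\ge 3$ vertices) contains a path $a\approx b\approx c$ with $\{a,c\}$ not an $\approx$-edge (the complete case being absorbed into the induction below). Restricting to $\{a,b,c\}$ and reading the column list of \ref{5_4}, a witness $\alpha_p$ of $\{a,b\}$ contributes either the column-2 vertex with distinguished pair $\{a,b\}$, when $c$ lies outside its block, or the column-3 vertex $(a\boxtimes_1 c)\boxtimes_2 b$, when $c$ lies inside; symmetrically for a witness of $\{b,c\}$. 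If any choice of witnesses produces the two column-2 vertices, the simplex condition is violated and we are done.

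The hard part will be the degenerate configuration in which \emph{every} witness of $\{a,b\}$ and of $\{b,c\}$ bundles $a,b,c$ into one $\boxtimes_2$-block, so that all these restricted triples collapse to the single column-3 vertex $(a\boxtimes_1 c)\boxtimes_2 b$ and no contradiction survives on that triple alone. This situation is impossible for $m=3$ (three inputs cannot fill a proper top block of an outermost-$\boxtimes_1$ word), which yields a clean base case, so I would eliminate it by induction on $m$: delete an input $g$ chosen to be neither a cut-vertex of the $\approx$-graph (so connectivity of $\approx$ survives on $\{1,\dots,m\}\setminus\{g\}$) nor the unique singleton top block of any two-block $\alpha_i$ (so every restricted word keeps outermost $\boxtimes_1$), and use \ref{2_3a} to see that $\{\alpha_i\cap(\{1,\dots,m\}\setminus\{g\})\}$ is again a simplex meeting the hypotheses. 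Showing that such a $g$ must exist whenever $\approx$ is connected — equivalently, that the bundling obstruction is incompatible with global connectivity of $\approx$ — is the crux of the argument and the one step I expect to require genuine care rather than routine bookkeeping.
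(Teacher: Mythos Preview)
Your reformulation is sound and illuminating: the lemma is exactly the statement that the join of the partitions $\Pi_i$ has at least two blocks, and your relation $\approx$ (with $a\approx b$ iff some $\alpha_i\cap\{a,b\}=a\boxtimes_2 b$) does yield a partition coarser than every $\Pi_i$. So the problem is correctly reduced to showing that the $\approx$-graph is disconnected.

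The gap is in your proof of disconnectedness. Your induction scheme---delete an input $g$ that is neither a cut-vertex of the $\approx$-graph nor the lone singleton top block of some two-block $\alpha_i$---is not carried out, and you say so yourself: the existence of such a $g$ is ``the crux of the argument and the one step I expect to require genuine care.'' As it stands this is not a proof but a hope. It is not clear that the two obstructions cannot conspire to rule out every $g$: there could be many vertices $\alpha_i$ of shape $g\boxtimes_1(\text{rest})$, one for each $g$, and independently the $\approx$-graph could be a tree with many cut-vertices. Your handling of the ``complete case'' is also only gestured at.

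The paper avoids this difficulty by a different and more direct idea. Instead of deleting a single input, it selects a \emph{maximal} top block $B$ (of largest cardinality among all top blocks of all $\alpha_i$) and proves that no $\approx$-edge crosses the cut $\{B,B^c\}$. In your language: for each $j\notin B$, restricting the whole simplex to $B\cup\{j\}$ keeps every vertex with outermost $\boxtimes_1$ (by maximality of $B$), and the restricted vertex $\alpha_0\cap(B\cup\{j\})=j\boxtimes_1\alpha_{01}$ has $\{j\}$ as a singleton top block with $\boxtimes_2$-rooted complement. A short special-case induction then forces every other restricted vertex to also split off $\{j\}$; hence $j\not\approx a$ for any $a\in B$. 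This gives disconnectedness immediately, with no need to thread two competing constraints on a deleted vertex. If you want to rescue your approach, the cleanest fix is to replace the ``delete $g$'' induction with this maximal-block argument, which plugs directly into your $\approx$-framework.
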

\begin{proof}
\textit{Special case:} There is a vertex $\alpha=1\boxtimes_1 \alpha'$ in $\sigma$ where $\alpha'$ has 
outermost operation $\boxtimes_2$. We show that any other vertex $\beta$ of $\sigma$ has the form $\beta=1\boxtimes_1\beta'$.

 We  proceed by induction.
For $m=3$ the statement is true (see table in \ref{5_4}).\\
Assume that $m>3$. We can find a generator (wolog $2$) such that
replacing $2$ by $0$ leaves an outermost $\boxtimes_2$ in $\alpha'\cap \{1,3,\ldots,m\}$.
Then by induction, $\beta\cap \{1,3,\ldots,m\}=1\boxtimes_1 \beta''$. If $\beta\ne 1\boxtimes_1 \beta'$
it must have the form $\beta=(1\boxtimes_2 2)\boxtimes_1 \beta''$. There is a generator
(wolog 3) such that $\alpha'\cap \{2,3\}=2\boxtimes_2 3$. Then
$$
\alpha\cap \{1,2,3\}= 1\boxtimes_1(2\boxtimes_23) \quad\textrm{and}\quad 
\beta\cap \{1,2,3\}=(1\boxtimes_2 2)\boxtimes_13=3\boxtimes_1 (1\boxtimes_2 2).
$$
But then $\{\alpha,\beta\}$ is not a $1$-simplex in $\mathfrak{K}_\bullet(m)$.

\textit{General case:} Let
$$
\alpha_i=\alpha_{i1}\boxtimes_1\alpha_{i2}\boxtimes_1\ldots\boxtimes_1\alpha_{iq_i}
$$
with $q_i$ maximal for such a decomposition. Among the $\alpha_{is}, 0\leq i\leq n,\ 1\leq s\leq q_i$ there is a not
necessarily unique block with a maximal number of generators. Wolog we may assume that this block is 
 $\alpha_{01}$ and that  $|\alpha_{01}|=\{1,\ldots ,p \}$. Then $\alpha_0= \alpha_{01}\boxtimes_1\alpha_0'$ and
$|\alpha_0'|=\{p+1,\ldots,m\}$.
 For each $j\in \{p+1,\ldots,m\}$
define $\alpha_i(j)=\alpha_i\cap \{1,2,\ldots,p,j\}$. Then $\alpha_i(j)$ has outermost operation
$\boxtimes_1$ by maximality of $\alpha_{01}$. Since $\alpha_0(j)=j\boxtimes_1 \alpha_{01}$ and $\alpha_{01}$ has outermost 
operation $\boxtimes_2$ we obtain that each $\alpha_i(j)$ has the form $\alpha_i(j)=j\boxtimes_1 \alpha_i(j)'$ 
by the special case above. 
Since this holds for each $j\in \{p+1,\ldots,m\}$ we obtain a decomposition
$$
\alpha_i=(\alpha_i\cap \{1,\ldots,p\})\ \boxtimes_1\ (\alpha_i \cap \{p+1,\ldots,m\})
$$
which proves the lemma.
\end{proof}

\begin{rema}
Note that the proof of Lemma \ref{hope} is constructive.  That is, we give an explicit
algorithm for constructing the decomposition in question.
\end{rema}

By iterating Lemma \ref{hope} we obtain

\begin{coro}\label{label_3_5}
If $\sigma=\{\alpha_0,\ldots ,\alpha_r\}$ is an $r$-simplex, $r\geq 1$, in $\mathfrak{K}_\bullet(m)$
and all vertices $\alpha_i$ have outermost operation $\boxtimes_1$, then there is a maximal
decomposition such that for all $i$
$$\alpha_i=\alpha_{i1}\boxtimes_1\alpha_{i2}\boxtimes_1\dots\boxtimes_1\alpha_{ip}$$
and for all $1\le j\le p$
$$|\alpha_{0j}|=|\alpha_{1j}|=\dots=|\alpha_{rj}|.$$
Maximal means that for all $1\le j\le p$, either $|\alpha_{0j}|=|\alpha_{1j}|=\dots=|\alpha_{rj}|$
consists of a single generator, or $|\alpha_{ij}|$ has outermost operation $\boxtimes_2$
for at least one $i$.\newline
The same holds if we interchange $\boxtimes_1$ and $\boxtimes_2$.
\end{coro}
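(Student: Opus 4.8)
The plan is to obtain the decomposition by iterating Lemma \ref{hope}; the genuine content is already concentrated in that lemma, and what remains is bookkeeping to refine the two-block splitting it provides into a maximal one and to check that everything stays globally consistent across the vertices of $\sigma$.

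First I would apply Lemma \ref{hope} to $\sigma$ itself, producing $\alpha_i=\alpha_{i1}\boxtimes_1\alpha_{i2}$ with $U_j:=|\alpha_{0j}|=|\alpha_{1j}|=\dots=|\alpha_{rj}|$ for $j=1,2$ (and $U_1\amalg U_2=\{1,\dots,m\}$, both nonempty). For each $j$, restricting a vertex $\alpha_i$ to the subset $U_j\subseteq\{1,\dots,m\}$ kills everything outside the $j$-th block, so $\alpha_i\cap U_j=\alpha_{ij}$. Since the simplex condition in $\mathfrak{K}_\bullet$ is tested only on triples of generators (Definition \ref{5_4}) and every triple contained in $U_j$ is also a triple in $\{1,\dots,m\}$, the family $\{\alpha_{0j},\dots,\alpha_{rj}\}$ is again a simplex of $\mathfrak{K}_\bullet(U_j)$ (possibly $0$-dimensional). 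Now I recurse on each block: if $|U_j|=1$ the block is a single generator and is left alone; if at least one $\alpha_{ij}$ has outermost operation $\boxtimes_2$ the block is again left alone; otherwise every $\alpha_{ij}$ has outermost operation $\boxtimes_1$ and $|U_j|\ge 2$, so Lemma \ref{hope} applies once more to $\{\alpha_{0j},\dots,\alpha_{rj}\}$ and splits $U_j$ into two smaller nonempty pieces. (If the restricted family happens to be a single vertex, its outermost $\boxtimes_1$ already supplies such a splitting by strict associativity, so the conclusion holds trivially in that degenerate case.) Each such split strictly decreases the size of the generator sets of the pieces, so the recursion terminates.

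Concatenating all the two-block splittings and using the strict associativity of $\boxtimes_1$ in $\mathfrak{M}_2^{ab}$, I obtain a single common decomposition $\alpha_i=\alpha_{i1}\boxtimes_1\cdots\boxtimes_1\alpha_{ip}$ with $|\alpha_{0j}|=\dots=|\alpha_{rj}|$ for every $j$. When the recursion halts, each block either consists of a single generator or contains some vertex with outermost operation $\boxtimes_2$, which is precisely the stated maximality condition. The variant with $\boxtimes_1$ and $\boxtimes_2$ interchanged follows verbatim from the symmetric form of Lemma \ref{hope}.

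The main obstacle is purely organizational: one must check that restricting $\sigma$ to a block $U_j$ really produces a simplex of $\mathfrak{K}_\bullet(U_j)$ (this is where the triple-restriction definition of $\mathfrak{K}_\bullet$ is used) and that the recursion is arranged so that it halts exactly at the maximality condition rather than too early. The genuinely hard input -- that being a simplex forces the block sizes to agree across all vertices -- has already been discharged in the proof of Lemma \ref{hope}, so no new analytic work is needed here.
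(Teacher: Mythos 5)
Your proposal is correct and follows exactly the route the paper intends: the paper proves this corollary simply by the phrase ``By iterating Lemma \ref{hope} we obtain,'' and your argument supplies precisely that iteration (restriction of the simplex to each block via the triple-wise definition of $\mathfrak{K}_\bullet$, recursion until each block is a single generator or contains a $\boxtimes_2$-vertex, and termination by decreasing block size). The handling of the degenerate case where a restricted family collapses to a single vertex is a sensible addition, since Lemma \ref{hope} as stated requires $r\ge 1$.
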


We shall refer to the decomposition specified in Corollary \ref{label_3_5} as the {\it maximal
common $\boxtimes_1$ decomposition}\/  of the vertices of a simplex of the first kind or the 
{\it maximal common $\boxtimes_2$ decomposition} of the vertices of a simplex of the second
kind.

\begin{prop}\label{label_4} Suppose that 
$$\{\alpha_0,\alpha_1,\dots,\alpha_r\}$$
is an $r$-simplex in $\mathfrak{K}_\bullet(m)$ (which we denote $\overline{\alpha}$). Then
$\cap_{i=0}^r G_m^{ax}(\alpha_i)$ is nonempty and we may find a pair of binodal trees
$(S_{\overline{\alpha}},T_{\overline{\alpha}})$ such that
$$\cap_{i=0}^r G_m^{ax}(\alpha_i)
=W|\mathcal{NM}_k|(S_{\overline{\alpha}})\times W|\mathcal{NM}_l|(T_{\overline{\alpha}})$$
\end{prop}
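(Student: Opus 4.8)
The plan is to split the intersection into a $W|\mathcal{NM}_k|$-factor and a $W|\mathcal{NM}_l|$-factor, treat the two by a symmetry, and then reduce the whole question to the three-input computations of Section \ref{sec8}. By Proposition \ref{9_1} the axial embedding identifies $G_m^{ax}(\alpha_i)$ with $(W|\mathcal{NM}_k|)(S_{\alpha_i})\times(W|\mathcal{NM}_l|)(T_{\alpha_i})$ inside $W|\mathcal{NM}_k|(1)^m\times W|\mathcal{NM}_l|(1)^m$, so that
$$\bigcap_{i=0}^{r}G_m^{ax}(\alpha_i)=\Bigl(\bigcap_{i=0}^{r}(W|\mathcal{NM}_k|)(S_{\alpha_i})\Bigr)\times\Bigl(\bigcap_{i=0}^{r}(W|\mathcal{NM}_l|)(T_{\alpha_i})\Bigr).$$
The involution $\iota$ of $\mathfrak{M}^{ab}_2$ interchanging $\boxtimes_1$ and $\boxtimes_2$ permutes the columns of the table in Definition \ref{5_4} (first with fourth, second with third), hence is an automorphism of every $\mathfrak{K}_\bullet(m)$ carrying simplices to simplices, and $S_{\iota(\alpha)}=T_\alpha$. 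So it is enough to prove the following for \emph{any} simplicial operad $\scB_\bullet$ with $\scB_\bullet(1)=\{id\}$: for an $r$-simplex $\{\alpha_0,\dots,\alpha_r\}$ of $\mathfrak{K}_\bullet(m)$ there is a binodal tree $S$ with $|S|=\{1,\dots,m\}$ such that $\bigcap_i(W|\scB_\bullet|)(S_{\alpha_i})=(W|\scB_\bullet|)(S)$. Taking $\scB_\bullet=\mathcal{NM}_k$ yields $S_{\overline\alpha}$; taking $\scB_\bullet=\mathcal{NM}_l$ and the simplex $\{\iota(\alpha_0),\dots,\iota(\alpha_r)\}$ yields $T_{\overline\alpha}$. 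Nonemptiness is then immediate, since the tuple $(id,\dots,id)$ lies in $(W|\scB_\bullet|)(S)$ for \emph{every} binodal tree $S$ (by Proposition \ref{8_4} it suffices to check this for three-input binodal trees, where it is clear).

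To see that the intersection has binodal-tree form, apply Proposition \ref{8_4} to each $S_{\alpha_i}$:
$$\bigcap_i(W|\scB_\bullet|)(S_{\alpha_i})=\Bigl\{(\beta_s)_s\ :\ \forall\,\{a,b,c\},\ (\beta_s)_s\cap\{a,b,c\}\in\bigcap_i(W|\scB_\bullet|)(S_{\alpha_i}\cap\{a,b,c\})\Bigr\}.$$
Since $\{\alpha_0,\dots,\alpha_r\}$ is a simplex, for each triple $\{a,b,c\}$ the restrictions $\{\alpha_i\cap\{a,b,c\}\}_i$ form a simplex of $\mathfrak{K}_\bullet(\{a,b,c\})\cong\mathfrak{K}_\bullet(3)$, hence use at most one vertex from each column of the table in Definition \ref{5_4}. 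Consequently every pair among the three-input binodal trees $S_{\alpha_i\cap\{a,b,c\}}$ is a ``cross-column'' pair, and by Proposition \ref{8_5} (together with the companion table in the Appendix) its intersection is again a space $(W|\scB_\bullet|)(U)$ for a three-input binodal tree $U$ — never the one exceptional entry, which requires two vertices lying in a common column and so cannot occur in a simplex. An easy induction on the number of trees then yields a three-input binodal tree $U_{abc}$ with $\bigcap_i(W|\scB_\bullet|)(S_{\alpha_i}\cap\{a,b,c\})=(W|\scB_\bullet|)(U_{abc})$. Therefore, as soon as we produce a binodal tree $S$ on $\{1,\dots,m\}$ with $S\cap\{a,b,c\}=U_{abc}$ for every triple, a second application of Proposition \ref{8_4} gives $(W|\scB_\bullet|)(S)=\bigcap_i(W|\scB_\bullet|)(S_{\alpha_i})$, as required.

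It remains to build such an $S$, which I would do by induction on $m$ using the maximal common decompositions of Corollary \ref{label_3_5}. If every $\alpha_i$ has outermost operation $\boxtimes_2$, the maximal common $\boxtimes_2$-decomposition $\alpha_i=\alpha_{i1}\boxtimes_2\cdots\boxtimes_2\alpha_{ip}$ has $|\alpha_{ij}|=:S_j$ independent of $i$, each $\{\alpha_{ij}\}_i$ is a simplex of $\mathfrak{K}_\bullet(S_j)$ on fewer generators, and $S_{\alpha_i}$ is white-rooted with branches $S_{\alpha_{ij}}$, so $(W|\scB_\bullet|)(S_{\alpha_i})=\prod_j(W|\scB_\bullet|)(S_{\alpha_{ij}})$ is a product; by induction the intersection is $\prod_j(W|\scB_\bullet|)(S_j')$, and one takes $S$ to be the reduced binodal tree with white root and branches $S_1',\dots,S_p'$. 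The case where every $\alpha_i$ has outermost $\boxtimes_1$ is the mirror image: the maximal common $\boxtimes_1$-decomposition gives all the $S_{\alpha_i}$ a black root of one and the same arity $p$, so the fiber-product descriptions of the $(W|\scB_\bullet|)(S_{\alpha_i})$ share a common factor $W|\scB_\bullet|(p)$ and their intersection again has that shape, with a little care needed to pass to reduced form when some block $S_j'$ is itself black-rooted. In the remaining, mixed, case one splits $\{\alpha_0,\dots,\alpha_r\}$ into its two pure sub-simplices, applies the previous two cases, and glues the resulting black-rooted and white-rooted binodal trees. I expect this gluing step to be the main obstacle: its content is the purely combinatorial assertion — the constructive counterpart of Proposition \ref{empty} — that a family $\{U_{abc}\}$ of three-input binodal trees arising as the triple-restrictions of a simplex of $\mathfrak{K}_\bullet(m)$ is always realized by a single binodal tree on $\{1,\dots,m\}$, and it is what forces us to use the coherence built into the definition of $\mathfrak{K}_\bullet$ and the table of Proposition \ref{8_5}. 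Once $S$ is constructed one checks, by matching triple-restrictions, that $S\cap\{a,b,c\}=U_{abc}$, and the desired identity follows from Proposition \ref{8_4}.
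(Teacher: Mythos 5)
Your reduction of the statement to a single operad factor (via the involution swapping $\boxtimes_1$ and $\boxtimes_2$) and then to three-input restrictions via Proposition \ref{8_4} is a legitimate reorganization of the paper's verification step, and your observation that the exceptional entry of the intersection table can only arise from two vertices in a common column is correct. But the proposal has two genuine problems.

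First, your nonemptiness witness is wrong: $(id,\dots,id)$ does \emph{not} lie in $(W|\scB_\bullet|)(S)$ when $S$ has a black node of arity $\ge 2$. For the black-rooted corolla on two inputs this space is the axial image of $W|\scB_\bullet|(2)$ in $W|\scB_\bullet|(1)^2$, and $(id,id)$ is not in that image — stumping one input of a reduced two-input tree never reduces it to the trivial tree, since the grafted edge has length $1$ and cannot be shrunk. Indeed the paper uses exactly the fact $(\alpha\circ\rho,\alpha)\notin W\scB(2)$ (hence $(id,id)\notin W\scB(2)$) in the proof of Lemma \ref{LemmaDrei}. Nonemptiness is still true, but it has to come from the identification of the intersection with $W|\mathcal{NM}_k|(S_{\overline{\alpha}})\times W|\mathcal{NM}_l|(T_{\overline{\alpha}})$, i.e. from the part of the argument you have not completed.

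Second, and more seriously, the construction of the global binodal tree $S$ with $S\cap\{a,b,c\}=U_{abc}$ for all triples — which you correctly identify as ``the main obstacle'' — is precisely where the substance of the paper's proof lies, and ``glue the black-rooted and white-rooted trees of the two pure sub-simplices'' does not describe a working construction. In the mixed case the paper does something different: it takes the maximal common $\boxtimes_1$-decomposition (Corollary \ref{label_3_5}) of \emph{only} the $\boxtimes_1$-vertices to obtain a partition $U_1,\dots,U_p$ of the inputs, puts a black root with $p$ branches, and recurses on the \emph{entire} simplex restricted to each $U_j$ — so the $\boxtimes_2$-vertices do shape $S_{\overline{\alpha}}$, through the recursion, and the two sides cannot be assembled from the two pure sub-simplices independently. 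Without this recursive construction (whose correctness the paper then checks by reducing to the explicitly enumerated subsimplices of the two maximal simplices of $\mathfrak{K}_\bullet(3)$), your argument does not close. A smaller unverified point: your ``easy induction on the number of trees'' for iterated three-input intersections needs the additional check that the outputs listed in the table of Proposition \ref{8_5} and the Appendix never form a row-9 pair with a remaining tree; this happens to be true (no output has the shape of a black node under a white root in the relevant position) but must be said.
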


\begin{proof}
We will first describe a recursive algorithm for constructing the trees $S_{\overline{\alpha}}$ and
$T_{\overline{\alpha}}$. We defer proving that 
$$\cap_{i=0}^r G_m^{ax}(\alpha_i)
=(W|\mathcal{NM}_k|)(S_{\overline{\alpha}})\times(W|\mathcal{NM}_l|)(T_{\overline{\alpha}})$$
until later.

The first case of a simplex of positive dimension occurs when $m=2$, where we have the 1-simplex
$\overline{\alpha}=\{1\boxtimes_1 2, 1\boxtimes_2 2\}$. In this case we take\newline
\centerline{
\raisebox{-10pt}{$S_{\overline{\alpha}}=T_{\overline{\alpha}}=$\quad}
$\xymatrix@=5pt@M=-1pt@W=-1pt{
1 &&2\\
\ar@{-}[ddr] &&\ar@{-}[ddl]\\
\\
&\bullet\ar@{-}[dd]\\
\\
&
}$
}

Now assume we have constructed the trees $S_{\overline{\beta}}$, $T_{\overline{\beta}}$ for all simplices
$\overline{\beta}$ in $\mathfrak{K}_\bullet(a)$, for $a<m$. For a simplex $\overline{\alpha}$
in $\mathfrak{K}_\bullet(m)$, we consider three possible cases.

{\it Case 1:\/} Some of the objects $\alpha_i$ have outermost operation $\boxtimes_1$ while others have
outermost operation $\boxtimes_2$. 

In this case reindex the vertices so that $\alpha_0, \alpha_1,\dots,\alpha_s$ have outermost
operation $\boxtimes_1$, while $\alpha_{s+1},\dots,\alpha_r$ have outermost operation $\boxtimes_2$.
Let
\begin{eqnarray*}
\alpha_i&=&\alpha_{i1}\boxtimes_1\alpha_{i2}\boxtimes_1\dots\boxtimes_1\alpha_{ip}\quad 0\le i\le s\\
\alpha_i&=&\alpha_{i1}\boxtimes_2\alpha_{i2}\boxtimes_2\dots\boxtimes_2\alpha_{iq}\quad s+1\le i\le r
\end{eqnarray*}
be the maximal common decompositions as in Corollary \ref{label_3_5}. Let
\begin{eqnarray*}
U_j &= &|\alpha_{0j}|=|\alpha_{1j}|=\dots=|\alpha_{sj}|\quad 1\le j\le p\\
V_j &= &|\alpha_{s+1,j}|=|\alpha_{s+2,j}|=\dots=|\alpha_{rj}|\quad 1\le j\le q
\end{eqnarray*}

By recursion, we have already defined pairs of binodal trees $(S_{\overline{\alpha}\cap U_j},T_{\overline{\alpha}\cap U_j})$ for each $j=1,2,\dots, p$. Similarly
for each $j=1,2,\dots,q$ we have already defined
pairs of binodal trees $(S_{\overline{\alpha}\cap V_j},T_{\overline{\alpha}\cap V_j})$. We define
$S_{\overline{\alpha}}$ to be the binodal tree
$$\xymatrix@=5pt@M=-1pt@W=-1pt{
S_{\overline{\alpha}\cap U_1} &&S_{\overline{\alpha}\cap U_2} &&\dots &&S_{\overline{\alpha}\cap U_p}\\
\ar@{-}[ddddrrrr] &&\ar@{-}[ddddrr] && &&\ar@{-}[ddddll]\\
\\
\\
\\
&& &&\bullet\ar@{-}[dddd]\\
\\
\\
\\
&& &&
}$$
and we define $T_{\overline{\alpha}}$ to be the binodal tree
$$\xymatrix@=5pt@M=-1pt@W=-1pt{
T_{\overline{\alpha}\cap V_1 } &&T_{\overline{\alpha}\cap V_2} &&\dots &&T_{\overline{\alpha}\cap V_q}\\
\ar@{-}[ddddrrrr] &&\ar@{-}[ddddrr] && &&\ar@{-}[ddddll]\\
\\
\\
\\
&& &&\bullet\ar@{-}[dddd]\\
\\
\\
\\
&& &&
}$$

{\it Case 2:\/} All of the objects $\alpha_i$ have outermost operation $\boxtimes_1$.

Let
$$\alpha_i=\alpha_{i1}\boxtimes_1\alpha_{i2}\boxtimes_1\dots\boxtimes_1\alpha_{ip}\quad 0\le i\le r$$
and
$$U_j = |\alpha_{0j}|=|\alpha_{1j}|=\dots=|\alpha_{rj}|\quad 1\le j\le p$$
be as in Case 1.

By recursion, for each $j=1,2,\dots,p$ we have already defined
pairs of binodal trees $(S_{\overline{\alpha}\cap U_j},T_{\overline{\alpha}\cap U_j})$. We define
$S_{\overline{\alpha}}$ to be the binodal tree
$$\xymatrix@=5pt@M=-1pt@W=-1pt{
S_{\overline{\alpha}\cap U_1} &&S_{\overline{\alpha}\cap U_2} &&\dots &&S_{\overline{\alpha}\cap U_p}\\
\ar@{-}[ddddrrrr] &&\ar@{-}[ddddrr] && &&\ar@{-}[ddddll]\\
\\
\\
\\
&& &&\bullet\ar@{-}[dddd]\\
\\
\\
\\
&& &&
}$$
and $T_{\overline{\alpha}}$ to be the binodal tree
$$\xymatrix@=5pt@M=-1pt@W=-1pt{
T_{\overline{\alpha}\cap U_1} &&T_{\overline{\alpha}\cap U_2} &&\dots &&T_{\overline{\alpha}\cap U_p}\\
\ar@{-}[ddddrrrr] &&\ar@{-}[ddddrr] && &&\ar@{-}[ddddll]\\
\\
\\
\\
&& &&\circ\ar@{-}[dddd]\\
\\
\\
\\
&& &&
}$$

{\it Case 3:\/} All of the objects $\alpha_i$ have outermost operation $\boxtimes_2$.

Let
$$\alpha_i=\alpha_{i1}\boxtimes_2\alpha_{i2}\boxtimes_2\dots\boxtimes_2\alpha_{iq}\quad 0\le i\le r$$
and
$$V_j=|\alpha_{0j}|=|\alpha_{1j}|=\dots=|\alpha_{rj}|\quad 1\le j\le q$$
be as in Case 1.

By recursion, for each $j=1,2,\dots,q$ we have already defined
pairs of binodal trees $(S_{\overline{\alpha}\cap V_j},T_{\overline{\alpha}\cap V_j})$. We define
$S_{\overline{\alpha}}$ to be the binodal tree
$$\xymatrix@=5pt@M=-1pt@W=-1pt{
S_{\overline{\alpha}\cap V_1} &&S_{\overline{\alpha}\cap V_2} &&\dots &&S_{\overline{\alpha}\cap V_q}\\
\ar@{-}[ddddrrrr] &&\ar@{-}[ddddrr] && &&\ar@{-}[ddddll]\\
\\
\\
\\
&& &&\circ\ar@{-}[dddd]\\
\\
\\
\\
&& &&
}$$
and $T_{\overline{\alpha}}$ to be the binodal tree
$$\xymatrix@=5pt@M=-1pt@W=-1pt{
T_{\overline{\alpha}\cap V_1} &&T_{\overline{\alpha}\cap V_2} &&\dots &&T_{\overline{\alpha}\cap V_q}\\
\ar@{-}[ddddrrrr] &&\ar@{-}[ddddrr] && &&\ar@{-}[ddddll]\\
\\
\\
\\
&& &&\bullet\ar@{-}[dddd]\\
\\
\\
\\
&& &&
}$$

To check that
$$\cap_{i=1}^r G_m^{ax}(\alpha_i)
=(W|\mathcal{NM}_k|)(S_{\overline{\alpha}})\times(W|\mathcal{NM}_l|)(T_{\overline{\alpha}})$$
we observe that it is true for the unique 1-simplex in the case $m=2$ (compare the discussion of binary 
operations in Section 6). If $m>2$, then by \ref{8_4}
we can reduce checking this to the case $m=3$. Up to permutations, there are two maximal simplices in 
$\mathfrak{M}^{ab}_2(3)$:
$$\{1\boxtimes_1 2\boxtimes_1 3,(1\boxtimes_2 2)\boxtimes_1 3,(2\boxtimes_1 3)\boxtimes_2 1, 1\boxtimes_2 2\boxtimes_2 3\}$$
$$\{1\boxtimes_1 2\boxtimes_1 3,(1\boxtimes_2 2)\boxtimes_1 3,(1\boxtimes_1 2)\boxtimes_2 3, 1\boxtimes_2 2\boxtimes_2 3\}$$

We index the vertices of the first simplex by the indices 0, 1, 2, 3 and we use the notations $\overline{\alpha}_{[i]}$,
$\overline{\alpha}_{[i,j]}$, $\overline{\alpha}_{[i,j,k]}$, $\overline{\alpha}_{[0,1,2,3]}$ to indicate the subsimplices
spanned by the vertices with those indices. We list below the pairs $(S_{\overline{\alpha}},T_{\overline{\alpha}})$
assigned to these simplices by our recursive algorithm.
\begin{eqnarray*}
\raisebox{-20pt}{$(S_{\overline{\alpha}_{[0]}},T_{\overline{\alpha}_{[0]}})$}
&\raisebox{-20pt}{=\ }
&\raisebox{-20pt}{$\left(\xymatrix{\\ \\}\right.$}
\xymatrix@=5pt@M=-1pt@W=-1pt{
1 &&2 &&3\\
\ar@{-}[ddddrr] &&\ar@{-}[dddd] &&\ar@{-}[ddddll]\\
\\
\\
\\
&&\bullet\ar@{-}[dd]\\
\\
&&
}\raisebox{-20pt}{,\quad}
\xymatrix@=5pt@M=-1pt@W=-1pt{
1 &&2 &&3\\
\ar@{-}[ddddrr] &&\ar@{-}[dddd] &&\ar@{-}[ddddll]\\
\\
\\
\\
&&\circ\ar@{-}[dd]\\
\\
&&
}
\raisebox{-20pt}{$\left.\xymatrix{\\ \\}\right)$}\\
\raisebox{-20pt}{$(S_{\overline{\alpha}_{[1]}},T_{\overline{\alpha}_{[1]}})$}
&\raisebox{-20pt}{=\ }
&\raisebox{-20pt}{$\left(\xymatrix{\\ \\}\right.$}
\xymatrix@=5pt@M=-1pt@W=-1pt{
1 &&2 &&&3\\
\ar@{-}[ddr] &&\ar@{-}[ddl]&&&\ar@{-}[ddddll]\\
\\
&\circ\ar@{-}[ddrr]\\
\\
&&&\bullet\ar@{-}[dd]\\
\\
&&&
}
\raisebox{-20pt}{,\quad}
\xymatrix@=5pt@M=-1pt@W=-1pt{
1 &&2 &&&3\\
\ar@{-}[ddr] &&\ar@{-}[ddl]&&&\ar@{-}[ddddll]\\
\\
&\bullet\ar@{-}[ddrr]\\
\\
&&&\circ\ar@{-}[dd]\\
\\
&&&
}
\raisebox{-20pt}{$\left.\xymatrix{\\ \\}\right)$}\\
\raisebox{-20pt}{$(S_{\overline{\alpha}_{[2]}},T_{\overline{\alpha}_{[2]}})$}
&\raisebox{-20pt}{=\ }
&\raisebox{-20pt}{$\left(\xymatrix{\\ \\}\right.$}
\xymatrix@=5pt@M=-1pt@W=-1pt{
2 &&3 &&&1\\
\ar@{-}[ddr] &&\ar@{-}[ddl]&&&\ar@{-}[ddddll]\\
\\
&\bullet\ar@{-}[ddrr]\\
\\
&&&\circ\ar@{-}[dd]\\
\\
&&&
}
\raisebox{-20pt}{,\quad}
\xymatrix@=5pt@M=-1pt@W=-1pt{
2 &&3 &&&1\\
\ar@{-}[ddr] &&\ar@{-}[ddl]&&&\ar@{-}[ddddll]\\
\\
&\circ\ar@{-}[ddrr]\\
\\
&&&\bullet\ar@{-}[dd]\\
\\
&&&
}
\raisebox{-20pt}{$\left.\xymatrix{\\ \\}\right)$}\\
\raisebox{-20pt}{$(S_{\overline{\alpha}_{[3]}},T_{\overline{\alpha}_{[3]}})$}
&\raisebox{-20pt}{=\ }
&\raisebox{-20pt}{$\left(\xymatrix{\\ \\}\right.$}
\xymatrix@=5pt@M=-1pt@W=-1pt{
1 &&2 &&3\\
\ar@{-}[ddddrr] &&\ar@{-}[dddd] &&\ar@{-}[ddddll]\\
\\
\\
\\
&&\circ\ar@{-}[dd]\\
\\
&&
}\raisebox{-20pt}{,\quad}
\xymatrix@=5pt@M=-1pt@W=-1pt{
1 &&2 &&3\\
\ar@{-}[ddddrr] &&\ar@{-}[dddd] &&\ar@{-}[ddddll]\\
\\
\\
\\
&&\bullet\ar@{-}[dd]\\
\\
&&
}
\raisebox{-20pt}{$\left.\xymatrix{\\ \\}\right)$}\\
\raisebox{-20pt}{$(S_{\overline{\alpha}_{[0,1]}},T_{\overline{\alpha}_{[0,1]}})$}
&\raisebox{-20pt}{=\ }
&\raisebox{-20pt}{$\left(\xymatrix{\\ \\}\right.$}
\xymatrix@=5pt@M=-1pt@W=-1pt{
1 &&2 &&&3\\
\ar@{-}[ddr] &&\ar@{-}[ddl]&&&\ar@{-}[ddddll]\\
\\
&\bullet\ar@{-}[ddrr]\\
\\
&&&\bullet\ar@{-}[dd]\\
\\
&&&
}
\raisebox{-20pt}{,\quad}
\xymatrix@=5pt@M=-1pt@W=-1pt{
1 &&2 &&&3\\
\ar@{-}[ddr] &&\ar@{-}[ddl]&&&\ar@{-}[ddddll]\\
\\
&\bullet\ar@{-}[ddrr]\\
\\
&&&\circ\ar@{-}[dd]\\
\\
&&&
}
\raisebox{-20pt}{$\left.\xymatrix{\\ \\}\right)$}\\
\end{eqnarray*}
\begin{eqnarray*}
\raisebox{-20pt}{$(S_{\overline{\alpha}_{[0,2]}},T_{\overline{\alpha}_{[0,2]}})$}
&\raisebox{-20pt}{=\ }
&\raisebox{-20pt}{$\left(\xymatrix{\\ \\}\right.$}
\xymatrix@=5pt@M=-1pt@W=-1pt{
1 &&2 &&3\\
\ar@{-}[ddddrr] &&\ar@{-}[dddd] &&\ar@{-}[ddddll]\\
\\
\\
\\
&&\bullet\ar@{-}[dd]\\
\\
&&
}
\raisebox{-20pt}{,\quad}
\xymatrix@=5pt@M=-1pt@W=-1pt{
2 &&3 &&&1\\
\ar@{-}[ddr] &&\ar@{-}[ddl]&&&\ar@{-}[ddddll]\\
\\
&\circ\ar@{-}[ddrr]\\
\\
&&&\bullet\ar@{-}[dd]\\
\\
&&&
}
\raisebox{-20pt}{$\left.\xymatrix{\\ \\}\right)$}\\
\raisebox{-20pt}{$(S_{\overline{\alpha}_{[0,3]}},T_{\overline{\alpha}_{[0,3]}})$}
&\raisebox{-20pt}{=\ }
&\raisebox{-20pt}{$\left(\xymatrix{\\ \\}\right.$}
\xymatrix@=5pt@M=-1pt@W=-1pt{
1 &&2 &&3\\
\ar@{-}[ddddrr] &&\ar@{-}[dddd] &&\ar@{-}[ddddll]\\
\\
\\
\\
&&\bullet\ar@{-}[dd]\\
\\
&&
}
\raisebox{-20pt}{,\quad}
\xymatrix@=5pt@M=-1pt@W=-1pt{
1 &&2 &&3\\
\ar@{-}[ddddrr] &&\ar@{-}[dddd] &&\ar@{-}[ddddll]\\
\\
\\
\\
&&\bullet\ar@{-}[dd]\\
\\
&&
}
\raisebox{-20pt}{$\left.\xymatrix{\\ \\}\right)$}\\
\raisebox{-20pt}{$(S_{\overline{\alpha}_{[1,2]}},T_{\overline{\alpha}_{[1,2]}})$}
&\raisebox{-20pt}{=\ }
&\raisebox{-20pt}{$\left(\xymatrix{\\ \\}\right.$}
\xymatrix@=5pt@M=-1pt@W=-1pt{
1 &&2 &&&3\\
\ar@{-}[ddr] &&\ar@{-}[ddl]&&&\ar@{-}[ddddll]\\
\\
&\circ\ar@{-}[ddrr]\\
\\
&&&\bullet\ar@{-}[dd]\\
\\
&&&
}
\raisebox{-20pt}{,\quad}
\xymatrix@=5pt@M=-1pt@W=-1pt{
2 &&3 &&&1\\
\ar@{-}[ddr] &&\ar@{-}[ddl]&&&\ar@{-}[ddddll]\\
\\
&\circ\ar@{-}[ddrr]\\
\\
&&&\bullet\ar@{-}[dd]\\
\\
&&&
}
\raisebox{-20pt}{$\left.\xymatrix{\\ \\}\right)$}\\
\raisebox{-20pt}{$(S_{\overline{\alpha}_{[1,3]}},T_{\overline{\alpha}_{[1,3]}})$}
&\raisebox{-20pt}{=\ }
&\raisebox{-20pt}{$\left(\xymatrix{\\ \\}\right.$}
\xymatrix@=5pt@M=-1pt@W=-1pt{
1 &&2 &&&3\\
\ar@{-}[ddr] &&\ar@{-}[ddl]&&&\ar@{-}[ddddll]\\
\\
&\circ\ar@{-}[ddrr]\\
\\
&&&\bullet\ar@{-}[dd]\\
\\
&&&
}
\raisebox{-20pt}{,\quad}
\xymatrix@=5pt@M=-1pt@W=-1pt{
1 &&2 &&3\\
\ar@{-}[ddddrr] &&\ar@{-}[dddd] &&\ar@{-}[ddddll]\\
\\
\\
\\
&&\bullet\ar@{-}[dd]\\
\\
&&
}
\raisebox{-20pt}{$\left.\xymatrix{\\ \\}\right)$}\\
\raisebox{-20pt}{$(S_{\overline{\alpha}_{[2,3]}},T_{\overline{\alpha}_{[2,3]}})$}
&\raisebox{-20pt}{=\ }
&\raisebox{-20pt}{$\left(\xymatrix{\\ \\}\right.$}
\xymatrix@=5pt@M=-1pt@W=-1pt{
2 &&3 &&&1\\
\ar@{-}[ddr] &&\ar@{-}[ddl]&&&\ar@{-}[ddddll]\\
\\
&\bullet\ar@{-}[ddrr]\\
\\
&&&\circ\ar@{-}[dd]\\
\\
&&&
}
\raisebox{-20pt}{,\quad}
\xymatrix@=5pt@M=-1pt@W=-1pt{
2 &&3 &&&1\\
\ar@{-}[ddr] &&\ar@{-}[ddl]&&&\ar@{-}[ddddll]\\
\\
&\bullet\ar@{-}[ddrr]\\
\\
&&&\bullet\ar@{-}[dd]\\
\\
&&&
}
\raisebox{-20pt}{$\left.\xymatrix{\\ \\}\right)$}\\
\raisebox{-20pt}{$(S_{\overline{\alpha}_{[0,1,2]}},T_{\overline{\alpha}_{[0,1,2]}})$}
&\raisebox{-20pt}{=\ }
&\raisebox{-20pt}{$\left(\xymatrix{\\ \\}\right.$}
\xymatrix@=5pt@M=-1pt@W=-1pt{
1 &&2 &&&3\\
\ar@{-}[ddr] &&\ar@{-}[ddl]&&&\ar@{-}[ddddll]\\
\\
&\bullet\ar@{-}[ddrr]\\
\\
&&&\bullet\ar@{-}[dd]\\
\\
&&&
}
\raisebox{-20pt}{,\quad}
\xymatrix@=5pt@M=-1pt@W=-1pt{
2 &&3 &&&1\\
\ar@{-}[ddr] &&\ar@{-}[ddl]&&&\ar@{-}[ddddll]\\
\\
&\circ\ar@{-}[ddrr]\\
\\
&&&\bullet\ar@{-}[dd]\\
\\
&&&
}
\raisebox{-20pt}{$\left.\xymatrix{\\ \\}\right)$}\\
\raisebox{-20pt}{$(S_{\overline{\alpha}_{[0,1,3]}},T_{\overline{\alpha}_{[0,1,3]}})$}
&\raisebox{-20pt}{=\ }
&\raisebox{-20pt}{$\left(\xymatrix{\\ \\}\right.$}
\xymatrix@=5pt@M=-1pt@W=-1pt{
1 &&2 &&&3\\
\ar@{-}[ddr] &&\ar@{-}[ddl]&&&\ar@{-}[ddddll]\\
\\
&\bullet\ar@{-}[ddrr]\\
\\
&&&\bullet\ar@{-}[dd]\\
\\
&&&
}
\raisebox{-20pt}{,\quad}
\xymatrix@=5pt@M=-1pt@W=-1pt{
1 &&2 &&3\\
\ar@{-}[ddddrr] &&\ar@{-}[dddd] &&\ar@{-}[ddddll]\\
\\
\\
\\
&&\bullet\ar@{-}[dd]\\
\\
&&
}
\raisebox{-20pt}{$\left.\xymatrix{\\ \\}\right)$}\\
\raisebox{-20pt}{$(S_{\overline{\alpha}_{[0,2,3]}},T_{\overline{\alpha}_{[0,2,3]}})$}
&\raisebox{-20pt}{=\ }
&\raisebox{-20pt}{$\left(\xymatrix{\\ \\}\right.$}
\xymatrix@=5pt@M=-1pt@W=-1pt{
1 &&2 &&3\\
\ar@{-}[ddddrr] &&\ar@{-}[dddd] &&\ar@{-}[ddddll]\\
\\
\\
\\
&&\bullet\ar@{-}[dd]\\
\\
&&
}
\raisebox{-20pt}{,\quad}
\xymatrix@=5pt@M=-1pt@W=-1pt{
2 &&3 &&&1\\
\ar@{-}[ddr] &&\ar@{-}[ddl]&&&\ar@{-}[ddddll]\\
\\
&\bullet\ar@{-}[ddrr]\\
\\
&&&\bullet\ar@{-}[dd]\\
\\
&&&
}
\raisebox{-20pt}{$\left.\xymatrix{\\ \\}\right)$}\\
\end{eqnarray*}
\begin{eqnarray*}
\raisebox{-20pt}{$(S_{\overline{\alpha}_{[1,2,3]}},T_{\overline{\alpha}_{[1,2,3]}})$}
&\raisebox{-20pt}{=\ }
&\raisebox{-20pt}{$\left(\xymatrix{\\ \\}\right.$}
\xymatrix@=5pt@M=-1pt@W=-1pt{
1 &&2 &&&3\\
\ar@{-}[ddr] &&\ar@{-}[ddl]&&&\ar@{-}[ddddll]\\
\\
&\circ\ar@{-}[ddrr]\\
\\
&&&\bullet\ar@{-}[dd]\\
\\
&&&
}
\raisebox{-20pt}{,\quad}
\xymatrix@=5pt@M=-1pt@W=-1pt{
2 &&3 &&&1\\
\ar@{-}[ddr] &&\ar@{-}[ddl]&&&\ar@{-}[ddddll]\\
\\
&\bullet\ar@{-}[ddrr]\\
\\
&&&\bullet\ar@{-}[dd]\\
\\
&&&
}
\raisebox{-20pt}{$\left.\xymatrix{\\ \\}\right)$}\\
\raisebox{-20pt}{$(S_{\overline{\alpha}_{[0,1,2,3]}},T_{\overline{\alpha}_{[0,1,2,3]}})$}
&\raisebox{-20pt}{=\ }
&\raisebox{-20pt}{$\left(\xymatrix{\\ \\}\right.$}
\xymatrix@=5pt@M=-1pt@W=-1pt{
1 &&2 &&&3\\
\ar@{-}[ddr] &&\ar@{-}[ddl]&&&\ar@{-}[ddddll]\\
\\
&\bullet\ar@{-}[ddrr]\\
\\
&&&\bullet\ar@{-}[dd]\\
\\
&&&
}
\raisebox{-20pt}{,\quad}
\xymatrix@=5pt@M=-1pt@W=-1pt{
2 &&3 &&&1\\
\ar@{-}[ddr] &&\ar@{-}[ddl]&&&\ar@{-}[ddddll]\\
\\
&\bullet\ar@{-}[ddrr]\\
\\
&&&\bullet\ar@{-}[dd]\\
\\
&&&
}
\raisebox{-20pt}{$\left.\xymatrix{\\ \\}\right)$}\\
\end{eqnarray*}
We can check that
$$\cap_{i=0}^r G_m^{ax}(\alpha_i)
=(W|\mathcal{NM}_k|)(S_{\overline{\alpha}})\times(W|\mathcal{NM}_l|)(T_{\overline{\alpha}})$$
holds for each of these simplices by repeatedly applying Proposition \ref{8_5}.

For the second simplex
$$\{1\boxtimes_1 2\boxtimes_1 3,(1\boxtimes_2 2)\boxtimes_1 3,(1\boxtimes_1 2)\boxtimes_2 3, 1\boxtimes_2 2\boxtimes_2 3\}$$
any subsimplex not containing the edge
$\{(1\boxtimes_2 2)\boxtimes_1 3,(1\boxtimes_1 2)\boxtimes_2 3\}$
is, up to permutation, a subsimplex of the first maximal simplex and thus has already been analyzed. For any subsimplex
containing this edge, the recursive algorithm assigns
\begin{eqnarray*}
\raisebox{-20pt}{$(S_{\overline{\alpha}},T_{\overline{\alpha}})$}
&\raisebox{-20pt}{=\ }
&\raisebox{-20pt}{$\left(\xymatrix{\\ \\}\right.$}
\xymatrix@=5pt@M=-1pt@W=-1pt{
1 &&2 &&&3\\
\ar@{-}[ddr] &&\ar@{-}[ddl]&&&\ar@{-}[ddddll]\\
\\
&\bullet\ar@{-}[ddrr]\\
\\
&&&\bullet\ar@{-}[dd]\\
\\
&&&
}
\raisebox{-20pt}{,\quad}
\xymatrix@=5pt@M=-1pt@W=-1pt{
1 &&2 &&&3\\
\ar@{-}[ddr] &&\ar@{-}[ddl]&&&\ar@{-}[ddddll]\\
\\
&\bullet\ar@{-}[ddrr]\\
\\
&&&\bullet\ar@{-}[dd]\\
\\
&&&
}
\raisebox{-20pt}{$\left.\xymatrix{\\ \\}\right)$}\\
\end{eqnarray*}
Again we can check that
$$\cap_{i=0}^r G_m^{ax}(\alpha_i)
=(W|\mathcal{NM}_k|)(S_{\overline{\alpha}})\times(W|\mathcal{NM}_l|)(T_{\overline{\alpha}})$$
holds for each of these simplices by repeatedly applying Proposition \ref{8_5}.
\end{proof}

\begin{prop}\label{label_5}Let $\overline{\alpha}=\{\alpha_0,\alpha_1,\dots,\alpha_r\}$ be an $r$-simplex
in $\mathfrak{K}_\bullet(m)$ and let
$$\cap_{i=0}^r G_m^{ax}(\alpha_i)=(W|\mathcal{NM}_k|)(S_{\overline{\alpha}})\times(W|\mathcal{NM}_l|)(T_{\overline{\alpha}}).$$
Then
\begin{enumerate}
\item[(1)] $\cap_{i=0}^r G_m^{ax}(\alpha_i)$ has a cellular decomposition over the poset
$\scM_k(S_{\overline{\alpha}})\times\scM_k(T_{\overline{\alpha}})$
\item[(2)] If $\overline{\alpha}$ is a subsimplex of $\overline{\beta}$, then the inclusion
$\cap_j G_m^{ax}(\beta_j)\subset\cap_i G_m^{ax}(\alpha_i)$
is compatible with the cellular decomposition.
\item[(3)] The axial map sends $\cap_{i=0}^r G_m(\alpha_i)$ homeomorphically onto $\cap_{i=0}^r G_m^{ax}(\alpha_i)$.
\end{enumerate}
\end{prop}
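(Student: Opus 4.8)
The plan is to take parts (1)--(3) in turn, leaning on Proposition \ref{label_4} (which identifies $\cap_{i=0}^r G_m^{ax}(\alpha_i)$ with $(W|\mathcal{NM}_k|)(S_{\overline{\alpha}})\times(W|\mathcal{NM}_l|)(T_{\overline{\alpha}})$) and on the cellular decompositions supplied by Corollary \ref{8_10}. Part (1) is then formal: Corollary \ref{8_10} gives cellular decompositions of the two factors over $\scM_k(S_{\overline{\alpha}})$ and $\scM_l(T_{\overline{\alpha}})$, and the levelwise product of two cellular decompositions is a cellular decomposition of the product over the product poset.

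For part (2) I would first read off, from the recursive construction in the proof of Proposition \ref{label_4} together with the maximal common $\boxtimes_1$- and $\boxtimes_2$-decompositions of Corollary \ref{label_3_5}, that when $\overline{\alpha}$ is a face of $\overline{\beta}$ the binodal tree $S_{\overline{\beta}}$ is obtained from $S_{\overline{\alpha}}$ by expanding certain of its black nodes into binodal subtrees, and likewise for $T_{\overline{\beta}}$ and $T_{\overline{\alpha}}$; the inclusion $\cap_j G_m^{ax}(\beta_j)\subseteq\cap_i G_m^{ax}(\alpha_i)$ is the product of the induced inclusions $(W|\mathcal{NM}_k|)(S_{\overline{\beta}})\subseteq(W|\mathcal{NM}_k|)(S_{\overline{\alpha}})$ and $(W|\mathcal{NM}_l|)(T_{\overline{\beta}})\subseteq(W|\mathcal{NM}_l|)(T_{\overline{\alpha}})$. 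Each such expansion induces an order-preserving poset map (operad composition in $\scM_k$, resp.\ $\scM_l$), and since the cellular decompositions of Corollary \ref{8_10} are pulled back along the augmentation $\varepsilon$, which is natural in the binodal tree, the inclusion carries the cell indexed by a pair $(p,q)$ into the cell indexed by the image of $(p,q)$. That is exactly the required compatibility: the inclusion is a morphism of cellular decompositions lying over that poset map.

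For part (3) the easy half is injectivity together with continuity of the would-be inverse: $\cap_{i=0}^r G_m(\alpha_i)$ is a subspace of $G_m(\alpha_0)$, on which the axial map is an imbedding by Proposition \ref{9_1}, so it restricts to an imbedding of $\cap_i G_m(\alpha_i)$ whose image lies in $\cap_i G_m^{ax}(\alpha_i)$ since $\xi(G_m(\alpha_i))=G_m^{ax}(\alpha_i)$. What remains is surjectivity onto $\cap_i G_m^{ax}(\alpha_i)$, i.e.\ that the intersection formed inside $(W|\mathcal{NM}_k|\otimes W|\mathcal{NM}_l|)(m)$ is no smaller than the one formed inside $W|\mathcal{NM}_k|(1)^m\times W|\mathcal{NM}_l|(1)^m$. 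Given $x\in\cap_i G_m^{ax}(\alpha_i)$, Proposition \ref{9_1} furnishes a unique $p_i\in G_m(\alpha_i)$ with $\xi(p_i)=x$; since any two vertices of $\overline{\alpha}$ span a $1$-simplex of $\mathfrak{K}_\bullet(m)$, transitivity of equality reduces the claim to the edge case: for a $1$-simplex $\{\alpha_0,\alpha_1\}$, elements $p_0\in G_m(\alpha_0)$ and $p_1\in G_m(\alpha_1)$ with $\xi(p_0)=\xi(p_1)$ represent the same element of the tensor product.

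To settle the edge case I would build, directly from the common value $x\in(W|\mathcal{NM}_k|)(S_{\{\alpha_0,\alpha_1\}})\times(W|\mathcal{NM}_l|)(T_{\{\alpha_0,\alpha_1\}})$, an explicit tree in the tensor product as in the proof of Proposition \ref{9_1} --- interleaving the black nodes of a planar representative of $S_{\{\alpha_0,\alpha_1\}}$ (labelled in $W|\mathcal{NM}_k|$) with the matching black nodes of $T_{\{\alpha_0,\alpha_1\}}$ (labelled in $W|\mathcal{NM}_l|$) and stacking the edge labels --- and then verify that this tree lies in both $G_m(\alpha_0)$ and $G_m(\alpha_1)$. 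For the former one uses the $(k,l)$-interchange relations on trees of \ref{5_1c}: because $S_{\{\alpha_0,\alpha_1\}}$ refines $S_{\alpha_0}$ and $T_{\{\alpha_0,\alpha_1\}}$ refines $T_{\alpha_0}$ (the face $\{\alpha_0\}\subset\{\alpha_0,\alpha_1\}$ case of part (2)), a sequence of $(k,l)$-interchanges and operad compositions converts this tree into one whose $\mathfrak{M}_2^{ab}$-evaluation, after deleting unary nodes, is $\alpha_0$, i.e.\ into a representative witnessing membership in $G_m(\alpha_0)$; symmetrically for $\alpha_1$, and by the uniqueness clause of Proposition \ref{9_1} this forces $p_0=p_1$. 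To keep the bookkeeping finite I would apply Proposition \ref{8_4} to both $S_{\{\alpha_0,\alpha_1\}}$ and $T_{\{\alpha_0,\alpha_1\}}$ to reduce the tree identities that must be checked to subsets of inputs of cardinality $\le 3$, where they amount to the finite list of cases already handled in Section 6 (for cardinality $2$) and in the proof of Proposition \ref{label_4} via Proposition \ref{8_5} (for cardinality $3$). The main obstacle is precisely this last step --- confirming that the coarse, branch-bundle-permuting interchange relations of a tensor product really do suffice to pass between the $(S_{\{\alpha_0,\alpha_1\}},T_{\{\alpha_0,\alpha_1\}})$-shaped representative and the $\alpha_i$-shaped ones without leaving the equivalence class; once that is in hand, the homeomorphism statement (as opposed to a mere set bijection) is immediate because every map in sight is the realization of a simplicial map.
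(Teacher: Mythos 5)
Your overall architecture is sound and close to the paper's: part (1) is indeed immediate from Corollary \ref{8_10}, and your reduction of part (3) to the case of a $1$-simplex (injectivity and continuity of the inverse coming from Proposition \ref{9_1}, surjectivity from pairwise agreement of preimages) is a legitimate reorganization of the paper's double induction on $m$ and $r$. But the content of the proposition is concentrated exactly in the two places you defer, and the tools you propose for closing them do not work as stated. For part (2), the claim that $S_{\overline{\beta}}$ is always obtained from $S_{\overline{\alpha}}$ by expanding black nodes into binodal subtrees is false in the case where all vertices of $\overline{\alpha}$ have outermost operation $\boxtimes_1$ and the added vertex has outermost operation $\boxtimes_2$ (and dually): there $T_{\overline{\beta}}$ and $T_{\overline{\alpha}}$ are related by a transposition of the grouping (a black root with white nodes above becomes a white root with black nodes above), and the induced map on the $W|\mathcal{NM}_l|$-factor is a restriction/diagonal $\scM_l(q)\to\prod_i\scM_l(a_i)$, $\phi\mapsto(\phi\cdot(\epsilon_{11}\oplus\dots\oplus\epsilon_{1q}),\dots)$, not a subtree inclusion. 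The paper's proof of (2) is a ten-case analysis precisely because these transposition and recomposition cases require separate verification; your one-sentence justification does not cover them.

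For part (3) the gap is more serious. You correctly identify that everything hinges on showing that the two preimages $p_0\in G_m(\alpha_0)$ and $p_1\in G_m(\alpha_1)$ of a common axial image are equal in $(W|\mathcal{NM}_k|\otimes W|\mathcal{NM}_l|)(m)$, but the two devices you invoke to finish this do not apply. First, the ``interleaving'' recipe from the proof of Proposition \ref{9_1} requires $S$ and $T$ to be planar trees of the same shape with colors swapped; this holds for the pair $(S_\alpha,T_\alpha)$ attached to a \emph{vertex} $\alpha$, but not for the pair $(S_{\overline{\alpha}},T_{\overline{\alpha}})$ attached to a positive-dimensional simplex (compare, e.g., $(S_{\overline{\alpha}_{[0,2]}},T_{\overline{\alpha}_{[0,2]}})$ in the $m=3$ table), so there is no canonical single tree in the coproduct to start from. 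Second, Proposition \ref{8_4} reduces membership in $W|\scB_\bullet|(T)\subseteq W|\scB_\bullet|(1)^S$ to $3$-input restrictions for a \emph{single} operad; it says nothing about when two trees represent the same element of the \emph{tensor product}, and the statement that elements of the tensor product are determined by their axial images is exactly the axiality of $W|\mathcal{NM}_k|\otimes W|\mathcal{NM}_l|$, which is deduced \emph{from} part (3) in Corollary \ref{label_6}(4) --- so using it here would be circular. What is actually needed (and what the paper supplies) is an explicit chain of operad compositions and $(k,l)$-interchanges converting a tree representative of $p_1$ into one of $p_0$, organized by the same case analysis as in part (2) together with an induction on $m$ to handle the branches above a common root node. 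Until that combinatorial step is carried out, the proof is incomplete.
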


\begin{proof}
Part (1) is an immediate consequence of Corollary \ref{8_10}. To establish the second part, we proceed by induction
on $m$  and refer back to the proof of Proposition \ref{label_4}. If $m=1$, the diagram is trivial and there is nothing
to prove. So let us assume we have established this for all $a<m$. It suffices to check the compatibility with the cellular
decompositions for inclusions $G'_m(\overline{\beta})\subset G'_m(\overline{\alpha})$, where the simplex $\overline{\beta}$
is obtained by adding a single vertex $\lambda$ to the simplex $\overline{\alpha}$. 

We refer back to the recursive algorithm of Proposition \ref{label_4} and analyze
several different cases. For convenience sake we will call vertices of our simplices as
$\boxtimes_1$-vertices if their outermost operation is $\boxtimes_1$, and $\boxtimes_2$-vertices
if their outermost operation is $\boxtimes_2$.

{\it Case (a):\/} Suppose $\overline{\alpha}$ contains both $\boxtimes_1$-vertices and
$\boxtimes_2$-vertices, the new vertex $\lambda$ is a $\boxtimes_1$-vertex and has a
$\boxtimes_1$ decomposition compatible with the maximal common $\boxtimes_1$ decomposition
of the $\boxtimes_1$-vertices of $\overline{\alpha}$.

In this case both simplices $\overline{\alpha}$ and $\overline{\beta}$ are in Case 1 of the proof of Proposition
\ref{label_4}. Thus the inclusion $(W\mathcal{NM}_k)(S_{\overline{\beta}})\subset (W\mathcal{NM}_k)(S_{\overline{\alpha}})$
takes the form
\begin{eqnarray*}
\lefteqn{W|\mathcal{NM}_k|(p)\times_{W|\mathcal{NM}_k|(1)^p}\prod_{j=1}^p(W|\mathcal{NM}_k|)(S_{\overline{\beta}\cap U_j})}
\qquad\qquad\qquad\qquad\\
 &\subset
&W|\mathcal{NM}_k|(p)\times_{W|\mathcal{NM}_k|(1)^p}\prod_{j=1}^p (W|\mathcal{NM}_k|)(S_{\overline{\alpha}\cap U_j})
\end{eqnarray*}
and is thus compatible with the cellular decomposition by the induction hypothesis. Similarly the inclusion
$(W|\mathcal{NM}_l|)(T_{\overline{\beta}})\subset(W|\mathcal{NM}_l|)(T_{\overline{\alpha}})$ takes the form
\begin{eqnarray*}
\lefteqn{W\mathcal{NM}_l(q)\times_{W|\mathcal{NM}_l|(1)^q}\prod_{j=1}^q(W|\mathcal{NM}_l|)(T_{\overline{\beta}\cap V_j})}
\qquad\qquad\qquad\qquad\\
 &\subset
&W|\mathcal{NM}_l|(q)\times_{W|\mathcal{NM}_l|(1)^q}\prod_{j=1}^q(W|\mathcal{NM}_l|)(T_{\overline{\alpha}\cap V_j})
\end{eqnarray*}
which again is compatible with the cellular decompositions for the same reason.

A similar analysis, reducing the proof to the induction hypothesis, applies in Cases (b), (c) and (d) below.

{\it Case (b):\/}  Suppose $\overline{\alpha}$ contains both $\boxtimes_1$-vertices and
$\boxtimes_2$-vertices, the new vertex $\lambda$ is a $\boxtimes_2$-vertex and has a
$\boxtimes_2$ decomposition compatible with the maximal common $\boxtimes_2$ decomposition
of the $\boxtimes_2$-vertices of $\overline{\alpha}$.

{\it Case (c):\/} Suppose $\overline{\beta}$ contains only  $\boxtimes_1$-vertices  and the
new vertex $\lambda$ has a $\boxtimes_1$ decomposition compatible with the maximal common $\boxtimes_1$ decomposition of the vertices of $\overline{\alpha}$.

{\it Case (d):\/} Suppose $\overline{\beta}$ contains only  $\boxtimes_2$-vertices  and the
new vertex $\lambda$ has a $\boxtimes_2$ decomposition compatible with the maximal common $\boxtimes_2$ decomposition of the vertices of $\overline{\alpha}$.

{\it Case (e):\/} Suppose $\overline{\alpha}$ contains both $\boxtimes_1$-vertices and
$\boxtimes_2$-vertices and the new vertex $\lambda$ is a $\boxtimes_1$-vertex which
does not have a $\boxtimes_1$ decomposition compatible  with the maximal common $\boxtimes_1$ decomposition of the $\boxtimes_1$-vertices of $\overline{\alpha}$.

Again both simplicies $\overline{\alpha}$ and $\overline{\beta}$ are in Case 1 of the proof of Proposition
\ref{label_4} but now the maximal common $\boxtimes_1$-decomposition of the $\boxtimes_1$-vertices
of $\overline{\alpha}$ is finer (i.e. has more summands) than the maximal common 
$\boxtimes_1$-decomposition of the $\boxtimes_1$-vertices of $\overline{\beta}$.

In this case the inclusion 
$W|\mathcal{NM}_k|(S_{\overline{\beta}})\subset W|\mathcal{NM}_k|(S_{\overline{\alpha}})$
corresponds to a sequence of intersections of the type\newline
\centerline{
\raisebox{-30pt}{$W|\mathcal{NM}_k|\left(\begin{array}{c}\\ \\ \\ \end{array}\right.$}
$\xymatrix@=5pt@M=-1pt@W=-1pt{
\ &\  &&\dots &\  &&\  &\  &&\dots &\ \\
\ar@{-}[dddrr] &\ar@{-}[dddr] && &\ar@{-}[dddll] &&\ar@{-}[ddddddll] &\ar@{-}[ddddddlll] &&&\ar@{-}[ddddddllllll]\\
\\
\\
&&\circ\ar@{-}[dddrr]&&&&\\
\\
\\
&&&&\bullet\ar@{-}[ddd]&&&\\
&&&&&&\\
\\
&&&&
}$
\raisebox{-30pt}{$\left.\begin{array}{c}\\ \\ \\ \end{array}\right)\bigcap W|\mathcal{NM}_k|
\left(\begin{array}{c}\\ \\ \\ \end{array}\right.$}
$\xymatrix@=5pt@M=-1pt@W=-1pt{
\  &\ &&\dots &\ &&\ &\ &&\dots &\ \\
\ar@{-}[ddddddrrrr] &\ar@{-}[ddddddrrr] && &\ar@{-}[dddddd] &&\ar@{-}[ddddddll] &\ar@{-}[ddddddlll] &&&\ar@{-}[ddddddllllll]\\
\\
\\
&&&&&&\\
\\
\\
&&&&\bullet\ar@{-}[ddd]&&&\\
&&&&&&\\
\\
&&&&
}$
\raisebox{-30pt}{$\left.\begin{array}{c}\\ \\ \\ \end{array}\right)$}
}\newline
\centerline{
\raisebox{-30pt}{$\qquad\qquad\qquad\qquad=
W|\mathcal{NM}_k|\left(\begin{array}{c}\\ \\ \\ \end{array}\right.$}
$\xymatrix@=5pt@M=-1pt@W=-1pt{
\ &\  &&\dots &\  &&\  &\  &&\dots &\ \\
\ar@{-}[dddrr] &\ar@{-}[dddr] && &\ar@{-}[dddll] &&\ar@{-}[ddddddll] &\ar@{-}[ddddddlll] &&&\ar@{-}[ddddddllllll]\\
\\
\\
&&\bullet\ar@{-}[dddrr]&&&&\\
\\
\\
&&&&\bullet\ar@{-}[ddd]&&&\\
&&&&&&\\
\\
&&&&
}$
\raisebox{-30pt}{$\left.\begin{array}{c}\\ \\ \\ \end{array}\right)$}}\newline
Hence $S_{\overline{\alpha}}$ can be obtained from $S_{\overline{\beta}}$ by a sequence
of the following types of moves:\newline
\centerline{$\xymatrix@=5pt@M=-1pt@W=-1pt{
W_1 &W_2 &&\dots &W_{u-1} &&W_u &W_{u+1} &&\dots &W_s\\
\ar@{-}[dddrr] &\ar@{-}[dddr] && &\ar@{-}[dddll] &&\ar@{-}[ddddddll] &\ar@{-}[ddddddlll] &&&\ar@{-}[ddddddllllll]\\
\\
\\
&&\bullet\ar@{-}[dddrr]&&&&\\
\\
\\
&&&&\bullet\ar@{-}[ddd]&&&\\
&&&&&&\\
\\
&&&&
}$
\raisebox{-30pt}{$\leadsto$}
$\xymatrix@=5pt@M=-1pt@W=-1pt{
W'_1 &W'_2 &&\dots &W'_{u-1} &&W_u &W_{u+1} &&\dots &W_s\\
\ar@{-}[ddddddrrrr] &\ar@{-}[ddddddrrr] && &\ar@{-}[dddddd] &&\ar@{-}[ddddddll] &\ar@{-}[ddddddlll] &&&\ar@{-}[ddddddllllll]\\
\\
\\
&&&&&&\\
\\
\\
&&&&\bullet\ar@{-}[ddd]&&&\\
&&&&&&\\
\\
&&&&
}$}
where $W_j$ and $W'_j$ are obtained by restricting $S_{\overline{\beta}}$, respectively $S_{\overline{\alpha}}$, to the same
set of inputs. The induced map on $W|\mathcal{NM}_k|(-)$ is the product of the composition map:
\begin{eqnarray*}
W|\mathcal{NM}_k|(s-u+1)\times W|\mathcal{NM}_k|(u-1)&\longrightarrow &W|\mathcal{NM}_k|(s)\\
(a,b) &\mapsto &a\circ(b\oplus id\oplus\dots\oplus id),
\end{eqnarray*}
which is obviously compatible with the cellular decomposition, and the inclusion map:
$$\prod_{j=1}^s W|\mathcal{NM}_k|(W_j)\subset \prod_{j=1}^s W|\mathcal{NM}_k|(W'_j)$$
which is compatible with the cellular decomposition by inductions hypothesis. The induced map
$(W|\mathcal{NM}_l|)(T_{\overline{\beta}})\subset(W|\mathcal{NM}_l|)(T_{\overline{\alpha}})$ takes the form
\begin{eqnarray*}
\lefteqn{W|\mathcal{NM}_l|(q)\times_{W|\mathcal{NM}_l|(1)^q}\prod_{j=1}^q(W|\mathcal{NM}_l|)(T_{\overline{\beta}\cap V_j})}
\qquad\qquad\qquad\qquad\\
 &\subset
&W|\mathcal{NM}_l|(q)\times_{W|\mathcal{NM}_l|(1)^q}\prod_{j=1}^q(W|\mathcal{NM}_l|)(T_{\overline{\alpha}\cap V_j}),
\end{eqnarray*}
for which compatibility with the cellular decomposition follows by induction hypothesis.

{\it Case (f):\/} Suppose $\overline{\alpha}$ contains both $\boxtimes_1$-vertices and
$\boxtimes_2$-vertices and the new vertex $\lambda$ is a $\boxtimes_2$-vertex which
does not have a $\boxtimes_2$ decomposition compatible  with the maximal common $\boxtimes_2$ decomposition of the $\boxtimes_2$-vertices of $\overline{\alpha}$.

In this case, a similar analysis as in Case (e) applies, with the roles of the inclusions
$(W|\mathcal{NM}_k|)(S_{\overline{\beta}})\subset(W|\mathcal{NM}_k|)(S_{\overline{\alpha}})$ and
$(W|\mathcal{NM}_l|)(T_{\overline{\beta}})\subset(W|\mathcal{NM}_l|)(T_{\overline{\alpha}})$ reversed.

{\it Case (g):\/} Suppose $\overline{\alpha}$ contains only $\boxtimes_1$-vertices and
and the new vertex $\lambda$ is a $\boxtimes_1$-vertex which
does not have a $\boxtimes_1$ decomposition compatible  with the maximal common $\boxtimes_1$ decomposition of the vertices of $\overline{\alpha}$.

In this case the argument that the resulting inclusion is compatible with the cellular decomposition is similar
to that in Case (e).

{\it Case (h):\/} Suppose $\overline{\alpha}$ contains only $\boxtimes_2$-vertices and
and the new vertex $\lambda$ is a $\boxtimes_2$-vertex which
does not have a $\boxtimes_2$ decomposition compatible  with the maximal common $\boxtimes_2$ decomposition of the vertices of $\overline{\alpha}$.

In this case the argument that the resulting inclusion is compatible with the cellular decomposition is similar
to that in Case (f).

{\it Case (i):\/} Suppose $\overline{\alpha}$ contains only $\boxtimes_1$-vertices and the new vertex $\lambda$ is $\boxtimes_2$-vertex.

In this case the induced inclusion 
$(W|\mathcal{NM}_k|)(S_{\overline{\beta}})\subset(W|\mathcal{NM}_k|)(S_{\overline{\alpha}})$ takes the form
\begin{eqnarray*}
\lefteqn{W|\mathcal{NM}_k|(p)\times_{W|\mathcal{NM}_k|(1)^p}\prod_{j=1}^p(W|\mathcal{NM}_k|)(S_{\overline{\beta}\cap U_j})}
\qquad\qquad\qquad\qquad\\
 &\subset
&W|\mathcal{NM}_k|(p)\times_{W|\mathcal{NM}_k|(1)^p}\prod_{j=1}^p(W|\mathcal{NM}_k|)(S_{\overline{\alpha}\cap U_j})
\end{eqnarray*}
and is thus compatible with the cellular decomposition by the induction hypothesis. The inclusion
$(W|\mathcal{NM}_l|)(T_{\overline{\beta}})\subset(W|\mathcal{NM}_l|)(T_{\overline{\alpha}})$ is more complicated.
The trees $T_{\overline{\beta}}$ and $T_{\overline{\alpha}}$ are related as shown below:
$$\mbox{
\setlength{\unitlength}{0.75\unitlength}
\begin{picture}(240,120)(-110,0)
\put(0,0){\line(0,1){20}}
\put(0,20){\circle*{5}}
\put(0,20){\line(-2,1){98}}
\put(0,20){\line(-1,2){24}}
\put(0,20){\line(2,1){98}}
\put(25,67){$\dots$}
\put(-125,113){$W_{11}$}
\put(-85,113){$W_{p1}$}
\put(4,12){}
\put(-100,70){\circle{5}}
\put(-101,72){\line(-1,2){20}}
\put(-99,72){\line(1,2){20}}
\put(-95,70){}
\put(-95,110){$\scriptstyle\dots$}
\put(-50,113){$W_{12}$}
\put(-10,113){$W_{p2}$}
\put(-25,70){\circle{5}}
\put(-26,72){\line(-1,2){20}}
\put(-24,72){\line(1,2){20}}
\put(-20,70){}
\put(-20,110){$\scriptstyle\dots$}
\put(100,70){\circle{5}}
\put(99,72){\line(-1,2){20}}
\put(101,72){\line(1,2){20}}
\put(105,70){}
\put(105,110){$\scriptstyle\dots$}
\put(75,113){$W_{1q}$}
\put(115,113){$W_{pq}$}
\put(120,40){\Large$\leadsto$}
\end{picture}
}\mbox{\qquad\qquad
\setlength{\unitlength}{0.75\unitlength}
\begin{picture}(220,120)(-80,0)
\put(0,0){\line(0,1){17}}
\put(0,20){\circle{5}}
\put(-2,21){\line(-2,1){100}}
\put(-1,22){\line(-1,2){25}}
\put(2,21){\line(2,1){100}}
\put(35,67){$\dots$}
\put(4,12){}
\put(-100,70){\circle*{5}}
\put(-100,70){\line(-1,2){20}}
\put(-100,70){\line(1,2){20}}
\put(-95,70){}
\put(-95,110){$\scriptstyle\dots$}
\put(-125,113){$W'_{11}$}
\put(-85,113){$W'_{1q}$}
\put(-25,70){\circle*{5}}
\put(-25,70){\line(-1,2){20}}
\put(-25,70){\line(1,2){20}}
\put(-20,70){}
\put(-20,110){$\scriptstyle\dots$}
\put(-50,113){$W'_{21}$}
\put(-10,113){$W'_{2q}$}
\put(100,70){\circle*{5}}
\put(100,70){\line(-1,2){20}}
\put(100,70){\line(1,2){20}}
\put(105,70){}
\put(105,110){$\scriptstyle\dots$}
\put(75,113){$W'_{p1}$}
\put(115,113){$W'_{pq}$}
\end{picture}
}$$
Here $W_{ij}$ and $W'_{ij}$ have the same sets of inputs, some of which may be empty, in which case the trees are understood to
be empty as well. The induced inclusion $(W|\mathcal{NM}_l|)(T_{\overline{\beta}})\subset(W|\mathcal{NM}_l|)(T_{\overline{\alpha}})$
is then the product of the maps
$$\mathcal{NM}_l(q)\longrightarrow \prod_{j=1}^p\mathcal{NM}_l(a_i)$$
and
$$\prod_{i,j}\mathcal{NM}_l(W_{ij})\subset \prod_{i,j}\mathcal{NM}_l(W'_{ij}).$$
Here $a_i$ is the cardinality of $\displaystyle\bigcup_{j=1}^q |W'_{ij}|$ and the first map is given by
$$\phi\mapsto\left(\phi\cdot(\epsilon_{11}\oplus\dots\oplus\epsilon_{1q}),
\phi\cdot(\epsilon_{21}\oplus\dots\oplus\epsilon_{2q}),\dots,
\phi\cdot(\epsilon_{p1}\oplus\dots\oplus\epsilon_{pq})\right),$$
where $\epsilon_{ij}=id\in\mathcal{NM}_l(1)$ if $|W'_{ij}|$ is nonempty or $\epsilon_{ij}=0\in\mathcal{NM}_l(0)$ otherwise.
Clearly this is compatible with the cellular decompositions while the same is true for the second map by the induction hypothesis.

{\it Case (j):\/} Suppose $\overline{\alpha}$ contains only $\boxtimes_2$-vertices and the new vertex $\lambda$ is $\boxtimes_1$-vertex.

In this case we argue as in Case (i), except with the roles of the inclusions
$(W|\mathcal{NM}_k|)(S_{\overline{\beta}})\subset(W|\mathcal{NM}_k|)(S_{\overline{\alpha}})$ and
$(W|\mathcal{NM}_l|)(T_{\overline{\beta}})\subset(W|\mathcal{NM}_l|)(T_{\overline{\alpha}})$ interchanged.

This concludes the proof of part (2).

To prove part (3) we proceed by a double induction on $m$ and $r$. If $m=1$ or $r=1$, there is nothing to prove.
So assume we have established the result for $m'<m$ and all $r'$ as well as for $m'=m$ and $r'\le r$. Now consider
an $(r+1)$-simplex $\overline{\beta}$ in $\mathfrak{K}_\bullet$ obtained by adding a new vertex $\lambda$ to an $r$-simplex
$\overline{\alpha}$. Suppose we are given an element $x\in\cap_{i=0}^r G_m^{ax}(\alpha_i)\cap G_m^{ax}(\lambda)$,
and let $y\in G_m(\lambda)$ be the unique preimage of $x$. It suffices to find a single vertex $\alpha_j$ such
that the preimage $z_j\in G_m(\alpha_j)$ of $x$ is the same element of $(W|\mathcal{NM}_k|\otimes W|\mathcal{NM}_l|)(m)$
as $y$. For then we would have $y=z_j\in G_m(\alpha_j)\cap G_m(\lambda)$ and by the second induction hypothesis
$z_j\in\cap_{i=0}^r G_m^{ax}(\alpha_i)$. Thus we would have $y=z_j\in\cap_{i=0}^r G_m(\alpha_i)\cap G_m(\lambda)$
and we would be done.

To establish this fact, we do the same case by case analysis as in the proof of part (2). In cases (a)-(d), that
analysis establishes that we can find a vertex $\alpha_j$ such that $y$ and $z_j$ have tree representatives
whose root nodes are identical, and all the axial images of the corresponding tree branches of $y$ and $z_j$
above the common root node are the same. By the primary induction hypothesis, these tree branches of $y$ and $z_j$
are also the same in $W|\mathcal{NM}_k|\otimes W|\mathcal{NM}_l|$. Hence $y=z_j$.

In cases (e)-(h), the analysis of the proof of part (2) shows that $y$ has a tree representative which can be
converted by series of compositions, either in $W|\mathcal{NM}_k|$ or in $W|\mathcal{NM}_l|$, into a tree which is
related to some $z_j\in G_m(\alpha_j)$ in the same way as in cases (a)-(d). By the same argument as above $y=z_j$.
In cases (i) and (j), that analysis shows $y$ has a tree representative which can be converted by an interchange
into a tree which is related to some $z_j\in G_m(\alpha_j)$ in the same way as in cases (a)-(d). Again we can
conclude that $y=z_j$.
\end{proof}

By combining the above results, we obtain the following more precise 
version of Proposition \ref{5_5}.

\begin{coro}\label{label_6}\begin{enumerate}
\item[(1)] $(W|\mathcal{NM}_k|\otimes W|\mathcal{NM}_l|)(m)$ is
the colimit of the diagram $G_m:\scI(m)\longrightarrow Top$.
\item[(2)] The maps in this diagram are all cofibrations.
\item[(3)] Each space $G_m(\overline{\alpha})$ has
a cellular decomposition over the poset 
${\mathcal{M}_k(S_{\overline{\alpha}})\times\mathcal{M}_k(T_{\overline{\alpha}})}$ and the maps in the diagram are
compatible with these cellular decompositions.
\item[(4)] $(W|\mathcal{NM}_k|\otimes W|\mathcal{NM}_l|)$ is axial and left factorial.
\end{enumerate}
\end{coro}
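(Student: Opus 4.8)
The plan is to assemble the corollary from Propositions~\ref{5_3}, \ref{9_1}, \ref{empty}, \ref{label_4} and~\ref{label_5}, which between them already describe every space $G_m(\overline{\alpha})$ occurring in the interchange diagram of Definition~\ref{5_41} and all of its intersections with the others; the genuinely new work is the gluing asserted in~(1) and the structural claim~(4). I would first dispose of~(2) and~(3). For~(2): the vertex spaces $G_m(\alpha)$ map to $(W|\mathcal{NM}_k|\otimes W|\mathcal{NM}_l|)(m)$ by closed cofibrations by Proposition~\ref{5_3}, and by Propositions~\ref{9_1}, \ref{label_4} and~\ref{label_5}(3) each remaining map in the diagram becomes, after applying the axial map, an inclusion $W|\mathcal{NM}_k|(S_{\overline{\beta}})\times W|\mathcal{NM}_l|(T_{\overline{\beta}})\subseteq W|\mathcal{NM}_k|(S_{\overline{\alpha}})\times W|\mathcal{NM}_l|(T_{\overline{\alpha}})$ with $\overline{\alpha}\subseteq\overline{\beta}$, which is a closed cofibration by Proposition~\ref{8_6} applied to each factor. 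For~(3): for the barycenter of a simplex the cellular decomposition over $\scM_k(S_{\overline{\alpha}})\times\scM_l(T_{\overline{\alpha}})$ and its compatibility with the diagram maps are precisely Proposition~\ref{label_5}(1)--(2); for a vertex $\alpha$ one combines the identification $G_m(\alpha)\cong W|\mathcal{NM}_k|(S_\alpha)\times W|\mathcal{NM}_l|(T_\alpha)$ of Proposition~\ref{9_1} with Corollary~\ref{8_10}, noting that a product of cellular decompositions is again one.

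For~(1), put $Z=RU(W|\mathcal{NM}_k|\otimes W|\mathcal{NM}_l|)(m)=W|\mathcal{NM}_k|(1)^m\times W|\mathcal{NM}_l|(1)^m$ and let $\phi\colon\mathrm{colim}_{\scI(m)}G_m\to(W|\mathcal{NM}_k|\otimes W|\mathcal{NM}_l|)(m)$ be the canonical map. It is surjective, since every element of the tensor product has a reduced tree representative, which lies in $\widetilde{G}_m(\alpha)$ for its value $\alpha\in\mathfrak{M}_2^{ab}(m)$ and hence in the image of $G_m(\alpha)$. For injectivity I would pass to axial images: by Propositions~\ref{9_1} and~\ref{label_5}(3) the diagram $G_m$ is isomorphic, via $\xi$, to the diagram $\alpha\mapsto G_m^{ax}(\alpha)$ of subspaces of $Z$; by Proposition~\ref{empty} a nonempty intersection of the $G_m^{ax}(\alpha)$ occurs only when the $\alpha$ span a simplex of $\mathfrak{K}_\bullet(m)$; and by Proposition~\ref{label_4} that intersection is exactly the corresponding barycenter space. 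Unwinding $\mathrm{colim}_{\scI(m)}G_m$ as a quotient of $\bigsqcup_\alpha G_m(\alpha)$ and using Proposition~\ref{label_5}(3) once more, any two points of the colimit with the same image under $\xi\circ\phi$ are identified through the barycenter space of the union of the two simplices carrying them; hence $\xi\circ\phi$, and therefore $\phi$, is injective. Since both $\mathrm{colim}_{\scI(m)}G_m$ and the tensor product are quotients of $\bigsqcup_\alpha G_m(\alpha)$ --- the first because every object of $\scI(m)$ admits a morphism to a vertex, the second by paragraph~\ref{5_2} together with the construction of the tensor product --- the continuous bijection $\phi$ is automatically a homeomorphism.

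For~(4), the argument above identifies $\xi\circ\phi$ with the inclusion $\bigcup_\alpha G_m^{ax}(\alpha)\hookrightarrow Z$. Now $\bigcup_\alpha G_m^{ax}(\alpha)$ is the realization of a $\Sigma_m$-invariant simplicial subset of $Z$ --- by the proof of Proposition~\ref{8_6} each $G_m^{ax}(\alpha)=W|\mathcal{NM}_k|(S_\alpha)\times W|\mathcal{NM}_l|(T_\alpha)$ is such a subset, and a finite union of simplicial subsets is again one --- so this inclusion is an equivariant closed cofibration; composing with $\phi^{-1}$ shows the axial map of $W|\mathcal{NM}_k|\otimes W|\mathcal{NM}_l|$ is a closed cofibration, while its $\Sigma$-freeness follows by inspecting the $\Sigma_m$-action on the cover $\{G_m^{ax}(\alpha)\}$ and using the $\Sigma$-freeness of $W|\mathcal{NM}_k|$ and $W|\mathcal{NM}_l|$; thus the tensor product is axial. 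For left-factoriality, Proposition~\ref{4_8} and the analysis of unary operations used in proving it identify $(W|\mathcal{NM}_k|\otimes W|\mathcal{NM}_l|)(1)$ with the product monoid $W|\mathcal{NM}_k|(1)\times W|\mathcal{NM}_l|(1)$, each factor of which is left-factorial by Lemma~\ref{LemmaNull}; since a maximal left factorization (Definition~\ref{7_16}) of a tuple in a product monoid is the pair of coordinatewise maximal left factorizations, the product monoid is left-factorial.

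The step I expect to be the main obstacle is the injectivity in~(1): one must check with care that $\mathrm{colim}_{\scI(m)}G_m$ genuinely recovers the union $\bigcup_\alpha G_m^{ax}(\alpha)$ at the level of underlying sets, which is the one place where the combinatorics of $\mathfrak{K}_\bullet(m)$ (Propositions~\ref{empty} and~\ref{label_4}) and the ``axiality on each intersection'' statement (Proposition~\ref{label_5}(3)) must be used together rather than piecemeal, and where there is a mild bootstrap, since the colimit description in~(1) and the axiality in~(4) reinforce one another. Once this set-level fact is established, the rest --- matching the colimit topology with the subspace topology and the cellular/cofibration bookkeeping --- is a direct quotation of the results cited above.
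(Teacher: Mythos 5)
Your proposal is correct and follows essentially the same route as the paper: the paper also deduces (2) and (3) from Propositions \ref{9_1}, \ref{label_4}, \ref{label_5} and the simplicial-realization argument, obtains axiality from \ref{label_5}(3) and left factoriality from the product-of-monoids observation, and treats (1) as ``tautological'' precisely because the diagram's intersections are by definition computed inside the tensor product and the $G_m(\alpha)$ cover it. Your more explicit bijectivity check for (1) via the axial map and Propositions \ref{empty}, \ref{label_4}, \ref{label_5}(3) just unpacks what the paper compresses into that one word, so there is no substantive difference.
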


\begin{proof}
Part (1) is tautological. In view of Proposition \ref{label_5}, we have for any object $\overline{\alpha}$ in $\scI(m)$,
or equivalently any simplex $\overline{\alpha}=\{\alpha_0,\alpha_1,\dots,\alpha_r\}$ in $\mathfrak{K}_\bullet(m)$
$$G_m(\overline{\alpha})=\cap_{i=0}^r G_m(\alpha_i)\cong\cap_{i=0}^r G_m^{ax}(\alpha_i)
=(W|\mathcal{NM}_k|)(S_{\overline{\alpha}})\times(W|\mathcal{NM}_l|)(T_{\overline{\alpha}})$$
Thus the maps in the diagram are realizations of simplicial injections and thus are cofibrations. Part (3)
follows from parts (1) and (2) of Proposition \ref{label_5}. Axiality follows from part (3) of Proposition \ref{label_5},
whereas left factoriality immediately follows from the obvious observation that the product of two left factorial
monoids is left factorial.
\end{proof}

\section{The Grothendieck construction for interchanges}\label{sec11}

According to Corollary \ref{label_6}, $(W|\mathcal{NM}_k|\otimes W|\mathcal{NM}_l|)(m)$
is homeomorphic to an iterated colimit of diagrams of cofibrations. We can now
apply Proposition 5.2 of \cite{BFV} which identifies such an iterated colimit
as a simple colimit of the Grothendieck construction of the diagram of categories
which to each node $\overline{\alpha}$ of the outer diagram assigns the category
indexing the inner colimit at that node.

\begin{defi}\label{10_1}
Let $F:\scA\to\scC at$ be any functor. Then the Grothendieck
construction $\scA\int F$ is the category whose objects are pairs
$(A,B)$ with $A\in{\rm Ob}\scA$ and with $B\in{\rm Ob}F(A)$. A morphism
$(A_1,B_1)\longrightarrow(A_2,B_2)$ is a pair $(\alpha,\beta)$ where
$\alpha:A_1\to A_2$ and $\beta:F(\alpha)(B_1)\to B_2$.
\end{defi}

\begin{defi}\label{10_2}
In Corollary \ref{label_6} the appropriate diagram for the Gro\-then\-dieck
construction is the diagram of posets $\hat{G}'_m:\scI(m)\longrightarrow Cat$
given by 
$\hat{G}'_m(\overline{\alpha})= \scM_k(S_{\overline{\alpha}})\times\scM_l(T_{\overline{\alpha}})$.
We denote the Grothendieck construction $\scI(m)\int \hat{G}'_m$ by
$\scI(k,l)(m)$
\end{defi}

Thus we have established that

\begin{prop}\label{10_3} $(W|\mathcal{NM}_k|\otimes W|\mathcal{NM}_l|)(m)$ has a
cellular decomposition indexed by $\scI(k,l)(m)$.
This decomposition assigns to an object $(\overline{\alpha},\beta)$ the cell $G'_m(\overline{\alpha},\beta)$
in $G_m(\overline{\alpha})\cong(W|\mathcal{NM}_k|)(S_{\overline{\alpha}})\times(W|\mathcal{NM}_k|)(T_{\overline{\alpha}})$
indexed by $\beta\in\scM_k(S_{\overline{\alpha}})\times\scM_l(T_{\overline{\alpha}})$
\end{prop}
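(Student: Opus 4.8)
The plan is to assemble Proposition~\ref{10_3} purely as a bookkeeping consequence of the structural results already in hand, namely Corollary~\ref{label_6} and Proposition~5.2 of \cite{BFV}. The first step is to recall the two levels of colimit decomposition of $(W|\mathcal{NM}_k|\otimes W|\mathcal{NM}_l|)(m)$: the outer one over $\scI(m)=\mathrm{Sd}\,\mathfrak{K}_\bullet(m)$ provided by part~(1) of Corollary~\ref{label_6}, which is a diagram of cofibrations $G_m:\scI(m)\to\Top$ whose value at a simplex $\overline{\alpha}$ is $G_m(\overline{\alpha})\cong\cap_{i=0}^r G_m^{ax}(\alpha_i)=(W|\mathcal{NM}_k|)(S_{\overline{\alpha}})\times(W|\mathcal{NM}_l|)(T_{\overline{\alpha}})$; and the inner one, provided by part~(3) of Corollary~\ref{label_6} together with Corollary~\ref{8_10}, which equips each $G_m(\overline{\alpha})$ with a cellular decomposition over the poset $\scM_k(S_{\overline{\alpha}})\times\scM_l(T_{\overline{\alpha}})$, compatibly with the structure maps of $G_m$.

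The second step is to package the inner decompositions into a single functor $\hat{G}'_m:\scI(m)\to\Cat$. Since the cellular decomposition of $G_m(\overline{\alpha})$ is natural in $\overline{\alpha}$ by part~(3) of Corollary~\ref{label_6}, the assignment $\overline{\alpha}\mapsto\scM_k(S_{\overline{\alpha}})\times\scM_l(T_{\overline{\alpha}})$ indeed extends to a functor, which is exactly the diagram $\hat{G}'_m$ of Definition~\ref{10_2}; its Grothendieck construction is $\scI(k,l)(m)$. One then invokes Proposition~5.2 of \cite{BFV}, which says precisely that an iterated colimit of a diagram of cofibrant cellular decompositions—an outer $\scI(m)$-diagram each of whose values carries an inner cellular decomposition indexed functorially by a category—can be rewritten as a single colimit of a cellular decomposition indexed by the Grothendieck construction $\scI(m)\int\hat{G}'_m=\scI(k,l)(m)$. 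The cell attached to an object $(\overline{\alpha},\beta)$ of this Grothendieck construction is by construction the cell $G'_m(\overline{\alpha},\beta)$ sitting inside $G_m(\overline{\alpha})\cong(W|\mathcal{NM}_k|)(S_{\overline{\alpha}})\times(W|\mathcal{NM}_l|)(T_{\overline{\alpha}})$ that is indexed by $\beta\in\scM_k(S_{\overline{\alpha}})\times\scM_l(T_{\overline{\alpha}})$, which is exactly the claim.

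The third step is simply to check that the hypotheses of Proposition~5.2 of \cite{BFV} are met: the maps in the outer diagram are cofibrations (Corollary~\ref{label_6}(2)), the inner decompositions are cellular, i.e. diagrams of closed cofibrations of contractible spaces over posets (Corollary~\ref{8_10} together with the definition of cellular decomposition in \ref{5_6a}), and the compatibility of the inner decompositions with the outer structure maps is Corollary~\ref{label_6}(3). I expect the only mild subtlety to be verifying that the composite indexing category one obtains really is the Grothendieck construction as stated—i.e. that the morphisms $(\alpha,\beta)$ of Definition~\ref{10_1} correctly encode both an inclusion $G_m(\overline{\alpha}_1)\hookrightarrow G_m(\overline{\alpha}_2)$ (from a face relation in $\scI(m)$) and a poset morphism in the fiber—but this is forced by the naturality already recorded in Corollary~\ref{label_6}(3) and is exactly what Proposition~5.2 of \cite{BFV} is designed to handle. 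So the proof is essentially a citation: ``Combining Corollary~\ref{label_6} with Proposition~5.2 of \cite{BFV} gives the result.''
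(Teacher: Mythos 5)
Your proposal is correct and follows essentially the same route as the paper: the paper states Proposition \ref{10_3} with the preamble ``Thus we have established that,'' immediately after recalling the Grothendieck construction and defining $\hat{G}'_m$ and $\scI(k,l)(m)$, the whole argument being exactly the combination of Corollary \ref{label_6} (iterated colimit of cofibrations with compatible cellular decompositions) with Proposition 5.2 of \cite{BFV}. Your additional verification of the hypotheses and of the identification of the cells $G'_m(\overline{\alpha},\beta)$ is consistent with what the paper leaves implicit.
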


We now prove Proposition \ref{5_7}. We first need the following result.

\begin{lem}\label{lem_11.4}
Let $\overline{\alpha}$ be a simplex in $\mathfrak{K}_\bullet(m)$. Consider the set $\overline{\gamma}$
of all objects $\lambda\in\mathfrak{M}_2^{ab}(m)=\mbox{Ob}\scM_2^{ab}(m)$ such that
$G_m(\overline{\alpha})\subseteq G_m(\lambda)$. Then $\overline{\gamma}$ contains a unique minimal element
$L'(\overline{\alpha})$ with respect to the order relation
on the poset $\scM_2^{ab}(m)$. The mapping $\overline{\alpha}\mapsto L'(\overline{\alpha})$ defines a poset map
$L':\scI(m)\longrightarrow\scM_2^{ab}(m)$.
\end{lem}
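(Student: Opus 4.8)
The plan is to exploit the combinatorial criterion for morphisms in $\scM_2^{ab}$ and the fact that the sets $\overline{\gamma}$ are closed under a suitable operation. First I would observe that the collection $\overline{\gamma}$ is nonempty: by the recursive construction of $(S_{\overline{\alpha}},T_{\overline{\alpha}})$ in Proposition \ref{label_4}, one can read off from the pair of binodal trees a canonical object of $\mathfrak{M}_2^{ab}(m)$ -- namely the one obtained by reinterpreting $S_{\overline{\alpha}}$ with its black nodes as $\boxtimes_1$ and white nodes as $\boxtimes_2$ (equivalently $T_{\overline{\alpha}}$ with the roles reversed, which by the symmetry of the construction gives the same object). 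Call this object $\mu_{\overline{\alpha}}$; I would check directly that $G_m(\overline{\alpha})\subseteq G_m(\mu_{\overline{\alpha}})$, since every vertex $\alpha_i$ of $\overline{\alpha}$ admits a morphism to $\mu_{\overline{\alpha}}$ in $\scM_2^{ab}(m)$ (this uses the maximal common decompositions of Corollary \ref{label_3_5} and the morphism criterion of Lemma \ref{5_01} applied to $\scM_2^{ab}$ rather than $\scM_2$).

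Next I would prove that $\overline{\gamma}$ is closed under binary meets in the poset $\scM_2^{ab}(m)$, or more precisely that it has a least element. The key point is that for objects $\lambda$ of $\scM_2^{ab}(m)$, membership $G_m(\overline{\alpha})\subseteq G_m(\lambda)$ is equivalent, via the axiality established in Corollary \ref{label_6}(4) together with Proposition \ref{8_5} and the restriction-to-triples reduction of Proposition \ref{8_4}, to a condition that can be checked one pair of generators $\{a,b\}\subseteq\{1,\dots,m\}$ at a time. For each such pair the condition says: if $\alpha_i\cap\{a,b\}=a\boxtimes_s b$ for all $i$ with some common value, then $\lambda\cap\{a,b\}=a\boxtimes_{s'}b$ with $s'\le s$ (the commutativity in $\scM_2^{ab}$ removes the ordering constraint, so only the index comparison survives). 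Thus among all $\lambda$ satisfying the containment, one gets the least by taking, on each pair $\{a,b\}$, the smallest admissible index; I would verify that this pairwise-minimal choice is realized by an actual object of $\scM_2^{ab}(m)$ -- indeed precisely by $\mu_{\overline{\alpha}}$ above -- using the coherence/consistency criterion of \cite{BFSV} which says that a consistent assignment of restrictions to all pairs comes from a genuine object. This shows $L'(\overline{\alpha}):=\mu_{\overline{\alpha}}$ is the unique minimal element of $\overline{\gamma}$.

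Finally, functoriality: if $\overline{\alpha}$ is a subsimplex of $\overline{\beta}$, then every vertex of $\overline{\alpha}$ is a vertex of $\overline{\beta}$, so $G_m(\overline{\beta})\subseteq G_m(\overline{\alpha})\subseteq G_m(L'(\overline{\alpha}))$, whence $L'(\overline{\alpha})\in\overline{\gamma}_{\overline{\beta}}$ and therefore $L'(\overline{\beta})\le L'(\overline{\alpha})$ by minimality of $L'(\overline{\beta})$. Since $\scI(m)=\mbox{Sd}\,\mathfrak{K}_\bullet(m)$ carries the order opposite to inclusion of faces, the morphism $\overline{\alpha}\to\overline{\beta}$ in $\scI(m)$ (corresponding to $\overline{\alpha}\subseteq\overline{\beta}$) is sent to the morphism $L'(\overline{\alpha})\to L'(\overline{\beta})$ in $\scM_2^{ab}(m)$, so $L'$ is a poset map as claimed.

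The main obstacle I expect is the verification that the pairwise-minimal index assignment actually defines an object of $\scM_2^{ab}(m)$ rather than an inconsistent collection of local data, and that this object coincides with the tree-theoretic $\mu_{\overline{\alpha}}$; this is where the coherence theorem of \cite{BFSV} and the precise bookkeeping of the recursion in Proposition \ref{label_4} must be brought together carefully.
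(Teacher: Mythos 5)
Your proposal is correct and follows essentially the same route as the paper: you define $L'(\overline{\alpha})$ by relabelling the black and white nodes of $S_{\overline{\alpha}}$ as $\boxtimes_1$ and $\boxtimes_2$, verify $G_m(\overline{\alpha})\subseteq G_m(L'(\overline{\alpha}))$ by restriction to small subsets of inputs, check minimality pairwise, and deduce monotonicity of $L'$ from minimality. Two small imprecisions to repair: membership of $\lambda$ in $\overline{\gamma}$ is \emph{not} equivalent to a pairwise condition (simplex membership in $\mathfrak{K}_\bullet(m)$ and the containments are detected on triples, cf.\ Proposition \ref{8_4} and Definition \ref{5_4}) --- but you only use the one-directional implication that the containment $G_m(\overline{\alpha})\subseteq G_m(\lambda)$ forces $L'(\overline{\alpha})\cap\{a,b\}\le\lambda\cap\{a,b\}$ for every pair, which is all that minimality requires since the order on $\scM_2^{ab}(m)$ \emph{is} pairwise-checkable; and in your final sentence the morphism of $\scI(m)$ corresponding to the face inclusion $\overline{\alpha}\subseteq\overline{\beta}$ runs $\overline{\beta}\to\overline{\alpha}$ (the order is opposite to inclusion), which is consistent with the inequality $L'(\overline{\beta})\le L'(\overline{\alpha})$ you correctly derived.
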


\begin{proof}
By construction and Proposition \ref{empty}, $\overline{\gamma}$ is 
also a simplex in $\mathfrak{K}_\bullet(m)$ and we have $G_m(\overline{\alpha})=G_m(\overline{\gamma})$.
Hence wolog we may take $\overline{\alpha}=\overline{\gamma}$, i.e. that $G_m(\overline{\alpha})\subseteq G_m(\lambda)$
implies $\lambda\in\overline{\alpha}$.

Let $(S_{\overline{\alpha}},T_{\overline{\alpha}})$ be the pair of binodal trees corresponding to
$\overline{\alpha}$ in Proposition \ref{label_4}.  We define $L'(\overline{\alpha})$ as follows. In the
tree $S_{\overline{\alpha}}$ replace each node label $\bullet$ by node label $\boxtimes_1$ and
each node label $\circ$ by node label $\boxtimes_2$.  Let $L'(\overline{\alpha})$ be the element
of $\scM_2^{ab}(m)$ corresponding to this relabeled tree.

To check that $G_m(\overline{\alpha})\subseteq G_m(L'(\overline{\alpha}))$, it suffices to check that
$G_m(\overline{\alpha}\cap\{a,b,c\})\subseteq G_m(L'(\overline{\alpha})\cap\{a,b,c\})$ for all
$\{a<b<c\}\subseteq\{1,2,\dots,m\}$, which is easily verified. To check that $L'(\overline{\alpha})\le\alpha_j$
for all vertices $\alpha_j$ in $\overline{\alpha}=\overline{\gamma}$, it suffices to check that
$L'(\overline{\alpha})\cap\{a,b\}\le\alpha_j\cap\{a,b\}$ for all $\{a<b\}\subseteq\{1,2,\dots,m\}$, which is
even easier to verify.

Finally if $\overline{\beta}\subset\overline{\alpha}$ is an inclusion of simplices in $\mathfrak{K}_\bullet(m)$,
then it easily follows that $L'(\overline{\alpha})\cap\{a,b\}\le L'(\overline{\beta})\cap\{a,b\}$
for all $\{a<b\}\subseteq\{1,2,\dots,m\}$.  This implies $L'(\overline{\beta})\le L'(\overline{\alpha})$,
and so $L':\scI(m)\longrightarrow\scM_2^{ab}$ is a map of posets. (Recall that the poset structure
on $\scI(m)$ is opposite to inclusion of simplices, cf. Definition \ref{5_41}.)
\end{proof}

\begin{exam}\label{exam_11_4}
Consider the following 1-simplex in $\mathfrak{K}_\bullet(5)$
$$\overline{\alpha}=\{(1\boxtimes_1 2\boxtimes_1 5)\boxtimes_2 3\boxtimes_2 4, (1\boxtimes_1 5)\boxtimes_2 2\boxtimes_2 (3\boxtimes_1 4)\}$$
The pair of binodal trees associated to this simplex is
$$
(S_{\overline{\alpha}}, T_{\overline{\alpha}})=\left(
\begin{array}{cc}
\begin{tikzpicture}
[level distance=10mm,
every node/.style={fill=white,circle,inner sep=2pt},
level 1/.style={sibling distance=50mm,nodes={fill=white}},
level 2/.style={sibling distance=20mm,nodes={fill=black}},
level 3/.style={sibling distance=10mm,nodes={fill=white}}]
\node[white] {}[grow=north]
child {node[draw] (above node) {}
child {node[draw] {}
child {node {4}}
child {node{3}}
}
child {node[draw] {}
child {node {5}
}
child {node {2}}
child {node {1}}
}
};
\end{tikzpicture},
&
\begin{tikzpicture}
[level distance=8mm,
every node/.style={fill=white,circle,inner sep=2pt},
level 1/.style={sibling distance=50mm,nodes={fill=white}},
level 2/.style={sibling distance=20mm,nodes={fill=black}},
level 3/.style={sibling distance=10mm,nodes={fill=white}},
level 4/.style={sibling distance=10mm,nodes={fill=white}}]
\node[white] {}[grow=north]
child {node[draw] (above node) {}
child {node[draw] {}
child {node {4}}
child {node{3}}
}
child {node[draw] {}
child {node {2}}
child [missing] {}
child {node[draw] {}
child {node {5}}
child [missing] {}
child {node {1}}
}
}
};
\end{tikzpicture}
\end{array}
\right)
$$
We obtain $L'(\overline{\alpha})$ by taking the first component of this pair and replacing
$\bullet$ by $\boxtimes_1$ and $\circ$ by $\boxtimes_2$. This gives the tree
$$
\begin{tikzpicture}
[level distance=10mm,
every node/.style={fill=white,rectangle,inner sep=2pt},
level 1/.style={sibling distance=50mm,nodes={fill=white}},
level 2/.style={sibling distance=20mm,nodes={fill=white}},
level 3/.style={sibling distance=10mm,nodes={fill=white}}]
\node[white] {}[grow=north]
child {node (above node) {$\phantom{1}\boxtimes_2$}
child {node {$\phantom{1}\boxtimes_1$}
child {node {4}}
child {node{3}}
}
child {node {$\phantom{1}\boxtimes_1$}
child {node {5}
}
child {node {2}}
child {node {1}}
}
};
\end{tikzpicture}
$$
which corresponds to the element $L'(\overline{\alpha})=(1\boxtimes_1 2\boxtimes_1 5)\boxtimes_2(3\boxtimes_1 4)\in\scM_2^{ab}(5 )$.
The set $\overline{\gamma}$ is the 3-simplex
$$\{(1\boxtimes_1 2\boxtimes_1 5)\boxtimes_2(3\boxtimes_1 4),(1\boxtimes_1 2\boxtimes_1 5)\boxtimes_2 3\boxtimes_2 4,
(1\boxtimes_1 5)\boxtimes_2 2\boxtimes_2 (3\boxtimes_1 4),(1\boxtimes_1 5)\boxtimes_2 2\boxtimes_2 3\boxtimes_2 4\}$$
\end{exam}

{\bf Proof of Proposition \ref{5_7}} We construct
$$L:\scI(k,l)(m)\longrightarrow\mathcal{M}_{k+l}(m)$$
so that the following diagram commutes:
$$\xymatrix{
\scI(k,l)(m)\ar[r]^L\ar[d]^P &\mathcal{M}_{k+l}(m)\ar[d]^{P'}\\
\scI(m)\ar[r]^{L'} &\scM_2^{ab}(m)}$$
Here $P:\scI(k,l)(m)=\scI(m)\int \hat{G}'_m\longrightarrow \scI(m)$ is the natural map in the
Grothendieck construction and $P':\mathcal{M}_{k+l}(m)\longrightarrow\scM_2^{ab}(m)$
is the poset map which replaces $\Box_i$ by $\boxtimes_1$ if $i\le k$ and by $\boxtimes_2$
if $i>k$.

The construction of $L$ goes as follows.  An object in $\scI(k,l)(m)$
is a pair $(\overline{\alpha},\beta)$, where $\overline{\alpha}$ is an object
in $\scI(m)=\mbox{Sd}\mathfrak{K}_\bullet(m)$, or equivalently a simplex
$\{\alpha_0,\alpha_1,\dots,\alpha_r\}$
in $\mathfrak{K}_\bullet(m)$, and $\beta$ is an object in the poset
$\scM_k(S_{\overline{\alpha}})\times\scM_l(T_{\overline{\alpha}})$.
As in the proof of Lemma \ref{lem_11.4} wolog we may assume that
$\overline{\alpha}=\overline{\gamma}$

There is then a unique morphism $\overline{\alpha}\longrightarrow L'(\overline{\alpha})$
in $\scI(m)$. Let $\beta'$ denote the image of $\beta$ under
the corresponding functor $\hat{G}'_m(\overline{\alpha})\longrightarrow \hat{G}'_m(L'(\overline{\alpha}))$.
Then $\beta'$ is an object in the poset
$\scM_k(S_{L'(\overline{\alpha})})\times\scM_l(T_{L'(\overline{\alpha})})$.
 But $S_{L'(\overline{\alpha})}$ and $T_{L'(\overline{\alpha})}$
are trees of the same shape with black nodes in $S_{L'(\overline{\alpha})}$ corresponding to
white nodes in $T_{L'(\overline{\alpha})}$ and vice versa. Now an element $\beta'$ in
$\scM_k(S_{L'(\overline{\alpha})})\times\scM_l(T_{L'(\overline{\alpha})})$ can be identified with
a pair of planar trees $(S',T')$ with $S'$ looking exactly like $S_{L'(\overline{\alpha})}$,
except that every black node is decorated with an object of $\scM_k$ of the
appropriate arity and with $T'$ looking exactly like $T_{L'(\overline{\alpha})}$,
except that every black node is decorated with an object of $\scM_l$ of the
appropriate arity. Putting together the decorations on $S'$ and $T'$, we obtain
a tree all of whose nodes are decorated with objects either of $\scM_k$
or of $\scM_l$. Regarding $\scM_k$ and $\scM_l$ as suboperads of $\scM_{k+l}$
as in paragraph \ref{4_7a}, we can interpret the resulting tree as an object
$\beta''$ in $\scM_{k+l}$, by considering the edges of the tree to be compositions
in the operad $\scM_{k+l}$. We define $L(\overline{\alpha},\beta)=\beta''$. This
is easily checked to be a map of posets.

To check Quillen's Theorem A for the functor $L$, we show that the over categories
$L/\alpha$ have contractible nerves for any object $\alpha$ in $\scM_{k+l}(m)$.
We note that we can obtain an object $\alpha'$ in $\overline{M}_2(m)$ by replacing
each $\square_i$ in $\alpha$ by $\boxtimes_1$ if $i\le k$ and by $\boxtimes_2$ if
$i>k$. This resulting object $\alpha'$ can also be regarded as an object in $\scI(m)$.
It is easy to see that there is a unique object $\beta'$ in
$\scM_k(S_{\alpha'})\times\scM_l(T_{\alpha'})$ such that $L(\alpha',\beta')=\alpha$
and that $\mbox{id}:L(\alpha',\beta')\longrightarrow\alpha$ is the terminal object
in $L/\alpha$.
\hfill\ensuremath{\Box}

\begin{exam}\label{exam_11_5}
We illustrate the construction $L$ with an example based on Example \ref{exam_11_4}.
Let us take $k=2$ and $l=3$. Now consider an object $(\overline{\alpha},\beta)\in\scI(2,3)(5)$.
The second component $\beta$ is an element in
$$\scM_2(S_{\overline{\alpha}})\times\scM_3(T_{\overline{\alpha}})\cong\scM_2(3)\times\scM_2(2)\times\scM_3(2)^3$$
The object  $(\overline{\alpha},\beta)$ can be represented by a pair of trees $(S_{\overline{\alpha},\beta},T_{\overline{\alpha},\beta})$,
obtained from the pair of binodal trees $(S_{\overline{\alpha}},T_{\overline{\alpha}})$
in Example \ref{exam_11_4} as follows.  We replace each black node $v$ of $S_{\overline{\alpha}}$
by the tree representing the object in $\scM_2(\In(v))$, specified by the component $\beta$, to 
obtain $S_{\overline{\alpha},\beta}$. Likewise we replace each black node $w$ of $T_{\overline{\alpha}}$
by the tree representing the object in $\scM_3(\In(w))$, specified by the component $\beta$, to obtain 
$T_{\overline{\alpha},\beta}$. Thus a typical object $(\overline{\alpha},\beta)$ might be
represented by the pair of trees
$$
\left(
\begin{array}{cc}
\begin{tikzpicture}
[level distance=10mm,
every node/.style={fill=white,inner sep=2pt},
level 1/.style={sibling distance=50mm,nodes={fill=white}},
level 2/.style={sibling distance=30mm,nodes={fill=white}},
level 3/.style={sibling distance=10mm,nodes={fill=white}}]
\node[white] {}[grow=north]
child {node[draw,circle] (above node) {}
child {node{$\phantom{1}\square_2$}
child {node {2}}
child [missing] {}
child {node{$\phantom{1}\square_1$}
child {node {1}}
child [missing] {}
child {node {5}}
}
}
child {node {$\phantom{1}\square_1$}
child {node {3}}
child [missing] {}
child {node{4}}
}};
\end{tikzpicture},
&
\begin{tikzpicture}
[level distance=10mm,
every node/.style={fill=white,inner sep=2pt},
level 1/.style={sibling distance=50mm,nodes={fill=white}},
level 2/.style={sibling distance=30mm,nodes={fill=white}},
level 3/.style={sibling distance=10mm,nodes={fill=white}}]
\node[white] {}[grow=north]
child {node (above node) {$\phantom{1}\square_4$}
child {node  {$\phantom{1}\square_5$}
child {node {2}}
child [missing] {}
child {node [draw,circle] {}
child {node {5}}
child [missing] {}
child {node {1}}
}
}
child {node {$\phantom{1}\square_3$}
child {node {4}}
child [missing] {}
child {node{3}}
}
};
\end{tikzpicture}
\end{array}
\right)
$$

Then $(L'(\overline{\alpha}),\beta')$ is represented by

$$
\left(
\begin{array}{cc}
\begin{tikzpicture}
[level distance=10mm,
every node/.style={fill=white,inner sep=2pt},
level 1/.style={sibling distance=50mm,nodes={fill=white}},
level 2/.style={sibling distance=30mm,nodes={fill=white}},
level 3/.style={sibling distance=10mm,nodes={fill=white}}]
\node[white] {}[grow=north]
child {node[draw,circle] (above node) {}
child {node{$\phantom{1}\square_2$}
child {node {2}}
child [missing] {}
child {node{$\phantom{1}\square_1$}
child {node {1}}
child [missing] {}
child {node {5}}
}
}
child {node {$\phantom{1}\square_1$}
child {node {3}}
child [missing] {}
child {node{4}}
}};
\end{tikzpicture},
&
\begin{tikzpicture}
[level distance=10mm,
every node/.style={fill=white,inner sep=2pt},
level 1/.style={sibling distance=50mm,nodes={fill=white}},
level 2/.style={sibling distance=30mm,nodes={fill=white}},
level 3/.style={sibling distance=10mm,nodes={fill=white}}]
\node[white] {}[grow=north]
child {node (above node) {$\phantom{1}\square_4$}
child {node  [draw,circle] {}
child {node {5}}
child {node {2}}
child {node {1}}
}
child {node [draw,circle] {}
child {node {4}}
child [missing] {}
child {node{3}}
}
};
\end{tikzpicture}
\end{array}
\right)
$$
and $L(\overline{\alpha},\beta)$ is represented by the tree
$$
\begin{tikzpicture}
[level distance=10mm,
every node/.style={fill=white,inner sep=2pt},
level 1/.style={sibling distance=50mm,nodes={fill=white}},
level 2/.style={sibling distance=30mm,nodes={fill=white}},
level 3/.style={sibling distance=10mm,nodes={fill=white}}]
\node[white] {}[grow=north]
child {node (above node) {$\phantom{1}\square_4$}
child {node{$\phantom{1}\square_2$}
child {node {2}}
child [missing] {}
child {node{$\phantom{1}\square_1$}
child {node {1}}
child [missing] {}
child {node {5}}
}
}
child {node {$\phantom{1}\square_1$}
child {node {3}}
child [missing] {}
child {node{4}}
}};
\end{tikzpicture}
$$
Thus $L(\overline{\alpha},\beta)=(4\Box_1 3)\Box_4((5\Box_1 1)\Box_2 2)$.
\end{exam}

Thus we have established a chain of equivalences
$$
\begin{array}{rl}
(W|\mathcal{NM}_k|  \otimes W|\mathcal{NM}_l|)(m) &
\cong\mbox{colim}_{\scI(k,l)(m)}G'_m
 \stackrel{\simeq}{\longleftarrow}\mbox{hocolim}_{\scI(k,l)(m)}G'_m\\
& \stackrel{\simeq}{\longrightarrow} 
\mbox{hocolim}_{\scI(k,l)(m)}*
=|\mathcal{N}\scI(k,l)(m)|\\
& \stackrel{\simeq}{\longrightarrow}|\mathcal{NM}_{k+l}|(m)
\end{array}
$$
We now need to show that this chain of maps is compatible with the operad structures on the two
ends of the chain.  Thus we need to construct operad structures on all the intervening spaces and to
show that the maps are homomorphisms between these structures.

We begin with a simple construction.

\begin{lem}\label{lem_11_7}
 Let $\scC$ be an operad in $\Sets$ such that $\scC(m)$ is finite for each $m\ge0$. Let $\mathfrak{D}(m)$
be the simplex whose vertex set is  $\scC(m)$, regarded as an abstract simplicial complex in the classical sense.
Then the barycentric subdivisions $\mbox{Sd}\mathfrak{D}=\{\mbox{Sd}\mathfrak{D}(m)\}_{m\ge0}$
have a natural structure of an operad in $\Cat$.
\end{lem}

\begin{proof} Define the operad composition
 $$\mbox{Sd}\mathfrak{D}(k)\times\prod_{i=1}^k\mbox{Sd}\mathfrak{D}(m_i)
 \longrightarrow\mbox{Sd}\mathfrak{D}(m_1+m_2+\dots+m_k)$$
by
 $$\{a_i\}\circ(\oplus\{b_{ij}\}) = \{a_i\circ(b_{1j_1}\oplus b_{2j_2}\oplus\dots\oplus b_{kj_k})\}$$
In other words, take the barycenter of the simplex whose vertices are all possible
compositions of the vertices of the inputs.  The symmetric group actions are induced by the
symmetric group actions on the vertices.
It is straightforward to check that this defines an operad structure in $\Cat$.
\end{proof}

\begin{prop}\label{prop_11_8}
\begin{itemize}
\item[{\rm (i)}] $\scI=\{\scI(m)\}_{m\ge0}$ has  a $\Cat$-operad structure.
\item[{\rm (ii)}] The colimit decomposition of the operad $W|\mathcal{NM}_k|\otimes W|\mathcal{NM}_l|$
over $\scI$, given in Proposition \ref{5_5} is compatible with this operad structure.
\item[{\rm (iii)}] The functor $L':\scI\longrightarrow\scM_2^{ab}$ of Lemma \ref{lem_11.4}
is a map of $\Cat$-operads.
\end{itemize}
\end{prop}

\begin{proof} In Lemma \ref{lem_11_7} take the set operad $\scC$ to be $\mathfrak{M}_2^{ab}$.
Then by definition $\scI(m)$ is a subposet of $\mbox{Sd}\mathfrak{D}(m)$ (where we take the order
relation in both cases to be opposite to inclusion of faces).  Thus to prove (i), we have to show that
the operad structure on  $\mbox{Sd}\mathfrak{D}$ restricts to $\scI$.

Suppose that $\overline{\alpha}\in\scI(m)$ and that $\overline{\beta}_i\in\scI(n_i)$, for $i=1,2,\dots,m$.
Then it is clear from Definition \ref{5_41} that the operad composition
\begin{eqnarray*}
\lefteqn{(W|\mathcal{NM}_k|\otimes W|\mathcal{NM}_l|)(m)
\times\prod_{i=1}^m(W|\mathcal{NM}_k|\otimes W|\mathcal{NM}_l|)(n_i)}\\
&\longrightarrow &(W|\mathcal{NM}_k|\otimes W|\mathcal{NM}_l|)(n_1+n_2+\dots+n_m)
\end{eqnarray*}
restricts to
$$G(\overline{\alpha})\times\prod_{i=1}^m G(\overline{\beta}_i)\longrightarrow
\cap_{\lambda\in\overline{\alpha}\circ(\oplus_{i=1}^m\overline{\beta}_i)}G(\lambda),$$
where $\overline{\alpha}\circ(\oplus_{i=1}^m\overline{\beta}_i)$ denotes operad composition in
 $\mbox{Sd}\mathfrak{D}$.  It follows that 
$\cap_{\lambda\in\overline{\alpha}\circ(\oplus_{i=1}^m\overline{\beta}_i)}G(\lambda)$
is nonempty.  By Proposition \ref{5_5} it follows that
$\overline{\alpha}\circ(\oplus_{i=1}^m\overline{\beta}_i)\in\scI(n_1+n_2+\dots+n_k)$ and
$\cap_{\lambda\in\overline{\alpha}\circ(\oplus_{i=1}^m\overline{\beta}_i)}G(\lambda)
=G(\overline{\alpha}\circ(\oplus_{i=1}^m\overline{\beta}_i)).$
This proves (i) and also (ii), since the additional condition that the colimit decomposition
behaves appropriately under the symmetric group actions is obviously true.

Finally to prove (iii) we first note that since both $\scI$ and $\scM_2^{ab}$ are operads in the
category of posets, it suffices that $L'$  induces a map of operads on objects, which are operads
in $\Sets$.  To check this, we observe that this map decomposes as a composite
$$\mbox{Ob}\,\scI\stackrel{S}{\longrightarrow}\scB\scT
\stackrel{V}{\longrightarrow}\mbox{Ob}\,\scM_2^{ab}=\mathfrak{M}_2^{ab},$$
where $\scB\scT$ is the operad of binodal trees (cf. Definition \ref{scbt}),
$S(\overline{\alpha})=S_{\overline{\alpha}}$, and
$V$ replaces each node label $\bullet$ by node label $\boxtimes_1$ and each node label $\circ$
by node label $\boxtimes_2$.  Both $S$ and $V$ are easily seen to be operad maps.
\end{proof}

The following result establishes the desired operad equivalence between
$W|\mathcal{NM}_k|\otimes W|\mathcal{NM}_l|$ and $\mathcal{M}_{k+l}$ and shows that the
former is an $E_{k+l}$-operad.

\begin{theo}\label{thm_11_9}
\begin{itemize}
\item[{\rm (i)}] $\scI(k,l)=\{\scI(k,l)(m)\}_{m\ge0}$ has a $\Cat$-operad structure.
\item[{\rm (ii)}] The cellular decomposition of the operad $W|\mathcal{NM}_k|\otimes W|\mathcal{NM}_l|$
over $\scI(k,l)$, given in Proposition \ref{10_3} is compatible with this operad structure.
\item[{\rm (iii)}] The equivalence $L:\scI(k,l)\longrightarrow\mathcal{M}_{k+l}$ of Proposition \ref{5_7}
is a map of $\Cat$-operads.
\end{itemize}
\end{theo}

\begin{proof} We define the operad structure on $\mbox{Ob}\,\scI(k,l)$ by observing that
$$\mbox{Ob}\,\scI(k,l)\subset\mbox{Ob}\,\scI\times\mbox{Ob}\,(\scM_k)(\scB\scT)
\times\mbox{Ob}\,(\scM_l)(\scB\scT)$$
and noting that the operad structure on
$\mbox{Ob}\,\scI\times\mbox{Ob}\,(\scM_k)(\scB\scT)\times\mbox{Ob}\,(\scM_l)(\scB\scT)$
restricts to $\mbox{Ob}\,\scI(k,l)$.  This follows from the evident fact that 
$$S:\mbox{Ob}\,\scI\longrightarrow\scB\scT,\qquad T:\mbox{Ob}\,\scI\longrightarrow\scB\scT$$
given by $S(\overline{\alpha})=S_{\overline{\alpha}}$, $T(\overline{\alpha})=T_{\overline{\alpha}}$
respectively, are operad maps.  It is straightforward to check that this operad
structure on objects extends to a $\Cat$-operad structure on the poset operad $\scI(k,l)$.  This
proves part (i).

To prove part (ii), we note that according to Proposition \ref{prop_11_8} part (ii), operad composition
in $W|\mathcal{NM}_k|\otimes W|\mathcal{NM}_l|$ restricts to maps of the form
$$G(\overline{\alpha})\times\prod_{i=1}^m G(\overline{\beta}_i)\longrightarrow
G(\overline{\alpha}\circ(\oplus_{i=1}^m\overline{\beta}_i)).$$
We must show that these restrictions are compatible with the cellular decompositions on
both sides.  This follows from the commutativity of the diagram
$$\xymatrix{G(\overline{\alpha})\times\prod_{i=1}^m G(\overline{\beta}_i)
\ar[r]^(0.4){\cong}\ar[dd]
&\mbox{$\begin{array}{c}
W|\mathcal{NM}_k(S_{\overline{\alpha}})\times W|\mathcal{NM}_l(T_{\overline{\alpha}})\\
\times\prod_{i=1}^m W|\mathcal{NM}_k(S_{\overline{\beta_i}})\times W|\mathcal{NM}_l(T_{\overline{\beta_i}})\end{array}$}\ar[d]\\
&W|\mathcal{NM}_k(S_{\overline{\alpha}}\circ(\oplus_{i=1}^m S_{\overline{\beta_i}}))
\times W|\mathcal{NM}_l(T_{\overline{\alpha}}\circ(\oplus_{i=1}^m T_{\overline{\beta_i}}))
\ar@{=}[d]\\
G(\overline{\alpha}\circ(\oplus_{i=1}^m\overline{\beta}_i))\ar[r]^(0.3){\cong}
&W|\mathcal{NM}_k(S_{\overline{\alpha}\circ(\oplus_{i=1}^m\overline{\beta}_i)})
\times W|\mathcal{NM}_l(T_{\overline{\alpha}\circ(\oplus_{i=1}^m\overline{\beta}_i)})
}$$
Here the horizontal homeomorphisms are those given in Proposition \ref{label_5} and the right
hand vertical map is operad composition in the operad 
$(W|\mathcal{NM}_k|)(\scB\scT)\times (W|\mathcal{NM}_l|)(\scB\scT)$, which as noted in Remark
\ref{scPscA}, has a cellular decomposition over the poset operad 
$(\mathcal{M}_k)(\scB\scT)\times (\mathcal{M}_l)(\scB\scT)$.

Part (iii) is an immediate consequence of the definition of the operad structure on
$\scI(k,l)$ and the fact that $L'$ is a map of operads, as shown in Proposition \ref{prop_11_8}
part (iii).

\end{proof}

\newpage
\section{Appendix: Intersection table for Proposition \ref{8_5}}

\begin{center}
\begin{tabular}{|c|c|c|c|}
\hline
&$T_1$ &$T_2$ &$T_3$\\
\hline
\raisebox{-10pt}{1}
&$\xymatrix@=5pt@M=-1pt@W=-1pt{
1 &&2 &&3\\
\ar@{-}[ddddrr] &&\ar@{-}[dddd] &&\ar@{-}[ddddll]\\
\\
\\
\\
&&\circ\ar@{-}[dd]\\
\\
&&
}$
&\raisebox{-10pt}{$T_2$ arbitrary} &\raisebox{-10pt}{$T_2$}\\
\hline
\raisebox{-10pt}{2}
&$\xymatrix@=5pt@M=-1pt@W=-1pt{
1 &&2 &&3\\
\ar@{-}[ddddrr] &&\ar@{-}[dddd] &&\ar@{-}[ddddll]\\
\\
\\
\\
&&\bullet\ar@{-}[dd]\\
\\
&&
}$
&$\xymatrix@=5pt@M=-1pt@W=-1pt{
i &&j &&&k\\
\ar@{-}[ddr] &&\ar@{-}[ddl]&&&\ar@{-}[ddddll]\\
\\
&\bullet\ar@{-}[ddrr]\\
\\
&&&\bullet\ar@{-}[dd]\\
\\
&&&
}$
&$\xymatrix@=5pt@M=-1pt@W=-1pt{
i &&j &&&k\\
\ar@{-}[ddr] &&\ar@{-}[ddl]&&&\ar@{-}[ddddll]\\
\\
&\bullet\ar@{-}[ddrr]\\
\\
&&&\bullet\ar@{-}[dd]\\
\\
&&&
}$\\
\hline
\raisebox{-10pt}{3}
&$\xymatrix@=5pt@M=-1pt@W=-1pt{
1 &&2 &&3\\
\ar@{-}[ddddrr] &&\ar@{-}[dddd] &&\ar@{-}[ddddll]\\
\\
\\
\\
&&\bullet\ar@{-}[dd]\\
\\
&&
}$
&$\xymatrix@=5pt@M=-1pt@W=-1pt{
i &&j &&&k\\
\ar@{-}[ddr] &&\ar@{-}[ddl]&&&\ar@{-}[ddddll]\\
\\
&\bullet\ar@{-}[ddrr]\\
\\
&&&\circ\ar@{-}[dd]\\
\\
&&&
}$
&$\xymatrix@=5pt@M=-1pt@W=-1pt{
1 &&2 &&3\\
\ar@{-}[ddddrr] &&\ar@{-}[dddd] &&\ar@{-}[ddddll]\\
\\
\\
\\
&&\bullet\ar@{-}[dd]\\
\\
&&
}$\\
\hline
\raisebox{-10pt}{4}
&$\xymatrix@=5pt@M=-1pt@W=-1pt{
1 &&2 &&3\\
\ar@{-}[ddddrr] &&\ar@{-}[dddd] &&\ar@{-}[ddddll]\\
\\
\\
\\
&&\bullet\ar@{-}[dd]\\
\\
&&
}$
&$\xymatrix@=5pt@M=-1pt@W=-1pt{
i &&j &&&k\\
\ar@{-}[ddr] &&\ar@{-}[ddl]&&&\ar@{-}[ddddll]\\
\\
&\circ\ar@{-}[ddrr]\\
\\
&&&\bullet\ar@{-}[dd]\\
\\
&&&
}$
&$\xymatrix@=5pt@M=-1pt@W=-1pt{
i &&j &&&k\\
\ar@{-}[ddr] &&\ar@{-}[ddl]&&&\ar@{-}[ddddll]\\
\\
&\bullet\ar@{-}[ddrr]\\
\\
&&&\bullet\ar@{-}[dd]\\
\\
&&&
}$\\
\hline
\raisebox{-10pt}{5}
&$\xymatrix@=5pt@M=-1pt@W=-1pt{
i &&j &&&k\\
\ar@{-}[ddr] &&\ar@{-}[ddl]&&&\ar@{-}[ddddll]\\
\\
&\bullet\ar@{-}[ddrr]\\
\\
&&&\bullet\ar@{-}[dd]\\
\\
&&&
}$
&$\xymatrix@=5pt@M=-1pt@W=-1pt{
p &&q &&&r\\
\ar@{-}[ddr] &&\ar@{-}[ddl]&&&\ar@{-}[ddddll]\\
\\
&\bullet\ar@{-}[ddrr]\\
\\
&&&\circ\ar@{-}[dd]\\
\\
&&&
}$
&$\xymatrix@=5pt@M=-1pt@W=-1pt{
i &&j &&&k\\
\ar@{-}[ddr] &&\ar@{-}[ddl]&&&\ar@{-}[ddddll]\\
\\
&\bullet\ar@{-}[ddrr]\\
\\
&&&\bullet\ar@{-}[dd]\\
\\
&&&
}$\\
\hline
\raisebox{-10pt}{6}
&$\xymatrix@=5pt@M=-1pt@W=-1pt{
i &&j &&&k\\
\ar@{-}[ddr] &&\ar@{-}[ddl]&&&\ar@{-}[ddddll]\\
\\
&\bullet\ar@{-}[ddrr]\\
\\
&&&\bullet\ar@{-}[dd]\\
\\
&&&
}$
&$\xymatrix@=5pt@M=-1pt@W=-1pt{
i &&j &&&k\\
\ar@{-}[ddr] &&\ar@{-}[ddl]&&&\ar@{-}[ddddll]\\
\\
&\circ\ar@{-}[ddrr]\\
\\
&&&\bullet\ar@{-}[dd]\\
\\
&&&
}$
&$\xymatrix@=5pt@M=-1pt@W=-1pt{
i &&j &&&k\\
\ar@{-}[ddr] &&\ar@{-}[ddl]&&&\ar@{-}[ddddll]\\
\\
&\bullet\ar@{-}[ddrr]\\
\\
&&&\bullet\ar@{-}[dd]\\
\\
&&&
}$\\
\hline
\raisebox{-10pt}{7}
&$\xymatrix@=5pt@M=-1pt@W=-1pt{
i &&j &&&k\\
\ar@{-}[ddr] &&\ar@{-}[ddl]&&&\ar@{-}[ddddll]\\
\\
&\circ\ar@{-}[ddrr]\\
\\
&&&\bullet\ar@{-}[dd]\\
\\
&&&
}$
&$\xymatrix@=5pt@M=-1pt@W=-1pt{
i &&j &&&k\\
\ar@{-}[ddr] &&\ar@{-}[ddl]&&&\ar@{-}[ddddll]\\
\\
&\bullet\ar@{-}[ddrr]\\
\\
&&&\circ\ar@{-}[dd]\\
\\
&&&
}$
&$\xymatrix@=5pt@M=-1pt@W=-1pt{
i &&j &&&k\\
\ar@{-}[ddr] &&\ar@{-}[ddl]&&&\ar@{-}[ddddll]\\
\\
&\bullet\ar@{-}[ddrr]\\
\\
&&&\bullet\ar@{-}[dd]\\
\\
&&&
}$\\
\hline
\raisebox{-10pt}{8}
&$\xymatrix@=5pt@M=-1pt@W=-1pt{
i &&j &&&k\\
\ar@{-}[ddr] &&\ar@{-}[ddl]&&&\ar@{-}[ddddll]\\
\\
&\circ\ar@{-}[ddrr]\\
\\
&&&\bullet\ar@{-}[dd]\\
\\
&&&
}$
&$\xymatrix@=5pt@M=-1pt@W=-1pt{
i &&k &&&j\\
\ar@{-}[ddr] &&\ar@{-}[ddl]&&&\ar@{-}[ddddll]\\
\\
&\bullet\ar@{-}[ddrr]\\
\\
&&&\circ\ar@{-}[dd]\\
\\
&&&
}$
&$\xymatrix@=5pt@M=-1pt@W=-1pt{
i &&j &&&k\\
\ar@{-}[ddr] &&\ar@{-}[ddl]&&&\ar@{-}[ddddll]\\
\\
&\circ\ar@{-}[ddrr]\\
\\
&&&\bullet\ar@{-}[dd]\\
\\
&&&
}$\\
\hline
\raisebox{-10pt}{9}
&$\xymatrix@=5pt@M=-1pt@W=-1pt{
i &&j &&&k\\
\ar@{-}[ddr] &&\ar@{-}[ddl]&&&\ar@{-}[ddddll]\\
\\
&\bullet\ar@{-}[ddrr]\\
\\
&&&\circ\ar@{-}[dd]\\
\\
&&&
}$
&$\xymatrix@=5pt@M=-1pt@W=-1pt{
i &&k &&&j\\
\ar@{-}[ddr] &&\ar@{-}[ddl]&&&\ar@{-}[ddddll]\\
\\
&\bullet\ar@{-}[ddrr]\\
\\
&&&\circ\ar@{-}[dd]\\
\\
&&&
}$
&\raisebox{-8pt}{\parbox{140 pt}{nonempty but not of the form $(W\scB_\bullet)(T_3)$}} 
\\
\hline
\end{tabular}
\end{center}

\newpage

\end{document}